\numberwithin{equation}{section}
\newtheorem{theorem}{\textbf{Theorem}}[section]
\newtheorem{theorem*}{\textbf{Theorem}}
\newtheorem{thmx}{Theorem}
\newtheorem{corollaryx}[thmx]{\textbf{Corollary}}
\newtheorem{definition}[theorem]{\textbf{Definition}}
\newtheorem{proposition}[theorem]{\textbf{Proposition}}
\newtheorem{lemma}[theorem]{\textbf{Lemma}}
\newtheorem{question}[theorem]{\textbf{Question}}
\newtheorem{corollary}[theorem]{\textbf{Corollary}}
\newtheorem{remark}[theorem]{\textbf{Remark}}
\newtheorem{example}[theorem]{\textbf{Example}}
\newtheorem{conjecture}[theorem]{\textbf{Conjecture}}
\newtheorem{definition/proposition}[theorem]{\textbf{Definition/Proposition}}
\providecommand{\customgenericname}{}
\newcommand{\newcustomtheorem}[2]{%
	\newenvironment{#1}[1]
	{%
		\renewcommand\customgenericname{#2}%
		\renewcommand\theinnercustomgeneric{##1}%
		\innercustomgeneric
	}
	{\endinnercustomgeneric}
}
\def\A{{\mathbb A}}
\def\N{{\mathbb N}}
\def\R{\mathbb{R}}
\def\Z{{\mathbb Z}}
\def\C{{\mathbb C}}
\def\D{{\mathbb D}}
\def\Q{{\mathbb Q}}
\newcommand{\CP}{\mathbb{C}\mathbb{P}}
\def\i{\iota}
\def\x{\xi}
\def\z{\zeta}
\def\cA{{\mathcal A}}
\def\cC{{\mathcal C}}
\def\cH{{\mathcal H}}
\def\cJ{{\mathcal J}}
\def\cM{{\mathcal M}}
\def\cN{{\mathcal N}}
\def\cP{{\mathcal P}}
\def\cR{{\mathcal R}}
\def\cS{{\mathcal S}}
\def\bB{{\bm B}}
\def\bH{{\bm H}}
\def\bJ{{\bm J}}
\def\rD{{\rm D}}
\def\rSD{{\rm SD}}
\def\rd{{\rm d}}
\def\la{\langle\,}
\def\ra{\,\rangle}
\def\st{\: \big| \:}
\DeclareMathOperator{\Ima}{im}
\DeclareMathOperator{\ind}{ind}
\DeclareMathOperator{\Id}{id}
\DeclareMathOperator{\coker}{coker}
\DeclareMathOperator{\rank}{rank}
\DeclareMathOperator{\End}{End}
\title{Symplectic fillings of asymptotically dynamically convex manifolds II--$k$-dilations}
\author{Zhengyi Zhou}
\begin{document}
	\maketitle
\begin{abstract}
	We introduce the concept of $k$-(semi)-dilation for Liouville domains, which is a generalization of symplectic dilation defined by Seidel-Solomon. We prove that the existence of $k$-(semi)-dilation is a property independent of fillings for asymptotically dynamically convex (ADC) manifolds. We construct examples with $k$-dilations, but not $k-1$-dilations for all $k\ge 0$. We extract invariants taking value in $\N \cup \{\infty\}$ for Liouville domains and ADC contact manifolds, which are called the order of (semi)-dilation. The order of (semi)-dilation serves as embedding and cobordism obstructions. We determine the order of (semi)-dilation for many Brieskorn varieties and use them to study cobordisms between Brieskorn manifolds.
	
\end{abstract}
\tableofcontents
\section{Introduction}\label{s1}
One fundamental question in symplectic topology is understanding the symplectic cobordism category. As a field theory for the symplectic cobordism category, symplectic field theory (SFT) outlined by Eliashberg-Givental-Hofer \cite{eliashberg2000introduction} is a powerful tool. However, the full SFT is an enormous algebra with rich structures and often far too complicated to fully compute. Hence the task is identifying smaller pieces of structures of SFT that enjoy the field theory property and are relatively easy to compute in certain non-trivial examples. Then one can derive applications in symplectic topology from them. One success along this line is the algebraic torsion introduced by Latschev-Wendl \cite{latschev2011algebraic}, which puts a hierarchy on contact manifolds without fillings. Latschev-Wendl then found a sequence of contact $3$-manifolds with increasing algebraic torsion, see also \cite{moreno2019algebraic} for higher dimensional examples. Our goal here is to introduce two numerical invariants, which we call the order of dilation and the order of semi-dilation, to put a hierarchy on Liouville domains. Moreover, following \cite{filling}, those invariants are independent of certain fillings, if the contact boundary satisfies an index condition called asymptotically dynamically convex (ADC). Hence they can be used to give obstructions to cobordisms. 

The analytical foundations of SFT are currently under development by Hofer-Fish-Wysocki-Zehnder \cite{SFT,hofer2017polyfold}. Unlike the algebraic torsion, which requires the full construction of SFT, we will use symplectic cohomology and its $S^1$-equivariant version developed in \cite{bourgeois2016s,seidel2006biased}. It has the benefits of being well-defined for all Liouville domains and its relation to SFT was studied in \cite{bourgeois2009exact} when transversality holds. Following the work of Ganatra \cite{ganatra2019cyclic}, the $S^1$-equivariant symplectic cohomology has an isomorphic open string description by the cyclic homology of the wrapped Fukaya category for non-degenerate Liouville domains including all Weinstein domains. Then the $k$-dilation in this paper can also be extracted from the wrapped Fukaya category. In fact, the existence of $k$ dilation for some $k$ is equivalent to that the canonical smooth Calabi-Yau structure defined in \cite[Theorem 3]{ganatra2019cyclic} is exact by the recent work of Li \cite{li2019exact}, see Remark \ref{rmk:li} for more discussion.

\subsection{$k$-dilations}
For a Liouville domain $W$, by considering the Hamiltonian-Floer theory of a quadratic Hamiltonian, one can construct a cochain complex $(C,\delta^0)$. If we include the $S^1$-action on the free loop space into consideration, the theory can be enhanced into an equivariant theory as outlined in \cite{seidel2006biased}. One such construction endows an $S^1$-structure $(\delta^0,\delta^1,\ldots)$ on $(C,\delta^0)$. Then the equivariant symplectic cohomology is the cohomology of $(C[u,u^{-1}]/u,\delta^{S^1}:=\sum u^i\delta^i)$, where $u$ is a formal variable of degree $2$. By considering the associated $u$-adic filtration, we say $W$ admits a $k$-dilation iff the unit $1$ is killed in the $k+1$th page of the spectral sequence. Therefore the vanishing of symplectic cohomology is equivalent to the existence of $0$-dilation and the symplectic dilation introduced by Seidel-Solomon \cite{seidel2012symplectic} is equivalent to $1$-dilation. Since the existence of $k$-dilation implies the existence of $k+1$-dilation by definition,  we can introduce the order of dilation as the minimal number $k$ for which $W$ admits a $k$-dilation. Since the Viterbo transfer map preserves such structure, i.e.\ if we have a subdomain $V\subset W$, then $V$ admits a $k$-dilation if $W$ admits a $k$-dilation, the order of dilation provides obstructions to embeddings of Liouville domains just like the case for $0$ and $1$-dilations, e.g.\ any flexible Weinstein domain does not contain an exact Lagrangian and any Liouville domain with symplectic dilation does not contain exact Lagrangian torus \cite{seidel2012symplectic}. To illustrate the concept of $k$--dilation is a non-trivial extension of symplectic dilation, we provide basic examples in the following theorem.
\begin{thmx}\label{thm:A}
	The Milnor fiber of the singularity $x_0^k+x_1^k+\ldots+x_m^k=0$ admits a $k-1$-dilation but not a $k-2$ dilation for $m\ge k$ over $\Q$.
\end{thmx}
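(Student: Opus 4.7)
The plan is to compute the $S^1$-equivariant symplectic cohomology of the Milnor fiber $W$ of $x_0^k+\ldots+x_m^k$ via a Morse-Bott model based on the natural diagonal $S^1$-action by $k$-th roots of unity on the ambient affine space, and then read off the order of dilation from the $u$-adic spectral sequence on $C[u^{-1}]$.

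First, I would set up the Morse-Bott description of the Reeb orbits on the Brieskorn contact boundary $\Sigma(k,\ldots,k)$. With a $\Z/k$-invariant contact form making the Reeb flow periodic, all orbits are closed and organized into Morse-Bott families $N_\nu$ indexed by multiplicity $\nu\ge 1$, each orbit space $N_\nu/S^1$ being an explicit closed manifold of Brieskorn/Fermat type. Choosing auxiliary Morse functions on each $N_\nu/S^1$, the $S^1$-equivariant cochain complex is generated by their critical points, with degrees determined by Conley-Zehnder plus Morse indices. A direct computation of Conley-Zehnder indices verifies the ADC property for $m\ge k$ and pins down the degrees of the low-multiplicity generators.

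To produce the $(k-1)$-dilation I would construct an explicit chain $\tilde x = \sum_{i=0}^{k-1} x_i u^{-i}$ in the equivariant complex satisfying $\delta^{S^1}\tilde x = 1$. The natural candidate for $x_{k-1}$ is supported on the multiplicity-$k$ family $N_k$, whose orbit space carries a top-degree class that cascades down to the unit through successive $\delta^0$'s and the BV-type $\delta^1$'s. Identifying these operators on the Morse-Bott families with the Morse differential and capping with the Euler class of the $S^1$-bundle $N_\nu\to N_\nu/S^1$ reduces the tower of relations $\delta^0 x_i + \delta^1 x_{i-1}+\ldots = 0$ for $i<k-1$, together with the terminal equation producing $1$, to explicit intersection computations on the orbit spaces that are non-trivial precisely at multiplicity $\nu=k$ by a direct Chern class calculation.

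The main obstacle is the negative direction: ruling out a $(k-2)$-dilation. My approach is to combine degree constraints with the residual $\Z/k$-symmetry. Any chain $\tilde x = \sum_{i=0}^{k-2} x_i u^{-i}$ with $\delta^{S^1}\tilde x = 1$ would force $x_{k-2}$ to be supported on Morse-Bott families of multiplicity $\nu\le k-1$, and a Conley-Zehnder computation shows that for $m\ge k$ no such generator sits at the correct degree to feed into the unit after only $k-1$ iterations of $\delta^{S^1}$: the BV-type corrections available from lower-multiplicity families miss the unit's degree by a gap that closes only at $\nu=k$. As a backup, if the direct degree obstruction is insufficient in small cases, I would extract a functorial secondary invariant that vanishes whenever a $(k-2)$-dilation exists, and verify its non-vanishing on $W$ using the explicit chain from the existence step, then transport the result by Viterbo functoriality from model cases in lower dimensions where the order of dilation can be computed by hand.
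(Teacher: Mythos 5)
There are genuine gaps in both directions. On the existence side, your cascade scheme places the certificate $x_{k-1}$ on the multiplicity-$k$ Morse--Bott family and claims the whole tower reduces to Euler-class/intersection computations on the orbit spaces. In the paper the $(k-1)$-dilation chain lives entirely on the \emph{simple} (period-one) orbits: with Diogo--Lisi's model for the divisor complement $W_{k,m}\subset X_{k,m}$ one takes $\sum_{i=0}^{k-1}(-1)^i\frac{1}{k!}\check{p}_{m-k+i}u^{-i}$, where the $\check p_j$ are check generators over cohomology classes of $\Sigma_{k,m}$, and the $u$-depth $k-1$ comes from iterating $\delta^1(\check p)=\hat p$ within that single family, not from higher multiplicities. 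More importantly, the step that actually produces the unit is not a Chern-class or Morse-theoretic intersection at all: it is a count of Floer planes with a point constraint in the filling, which (under the monotonicity hypothesis $m\ge k$) Diogo--Lisi identify with the two-point Gromov--Witten invariant $\text{GW}_{0,2,A}([pt],[H^{m+1-k}])=k!$ of the compactification $X_{k,m}$. This is exactly where the factor $k!$, hence the $\Q$-coefficient hypothesis, enters; capping with the Euler class of $Y_{k,m}\to\Sigma_{k,m}$ only accounts for the $\check p\mapsto\hat q$ part of $\delta^0$ and can never reach the interior generators. Your sketch has no mechanism hitting the unit, so the ``terminal equation producing $1$'' is unsupported; likewise $\delta^1(\check p)=\hat p$ and $\delta^{\ge 2}=0$ require a transversality/cascades argument (the paper's Proposition on the $S^1$-structure), not just an identification by analogy.

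On the non-existence side, a pure Conley--Zehnder/degree argument does not close the case for all $m\ge k$: for $m$ close to $k$ the positive symplectic cohomology is not sparse enough (e.g.\ for $k=m=3$, $SH^*_+(W_{3,3})$ is supported in degrees $\le 3$ and degree reasons alone do not exclude a $1$-dilation; that case previously required Li's algebraic argument). The paper obtains the vanishing $SH^*_+(W_{k,m})=0$ for $0<*<2k-3$ only when $m\gg k$, via the Morse--Bott spectral sequence, and then propagates non-existence \emph{downward} to every $m\ge k$ using the Lefschetz fibration of $W_{k,m}$ with fiber $W_{k,m-1}$ together with the fact that a $k$-semi-dilation on the fiber induces one on the total space. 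Your backups do not substitute for this: an unspecified ``functorial secondary invariant'' is not an argument, and Viterbo functoriality only transfers existence of dilations to codimension-zero exact subdomains, so it cannot import non-existence from lower-dimensional model cases.
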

The $k=m=1$ case is $\C$ and $k=m=2$ case is $T^*S^2$, which are known to have vanishing symplectic cohomology and symplectic dilation respectively. The existence of $k$-dilation is preserved under product, Lefschetz fibration and flexible surgery under mild conditions. We also define $k$-semi-dilation as a generalization of $k$-dilation. $k$-semi-dilation enjoys the same property as $k$-dilation in the study of symplectic embeddings and $k$-semi-dilation is preserved in  Lefschetz fibration without any condition. Moreover, $k$-semi-dilation is more natural from a SFT perspective and can be generalized to contact manifolds coupled with general algebraic augmentations \cite{moreno2020landscape}. Starting from Theorem \ref{thm:A}, along with the embedding property and Lefschetz fibration,  we have many examples with $k$-dilations. In particular, we determine the order of (semi)-dilation for many Brieskorn varieties and find many new examples of Liouville domains with symplectic dilations, which are not Lefschetz fibration built from $T^*S^2$ \cite{seidel2012symplectic} and plumbings of $T^*S^2$ according to $A_m$ or $D_m$ diagrams \cite{etgu2017koszul}. 

\begin{remark}\label{rmk:li}
	Li \cite{li2019exact} defined the concept of cyclic dilation independently, which is closely related to the notion of $k$-dilation. A cyclic dilation is an element $x\in SH^{1}_{S^1}(W)$, such that $\bB(x)=h$ where $\bB:SH^{1}_{S^1}(X)\to SH^0(W)$ is the map in the Gysin exact sequence \cite{bourgeois2016s} and $h$ is an invertible element in $SH^0(W)$. When $h=1$, existence of cyclic dilation is equivalent to existence of $k$-dilation for some $k$, see Remark \ref{rmk:cyclic}. A similar structure called higher dilation was introduced in \cite{zhao2016periodic}, which implies cyclic dilation for $h=1$ but not the other way around. \cite{li2019exact} studied the open string implication when the Liouville domain supports a cyclic dilation and the $k=m=3$ case of Theorem \ref{thm:A} was also obtained there. Theorem \ref{thm:A} provides more non-trivial examples with cyclic dilation. $k$-dilation can be viewed as a refinement of cyclic dilation for $h=1$ and the quantity $k$ plays an important role for the purpose of this paper. 
\end{remark}

\begin{remark}
	The Gutt-Hutchings capacity defined in \cite{MR3868228} is also closely related. The first capacity $c^{GH}_1(W)$ is defined to the spectral invariant of the primitive of $1\in SH^*_{S^1}(W)$,  in other words, the spectral invariant associated to the element $x$ in the positive $S^1$-equivariant symplectic cohomology $SH^*_{S^1,+}(W)$ that is mapped to $1\in H^*_{S^1}(W)$ in the tautological long exact sequence. In particular, a Liouville domain has finite $c^{GH}_1$ iff $W$ admits a cyclic dilation for $h=1$ or equivalently a $k$-dilation for some $k$. The higher Gutt-Hutchings capacity $c^{GH}_i$ is the minimum of spectral invariants of elements in $SH^*_{S^1,+}(W)$ that are mapped to $u^{-i+1}+\sum_{j=0}^{i-2}u^{-j}a_j\in H^*_{S^1}(W)$, or schematically the spectral invariant of ``$u^{-i+1}x$" for $x\in SH^*_{S^1,+}(W)$ mapped to $1\in H^*_{S^1}(W)$.	
	
	By Proposition \ref{prop:order}, having a $k$-dilation but not a $k-1$ dilation implies that $u^ix\ne 0$ for $i\le k$, where $x\in SH^*_{S^1,+}(W)$ is mapped to $1\in H^*_{S^1}(W)$. Then we can define $c^{-i+1}_{GH}(W)$ as the spectral invariant of $u^ix$ for $i\le k$ and the increasing property of \cite[Theorem 1.1.]{MR3868228} still holds. Therefore the order of dilation $k$ is to what extent Gutt-Hutchings capacity can be defined in the negative direction. On the other hand, the maximal extent such that the Gutt-Hutchings capacity can be defined in the positive direction defines another numerical invariant by the recent work of Ganatra-Siegel \cite[Definition 3.1, Theorem 3.2]{ganatra2020embedding}, which can also be used to obstruct exact embeddings.
\end{remark}

The existence of $k$-(semi)-dilation is a uniruled condition, as already shown in \cite{filling} for the vanishing and dilation case. Therefore for algebraic varieties with non-negative log-Kodaira dimension, we also have the non-existence result.

\subsection{Persistence of $k$-dilations}
Asymptotically dynamically convex (ADC) manifolds were introduced by Lazarev \cite{lazarev2016contact} as a generalization of index positive contact manifold in \cite{cieliebak2018symplectic}. The key property is that the SFT algebra of an ADC manifold is supported in positive gradings (asymptotically). Examples of ADC contact manifolds are boundaries of flexible domains \cite{lazarev2016contact}, cotangent bundles $T^*M$ for $\dim M \ge 4$ and links of terminal singularities \cite{mclean2016reeb}.  We proved in \cite{filling} that vanishing of symplectic cohomology and existence of symplectic dilation are properties independent of fillings for ADC manifolds. Since the existence of $k$-dilation is the structural generalization of symplectic dilation, we prove the similar independence property in this paper.

More precisely, we construct, from the $u$-adic spectral sequence, a sequence of structural maps $\Delta^k_+:\ker \Delta^{k-1}_+\to \coker \Delta^{k-1}_{+}$ and $\Delta^{k}_{\partial}:\ker \Delta_+^{k} \to \coker \Delta^{k-1}_{\partial}$, with $\Delta^0_+=0:SH_+^*(W)\to SH_+^*(W)$ and $\Delta^{0}_{\partial}$ is the composition $SH_+^*(W)\to H^{*+1}(W)\to H^{*+1}(Y)$.  We call a filling $W$ of $Y$ topologically simple iff $c_1(W)=0$ and $\pi_1(Y)\to \pi_1(W)$ is injective.
\begin{thmx}\label{thm:B}
	If $Y$ is ADC contact manifold, then $\Delta^k_+,\Delta^k_{\partial}$ are independent of topologically simple Liouville fillings.
\end{thmx}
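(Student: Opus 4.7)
The plan is to promote the filling-independence of $SH_+^*$ established in \cite{filling} to an $S^1$-equivariant statement compatible with the $u$-adic filtration, from which the maps $\Delta^k_+$ and $\Delta^k_\partial$ are extracted as intrinsic data of the associated spectral sequence.

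First I would recall the mechanism from \cite{filling}: for a topologically simple filling $W$ of an ADC contact manifold $Y$ and for any action bound $D$, one can choose a contact form on $Y$ such that Reeb orbits of action $\le D$ all have Conley-Zehnder index bounded below by a positive constant. Consequently the positive chain complex truncated at action $D$ is generated by orbits on $Y$ lying in a controlled positive index window, and topological simplicity ($c_1(W)=0$ and $\pi_1(Y)\hookrightarrow \pi_1(W)$) guarantees these gradings and homotopy classes are computed on $Y$ rather than on $W$. Given two such fillings $W_1, W_2$, a neck-stretching argument on the trivial cobordism produces continuation chain maps between truncated positive complexes which are identities on low-action generators and therefore quasi-isomorphisms after passing to the direct limit in $D$.

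Second, I would upgrade this to the $S^1$-equivariant positive chain complex carrying the $S^1$-structure $(\delta^0, \delta^1, \delta^2, \ldots)$ of Bourgeois-Oancea. Each $\delta^i$ counts $S^1$-parameterized Floer cylinders with marked points and has a fixed cohomological degree, so the ADC positivity still dominates in the truncated range and spurious low-index contributions are excluded. Extending neck-stretching to $S^1$-families produces a comparison map that intertwines the full $S^1$-structure up to chain homotopy and visibly preserves the $u$-adic filtration. A standard spectral sequence comparison argument then identifies the associated $u$-adic spectral sequences of $W_1$ and $W_2$ page by page, and in particular identifies the induced differentials, which are exactly the maps $\Delta^k_+$. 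The maps $\Delta^k_\partial$ are obtained by composing the spectral sequence data with the tautological connecting map $SH_+^*\to H^{*+1}(W)$ followed by restriction to $H^{*+1}(Y)$; both pieces of this composition are compatible with the comparison map under topological simplicity, so $\Delta^k_\partial$ is filling independent as well.

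The main technical obstacle I expect is showing that the comparison map intertwines the whole $S^1$-structure $(\delta^0, \delta^1, \ldots)$, not merely the leading differential $\delta^0$. This requires controlling moduli of $S^1$-parameterized cylinders in the stretched cobordism with arbitrarily many marked points and ruling out that such a family can enter a filling and produce low-index broken configurations that spoil the comparison. The ADC grading inequalities supply exactly the room needed to absorb the grading shifts contributed by the marked-point insertions, and this is where the index condition must be used most carefully.
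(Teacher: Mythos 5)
Your strategy for $\Delta^k_+$ matches the paper's: build the positive $S^1$-complex (in truncated form) with Hamiltonians adapted to neck-stretching, use the ADC degree positivity to force all zero-dimensional parameterized moduli spaces out of the filling, and conclude that the truncated $S^1$-structures and the continuation maps between them can be identified for $W_1$ and $W_2$ up to $S^1$-homotopy; your ``main technical obstacle'' is exactly what the paper resolves via its consistent Hamiltonian data and Proposition \ref{prop:neck}. One caveat: with this kind of model one only gets $N$-truncated $S^1$-structures and hence an identification of $\Delta^j_+,\Delta^j_\partial$ for $j\le N$ with an isomorphism depending on $N$, which is how Theorem \ref{thm:ind} is actually stated; your claim of a single comparison intertwining the whole tower at once is stronger than what the argument delivers.

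The genuine gap is in your last step, where you assert that $\Delta^k_\partial$ is handled because ``both pieces of this composition are compatible with the comparison map under topological simplicity.'' The first piece, the connecting map $SH^*_+(W)\to H^{*+1}(W)$, is defined by moduli spaces carrying a point constraint coupled to gradient flow lines of a Morse function on $W$ ending at interior critical points; both the target $H^{*+1}(W)$ and these moduli spaces depend on the filling, and the neck-stretching identification of the positive complexes gives no identification of $H^*(W_1)$ with $H^*(W_2)$ (the fillings may have completely different topology). So independence of $\Delta^k_\partial$ does not follow formally from independence of the positive $S^1$-data plus ``restriction to $Y$''; that is precisely the content one must prove. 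The paper's missing ingredient is the ``shrinking the gradient flow'' step (\S\ref{s4}, Proposition \ref{prop:regP} and the surrounding discussion): one replaces $R\circ\delta_{+,0}$ by an $S^1$-morphism $P$ defined by the moduli spaces $\cP^i_r$, whose Morse-theoretic output is a flow line of a Morse function $h$ on the collar $\partial W\times\{1-\epsilon\}$, and the interpolating spaces $\cH^i_r$ provide an $S^1$-homotopy showing $P^k$ computes $\Delta^k_\partial$. Only after this reformulation are all moduli spaces entering the definition of $\Delta^k_\partial$ confined, by the ADC index argument under neck-stretching along the hypersurfaces $Y_i\subset Y\times(0,1-\epsilon)$, to the region common to both completions, so that the counts agree for $W_1$ and $W_2$. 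Without this (or an equivalent device), your argument establishes the invariance of $\Delta^k_+$ but not of $\Delta^k_\partial$.
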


The precise statement of the independence can be found in Theorem \ref{thm:ind}. The existence of $k$-semi-dilation is equivalent to $1\in \Ima \Delta^k_{\partial}$ when the first Chern class vanishes, and the existence of $k$-dilation is closely related. Therefore we obtain the persistence of $k$-dilations as follows.
\begin{corollaryx}\label{cor:C}
	If $Y$ is an ADC contact manifold, then
	\begin{itemize}
		\item the existence of $k$-semi-dilation is a property independent of topologically simple Liouville fillings $W$.
		\item the existence of $k$-dilation is independent of Weinstein fillings $W$ if $\dim Y \ge 4k+1$ or $\frac{\dim W}{2}$ is odd.
	\end{itemize}
\end{corollaryx}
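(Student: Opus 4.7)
The plan is to reduce both bullets to Theorem \ref{thm:B} by rephrasing the existence of $k$-(semi)-dilation as an image condition on the structural maps $\Delta^k_+$ and $\Delta^k_\partial$, whose invariance under topologically simple Liouville fillings is already established. The whole argument hinges on two auxiliary inputs beyond Theorem \ref{thm:B}: the translation between the spectral-sequence definition of $k$-(semi)-dilation and the image condition $1\in \Ima \Delta^k_\partial$ stated in the paragraph preceding the corollary, and the verification that Weinstein fillings automatically fall in the topologically simple class under the stated dimension assumption.

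For the first bullet, I would first use that $c_1(W)=0$ is built into the definition of a topologically simple filling, so the claimed equivalence $k\text{-semi-dilation}\Leftrightarrow 1\in\Ima\Delta^k_\partial$ is available. Theorem \ref{thm:B} then tells us that both the codomain $\coker \Delta^{k-1}_\partial$ and the map $\Delta^k_\partial$ itself are canonically associated to $Y$, not to the filling; the distinguished class $1$ persists through the identifications because it originates from the unit in $H^0(Y)$, which is intrinsic to the boundary. Hence the image condition is a property of $Y$ alone, giving persistence.

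For the second bullet, I would first check that a Weinstein filling of an ADC contact manifold is topologically simple in the relevant range: $c_1(W)=0$ follows from the $c_1$ hypothesis needed for the ADC grading on $Y$ together with the fact that Weinstein handles of index at most $n$ do not create new $c_1$ classes once $c_1|_Y=0$, and $\pi_1$-injectivity is standard for Weinstein cobordisms. Next, I would show that under $\dim Y\ge 4k+1$ or $\frac{\dim W}{2}$ odd, $k$-dilation and $k$-semi-dilation coincide: concretely, $k$-semi-dilation asks that the unit become trivial after the connecting map $SH_+^*\to H^{*+1}(Y)$, while $k$-dilation asks for it to be trivial already in $SH^*$; the obstruction to upgrading the former to the latter lies in a piece of the $u$-adic spectral sequence whose relevant bidegrees either fall outside the range $0,\ldots,k$ for dimension reasons or vanish by the $\Z/2$-parity forced by $c_1=0$ together with $\dim W/2$ odd. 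Together with the first bullet, this yields filling independence of $k$-dilation for Weinstein fillings.

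The main obstacle is the degree analysis establishing that $k$-dilation coincides with $k$-semi-dilation in the two regimes $\dim Y\ge 4k+1$ and $\frac{\dim W}{2}$ odd. This requires tracking bidegrees in the $u$-adic spectral sequence, remembering that the $k$-th page differential shifts the $u$-grading by $k$ and the cohomological grading by $1$, then ruling out stray differentials touching the unit by comparing against $H^*(W)$ in a narrow degree window. The parity case is the more delicate of the two, since it uses the $\Z$-grading collapsing onto a $\Z/2$-grading only through $c_1=0$ and the specific parity of $n=\dim W/2$; I expect the argument to proceed by splitting the spectral sequence into even and odd parts and observing that the obstruction term between $\Delta^k_+$ and $\Delta^k_\partial$ lives in the wrong parity.
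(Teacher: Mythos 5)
Your first bullet is essentially the paper's own argument and is fine: with $c_1(W)=0$ built into topological simplicity, a $k$-semi-dilation is exactly the condition that some class with $\pi_0$-component $1$ lies in $\Ima \Delta^k_{\partial}$, and Theorem \ref{thm:B} makes $\Delta^k_{\partial}$, together with its domain and codomain, independent of the topologically simple filling; the check that a Weinstein filling of an ADC boundary is topologically simple is also as you say.

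The gap is in the second bullet. You reduce it to the claim that $k$-dilation and $k$-semi-dilation coincide for Weinstein fillings once $\dim Y\ge 4k+1$ or $n=\frac{\dim W}{2}$ is odd, to be proved by bidegree and parity considerations in the $u$-adic spectral sequence. This step fails: the discrepancy between the two notions is measured by classes $a_j\in H^{2j}(W)$ for $1\le j\le k$ (a semi-dilation means $1+\sum_{j\ge 1}u^{-j}a_j$ is hit by the connecting map, a dilation means $1$ itself is hit), and these groups do not vanish for dimension or parity reasons --- a Weinstein domain of dimension $2n$ can have nonzero cohomology in every even degree up to $n$, and the parity of $n$ says nothing about $H^{2j}(W)$ for $2j<n$. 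The paper never asserts such an equivalence; it explicitly leaves open whether a $k$-semi-dilation can fail to give a $k$-dilation. What the hypotheses actually provide is injectivity of $H^{2j}(W')\to H^{2j}(Y)$ for all $j\le k$: for $2j<n$ because $H^{2j}(W',Y)\cong H_{2n-2j}(W')=0$, for $2j>n$ because $H^{2j}(W')=0$, and the middle case $2j=n$ is excluded either by $2k<n$ (i.e.\ $\dim Y\ge 4k+1$) or by $n$ odd. The paper's proof (Corollary \ref{cor:dilation}) then uses this injectivity rather than any intrinsic equivalence: a $k$-dilation on one topologically simple filling shows that $1|_Y$ is in the image of the composed map $SH^{-1}_{S^1,+,k}\to H^0_{S^1,k}(\partial W)$, which by Theorem \ref{thm:ind} and Remark \ref{rmk:generalization} is the same map for $W'$; the class in $H^0_{S^1,k}(W')=\bigoplus_{j\le k}u^{-j}H^{2j}(W')$ through which it factors restricts to $1|_Y$, and injectivity forces it to equal $1$, giving a $k$-dilation on $W'$. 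So the obstruction classes die because they restrict to zero on $Y$ and the restriction is injective in the relevant degrees, not because they live in vanishing or wrong-parity groups; as written, your route cannot be completed without this boundary-restriction argument.
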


Philosophically, the contact homology algebra of an ADC contact manifold admits no non-trivial $\Z$-graded augmentation, since it is supported in positive grading (asymptotically). Since augmentations can be understood as the algebraic analogue of fillings, the theme of Theorem \ref{thm:B} can be summarized as fillings of contact manifolds that admit no non-trivial augmentation have some rigidity. In particular, it is possible to reformulate Theorem \ref{thm:B} using (possibly a variant of) SFT, and the result can be improved to contact manifolds with only trivial augmentation. The exactness condition in Theorem \ref{thm:B} is technical, while the $c_1(W)=0$ is essential since ADC is an index property. However, with the current setup using Hamiltonians, the strong filling version of Theorem \ref{thm:B} only holds for tamed ADC contact manifolds due to the shrinking issue explained in \cite[\S 8]{filling}.

The structural maps $\Delta^k_{\partial}$ factor through the cohomology of the filling $W$, as a consequence, if $Y$ is ADC and $\Ima \Delta^k_{\partial}$ contains an element of degree $>\frac{\dim W}{2}$, then there is no Weinstein filling of $Y$. Cases of $k=0,1$ were considered in \cite{filling} and proven to be symplectic in nature. Similarly, $\Delta^k_{\partial}$ can be used to develop obstructions to cobordisms. A special case of such results is the following.
\begin{corollaryx}\label{cor:D}
	Let $Y_1,Y_2$ be two simply connected contact manifolds with topologically simple exact fillings $W_1,W_2$. We define  the order of semi-dilation  $\rSD(W):=\min(\{k|\text{$W$ admits a $k$-semi-dilation}\}\cup \{\infty\})$. If $Y_1$ is ADC and  $\rSD(W_2)> \rSD(W_1)$, then there is no exact cobordism from $Y_2$ to $Y_1$ with vanishing first Chern class.
\end{corollaryx}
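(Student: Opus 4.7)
The plan is to argue by contradiction: assume an exact cobordism $X$ from $Y_2$ to $Y_1$ with $c_1(X)=0$ exists, glue it onto $W_2$ to produce a new filling of $Y_1$, and then play the independence-of-fillings result (Corollary \ref{cor:C}) against the Viterbo transfer monotonicity of $\rSD$ to derive a contradiction with $\rSD(W_2) > \rSD(W_1)$.

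More concretely, the first step is to form $W := W_2 \cup_{Y_2} X$ by gluing $X$ on top of $W_2$ along $Y_2$. Since $W_2$ and $X$ are exact and the Liouville forms can be matched at the boundary in the standard way, $W$ is an exact filling of $Y_1$. I then need to check that $W$ is topologically simple in the sense of the paper. The class $c_1(W)$ is obtained by gluing $c_1(W_2)=0$ and $c_1(X)=0$ via Mayer--Vietoris, so $c_1(W)=0$. The condition $\pi_1(Y_1) \to \pi_1(W)$ injective is automatic since $Y_1$ is simply connected by hypothesis.

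The second step applies Corollary \ref{cor:C} to the ADC contact manifold $Y_1$: the existence of a $k$-semi-dilation is independent of the topologically simple exact filling chosen. Both $W_1$ and $W$ are topologically simple exact fillings of $Y_1$, so $W$ admits a $k$-semi-dilation if and only if $W_1$ does, giving $\rSD(W) = \rSD(W_1)$.

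The third step invokes the Viterbo transfer property of $k$-semi-dilation, namely that if $V\subset W$ is a Liouville subdomain then $\rSD(V) \le \rSD(W)$ (this is the embedding property cited in the introduction). By construction $W_2$ sits inside $W$ as a Liouville subdomain with boundary $Y_2$, so $\rSD(W_2) \le \rSD(W) = \rSD(W_1)$, contradicting the hypothesis $\rSD(W_2) > \rSD(W_1)$.

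The main obstacle, assuming Theorem \ref{thm:B} and Corollary \ref{cor:C} as given, is essentially just the topological bookkeeping at the gluing: checking that $c_1$ glues correctly across $Y_2$ and that the Liouville structures can be concatenated to make $W_2\cup_{Y_2} X$ an honest exact filling to which Corollary \ref{cor:C} applies. The simple connectivity of $Y_1$ and the hypothesis $c_1(X)=0$ are precisely what is needed to push this through; the rest is formal.
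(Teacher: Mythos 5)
Your proof is correct and is essentially the paper's own argument (Corollary \ref{cor:cob}): glue the cobordism onto $W_2$ to get a new exact filling of $Y_1$, verify topological simplicity, and play the independence of $\rSD$ for the ADC boundary (Corollary \ref{cor:C}) against Viterbo transfer monotonicity. One small precision: the Mayer--Vietoris step needs $H^1(Y_2)=0$, i.e.\ simple connectivity of the gluing interface $Y_2$ rather than of $Y_1$, to kill the possible contribution to $c_1(W_2\cup_{Y_2} X)$ from classes restricting to zero on both pieces --- the hypotheses do provide this, but your closing paragraph attributes the needed simple connectivity to $Y_1$ alone.
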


\subsection*{Organization of the paper}
In \S \ref{s2}, we explain the algebraic preliminaries of $S^1$-structures and define $k$-(semi)-dilation in the algebraic setting. In \S \ref{s3}, we build the $S^1$-equivariant cochain complexes following \cite{zhao2014periodic}. We define $k$-(semi)-dilations for Liouville domains and study basic applications of them in \S \ref{s3}. We prove Theorem \ref{thm:B} and corollaries of it in \S \ref{s4}, the argument is completely analogous to \cite{filling} after appropriate setup. Theorem \ref{thm:A} is proven in \S \ref{s6} and we study the cobordism problem between Brieskorn manifolds.

\subsection*{Acknowledgments}
The author is supported  by  the National Science Foundation under Grant No. DMS-1638352.  It is a great pleasure to acknowledge the Institute for Advanced Study for its warm hospitality. The author is grateful to Yaim Cooper and Samuel Lisi for answering various questions, and to the referee for numerous helpful comments and suggestions. This paper is dedicated to the memory of Chenxue.
\section{Algebra preliminary}\label{s2}
In this section, we discuss the algebraic properties of $S^1$-cochain complexes using the explicit model as in \cite{zhao2014periodic} and the abstract definition of $k$-dilation. For simplicity, we will use a field coefficient $\bm{k}$ with a default setting of $\Q$ throughout this paper. Most of the paper works for any ring coefficient, with exception of the (non-conical) reconstruction from spectral sequence in Remark \ref{rmk:SS},\ref{rmk:SSphi}, the K{\"u}nneth formula in Proposition \ref{prop:prod} and the computation on Brieskorn manifolds in \S  \ref{s52}.
\subsection{$S^1$-complexes}
\begin{definition}{\cite[Definition 2.1]{zhao2014periodic}}
	Let $(C,\delta_0)$ be a $\Z$ (or $\Z_2$) graded cochain complex over $\bm{k}$. An $S^1$-structure on $(C,\delta_0)$ is given by a sequence of maps 
	$$\delta:=(\delta^0,\delta^1,\ldots), \quad \delta^i: C^* \to C^{*+1-2i}$$
	such that the relation 
	$$\sum_{i+j=k} \delta^i\circ \delta^j=0$$
	holds for every $k\ge 0$.
\end{definition}

Given an $S^1$-complex $(C,\delta)$, we have the following three cochain complexes (viewed as $\bm{k}[u]$ modules) for the equivariant theory 
\begin{eqnarray*}
	C^- & = & \varprojlim C[u]/u^pC[u]=C[[u]];\\
	C^{\infty} & = & u^{-1}(C^-)=C((u));\\
	C^+ & = & C^{\infty}/uC^{-},
\end{eqnarray*}
with differential $\delta^{S^1}:=\sum_{i=0}^\infty u^i\delta^i$. More explicitly, $C^-$ consists of power series in $u$ with coefficient in $C$  and $C^{\infty}$ consists of Laurent series in $u$ with coefficient in $C$. Then $C^+$ as a $\bm{k}$ vector space is isomorphic to $C[u^{-1}]$. In other words, $C^+$ is the space of polynomials of $u^{-1}$ with coefficients in $C$ equipped with a $\bm{k}[u]$-module structure given by $u\cdot (c_0+c_1u^{-1}+\ldots+c_nu^{-n})=c_1+c_2u^{-1}+\ldots+c_nu^{-n+1}$. We say $\delta=(\delta^0,\ldots, \delta^N)$ is a $N$-truncated $S^1$-structures iff 
$$\sum_{i+j=k}\delta^i\circ \delta^j=0, \quad \forall k \le N.$$ 
Then for a $N$-truncated $S^1$-structure, $\delta^{S^1}$ also defines a differential on $F^NC^+:=C\otimes \la 1,u^{-1},\ldots,u^{-N}\ra $. Given an $S^1$-stricture,  we have a filtration of cochain complexes $C=F^0C^+\subset F^1C^+\subset \ldots \subset F^NC^+ \subset \ldots \subset C^+$ with $C^+=\varinjlim F^NC^+$. Obviously, every $S^1$-structure induces a $N$-truncated structure by truncation.

\begin{definition}[{\cite[Definition 2.2]{zhao2014periodic}}]
	Let $(C,\delta),(D,\partial)$ be two $S^1$-complexes, an $S^1$-morphism between them is a sequence of maps
	$$(\phi^0,\phi^1,\ldots), \quad \phi^i:C^*\to D^{*-2i},$$
	such that 
	$$\sum_{i+j=k}\phi^i\circ \delta^j - \partial^j\circ \phi^i = 0.$$
	Similarly, given two $S^1$-morphisms $\phi,\tilde{\phi}$, an $S^1$-homotopy between them is a sequence of maps
	$$(h^0,h^1,\ldots), \quad h^i:C^* \to D^{*-2i-1},$$
	such that
	$$\phi^k-\tilde{\phi}^k=\sum_{i+j=k}h^i\circ \delta^j+\partial^j\circ h^i.$$
	Similarly, we have the $N$-truncated version if we only have the first $N+1$ maps and the first $N+1$ relations.
\end{definition}
Similar to the definition of $\delta^{S^1}$, we have $\phi^{S^1}$ and $h^{S^1}$, which are morphism and homotopy defined on cochain complex $(C^+,\delta^{S^1})$. The composition $\tilde{\phi}\circ \phi$ of two $S^1$-morphisms is defined as $(\tilde{\phi}\circ \phi)^k:=\sum_{i+j=k}\tilde{\phi}^i\circ \phi^j$.

Next we assume that $C$ is a free $\bm{k}$-module and splits into direct sums of free $\bm{k}$-modules $C_0$ and $C_+$, such that we have a short exact sequence of cochain complexes $0\to C_0\stackrel{\iota}{\to} C \stackrel{\pi}{\to} C_+ \to 0$ for the first differential $\delta^0$.
\begin{definition}
	We say an ($N$-truncated) $S^1$-structure is compatible with the splitting iff $\delta^i|_{C_0}=0$ for $i\ge 1$.
\end{definition}
If the $S^1$-structure is compatible with the splitting, then $\pi\circ \delta^r|_{C_0}=0$. Hence $\delta^r$ has a decomposition $\delta^r_0+\delta^r_++\delta^r_{+,0}$ with $\delta^r_0:C_0\to C_0,\delta^r_+:C_+\to C_+$ and $\delta^r_{+,0}:C_+\to C_0$. Then $C_0$  is equipped with an $S^1$-structure $(\delta^0_0,0,\ldots)$ and $C_+$ is equipped with an $S^1$-structure $(\delta^0_{+},\delta^1_+,\ldots)$. Moreover, $(\iota,0,\ldots), (\pi,0,\ldots)$ define $S^1$-morphisms from $C_0$ to $C$ and from $C$ to $C_+$ respectively. Then we have the following short exact sequence
\begin{equation}\label{eqn:short}
0\to C^+_0 \stackrel{\iota^{S^1}}{\longrightarrow} C^+ \stackrel{\pi^{S^1}}{\longrightarrow} C^+_+ \to 0.
\end{equation}
The connecting map $(\delta^{0}_{+,0},\delta^1_{+,0},\ldots)$ defines an $S^1$ morphism from $C_+$ to $C_0[1]$, where $C_0[1]$ is equipped with the $S^1$-structure $(-\delta_0^0,\ldots)$.

If $(C,\delta),(D,\widetilde{\delta})$ are both splitted so that the $S^1$-structures are compatible, an $S^1$-morphism $\phi$ is compatible with splitting iff $\phi^i = \phi^i_+ +\phi^i_{+,0}+\phi^i_0$,  where $\phi^i_0=0$ for $i > 0$. In particular, $\phi^{S^1}_0$ preserves the decomposition w.r.t. $\{u^{-i}\}_{i\ge 0}$ on $H^*(C_0^+,\delta^{S^1}_0), H^*(D_0^+,\widetilde{\delta}^{S^1}_0)$. Two compatible  $S^1$-morphisms are homotopic if there is an $S^1$-homotopy $h$ with decomposition $h^i=h^i_++h^i_{+,0}+h^i_0$ (no need for $h^i_0=0$ when $i\ge 1$). Given an $S^1$-morphism $\phi$ compatible with the splitting, $\phi_0:=(\phi^i_0)_{i\ge 0}, \phi_+:=(\phi^i_+)_{i\ge 0}$ are $S^1$-morphisms from $C_0$ to $D_0$ and $C_+$ to $D_+$ respectively, and we have the following commutative diagram of short exact sequences,
$$
\xymatrix{
0 \ar[r] & C^+_0 \ar[r]\ar[d]^{\phi^{S^1}_0} & C^+\ar[r]\ar[d]^{\phi^{S^1}} & C^+_+ \ar[r]\ar[d]^{\phi^{S^1}_+} & 0\\
0 \ar[r] & D^+_0 \ar[r] & D^+ \ar[r] & D^{+}_+ \ar[r] & 0
}
$$
When two $S^1$-morphisms $\phi,\tilde{\phi}$ compatible with splittings are homotopic, then $\phi^{S^1}_0,\phi^{S^1},\phi^{S^1}_+$ are homotopic to $\tilde{\phi}^{S^1}_0,\tilde{\phi}^{S^1},\tilde{\phi}^{S^1}_+$ respectively.

\subsection{$k$-dilations}
In this part, we state the algebraic definition of $k$-dilations and $k$-semi-dilations. Assume $(C,\delta)$ is a ($N$-truncated) $S^1$-structure compatible with a splitting. We assume $H^*(C_0,\delta^0_0)$ contains an distinguished element $[e]\in H^0(C_0,\delta^0_0)$ such that we have a preferred projection $\pi_0:H^*(C,\delta^{0}_0)\to \la [e] \ra$. In the context of symplectic cohomology, $(C_0,\delta_0^0)$ will be the Morse cochain complex of a connected filling $W$ and $[e]\in H^*(W)$ will be the unit $1$ and the preferred projection is induced by the decomposition by degrees. Note that since $\delta^i_0=0$ for $i\ge 1$, we have $H^*(C^+_0,\delta^{S^1}_0)=H^*(C,\delta^0_0)\otimes \bm{k}[u,u^{-1}]/u$. In particular, $[e]$ can be viewed as a class in $H^0(C^+_0,\delta_0^{S^1})$ as well as in any truncated version $H^0(F^NC^+_0,\delta_0^{S^1})$. We also denote the induced projection $H^*(F^NC^+_0,\delta_0^{S^1})\to \la[e]\ra$ by $\pi_0$.

\begin{definition}\label{def:dilation}
	Under the assumptions above, we say the $N$-truncated $S^1$ structure $(C,\delta)$ carries a $k$-dilation for $k\le N$, if $\iota^{S^1}[e]=0\in H^*(F^kC^+,\delta^{S^1})$. We say it carries a $k$-semi-dilation iff $[e]$ is in the image of $H^*(F^kC^+_+,\delta^{S^1}_+)\stackrel{[\delta^{S^1}_{+,0}]}{\to} H^{*+1}(F^kC^+_0,\delta^{S^1}_0)\stackrel{\pi_0}{\to} \la [e]\ra$.
\end{definition}
 An instant corollary of the definition is the following.
\begin{proposition}\label{prop:hier}
	If $(C,\delta)$ carries a $k$-dilation, then it carries a $k$-semi-dilation. If $(C,\delta)$ carries a $k$-(semi)-dilation then it also carries a $k+1$-(semi)-dilation (If $C$ only has a $N$-truncated $S^1$ structure, we assume $k+1\le N$).
\end{proposition}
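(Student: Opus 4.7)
The plan is a straightforward diagram chase using the short exact sequence (2.1) restricted to the filtration and the naturality of that restriction under the inclusion $F^k C^+ \hookrightarrow F^{k+1}C^+$. The key preliminary observation is that because the $S^1$-structure is compatible with the splitting, $\iota^{S^1}$ sends $F^k C_0^+$ into $F^k C^+$, and $\pi^{S^1}$ sends $F^k C^+$ onto $F^k C_+^+$, so we get a short exact sequence of subcomplexes
\begin{equation*}
0 \to F^k C_0^+ \xrightarrow{\iota^{S^1}} F^k C^+ \xrightarrow{\pi^{S^1}} F^k C_+^+ \to 0
\end{equation*}
with connecting map $[\delta^{S^1}_{+,0}]\colon H^*(F^k C_+^+) \to H^{*+1}(F^k C_0^+)$, and moreover the inclusions $F^k \hookrightarrow F^{k+1}$ induce a morphism of short exact sequences.

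For the first implication, I would run the long exact sequence for the above triple at degree $0$: the segment
\begin{equation*}
H^{-1}(F^k C_+^+) \xrightarrow{[\delta^{S^1}_{+,0}]} H^0(F^k C_0^+) \xrightarrow{\iota^{S^1}} H^0(F^k C^+)
\end{equation*}
shows that the assumption $\iota^{S^1}[e]=0$ produces a class $y\in H^{-1}(F^k C_+^+)$ with $[\delta^{S^1}_{+,0}](y)=[e]$ in $H^0(F^k C_0^+)$. Applying the projection $\pi_0$ and using $\pi_0([e])=[e]$ then gives $[e]\in\Ima\bigl(\pi_0\circ[\delta^{S^1}_{+,0}]\bigr)$, which is the definition of $k$-semi-dilation.

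For the persistence statement, I would invoke naturality. The inclusion of filtrations gives a commutative square relating the two short exact sequences at levels $k$ and $k+1$, so in the induced commutative ladder of long exact sequences the vertical maps commute with $\iota^{S^1}$ and $[\delta^{S^1}_{+,0}]$; the vertical map $H^*(F^k C_0^+)\to H^*(F^{k+1}C_0^+)$ sends the distinguished class $[e]$ to $[e]$ and commutes with $\pi_0$ (since both are the obvious projections to the $u^0$-coefficient of $[e]$). A witness of $k$-dilation, namely a bounding cochain for $\iota^{S^1}[e]$ in $F^k C^+$, thus maps to a bounding cochain for $\iota^{S^1}[e]$ in $F^{k+1} C^+$; likewise a class $y\in H^{-1}(F^k C_+^+)$ witnessing $k$-semi-dilation maps to a class in $H^{-1}(F^{k+1}C_+^+)$ witnessing $(k+1)$-semi-dilation.

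There is no real obstacle here beyond bookkeeping; the only point that wants a little care is the first preliminary observation that the short exact sequence really does restrict to each $F^k$, and that the splitting hypothesis $\delta^i|_{C_0}=0$ for $i\ge 1$ is precisely what guarantees this (otherwise $\iota^{S^1}$ could mix negative powers of $u$ into the $C_+$ part of $C^+$ and break the filtration). Once that is in place, everything else is formal from the long exact sequence and naturality.
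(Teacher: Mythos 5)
Your proposal is correct and follows essentially the same route as the paper: the first implication is the long exact sequence of the truncated short exact sequence $0\to F^kC^+_0\to F^kC^+\to F^kC^+_+\to 0$ together with $\pi_0[e]=[e]$, and the persistence statement is the naturality of this sequence under the inclusion $F^kC^+\hookrightarrow F^{k+1}C^+$ with $[e]$ preserved in $H^*(F^kC^+_0)\to H^*(F^{k+1}C^+_0)$, which is exactly the commutative diagram used in the paper's proof.
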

\begin{proof}
	By the tautological long exact sequence induced by \eqref{eqn:short}, $(C,\delta)$ carries a $k$-dilation iff $[e]$ is in the image of  $H^*(F^kC^+_+,\delta^{S^1}_+)\to H^{*+1}(F^kC^+_0,\delta^{S^1}_0)$. In particular, it carries a $k$-semi-dilation. The remaining of the claim follows from the following commutative diagram of short exact sequences, 
	$$
	\xymatrix{
		0 \ar[r] & F^{k}C^{+}_0 \ar[r]\ar[d] & F^kC^{+}\ar[r]\ar[d] & F^{k}C^+_+ \ar[r]\ar[d] & 0\\
		0 \ar[r] & F^{k+1}C^+_0 \ar[r] & F^{k+1}C^+ \ar[r] & F^{k+1}C^{+}_+ \ar[r] & 0
	}
	$$
	with $[e]$ preserved in $H^{*}(F^kC_0^{+})\to H^{*}(F^{k+1}C_0^+)$.
\end{proof}

\begin{proposition}\label{prop:order}
	Let $(C,\delta)$ be an $S^1$-complex, then $C$ carries a $k$-dilation iff there exists $x\in H^*(C^+_+,\delta^{S^1}_+)$  such that $[\delta^{S^1}_{+,0}](x)=[e]$ and $u^{k+1}x=0$. $C$ carries a $k$-semi-dilation iff there exists  $x\in H^*(C^+_+,\delta^{S^1}_+)$ such that $\pi_0\circ [\delta^{S^1}_{+,0}](x)=[e]$ and $u^{k+1}x=0$. 
\end{proposition}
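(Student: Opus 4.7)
The strategy is to translate the $F^k$-filtered statement defining a $k$-dilation into an unfiltered statement on $C^+_+$ decorated with the nilpotency constraint $u^{k+1}x=0$. The plan is to combine two short exact sequences and perform a diagram chase, using naturality of connecting maps.

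First I would reuse the short exact sequence $0\to F^k C^+_0 \to F^k C^+ \to F^k C^+_+ \to 0$ coming from the splitting compatibility. Exactly as in the proof of Proposition \ref{prop:hier}, its tautological long exact sequence identifies the condition $\iota^{S^1}[e]=0\in H^*(F^k C^+)$ with $[e]$ lying in the image of the connecting map $[\delta^{S^1}_{+,0}]\colon H^{*-1}(F^k C^+_+)\to H^*(F^k C^+_0)$. For the semi-dilation variant, post-compose with the canonical extension of $\pi_0$ to $H^*(F^k C^+_0)$.

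The key step is to bring in the $u$-action. Since $C^+_+\cong C_+[u^{-1}]$ and $u$ acts by truncate-and-shift, multiplication by $u^{k+1}$ is $\bm{k}[u]$-linear (hence commutes with $\delta^{S^1}$), has kernel exactly $F^k C^+_+$, and is surjective (a polynomial $\sum c_iu^{-i}$ lifts to $\sum c_iu^{-i-k-1}$, since $u^{k+1}\cdot u^{-i-k-1}=u^{-i}$). This gives a short exact sequence of cochain complexes
$$0\to F^k C^+_+ \hookrightarrow C^+_+ \xrightarrow{u^{k+1}} C^+_+ \to 0,$$
whose long exact sequence identifies the image of $H^*(F^k C^+_+)\to H^*(C^+_+)$ with $\ker\bigl(u^{k+1}:H^*(C^+_+)\to H^*(C^+_+)\bigr)$.

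Finally I would splice these together via naturality of connecting maps under the morphism of short exact sequences induced by the inclusion $F^k C^+\hookrightarrow C^+$. Given a lift $y\in H^{*-1}(F^k C^+_+)$ of $[e]$ on the truncated side, its image $x$ in $H^{*-1}(C^+_+)$ automatically satisfies $u^{k+1}x=0$, and $[\delta^{S^1}_{+,0}](x)=[e]$ by naturality. Conversely, given $x$ with $u^{k+1}x=0$ and $[\delta^{S^1}_{+,0}](x)=[e]$, the second long exact sequence produces a lift $y\in H^{*-1}(F^k C^+_+)$, and the injectivity of the inclusion $H^*(F^k C^+_0)=H^*(C_0)\otimes\langle 1,u^{-1},\ldots,u^{-k}\rangle\hookrightarrow H^*(C^+_0)=H^*(C_0)\otimes\bm{k}[u^{-1}]$ (which holds because $\delta^{S^1}_0=\delta^0_0$ has trivial $u$-dependence) forces $[\delta^{S^1}_{+,0}](y)=[e]$ on the nose. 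The semi-dilation version follows by composing with $\pi_0$ throughout. The real content sits in the middle step: verifying exactness of the $u^{k+1}$-sequence, in particular surjectivity of $u^{k+1}$ on $C^+_+$ (which would fail on $C^-_+$); after that, naturality and the injectivity on $H^*(F^k C^+_0)$ are both automatic, and the proposition reduces to a routine diagram chase.
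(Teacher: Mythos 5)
Your proposal is correct, and it reorganizes the converse direction rather than reproducing the paper's argument. The forward direction is the same as the paper's: push a class of $H^*(F^kC^+_+)$ whose connecting image is $[e]$ into $H^*(C^+_+)$ and note it is annihilated by $u^{k+1}$. For the converse the paper works at the chain level: from $u^{k+1}x=\delta^{S^1}_+y$ it corrects the cocycle to $x'=x-\delta^{S^1}_+(u^{-k-1}y)$, using that the commutator $[u^{-k-1},\delta^{S^1}_+]$ takes values in $F^kC^+_+$, and then pins down the connecting image via the injectivity of $H^*(F^kC^+_0,\delta^{S^1}_0)\to H^*(C^+_0,\delta^{S^1}_0)$. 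You instead package the lifting step structurally: multiplication by $u^{k+1}$ is a chain map on $C^+_+$ (because $\delta^{S^1}$ is $\bm{k}[u]$-linear --- note your parenthetical has the implication reversed; it is the differential, not the multiplication, whose $u$-linearity is the point), it is surjective with kernel exactly $F^kC^+_+$, and the long exact sequence of $0\to F^kC^+_+\to C^+_+\to C^+_+\to 0$ produces the lift, after which naturality of connecting maps and the same injectivity of $H^*(F^kC^+_0)\hookrightarrow H^*(C^+_0)$ (for the semi-dilation case only the compatibility $\pi_0\circ j=\pi_0$) finish the chase. The mathematical content is identical --- the paper's commutator correction is precisely a hands-on proof of exactness of your sequence at the middle term --- but your version avoids explicit cocycle manipulation and makes the role of the $u$-torsion condition transparent, while the paper's chain-level computation is the template it reuses later (e.g.\ in Proposition \ref{prop:ss}).
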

\begin{proof}
	If $C$ carries a $k$-dilation, then there exists $x'\in H^*(F^kC^+_+,\delta^{S^1}_+)$ that is mapped to $[e]$. Then the same holds for $x$, which is the image of $x'$ under $H^*(F^kC^+_+,\delta^{S^1}_+)\to H^*(C^+_+,\delta^{S^1}_+)$. It is clear that $u^{k+1}x=0$. On the other hand, assume closed $x=\sum_{i=0}^Nc_i u^{-i}\in C^+_+$ is mapped to $[e]$ and $u^{k+1}x=\sum_{i=0}^{N-k-1}c_{i+k+1}u^{-i}$ is $\delta^{S^1}_+ y$ for some $y\in C_+^+$. Note that the commutator $[u^{-k-1},\delta^{S^1}_+]$ takes value in $F^kC^+_+$. Then $x'=x-\delta^{S^1}_+(u^{-k-1}y)=\sum_{i=0}^k c_i u^{-i}+[u^{-k-1},\delta^{S^1}_+](y)$, hence $x'$ can be viewed as a closed class in $F^kC^+_+$, which mapped to $C^+_+$ is cohomologous to $x$. Therefore $x'$ is also mapped to $[e]$, since $H^*(F^kC^+_0,\delta^{S^1}_0)\to H^{*}(C^+_0,\delta^{S^1}_0)$ is injective and sends $[e]$ to $[e]$. The proof for $k$-semi-dilation is similar.
\end{proof}

\begin{remark}\label{rmk:cyclic}
	In the previous version of this paper, the $k$-dilation was equivalently defined using the spectral sequence for the $u$-adic filtration as in Proposition \ref{prop:equi} below. If $\iota^{S^1}[e]=0$ in $H^*(C^+,\delta^{S^1})$, then $C$ carries a cyclic dilation (for $h=1$) introduced by Li \cite{li2019exact}. Then the existence of cyclic dilation for $h=1$ is equivalent to the existence of $k$-dilation for some $k$.
\end{remark}

\subsection{$u$-adic filtration and spectral sequences}
In the following, we introduce some structural maps from the spectral sequence induced from the $u$-adic filtration on $C^+$, some of which enjoy the invariance property for ADC contact manifolds in the context of $S^1$-equivariant symplectic cohomology and are closely related to $k$-(semi)-dilations.

\begin{definition}\label{def:Z}
	Given a $N$-truncated $S^1$-structure, for every $0\le k \le N$, we can define a subspace $Z_k\subset C$ consisting of elements $\alpha_0$, such that there exist $\alpha_1,\ldots, \alpha_k\in C$ so that the following
	$$
		\sum_{i=0}^{m} \delta^{i}(\alpha_{m-i}) = 0
	$$
	holds for every $0\le m \le k$, i.e.\ $\delta^{S^1}(\sum_{i=0}^k u^{-i}\alpha_{k-i})=0$.   Hence $Z_k$ is the space of leading terms of closed forms in $F^kC^+$. Similarly, we define $B_k \subset C$ consisting of elements $\alpha$, such that there exists $\alpha_0,\ldots \alpha_k\in C$ so that 
	$$\alpha=\sum_{i} \delta^i(\alpha_{k-i}), \quad \sum_{i} \delta^i(\alpha_{k-j-i})=0, \forall j > 0.$$
	In other words, $\delta^{S^1}(\sum_{i=0}^k u^{-i}\alpha_{k-i})=\alpha$, i.e.\ $B_k$ is the subset of $C$ that is exact in $F^kC^+$.
\end{definition}
In particular, $Z_0$ is the set of $\delta^0$-closed classes and $B_0$ is the set of $\delta^0$-exact classes.

\begin{proposition}\label{prop:ss}
	Given an $S^1$-complex, we have the following.
	\begin{enumerate}
		\item\label{ss1} $B_0\subset \ldots \subset B_k \subset \ldots \subset Z_k\subset \ldots \subset Z_0$.
		\item\label{ss2} For $k\ge 1$, $\Delta^k:Z_{k-1}/B_0\to Z_0/B_{k-1}$ given by $\Delta^k(\alpha_0)=\sum_{i=1}^k \delta^{i}(\alpha_{k-i})$, is well-defined of degree $1-2k$, where $\alpha_1,\ldots,\alpha_k$ are those in the definition of $Z_{k-1}$.
		\item\label{ss3} $\Delta^1$ is $[\delta^1]$ on $H^*(C,\delta^0)=Z_0/B_0$. Moreover,  we have $\ker\Delta^{k}=Z_{k}/B_0$, $\Ima \Delta^k=B_k/B_{k-1}$ and $\coker \Delta^k= Z_0/B_{k}$.
	\end{enumerate}
\end{proposition}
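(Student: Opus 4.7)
The plan is to work systematically with lifts to the filtered complex $(F^k C^+, \delta^{S^1})$: elements of $Z_k$ will be viewed as $u^{-k}$-leading coefficients of $\delta^{S^1}$-closed elements of $F^k C^+$, and elements of $B_k$ as $u^0$-bottom coefficients of $\delta^{S^1}$-exact elements of $F^k C^+$. The $S^1$-relation $\sum_{i+j=l}\delta^i \delta^j = 0$ will be the main algebraic input throughout.

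For part (\ref{ss1}), the inclusions $B_{k-1}\subset B_k$ and $Z_k\subset Z_{k-1}$ are immediate from $F^{k-1}C^+\subset F^k C^+$: an exact element of the smaller complex is still exact in the larger, and the first $k$ coefficients of a $Z_k$-witness give a $Z_{k-1}$-witness. The inclusion $B_0\subset B_k$ is obtained by placing a $\delta^0$-primitive at the top of the filtration with zeros elsewhere. The substantive content is $B_k\subset Z_k$: given $\mu=\sum_{j=0}^k u^{-j}\mu_j\in F^k C^+$ with $\delta^{S^1}\mu=\alpha\,u^0$, the plan is to verify that
\[
\beta_l:=\sum_{j=0}^k \delta^{l+j}(\mu_j),\qquad 0\le l\le k,
\]
provides a closed lift with $\beta_0=\alpha$. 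Using $\sum_{p+q=l+j}\delta^p\delta^q=0$ to swap the partial sum $\sum_{r=0}^l \delta^r\delta^{l+j-r}$ for its complement, the expression $\sum_{r=0}^l \delta^r\beta_{l-r}$ collapses to $-\sum_{s\ge 1}\delta^{l+s}\bigl(\sum_{q=0}^{k-s}\delta^q\mu_{q+s}\bigr)$; each inner sum is precisely the vanishing $u^{-s}$-coefficient of $\delta^{S^1}\mu$, forcing the total to vanish.

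For part (\ref{ss2}), fix $\alpha_0\in Z_{k-1}$ with witnesses $\alpha_1,\ldots,\alpha_{k-1}$ and set $\eta:=\sum_{j=0}^{k-1} u^{-j}\alpha_{k-1-j}\in F^{k-1}C^+$. A direct computation using closedness of $\eta$ yields the key identity $\delta^{S^1}(u^{-1}\eta)=(\Delta^k\alpha_0)\cdot u^0$ inside $F^k C^+$; applying $\delta^{S^1}$ once more gives $\delta^0(\Delta^k\alpha_0)=0$, and simultaneously shows $\Delta^k\alpha_0\in B_k$. Two choices of witnesses differ by $\beta_i=\alpha_i-\alpha_i'$ with $\beta_0=0$, and setting $\gamma_j:=\beta_{j+1}$ exhibits $\Delta^k\alpha_0-\Delta^k\alpha_0'=\sum_{i=1}^{k-1}\delta^i\beta_{k-i}$ as an element of $B_{k-1}$. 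Replacing $\alpha_0$ by $\alpha_0+\delta^0\gamma$ is realized cochain-wise by adding $\delta^{S^1}(u^{-(k-1)}\gamma)$ to $\eta$, which shifts the witnesses by $\alpha_i+\delta^i\gamma$ and changes $\Delta^k\alpha_0$ by $\sum_{i=1}^k\delta^i\delta^{k-i}\gamma=-\delta^0(\delta^k\gamma)\in B_0\subset B_{k-1}$, using $\sum_{i+j=k}\delta^i\delta^j=0$. Homogeneity of $\sum u^{-i}\alpha_{k-i}$ forces $|\alpha_{k-i}|=|\alpha_0|-2(k-i)$, which combined with $|\delta^i|=1-2i$ gives $|\Delta^k|=1-2k$.

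For part (\ref{ss3}), $\Delta^1=[\delta^1]$ is tautological from the formula. For the kernel, if $\Delta^k\alpha_0\in B_{k-1}$, then $(\Delta^k\alpha_0)\cdot u^0=\delta^{S^1}\xi$ for some $\xi\in F^{k-1}C^+$, and $u^{-1}\eta-\xi\in F^k C^+$ is $\delta^{S^1}$-closed with $u^{-k}$-coefficient $\alpha_0$, so $\alpha_0\in Z_k$; conversely, a $Z_k$-witness $\alpha_k$ satisfies $\delta^0\alpha_k=-\Delta^k\alpha_0\in B_0$. The inclusion $\Ima\Delta^k\subset B_k/B_{k-1}$ is the observation $\Delta^k\alpha_0\in B_k$ from part (\ref{ss2}); conversely, for $\alpha\in B_k$ with lift $\mu=\sum u^{-j}\mu_j$, matching the vanishing $u^{-m}$-coefficients of $\delta^{S^1}\mu$ against the $Z_{k-1}$-relations shows $\mu_k\in Z_{k-1}$ with witnesses $\mu_{k-1},\ldots,\mu_1$, and $\Delta^k(\mu_k)=\sum_{i=1}^k\delta^i\mu_i=\alpha-\delta^0\mu_0\equiv\alpha\pmod{B_{k-1}}$. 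The cokernel identification follows from the standard quotient $(Z_0/B_{k-1})/(B_k/B_{k-1})\cong Z_0/B_k$. The main technical hurdle across the proof is the cancellation underlying the explicit closed lift $\beta_l=\sum_j\delta^{l+j}\mu_j$ used for $B_k\subset Z_k$ and its dual incarnation $\delta^{S^1}(u^{-1}\eta)=(\Delta^k\alpha_0)\cdot u^0$; once these two formulas are in hand, everything else becomes careful bookkeeping with the $u$-adic filtration.
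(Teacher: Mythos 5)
Your proposal is correct and follows essentially the same route as the paper: you realize $\Delta^k(\alpha_0)$ as $\delta^{S^1}(u^{-1}\eta)$ in the $u$-adically filtered complex, take the same choice of shifted witnesses for the $B_0$-ambiguity, and identify kernel/image exactly as in the paper's proof. The only difference is cosmetic — where the paper invokes that the commutator $[u^{-n},\delta^{S^1}]$ lands in lower filtration (e.g.\ for $B_k\subset Z_k$), you verify the same identities by explicit coefficient bookkeeping with the relations $\sum_{i+j=l}\delta^i\delta^j=0$, which is an equivalent computation.
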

\begin{proof}
	That $B_i\subset B_j,Z_i\supset Z_j$ for $i<j$ follows from definition. Therefore to prove \eqref{ss1}, it suffices to show that $B_k\subset Z_k$. If $\alpha \in B_k$, then we have $A=\sum_{i=0}^k u^{-i}\alpha_{k-i}\in F^kC^+$, such that $\delta^{S^1}(A)=\alpha$. Since the commutator $[u^{-k},\delta^{S^1}]$ takes value in $F^{k-1}C^{+}$, we have $\delta^{S^1}(u^{-k}A) = u^{-k}\alpha+ [\delta^{S^1},u^{-k}]A\in F^kC^{+}$ and is $\delta^{S^1} $exact. Hence $\alpha \in Z_k$ by definition.
	
	Note that $\Delta^k(\alpha_0)=\delta^{S^1} (\sum_{i=1}^ku^{-i}\alpha_{k-i})$, hence $\delta^0(\Delta^k(\alpha_0))=(\delta^{S^1})^2(\sum_{i=1}^ku^{-i}\alpha_{k-i})=0$, i.e.\ $\Delta^k(\alpha_0)\in Z_0$. Now assume $\alpha'_1,\ldots,\alpha'_{k-1}$ is another family such that $\delta^{S^1}(\sum_{i=0}^{k-2} u^{-i}\alpha'_{k-1-i}+u^{-k+1}\alpha_0)=0$. Then $\delta^{S^1}(\sum_{i=0}^{k-2}u^{-i}(\alpha_{k-1-i}-\alpha'_{k-1-i}))=0$ and the difference in $\Delta^k(\alpha_0)$ is given by $\delta^{S^1}(\sum_{i=1}^{k-1}u^{-i}(\alpha_{k-i}-\alpha'_{k-i}))\in C$ since $[u^{-1},\delta^{S^1}]\in C$. Hence the difference is in $B_{k-1}$ by definition. Lastly if $\alpha_0=\delta^0\beta \in B_0$, then we can pick $\sum_{i=0}^{k-2}u^{-i}\alpha_{k-1-i}=[\delta^{S^1},u^{-k+1}]\beta$ which satisfies the conditions for $Z_{k-1}$. Then $\Delta^k(\alpha_0)$ is given by 
	\begin{eqnarray*}
	\delta^{S^1}(u^{-1}[\delta^{S^1},u^{-k+1}]\beta + u^{-k}\delta^{S^1}\beta) & = & \delta^{S^1}( u^{-1}\delta^{S^1}u^{-k+1}\beta) \\
	& = & (\delta^{S^1})^2(u^{-k}\beta)+\delta^{S^1}([u^{-1},\delta^{S^1}]u^{-k+1}\beta)\\
	& \in & \Ima \delta^{S^1}(C)=B_0. 
	\end{eqnarray*}
	Hence $\Delta^k(x)\in B_{0}\subset B_{k-1}$.
	
	It is straightforward to check that $\Delta^1$ is $[\delta^1]$ on $H^*(C,\delta^0)$. To prove $\ker \Delta^k = Z_k/B_0$, if $\alpha_0\in \ker \Delta^k$, that is there exist $\alpha_1,\ldots,\alpha_{k-1}$ and $\beta_0,\ldots,\beta_{k-1}$, such that $\delta^{S^1}(u^{-k}\alpha_0+\sum_{i=1}^{k-1}u^{-i} \alpha_{k-i})=\delta^{S^1}(\sum_{i=0}^{k-1}u^{-i}\beta_{k-1-i})\in C$. Hence $\delta^{S^1}( u^{-k}\alpha_0+\sum_{i=1}^{k-1} u^{-i}(\alpha_{k-i}-\beta_{k-1-i})-\beta_{k-1})=0$ and then $ \alpha_0\in Z_k$ by definition.  On the other hand, if $\alpha_0\in Z_k$, i.e.\ there exists $\alpha_1,\ldots,\alpha_k$ such that $\delta^{S^1}(\sum_{i=0}^ku^{-i}\alpha_{k-i})=0$. Then $\delta^{S^1}(\sum_{i=1}^{k}u^{-i}\alpha_{k-i})=-\delta^{S^1}(\alpha_k)\in B_0$. That is $\Delta^k(\alpha_0)=0$ in $Z_0/B_{k-1}$. 
	
	That $\Ima \Delta^k\subset B_k/B_{k-1}$ follows from definition. Now assume $\alpha \in B_k$, i.e. $\alpha = \delta^{S^1}(\sum_{i=0}^k u^{-i}\alpha_{k-i})$, then $\alpha-\delta^{0}(\alpha_k)$ is in the image of $\Delta^k$ by definition. Since $\delta^{0}(\alpha_k)=0\in Z_0/B_{k-1}$, then $\alpha \in \Ima \Delta^k$. Then $\coker \Delta^k=Z_0/B_k$ follows from $\Ima \Delta^k=B_{k}/B_{k-1}$. 
\end{proof}

\begin{remark}
	The key property used in Proposition \ref{prop:ss} is that $[u^{-n},\delta^{S^1}]|_{F^mC^+}$ is in $F^{n-1}C^+$. The same holds on $(F^NC^{+},\delta^{S^1})$ if $n+m\le N$, where on $F^{N}C^+$ we define $u^{-n}\cdot u^{-k}=u^{-n-k}$ for $k+n\le N$ and $u^{-n}\cdot u^{-k}=0$ for $k+n>N$. Therefore Proposition \ref{prop:ss} holds for $N$-truncated $S^1$-structure if $2k\le N$. 
\end{remark}

\begin{remark}\label{rmk:SS}
	$\{\Delta^k\}_{k\ge 1}$ carries all the information about spectral sequences from the $u$-adic filtration, as we will explain in the following.
	\begin{enumerate}
		\item\label{sss1} By \eqref{ss1} and \eqref{ss3} of Proposition \ref{prop:ss}, we have $\Delta^k$ is zero on $B_{k-1}/Z_0$ with image in $B_{k}/B_{k-1}\subset Z_{k-1}/B_{k-1}$. In particular, $\Delta^k$ induces $\overline{\Delta}^k:Z_{k-1}/B_{k-1}\to Z_{k-1}/B_{k-1}$ such that $(\overline{\Delta}^k)^2=0$ by $B_{k}\subset Z_k$. Moreover \eqref{ss3} also implies that $Z_{k}/B_k = H^*(Z_{k-1}/B_{k-1},\overline{\Delta}^k)$. That is we have an abstract spectral sequence $\{E_i=(Z_{i-1}/B_{i-1},\overline{\Delta}^i)\}_{i\ge 1}$.
		\item For the filtration $C=F^0C^+\subset \ldots F^NC^+$, we can consider the the associated Leray spectral sequence \cite{mccleary2001user}. The $0$th page is given by $C\oplus u^{-1}C\oplus \ldots \oplus u^{-N}C$ with differential $\delta^0\oplus \ldots \oplus \delta^0$.  For $0\le k\le N$, the $k+1$-page is given by $\oplus_{i=1}^N u^{-i} Z_{\min\{i,k\}}/B_{\min\{k,N-i\}}$. The differential on $k+1$th page is  from $u^{-i-k-1} Z_k/B_{\min\{k,N-i-k-1\}}\to u^{-i}Z_{\min\{i,k\}}/B_{\min\{k,N-i\}}$, which is induced by $\Delta^{k+1}$ similar to \eqref{sss1} above. Moreover, $\Delta^N$ is exactly the only differential on the $N$th page (the last page before it converges).
		\item We can similarly consider the $u$-adic filtration on $C^+$, $C^{-}$ and $C^{\infty}$, each page of the spectral sequences has a similar description induced from $\{\Delta^k\}_{k\ge 1}$. Therefore $\{\Delta^k\}_{k\ge 1}$ contain all the information about all such spectral sequences. The abstract spectral sequence in \eqref{sss1} tensor with $\bm{k}((u))$(also tensor $u^k$ to $\overline{\Delta}^k$) is the the Leray spectral sequence of $C^{\infty}$. The $u$-adic spectral sequences on $C^-,C^\infty$ typically do not converge.
	\end{enumerate}
\end{remark}

\begin{proposition}\label{prop:func1}
	Given an $S^1$-morphism $\phi: (C,\delta)\to (D,\widetilde{\delta})$, then $\phi^0$ maps $Z_k(C),B_k(C)$ to $Z_k(D),B_k(D)$ respectively, and $\phi^0\circ \Delta^k=\widetilde{\Delta}^k\circ \phi_0$. If there is another $S^1$-morphism $\tilde{\phi}$ that is homotopic to $\phi$, then the induced commutative diagrams with $\Delta^k$ are the same. If everything is $N$-truncated, then the claim holds for $2k \le N$ (the same applies to every proposition below).
\end{proposition}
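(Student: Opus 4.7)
My plan is to derive everything from the single observation that an $S^1$-morphism $\phi=(\phi^0,\phi^1,\ldots)$ assembles into a $\bm{k}[u]$-linear chain map $\phi^{S^1}:=\sum_{j\ge 0}u^j\phi^j$ from $(C^+,\delta^{S^1})$ to $(D^+,\widetilde{\delta}^{S^1})$ that preserves each level $F^kC^+$ of the filtration. Indeed, the $S^1$-morphism relations are precisely the coefficient-wise statement of $\phi^{S^1}\delta^{S^1}=\widetilde{\delta}^{S^1}\phi^{S^1}$, and because the $u$-action on $C^+$ satisfies $u\cdot F^kC^+\subset F^kC^+$, so does every $u^j\phi^j$. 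Under this translation, the subspaces $Z_k,B_k\subset C$ of Definition \ref{def:Z} are exactly the $u^{-k}$-coefficients of $\delta^{S^1}$-closed elements of $F^kC^+$ and the elements $\alpha\in C$ obtained as $\delta^{S^1}(A)\in C\subset C^+$ for some $A\in F^kC^+$, respectively.

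For the membership claim, I will take $\alpha_0\in Z_k(C)$ with witnesses $\alpha_1,\ldots,\alpha_k$ and form $A=\sum_{i=0}^k u^{-i}\alpha_{k-i}\in F^kC^+$. Since $A$ is closed and $\phi^{S^1}$ preserves filtration and closedness, $\phi^{S^1}(A)\in F^kD^+$ is closed; a direct index computation identifies its $u^{-k}$ coefficient as $\phi^0(\alpha_0)$, so $\phi^0(\alpha_0)\in Z_k(D)$. The lower coefficients of $\phi^{S^1}(A)$ furnish explicit witnesses. Symmetrically, if $\alpha=\delta^{S^1}(A)\in C$ with $A\in F^kC^+$, then $\phi^0(\alpha)=\widetilde{\delta}^{S^1}(\phi^{S^1}(A))$ lies in $\widetilde{\delta}^{S^1}(F^kD^+)\cap D = B_k(D)$.

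The commutation $\phi^0\circ \Delta^k=\widetilde{\Delta}^k\circ \phi^0$ will come from the identity $\Delta^k(\alpha_0)=\delta^{S^1}(A')$ with $A'=\sum_{i=1}^k u^{-i}\alpha_{k-i}\in F^kC^+$ that was already established inside the proof of Proposition \ref{prop:ss}. Applying $\phi^{S^1}$ yields $\phi^0(\Delta^k(\alpha_0))=\widetilde{\delta}^{S^1}(\phi^{S^1}(A'))$. I will then expand $\phi^{S^1}(A')=\beta_k+\sum_{i=1}^k u^{-i}\beta_{k-i}$, where $\beta_0=\phi^0(\alpha_0)$ is the leading coefficient and $\beta_1,\ldots,\beta_{k-1}$ are precisely the witnesses required to define $\widetilde{\Delta}^k(\phi^0(\alpha_0))$; by definition $\widetilde{\delta}^{S^1}(\sum_{i=1}^k u^{-i}\beta_{k-i})=\widetilde{\Delta}^k(\phi^0(\alpha_0))$, while the extra $u^0$-term contributes $\widetilde{\delta}^0(\beta_k)\in B_0(D)\subset B_{k-1}(D)$, which vanishes in the target $Z_0(D)/B_{k-1}(D)$.

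Finally, homotopy invariance will follow from assembling $h$ into $h^{S^1}:F^kC^+\to F^kD^+$ satisfying $\tilde{\phi}^{S^1}-\phi^{S^1}=h^{S^1}\delta^{S^1}+\widetilde{\delta}^{S^1}h^{S^1}$; taking $u^0$-coefficients on any $\alpha_0\in Z_0(C)$ gives $\tilde{\phi}^0(\alpha_0)-\phi^0(\alpha_0)=\widetilde{\delta}^0(h^0(\alpha_0))\in B_0(D)$, so both maps agree modulo $B_0(D)\subset B_{k-1}(D)$ on the relevant quotients $Z_{k-1}/B_0$ and $Z_0/B_{k-1}$, making the induced commutative squares identical. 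The $N$-truncated version is handled identically: every computation takes place within $F^kC^+$ under $\delta^{S^1}$ and $\phi^{S^1}$, and the relations needed match those required for Proposition \ref{prop:ss}, namely $2k\le N$. The main point of care, I anticipate, is in the $\Delta^k$ step: correctly accounting for the extra $u^0$-coefficient $\beta_k$ of $\phi^{S^1}(A')$, which has no counterpart in the defining formula for $\widetilde{\Delta}^k$, and verifying that it is always absorbed into $B_{k-1}(D)$ rather than producing a genuine discrepancy.
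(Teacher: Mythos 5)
Your proposal is correct and follows essentially the same route as the paper: assemble $\phi$ into the filtration-preserving chain map $\phi^{S^1}$, read off membership in $Z_k,B_k$ from leading coefficients, push the identity $\Delta^k(\alpha_0)=\delta^{S^1}(\sum_{i=1}^k u^{-i}\alpha_{k-i})$ through $\phi^{S^1}$ with the stray $u^0$-term $\sum_{i=1}^k\phi^i(\alpha_{k-i})$ contributing only a $\widetilde{\delta}^0$-exact error absorbed in $B_{k-1}(D)$, and reduce homotopy invariance to $\phi^0=\tilde{\phi}^0$ on $Z_0/B_0$. The point you flagged as delicate is exactly the term the paper's proof isolates, so there is no gap.
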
	
\begin{proof}
	If $\alpha_0\in Z_k(C)$, i.e.\ $\delta^{S^1}(\sum_{i=0}^ku^{-i}\alpha_{k-i})=0$, then $\widetilde{\delta}^{S^1}\circ \phi^{S^1}(\sum_{i=0}^ku^{-i}\alpha_{k-i})=0$. Since $\phi^{S^1}(\sum_{i=0}^ku^{-i}\alpha_{k-i})=u^{-k}\phi^0(\alpha_0)+\sum_{i=0}^{k-1}u^{-i}\beta_{k-i}$, we have $\phi^0(\alpha_0)\in Z_k(D)$. The situation for $B_k(C)$ is similar. To prove $\phi^0$ commutes with $\Delta^k$, let $\alpha_0\in Z_{k-1}$, then we have
	\begin{eqnarray*}
	 \phi_0\circ \Delta^k(\alpha_0) & = & \phi^0 \circ \delta^{S^1} (\sum_{i=1}^k u^{-i}\alpha_{k-i})\\
	 & = & \phi^{S^1}\circ  \delta^{S^1} (\sum_{i=1}^k u^{-i}\alpha_{k-i})\quad \text{  (since  $\delta^{S^1} (\sum_{i=1}^k u^{-i}\alpha_{k-i}) \in C$)}\\
	 & = & \text{$\widetilde{\delta}^{S^1}$}\circ \phi^{S^1}(\sum_{i=1}^k u^{-i}\alpha_{k-i}) \\
	 & = &  \text{$\widetilde{\Delta}^{k}$}\circ \phi_0(\alpha_0)+ \text{$\widetilde{\delta}^{0}$}(\sum_{i=1}^k \phi^i(\alpha_{k-i})) =  \text{$\widetilde{\Delta}^{k}$}\circ \phi_0(\alpha_0) \in Z_0/B_{k-1}
	\end{eqnarray*}
	 When $\phi,\tilde{\phi}$ are homotopic, $\phi^0=\tilde{\phi}^0$ on $H^*(C,\delta^0)=Z_0/B_0$. Hence the induced commutative diagrams with $\Delta^k$ from $\phi,\tilde{\phi}$ are the same since $Z_{k-1}/B_0\subset Z_0/B_0$.
\end{proof}

The higher order terms of $\phi$ can also be seen in some cases. For simplicity, let $\phi:(C,\delta)\to (D,\partial)$ be an $S^1$-morphism and $\partial^r=0$ for $r\ge 1$, then we define $\Phi^0:H^*(C,\delta^0)\to H^*(D,\partial^0)$ and define $\Phi^k:\ker \Delta^k\to \coker \Phi^{k-1}$ by
$$\Phi^k(\alpha_0)=\sum_{i=0}^k\phi^i(\alpha_{k-i})=\Pi_0\circ \phi^{S^1}(\sum_{i=0}^k u^{-i}\alpha_{k-i}),$$
where $\alpha_1,\ldots,\alpha_k$ are those in the definition of $Z_k$ and $\Pi_0:D^{+}\to D$ is the projection. 

\begin{proposition}
	$\Phi^k:\ker \Delta^k \to \coker \Phi^{k-1}$ is well-defined. 
\end{proposition}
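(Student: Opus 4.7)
My plan is to break the well-definedness of $\Phi^k$ into three checks:
\begin{enumerate}
\item[(i)] for any choice of witnesses $\alpha_1,\ldots,\alpha_k$, the element $\Phi^k(\alpha_0)$ is $\partial^0$-closed, so defines a class in $H^*(D,\partial^0)$;
\item[(ii)] changing the witnesses alters this class by an element of $\Ima\Phi^{k-1}$;
\item[(iii)] changing $\alpha_0$ within its $B_0$-coset makes $\Phi^k(\alpha_0)$ exact for a suitable witness choice.
\end{enumerate}
Together (ii) and (iii) give well-definedness modulo $\Ima\Phi^{k-1}$, which is exactly the target $\coker\Phi^{k-1}$. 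Throughout I will freely use that the hypothesis $\partial^r=0$ for $r\ge 1$ collapses the $S^1$-morphism relations to $\partial^{S^1}\phi^{S^1}=\phi^{S^1}\delta^{S^1}$ with $\partial^{S^1}=\partial^0$, and that $\partial^0$ commutes with the projection $\Pi_0\colon D^+\to D$.

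Writing $A:=\sum_{i=0}^k u^{-i}\alpha_{k-i}$, the condition $\alpha_0\in Z_k$ says exactly $\delta^{S^1}(A)=0$ in $C^+$. For (i), I would compute
$$\partial^0\Phi^k(\alpha_0)=\partial^0\Pi_0\phi^{S^1}(A)=\Pi_0\partial^{S^1}\phi^{S^1}(A)=\Pi_0\phi^{S^1}\delta^{S^1}(A)=0.$$
For (ii), given another family $\alpha_1',\ldots,\alpha_k'$, set $\beta_i:=\alpha_i-\alpha_i'$; since $\beta_0=0$ and $\delta^{S^1}(A-A')=0$, the index shift $\tilde\beta_j:=\beta_{j+1}$ rewrites $A-A'$ in the canonical $Z_{k-1}$-form for $\tilde\beta_0=\beta_1$, and then applying $\Pi_0\phi^{S^1}$ gives
$$\Phi^k(\alpha_0)-\Phi^k(\alpha_0')=\Pi_0\phi^{S^1}(A-A')=\Phi^{k-1}(\beta_1)\in\Ima\Phi^{k-1}.$$

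For (iii), if $\alpha_0=\delta^0\eta$ I would take the canonical witnesses $\alpha_i:=\delta^i\eta$; the quadratic relations on the $\delta^i$ show $\alpha_0\in Z_k$ via this choice, and the $\bm{k}[u]$-module structure on $C^+$ (under which $u^j\cdot u^{-k}=0$ for $j>k$) yields the identity $A=\delta^{S^1}(u^{-k}\eta)$ in $F^kC^+$. Then
$$\Phi^k(\alpha_0)=\Pi_0\phi^{S^1}\delta^{S^1}(u^{-k}\eta)=\partial^0\Pi_0\phi^{S^1}(u^{-k}\eta)=\partial^0\phi^k(\eta),$$
which is $\partial^0$-exact. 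Combined with (ii), this shows that $[\Phi^k(\alpha_0)]\in\coker\Phi^{k-1}$ depends only on the class of $\alpha_0$ in $Z_k/B_0=\ker\Delta^k$. No deep obstacle arises; the main care-point is tracking the $\bm{k}[u]$-action on $C^+$ closely enough to guarantee that $\delta^{S^1}(u^{-k}\eta)$ really lies in $F^kC^+$, so that the formal manipulations above all take place inside a single cochain complex.
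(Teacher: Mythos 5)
Your proof is correct and follows essentially the same route as the paper: the closedness computation using $\partial^r=0$ for $r\ge 1$ together with $\Pi_0\circ\partial^{S^1}=\partial^0\circ\Pi_0$, and the witness-change argument identifying the difference of the two values with $\Phi^{k-1}(\alpha_1-\alpha_1')\in\Ima\Phi^{k-1}$, are exactly the paper's two steps. Your step (iii), where replacing $\alpha_0$ by $\alpha_0+\delta^0\eta$ with the canonical witnesses $\alpha_i=\delta^i\eta$ gives $A=\delta^{S^1}(u^{-k}\eta)$ and hence only changes the value by the exact term $\partial^0\phi^k(\eta)$, is a valid additional check of the $B_0$-ambiguity in $\ker\Delta^k=Z_k/B_0$ which the paper leaves implicit in its closing ``inductively''.
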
	
\begin{proof}
	First note that 
	\begin{eqnarray*}
		\partial^0\circ \Pi_0\circ \phi^{S^1}(\sum_{i=0}^k u^{-i}\alpha_{k-i}) & =  & \partial^{S^1}\circ \Pi_0\circ \phi^{S^1}(\sum_{i=0}^k u^{-i}\alpha_{k-i})\\
		& = & \Pi_0\circ \partial^{S^1}\circ \phi^{S^1}(\sum_{i=0}^k u^{-i}\alpha_{k-i}) \quad \text{(by $\partial^r=0$ for $r\ge 1$)}\\
		& = & \Pi_0 \circ \phi^{S^1}\circ \delta^{S^1}(\sum_{i=0}^k u^{-i}\alpha_{k-i}) = 0,
	\end{eqnarray*}
	where the last equality follows from $\alpha_0\in Z_k$. Therefore $\Phi^k(\alpha_0)$ is closed. Now assume there is another family $\alpha'_1,\ldots,\alpha'_k$ satisfies the conditions in $Z_k$ for $\alpha_0$. Then we have $\delta^{S^1}(\sum_{i=0}^{k-1}u^{-i}(\alpha_{k-i}-\alpha'_{k-i}))=0$, hence $\alpha_1-\alpha'_1\in Z_{k-1}$. Then the difference resulted in $\Phi^k(\alpha_0)$ is given by $\Pi_0\circ \phi^{S^1}(\sum_{i=0}^{k-1}u^{-i}(\alpha_{k-i}-\alpha'_{k-i}))$, which is $\Phi^{k-1}(\alpha_1-\alpha'_1)$. Hence $\Phi^k$ is well-defined as a map from $\ker \Delta^k\to \coker \Phi^{k-1}$ inductively. 
\end{proof}

\begin{remark}\label{rmk:SSphi}
	Since $\phi^{S^1}$ defines a cochain map $F^NC^+ \to F^ND^+$ preserving the $u$-adic filtration, it induces a morphism between the spectral sequences. In particular, on the last page $E_{N+1}=E_\infty=\oplus_{i=0}^N u^{-i}Z_i/B_{N-i}$, we have a morphism 
	\begin{equation}\label{eqn:iso}
	\bigoplus_{i=0}^N u^{-i}Z_i/B_{N-i}\simeq H^*(F^NC^+,\delta^{S^1}) \to \bigoplus_{i=0}^{N} u^{-i} H^*(D,\partial^0),
	\end{equation}
	where the first isomorphism is not canonical (assume we use field coefficient) and both maps preserve filtrations. Then the morphism  $\phi^{S^1}$ on cohomology can be written as a matrix 
	\begin{equation}\label{eqn:matrix}
	\left[
	\begin{array}{cccc}
	\Phi^0_0, & \Phi^1_0, & \ldots, & \Phi^N_0\\
	0 & \Phi^0_1 & \ldots, & \quad\Phi^{N-1}_1\\
	\vdots & \vdots & \ddots & \vdots \\
	0 & 0 & \ldots & \Phi^0_N
	\end{array}
	\right]
	\end{equation}
	for $\Phi^i_j:u^{-i-j}Z_{i+j}/B_{N-i-j}\to u^{-j}H^*(D)$. However, since the first isomorphism in \eqref{eqn:iso} is not canonical, $\Phi^N_0$ is not canonically defined but with a ambiguity from the image of $[\Phi^0_0,  \Phi^1_0,  \ldots \Phi^{N-1}_0]$. One can show that $\Phi^i(B_{j})=0$ and $\Phi^{i}_0 = \Phi^i| \mod \Ima \Phi^{i-1}$. In particular, $\Phi^N$ is well-defined portion of $\Phi^N_0$. Under another non-canonical isomorphism $H^*(D,\partial^0)\simeq  \coker \Phi^N\oplus_{i=1}^N \Ima \Phi^i$, the map $H^*(F^NC^+,\delta^{S^1}) \to u^{-0}H^*(D,\partial^0)$ can be reconstructed. Similarly, we have $\Phi^i_j= \Phi^i|_{Z_{i+j}} \mod  \Ima \Phi^{i-1}(Z_{i+j-1})$. In other words, the map $H^*(F^NC^+,\delta^{S^1}) \to H^{*}(F^ND^+,\partial^{S^1})$ as a $\bm{k}$ linear map can be recovered from $\Delta^k$ and $\Phi^k$.
\end{remark}

\begin{proposition}\label{prop:func2}
	Let $\phi: (C,\delta)\to (D,\partial), \widetilde{\phi}:(\widetilde{C},\widetilde{\phi})\to (\widetilde{D},\widetilde{\partial})$ be two $S^1$-morphisms such that $\partial^r,\widetilde{\partial}^r=0$ for $r\ge 1$. Assume there are $S^1$-morphisms $\psi:C\to \widetilde{C}$ and $f:D\to \widetilde{D}$ such that $f^r=0$ for $r\ge 1$ and $f\circ \phi=\widetilde{\phi}\circ \psi$ up to $S^1$-homotopy, then  we have the following commutative diagram,
	$$\xymatrix{
		\ker \Delta^k(C) \ar[r]^{\Phi^k} \ar[d]^{\psi^0} & \coker \Phi^{k-1}\ar[d]^{f^0} \\
		\ker \Delta^k(\widetilde{C}) \ar[r]^{\widetilde{\Phi}^k} & \coker \widetilde{\Phi}^{k-1}} 
	$$
\end{proposition}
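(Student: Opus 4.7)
The plan is to reduce both well-definedness of the right vertical map $f^0: \coker \Phi^{k-1} \to \coker \widetilde{\Phi}^{k-1}$ and commutativity of the square to a single identity, namely
\[
f^0 \circ \Phi^m \;\equiv\; \widetilde{\Phi}^m \circ \psi^0 \pmod{\Ima \widetilde{\Phi}^{m-1}}
\]
for all $m \le k$. The left vertical map is the restriction of $\psi^0$ to $\ker \Delta^k(C) \subset H^*(C,\delta^0)$, which lands in $\ker \Delta^k(\widetilde{C})$ by Proposition \ref{prop:func1}. Once the displayed identity holds for $m=k-1$, it shows $f^0(\Ima \Phi^{k-1}) \subset \Ima \widetilde{\Phi}^{k-1}$, so the right vertical map descends; and for $m=k$ it is precisely the commutativity of the square.

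To verify the identity, I would fix $\alpha_0 \in \ker \Delta^m(C)$ with witnesses $\alpha_1,\ldots,\alpha_m$, and set $A := \sum_{i=0}^m u^{-i} \alpha_{m-i}$, so that $\delta^{S^1}(A) = 0$. Because $f^r = 0$ for $r \ge 1$, the term $\Pi_0 \circ f^{S^1} \circ \phi^{S^1}(A)$ coincides with $f^0 \circ \Pi_0 \circ \phi^{S^1}(A) = f^0 \circ \Phi^m(\alpha_0)$. On the other hand, $\psi^{S^1}(A) = u^{-m} \psi^0(\alpha_0) + \sum_{i=0}^{m-1} u^{-i} \beta_{m-i}$, and the elements $\beta_1,\ldots,\beta_m$ witness $\psi^0(\alpha_0) \in Z_m(\widetilde{C})$; any other choice of witnesses changes the value of $\widetilde{\Phi}^m$ only by something in $\Ima \widetilde{\Phi}^{m-1}$, so modulo $\Ima \widetilde{\Phi}^{m-1}$ we have $\widetilde{\Phi}^m(\psi^0(\alpha_0)) = \Pi_0 \circ \widetilde{\phi}^{S^1} \circ \psi^{S^1}(A)$.

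The difference $(f \circ \phi)^{S^1}(A) - (\widetilde{\phi} \circ \psi)^{S^1}(A)$ is $S^1$-homotopically trivial: if $h$ is the given $S^1$-homotopy, then it equals $h^{S^1} \circ \delta^{S^1}(A) + \widetilde{\partial}^{S^1} \circ h^{S^1}(A) = \widetilde{\partial}^0 \circ h^{S^1}(A)$, where the first summand vanishes because $\delta^{S^1}(A)=0$ and the second simplifies because $\widetilde{\partial}^r = 0$ for $r \ge 1$. Since $\widetilde{\partial}^0$ is $u$-independent it commutes with $\Pi_0$, so applying $\Pi_0$ yields $\widetilde{\partial}^0 \bigl( \sum_{i=0}^m h^i(\alpha_{m-i}) \bigr)$, which is $\widetilde{\partial}^0$-exact and therefore zero in $H^*(\widetilde{D},\widetilde{\partial}^0)$, a fortiori zero in $\coker \widetilde{\Phi}^{m-1}$. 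This establishes the identity for every $m \le k$, completing the proof.

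The argument is essentially symbol-pushing once the setup is fixed; the only mild subtlety is choosing the witnesses for $\psi^0(\alpha_0) \in Z_m(\widetilde{C})$ to be the ones coming directly from $\psi^{S^1}(A)$ so that $\widetilde{\Phi}^m \circ \psi^0$ can be written as $\Pi_0 \circ \widetilde{\phi}^{S^1} \circ \psi^{S^1}(A)$ modulo $\Ima \widetilde{\Phi}^{m-1}$. This should be flagged but poses no real obstacle given the well-definedness of $\widetilde{\Phi}^m$ established previously.
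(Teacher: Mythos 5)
Your proposal is correct and follows essentially the same route as the paper's proof: apply $\Pi_0$ to $f^{S^1}\circ\phi^{S^1}(A)$, commute $f^{S^1}$ past $\Pi_0$ using $f^r=0$, invoke the $S^1$-homotopy so that the $h^{S^1}\circ\delta^{S^1}$ term dies by closedness of $A$ and the $\widetilde{\partial}^{S^1}\circ h^{S^1}$ term becomes $\widetilde{\partial}^0$-exact, and read off $\widetilde{\Phi}^k(\psi^0(\alpha_0))$ from $\Pi_0\circ\widetilde{\phi}^{S^1}\circ\psi^{S^1}(A)$ using the witnesses supplied by $\psi^{S^1}(A)$, up to the ambiguity $\Ima\widetilde{\Phi}^{k-1}$. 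Your extra verification that $f^0$ descends to the cokernels (via the same identity for $m\le k-1$) is a point the paper leaves implicit, but it is not a different argument.
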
	
\begin{proof}
	Let $\alpha_0\in \ker Z_{k}(C)$, i.e.\ there exist $\alpha_1,\ldots,\alpha_k\in C$ such that $\delta^{S^1}(\sum_{i=0}^k u^{-i}\alpha_{k-i})=0$, then 
	\begin{eqnarray*}
		f^0\circ \Phi^k(\alpha_0) & = & f^{S^1}\circ \Pi_0\circ \phi^{S^1}(\sum_{i=0}^k u^{-i}\alpha_{k-i})\\
		& = &\Pi_0\circ f^{S^1} \circ \phi^{S^1}(\sum_{i=0}^k u^{-i}\alpha_{k-i})\\
		& = & (\Pi_0\circ \widetilde{\phi}^{S^1}\circ \psi^{S^1}+\Pi_0\circ h^{S^1}\circ \delta^{S^1}+\Pi_0\circ \widetilde{\partial}^{S^1}\circ h^{S^1})(\sum_{i=0}^k u^{-i}\alpha_{k-i})\\
		& = & \text{$\widetilde{\Phi}^k(\psi^0(\alpha_0))$} \mod \Ima \widetilde{\partial}^0,
	\end{eqnarray*}
where $h$ is the $S^1$-homotopy between $f\circ \phi$ and $\widetilde{\phi}\circ \psi$.
\end{proof}

Now assume $(C,\delta)$ is an $S^1$-structure compatible with a splitting $0\to C_0\to C \to C_+\to 0$. Then $\delta_{+,0}=(\delta^0_{+,0},\delta^1_{+,0},\ldots)$ defines an $S^1$-morphism from $C_+$ to $C_0[1]$, where the $S^1$ structure on $C_0[1]$ is given by $(-\delta_0^0,0,\ldots)$. Hence we have $\Delta^k_{+,0}:\ker \Delta^k_+ \to \coker \Delta^{k-1}_{+,0}$ with $\Delta^0_{+,0}$ is the connecting map $[\delta^0_{+,0}]:H^*(C_+,\delta_+^0)\to H^{*+1}(C_0,\delta_0^0)$. 

\begin{proposition}\label{prop:equi}
	Let $(C,\delta)$ be an $S^1$-structure compatible with a splitting and $H^*(C_0,\delta^0_0)$ has a distinguished element $[e]$ and a preferred projection  $\pi_0:H^*(C_0,\delta^0_0)\to \la [e]\ra$. Then $C$ carries a $k$-dilation iff $e\subset B_k$ or equivalently $[e]\in \Ima \Delta^k$. $C$ carries a $k$-semi-dilation iff there is $[x]\in H^*(C_0,\delta^0_0)$ that is projected to $[e]$ and $[x]\in \Ima \Delta^k_{+,0}$.  
\end{proposition}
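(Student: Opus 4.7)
The plan is to unpack both halves of the statement at the cochain level on $F^kC^+$ and match the resulting conditions to the formulas defining $B_k$ and $\Delta^k$, $\Delta^k_{+,0}$ in Definition \ref{def:Z} and Proposition \ref{prop:ss}. First I would parametrize an arbitrary element of $F^kC^+$ as $A = \sum_{j=0}^k u^{-j}\alpha_{k-j}$ and compute $\delta^{S^1}(A)$ on $F^kC^+$ using $u\cdot u^{-j} = u^{-j+1}$ for $j\ge 1$ and $u\cdot 1 = 0$. A direct reindexing shows that the coefficient of $u^{-m}$ in $\delta^{S^1}(A)$ is $\sum_{n=0}^{k-m}\delta^n(\alpha_{k-m-n})$ for $0\le m\le k$. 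The equation $\delta^{S^1}(A) = e$, with $e$ placed in the $u^0$-slot and the other slots zero, therefore coincides exactly with the defining equations of $B_k$ from Definition \ref{def:Z} applied to $\alpha = e$. Hence $\iota^{S^1}[e] = 0 \in H^*(F^kC^+,\delta^{S^1})$ is equivalent to $e \in B_k$. By Proposition \ref{prop:ss}(3), $\Ima \Delta^k = B_k/B_{k-1}$, and since $B_{k-1}\subset B_k$ this is equivalent to $[e] \in \Ima \Delta^k$ inside $Z_0/B_{k-1}$, proving the first equivalence.

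For the $k$-semi-dilation half, the same index calculation applied to the $S^1$-morphism $\delta_{+,0}$ shows that for any closed $A = \sum_{j=0}^k u^{-j}\alpha_{k-j} \in F^kC^+_+$ the closedness is exactly the $Z_k$-condition on $\alpha_0 \in Z_k(C_+)$ with witnesses $\alpha_1,\ldots,\alpha_k$, and the $u^0$-coefficient of $\delta^{S^1}_{+,0}(A)$ is $\sum_{n=0}^k \delta^n_{+,0}(\alpha_{k-n})$, which by construction represents $\Delta^k_{+,0}(\alpha_0)$ in $\coker \Delta^{k-1}_{+,0}$. Because $\delta^i_0 = 0$ for $i \ge 1$, the differential on $F^kC^+_0$ decouples by powers of $u$, so $H^*(F^kC^+_0,\delta^{S^1}_0) = H^*(C_0,\delta^0_0)\otimes \langle 1, u^{-1},\ldots, u^{-k}\rangle$ and $\pi_0\circ [\delta^{S^1}_{+,0}]$ is computed by projecting the $u^0$-cohomology class to $\langle [e]\rangle$. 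This identifies the image of $\pi_0\circ [\delta^{S^1}_{+,0}]$ on $H^*(F^kC^+_+)$ with the $\pi_0$-image of the natural lift of $\Ima \Delta^k_{+,0}$ to $H^*(C_0)$, which gives the forward direction. For the converse, given $[x] \in H^*(C_0)$ with $\pi_0([x]) = [e]$ and $[x]$ representing a class in $\Ima \Delta^k_{+,0}$, so $[x] = [\sum_n \delta^n_{+,0}(\alpha_{k-n})] + \Delta^{k-1}_{+,0}(\beta_0)$, I would assemble a closed element of $F^kC^+_+$ by padding the $\beta$-witnesses with a zero leading term to produce a closed $A'' \in F^kC^+_+$ realizing $\Delta^{k-1}_{+,0}(\beta_0)$ in the $u^0$-slot, and then take $A + A''$; this simultaneously verifies the monotonicity $\Ima \Delta^{k-1}_{+,0}\subset \Ima \Delta^k_{+,0}$ of the natural lifts and delivers the sought closed element.

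The main obstacle is essentially bookkeeping: matching the reindexing between the $u^{-m}$-coefficient formula and Definition \ref{def:Z}, and correctly tracking the ambiguity in lifting $\Delta^k_{+,0}$ from $\coker \Delta^{k-1}_{+,0}$ back to $H^*(C_0)$. Because the ambiguity lives in $\Ima \Delta^{k-1}_{+,0}$ and every such correction is itself realized by a closed element of $F^kC^+_+$ via the index-padding trick just described, the $\pi_0$-image is insensitive to the lift and the equivalence goes through without further work.
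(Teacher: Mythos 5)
Your proof is correct, and its overall skeleton (unpack $\delta^{S^1}$ on $F^kC^+$ coefficientwise, match the $u^{0}$-slot equation with Definition \ref{def:Z}, invoke $\Ima\Delta^k=B_k/B_{k-1}$ from Proposition \ref{prop:ss}, and read off the forward direction of the semi-dilation statement from the decoupling $H^*(F^kC^+_0)\simeq H^*(C_0)\otimes\la 1,\ldots,u^{-k}\ra$) coincides with the paper's. Where you genuinely diverge is in the converse of the semi-dilation half, i.e.\ in handling the ambiguity of lifting $\Delta^k_{+,0}$ from $\coker\Delta^{k-1}_{+,0}$ to $H^*(C_0)$: the paper reduces to the \emph{smallest} $k$ for which some lift of $\Ima\Delta^k_{+,0}$ projects to $[e]$, so that $\pi_0$ annihilates the entire lift of $\Ima\Delta^{k-1}_{+,0}$ and the ambiguity is invisible (implicitly using the monotonicity of Proposition \ref{prop:hier} to return to the stated $k$), whereas you realize the ambiguity outright: any witness tuple for an element of $\ker\Delta^{j}_+$, $j\le k-1$, padded by zeros in the top $u^{-i}$-slots, stays $\delta^{S^1}_+$-closed in $F^kC^+_+$ because $\delta^{S^1}$ only lowers powers of $u^{-1}$, and its $u^{0}$-output reproduces the correction term; adding it to $A$ gives the desired closed class. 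Your route buys a self-contained argument that needs neither the minimality reduction nor $\pi_0$ having rank-one image, and it yields the clean statement that the full lift of $\Ima\Delta^k_{+,0}$ in $H^{*+1}(C_0)$ is exactly the set of $u^{0}$-components of $[\delta^{S^1}_{+,0}(A)]$ over closed $A\in F^kC^+_+$ (in particular the nesting of these lifts in $k$); the paper's route is shorter on the page. One bookkeeping caveat, which your closing paragraph essentially concedes and which is not a gap: the correction term is an arbitrary element of the recursively defined lift of $\Ima\Delta^{k-1}_{+,0}$ (built from all $\Delta^{j}_{+,0}$, $j\le k-1$, through the successive cokernels), not a single canonical $\Delta^{k-1}_{+,0}(\beta_0)$; since the padding trick applies verbatim at every level $j\le k-1$, the recursion closes and your argument goes through.
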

\begin{proof}
	$e\in B_k$ is equivalent to $\iota^{S^1}[e]=0$ in $H^*(F^kC^+,\delta^{S^1})$ by definition, hence equivalent to $C$ carries a $k$-dilation. Assume $C$ carries a $k$-semi-dilation, then there exists  a closed class $A=\sum_{i=0}^k u^{-i}\alpha_{k-i}$, such that $\delta^{S^1}_{+,0}( A)=x+\sum_{i=1}^k u^{-i}y_i$ with $[x]\in H^*(C_0,\delta_0^0)$ that is projected to $[e]$. Hence $\Delta^k_{+,0}(\alpha_0)=[x]$ by definition. On the other hand, we may assume $k$ is the smallest number such that there exists $[x]\in \Ima\Delta^k_{+,0}$ and is projected to $[e]$. In other words, if we view $\Ima \Delta^{k-1}_{+,0}$ as a subspace $H^*(C_0,\delta_0^0)$, then $\pi_0\circ \Ima \Delta^{k-1}_{+,0}=0$. Then we have a closed class $A=\sum_{i=0}^k u^{-i}\alpha_{k-i}$, such that $\Delta^k_{+,0}(\alpha_0)=\Pi_0 \circ \delta^{S^1}_{+,0}(A)=x$ up to ambiguity from $\Ima \Delta^{k-1}_{+,0}$. This implies that $A$ is sent to $[e]$ in the composition $H^*(F^kC^+_+,\delta^{S^1}_+)\to H^{*+1}(F^kC^+_0,\delta_0^{S^1}) \stackrel{\pi_0}{\to} \la e\ra$ as the ambiguity is projected to zero by $\pi_0$, i.e.\ $C$ carries a $k$-semi-dilation. 
\end{proof}

\begin{remark}
	In addition to the reconstruction property in Remark \ref{rmk:SS}, \ref{rmk:SSphi}, the purpose of introducing $\Delta^k,\Delta^k_+,\Delta^k_{+,0}$ and reinterpreting $k$-(semi)-dilation with them is that, in the context of symplectic cohomology, they are defined on subspace/quotient space of the regular (positive) symplectic cohomology and the regular symplectic cohomology is typically sparser than the $S^1$-equivariant symplectic cohomology. In particular, the grading information of symplectic cohomology will be handy in determining the order of dilation, see \S \ref{s52}.
\end{remark}

\section{$k$-dilations of Liouville domains}\label{s3}
$S^1$-structure appears naturally in the construction of $S^1$-equivariant symplectic cohomology, which was sketched in \cite{seidel2006biased}.  The idea is to count Floer cylinders parametrized by the moduli spaces of gradient flow lines  a perfect Morse function on $\CP^\infty$. 
Such formalism of $S^1$-equivariant symplectic cohomology was carried out in detail in \cite{bourgeois2016s} and can also be found in \cite{gutt2017positive,zhao2014periodic}. We will first review the construction following \cite{ zhao2014periodic} and define $k$-(semi)-dilation for Liouville domains. Then we will make some modifications on the construction for the proof of Theorem \ref{thm:B}.
\subsection{Symplectic cohomology}\label{s31}
Let $(W,\lambda)$ be an exact domain such that the Reeb flow of $R_{\lambda}$ on $\partial W$ is non-degenerate. We use $\cS$ to denote the length spectrum of the Reeb orbits of $R_{\lambda}$, i.e. 
$$\cS:=\left\{\int \gamma^*\lambda| \gamma \text{ is a $R_{\lambda}$ orbit}\right\}.$$ 
We assume $W,\partial W$ are both connected through out this paper. Let $(\widehat{W},\widehat{\lambda})$ be the completion of $(W,\lambda)$, i.e.\ $\widehat{W} = W \cup \partial W \times (1,\infty)$, $\widehat{\lambda}=\lambda$ on $W$ and is $r\lambda|_{\partial W}$ on $\partial W \times (1,\infty)$. We first recall the construction of symplectic cohomology and its positive counterpart. A Hamiltonian $H_t:S^1_t\times \widehat{W}\to \R$ is called admissible of slope $a\notin \cS$ if the following holds.
\begin{enumerate}
	\item $H_t$ is $C^2$-small and independent of $t$ on $W$.
	\item $H_t=ar+b$ for $r\gg 0$.
	\item All periodic orbits of $X_{H_t}$ are non-degenerate.
\end{enumerate}
Let $H_t$ be an admissible Hamiltonian, the symplectic action of a Hamiltonian orbit $x$ is defined by
\begin{equation}\label{eqn:action}
\cA_{H_t}(x):=-\int x^*\widehat{\lambda}+\int_{S^1}H_t\circ x \rd t.
\end{equation}
Let $J_t$ be an $S^1$-dependent almost complex structure, that is $J_t$ is compatible with $\rd \widehat{\lambda}$ and cylindrical convex for $r\gg 0$, i.e.\  $\widehat{\lambda} \circ J_t = \rd r$ for $r\gg 0$. Using such $J_t$, for two orbits $x,y$, we have a compactified moduli space
$$\cM_0(x,y):=\overline{\left\{u:\R_s\times S^1_t\to \widehat{W}\left|\partial_su+J_t(\partial_tu-X_{H_t})=0,\lim_{s\to-\infty} u = x, \lim_{s\to +\infty} u = y\right. \right\}/\R}.$$
For generic choices of $J$ and a choice of coherent orientations, we can count the moduli spaces $\cM_0(x,y)$ that have expected dimension $0$ and define a cochain complex $C(H_t)$ generated by orbits of $H_t$ whose differential is defined by $\delta^0(y)=\sum \# \cM_0(x,y)x$. If $H_t\le K_t$ for $r\gg 0$, then there is a continuation cochain map $C(H_t)\to C(K_t)$ and symplectic cohomology $SH^*(W)$ is defined as $\varinjlim_{H_t}H^*(C(H_t))$. Moreover, any colimit over a sequence of Hamiltonians with slope going to $\infty$ computes the symplectic cohomology. We will grade symplectic cohomology by $|x|=n-\mu_{CZ}(x)$ (which is in the $\Z_2$ sense if $c_1(W)\ne 0$) so that the product structure on the symplectic cohomology has no grading shift. 

When we choose $H_t$ such that $H_t$ is $C^2$-close to a function $h(r)$ with $h''(r)\ge 0$ on $\partial W \times (1,\infty)$, then the orbits of $X_{H_t}$ can be separated into constant orbits on $W$ whose symplectic action \eqref{eqn:action} is close to $0$ and non-constant orbits, which in pairs correspond to Reeb orbits of $\partial W$ with period up to the slope of $H_t$, whose symplectic action is smaller than $-\min \cS$. Then they form two complexes $C_0(H_t),C_+(H_t)$ and we have a short exact sequence
$$0\to C_0(H_t)\to C(H_t)\to C_+(H_t)\to 0, $$
since the differential of $C(H_t)$ increases the symplectic action. The continuation map $C(H_t)\to C(K_t)$ can be chosen to be compatible with the short exact sequence. $C_0(H_t)$ is simply the Morse cochain complex of $W$ if we choose $J_t$ independent of $t$ on $W$ and the positive symplectic cohomology $SH^*_+(W)$ is defined as $\varinjlim_{H_t} H^*(C_+(H_t))$. Then we have a tautological long exact sequence,
$$\ldots \to H^*(W)\to SH^*(W)\to SH^*_+(W)\to H^{*+1}(W)\to\ldots $$

To construct the $S^1$-equivariant symplectic cohomology, we consider the following perfect Morse function on $\CP^N$,
$$f_N([x_0:\ldots:x_N])=\frac{\sum_{j=0}^Nj|x_j|^2}{\sum_{j=0}^N|x_j|^2}.$$
Let $g_N$ be Fubini-Study metric on $\CP^N$ and $\widetilde{g}_N$ the standard metric on $S^{2N+1}$. Then $(f_N,g_N)$ is a Morse-Smale pair. We have $N+1$ critical points $\{z_i\}_{0\le i \le N}$ with the property that the gradient flow-lines from $z_i$ to $z_j$ is contained in the $\CP^{j-i}$ using the $z_i,\ldots,z_j$ coordinates and can be identified with the gradient flow lines from $z_{i+k}$ to $z_{j+k}$ naturally. Let $\widetilde{f}_N$ denote the lift of $f_N$ to the $S^{2N+1}$. Each critical point $z_i$ of $f_N$ lifts to a critical orbit $S_i$ of $\widetilde{f}_N$. For every critical point $z_i$, we pick a neighbourhood $O_i$ and a lift of $O_i$ to a local slice $U_i$ of the $S^1$ action, such that $\nabla \widetilde{f}_N$ is tangent to $U_i$\footnote{For example, we can pick $U_i\subset S^{2N+1}$ around the point determined by $x_i=1$ to be those with real $x_i$ coordinate.}. Let $\widetilde{z}_i$ denote the lift of $z_i$ in $U_i$,  then $V_i:=S^1\cdot U_i$ is an invariant neighbourhood of the critical orbit $S_i=S^1\cdot \widetilde{z}_i$. 
\begin{definition}\label{def:adm}
	A Hamiltonian $H_N:S^1\times S^{2N+1} \times \widehat{W}\to \R$ is said to be admissible if it satisfies the following conditions.
	\begin{enumerate}
		\item\label{a1} $H_N(t+\theta,\theta\cdot z, x)=H_N(t,z,x)$ for $\theta \in S^1$ and $H_N$ has slope $a\notin \cS$ for $r\gg 0$
		\item On $S^1\times V_i\times \widehat{W}$, the Hamiltonian is given by $H_N(t,x,z)=H_{t-\theta_z}(x)$, where $H_t:S^1\times \widehat{W}\to \R$ is a fixed admissible Hamiltonian with slope $a$ and $\theta_z$ is the unique element such that $\theta_z^{-1}z\in U_i$. In other words, for every $i$, on the slice $S^1\times U_i\times \widehat{W}$, we have $H_N(t,z,x)=H_t(x)$, and the property above follows from \eqref{a1}\footnote{Since we would like to construct an $S^1$-structure on $C(H_t)$, we require $H_t$ is independent of the critical point.}.
		\item\label{a3} We impose the symmetry that $H_N$ is the same on $\CP^{j-r}=\{(0:\ldots:0:z_r:\ldots:z_j:0:\ldots:0)\}\simeq \{(0:\ldots:0:z_{r+k}:\ldots:z_{j+k}:0:\ldots:0)\}$	via translation.
	\end{enumerate}
    Similarly a family of almost complex structures $J_N:S^1\times S^{2N+1}\to \End(T\widehat{W})$  is admissible if the following holds.
    \begin{enumerate}
    	\item $J_N$ is compatible with $\rd\widehat{\lambda}$ and is cylindrical convex for $r\gg 0$.
	 	\item $J_N(t+\theta,\theta\cdot z)=J_N(t,z)$, and over slice $U_j\ni \widetilde{z}_j$, $J_N$ is a fixed $S^1$-dependent admissible almost complex structure $J_t$. 
	 	\item We impose the symmetry that $J_N$ is the same on $\CP^{j-i}=\{(0:\ldots:0:z_i:\ldots:z_j:0:\ldots:0)\}\simeq \{(0:\ldots:0:z_{i+k}:\ldots:z_{j+k}:0:\ldots:0)\}$	via translation.
    \end{enumerate}
\end{definition}
Then for $r\le N$ and two orbits $x,y$ of $X_{H_t}$, we consider the following moduli space for $0\le k \le N-r$
\begin{equation*}
M_r(x,y):=\left\{\begin{array}{l} u:\R \times S^1 \to \widehat{W}, \\ z:\R \to S^{2N+1}
\end{array}\left|\begin{array}{l} \partial_su+J_N(t,z(s),u)(\partial_tu-X_{H_N(t,z(s))}) = 0\\
z'+\nabla_{\widetilde{g}_N}\widetilde{f}=0\\
\displaystyle\lim_{s\to -\infty} (u,z) \in S^1\cdot (x,\widetilde{z}_{k+r}), \lim_{s\to \infty}(u,z)\in S^1\cdot (y,\widetilde{z}_{k}). 
\end{array} \right. \right\}/\R\times S^1.
\end{equation*}
Here the $S^1$ action is given by $\theta\cdot (u,z) = (u(\cdot,\cdot+\theta), \theta\cdot z)$ and the $\R$ action is given by the simultaneous translation in $s$ of $(u,z)$. The symmetric property imposed on $H_N,J_N$ implies that $M_r(x,y)$ does not depend on $k$. The expected dimension of $M_r(x,y)$ is $|x|-|y|-1+2r$ and $M_r(x,y)$ admits a compactification 
$$\cM_r(x,y):=\bigsqcup_{\substack{r_1+\ldots+r_s=r \\ x_1,\ldots,x_r}} M_{r_1}(x,x_1)\times \ldots \times M_{r_s}(x_s,y),$$
which is a compact manifold with boundary for generic $J_N$ and if the expected dimension is $\le 1$. In particular, we can define 
$$\delta^r(y)=\sum \# \cM_r(x,y)x.$$
Then the boundary combinatorics of $\cM_r(x,y)$ implies that $(\delta^0,\ldots,\delta^N)$ is a $N$-truncated $S^1$-structure on $C(H_t)$ with $\delta^0$ is the Floer differential defined before. Next we can choose admissible $H_{N+1}$ with the property that $H_{N+1}(z_0:\ldots:z_N:0)=H_{N+1}(0:z_0:\ldots:z_N)=H_N(z_0:\ldots:z_N)$ and similarly for $J_{N+1}$, as a consequence, $\delta^r$ defined by $H_N$ and $H_{N+1}$ coincides for $r\le N$. Hence we can define $\delta^r$ for all $r$ and give $C(H_t)$ an $S^1$-structure. 

Now if $K_t$ is a single admissible Hamiltonian with slope larger than the slope of $H_t$, then we can construct admissible family $K_N,H_N$ such that $K_N\ge K_N$ for $r\gg 0$. Then can build a homotopy $F_N:\R_s\times S^1_t\times S^{2N+1}\times \widehat{W}$ satisfies the similar equivariant and symmetric properties \eqref{a1}, \eqref{a3} in Definition \ref{def:adm} and $F_N = K_N$ for $s\ll 0$, $F_N=H_N$ for $s\gg 0$.  

For $0\le r \le N, 0\le k \le N-r$, by considering the compactification $\cN_r(x,u)$ of the following moduli space using a compatible homotopy of almost complex structures $J_N:\R\times S^1 \times S^{2N+1}\to \End(T\widehat{W})$,
\begin{equation*}
N_r(x,y):=\left\{\begin{array}{l} u:\R \times S^1 \to \widehat{W},\\
z:\R \to S^{2N+1}\end{array}\left|\begin{array}{l} \partial_su+J_N(s,t,z(s),u)(\partial_tu-X_{F_N(s,t,z(s))}) = 0\\
z'+\nabla_{\widetilde{g}_N}\widetilde{f}=0\\
\displaystyle\lim_{s\to -\infty} (u,z) \in S^1\cdot (x,\widetilde{z}_{k+r}), \lim_{s\to \infty}(u,z)\in S^1\cdot (y,\widetilde{z}_{k}). 
\end{array} \right. \right\}/ S^1.
\end{equation*}
we can define a $N$-truncated morphism $\phi^r(y) = \sum \# \cN_{r}(x,y) x$  from $C(H_t)$ to $C(K_t)$. Using the same extension argument as before, we can enhance it to an $S^1$-morphism. Then the equivariant symplectic cohomology $SH^*_{S^1}(W)$ is defined as
$$\varinjlim_{H_t} H^*(C^+(H_t)),$$
which is a $\bm{k}[u]$ module and every element is torsion.

When we choose $H_t$ in the special form as in the definition of $C_+(H_t)$. We can then choose $H_N$ so that the action argument works for the $S^1$-structure, such that $\delta^r(x)$ has no $C_+(H_t)$ component for $x\in C_0(H_t)$. On the other hand, since we can pick $J_N,H_N$ on $W$  both independent of $S^1\times S^{2N+1}$ and still get transversality when $H_N|_W$ is $C^2$-small and $x,y$ are both constant orbits, then $\cM_r(x,y)$ carries an additional $S^1$-action by $\theta \cdot (u,z)= (u,\theta \cdot z)$ which is necessarily free when $r>0$. Therefore  $\la \delta^r(x),y\ra=0$ for $r>0,x,y\in C_0(H_t)$, i.e.\ the $N$-truncated $S^1$-structure $C(H_t)$ is compatible with splitting and we have the following tautological long exact sequence,
\begin{equation}\label{eqn:exact}
\ldots \to H^*_{S^1}(W)\to SH^*_{S^1}(W) \to SH^*_{S^1,+}(W)\to H^{*+1}_{S^1}(W)\to \ldots,
\end{equation}
where $SH^*_{S^1,+}(W):=\varinjlim_{H_t} H^*(C^+_+(H_t))$ and $H^*_{S^1}(W):=H^*(C^+_0(H_t))=H^*(W)\otimes (\bm{k}[u,u^{-1}]/u)$. We refer readers to \cite{bourgeois2016s,gutt2017positive,zhao2014periodic} for details on well-definedness of such invariants, e.g.\ independence of auxiliary data etc.

\begin{remark}
	Strictly speaking, it may not be possible to arrange the extension property $H_N(z_0:\ldots:z_N)=H_{N+1}((z_0:\ldots:z_N:0)=H_{N+1}(0:z_0:\ldots:z_N)$ and maintaining the action separation for $C_0,C_+$ at the same time, as $\delta^r$ counts parameterized Floer cylinders whose energy is the difference of symplectic actions with an error term $\int_{\R\times S^1} \partial_sH(t,z(s),u)\rd s \rd t$, see the proof of Proposition \ref{prop:reg}. A solution to this was explained in  \cite[p.3867 before \S 2.3]{bourgeois2016s}. Another solution is by setting up the $S^1$ complex using an autonomous Hamiltonian $H$ with $S^1$ families of orbits that are Morse-Bott non-degenerate and setting $H_N=H$ independent of $S^1\times S^{2N+1}$ for any $N$. The cascades construction in \cite{bourgeois2009symplectic} can be easily adapted to this parameterized version and the error term from $\partial_s H$ in energy vanishes, see \S \ref{s52} for a brief description of such cascades construction. For the purpose of defining $SH^*_{S^1,+}(W)$ and obtaining \eqref{eqn:exact}, we can find $H^N_N$ with slope goes to $\infty$ as $N\to \infty$, such that the action argument works for both $H^N_N$ and the continuation map induced by the homotopy between $H^N_N$ and $H^{N+1}_{N+1}|_{S^{2N+1}\subset S^{SN+3}}$. Then $SH^*_{S^1,+}(W)$ is defined as $\varinjlim_{N\to\infty} H^*(F^NC^+_+(H^N))$, where $H^N$ is $H^N_N$ over critical points. This is the approach taken in \cite{gutt2017positive}.
\end{remark}

\begin{remark}
	Instead of defining equivariant symplectic cohomology as a colimit of cohomology, it is also possible to define an $S^1$-cochain complex for the full equivariant symplectic cohomology. Given a family of admissible Hamiltonians $H_t^i$ with slope goes to $\infty$, the construction above gives  an $S^1$-structure on each $C(H_t^i)$ as well as an $S^1$ morphism $C(H_t^i)\to C(H_t^{i+1})$. Then by the telescope construction we get a homotopy colimit $\mathrm{hocolim} C(H^i_t)$ with an $S^1$-structure \cite[\S 7]{zhao2014periodic}. It is straightforward to check that the $(\mathrm{hocolim} C(H^i_t))^+$ is the homotopy colimit of $C^+(H^i_t)$, in particular $H^*((\mathrm{hocolim} C(H^i_t))^+)\simeq SH^*_{S^1}(W)$. However, as explained in \cite[\S 8]{zhao2014periodic}, this is no longer the case for $C^-,C^{\infty}$ as the homotopy colomit does not commute with the $u$-adic completion in the definition of $C^-,C^{\infty}$ in general.
\end{remark}

\subsection{$k$-(semi)-dilations}\label{s3s3}
We use $\pi_0$ to denote the projection $H^*(W)\to H^0(W)$ as well as the projection from $H_{S^1}^*(W)$ to $H^0(W)$ if there is no confusion. In view of the discussion above,  Definition \ref{def:dilation} and Proposition \ref{prop:order}, we introduce the following.
\begin{definition}
	We say $W$ carries a $k$-dilation, iff there exists $x\in SH^*_{S^1,+}(W)$ such that $[\delta^{S^1}_{+,0}](x)=1$ and $u^{k+1}x=0$. We say $W$ carries a $k$-semi-dilation, iff there exists $x\in SH^*_{S^1,+}(W)$ such that $\pi_0\circ [\delta^{S^1}_{+,0}](x)=1$ and $u^{k+1}x=0$.
\end{definition}

\begin{proposition}\label{prop:TFAE}
	The followings are equivalent.
	\begin{enumerate}
		\item\label{e1} $W$ carries a $k$-(semi)-dilation.
		\item\label{e2} $C(H_t)$ carries a $k$-(semi)-dilation for some admissible $H_t$.
		\item\label{e3} The $k$-truncated $S^1$-structure on $C(H_t)$ carries a $k$-(semi)-dilation.
	\end{enumerate}
\end{proposition}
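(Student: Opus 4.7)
The plan is to argue (2) $\iff$ (3) directly from the definitions and to settle (1) $\iff$ (2) by a standard colimit/functoriality argument, using that continuation maps are $S^1$-morphisms compatible with the $C_0$--$C_+$ splitting.

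First I would dispatch (2) $\iff$ (3). By Definition \ref{def:dilation} together with Proposition \ref{prop:order}, a $k$-(semi)-dilation on $C(H_t)$ is witnessed by a class in $H^*(F^kC^+_+,\delta^{S^1}_+)$ whose image under $[\delta^{S^1}_{+,0}]$ (respectively, its composition with $\pi_0$) equals $[e]$. But on $F^kC^+ = C\otimes \langle 1,u^{-1},\dots,u^{-k}\rangle$ the operator $\delta^{S^1} = \sum_i u^i\delta^i$ only involves $\delta^0,\ldots,\delta^k$, and the analogous statement holds for $\delta^{S^1}_+$ and $\delta^{S^1}_{+,0}$. Hence the $k$-(semi)-dilation condition is visible on the $k$-truncated structure alone, and conversely any $k$-truncated witness determines a class in $F^kC^+$; so (2) and (3) literally coincide.

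For (2) $\Rightarrow$ (1), I would apply Proposition \ref{prop:order} to extract a class $x_H \in H^*(C^+_+(H_t),\delta^{S^1}_+)$ satisfying $u^{k+1}x_H = 0$ and $[\delta^{S^1}_{+,0}](x_H) = [e]$ (resp.\ $\pi_0\circ [\delta^{S^1}_{+,0}](x_H) = [e]$). Pushing $x_H$ forward to the colimit $SH^*_{S^1,+}(W)$ along continuation maps, and noting that these are $S^1$-morphisms compatible with the splitting (as arranged in \S\ref{s31}), they intertwine the $u$-action and $[\delta^{S^1}_{+,0}]$ and fix the class $1 \in H^0(W)$, since the continuation on $C_0$ is the Morse identity. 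The image of $x_H$ then realizes the required $k$-(semi)-dilation of $W$.

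For (1) $\Rightarrow$ (2), I start with $x \in SH^*_{S^1,+}(W)$ represented by some $x_H \in H^*(C^+_+(H_t),\delta^{S^1}_+)$. The relation $u^{k+1}x = 0$ and the identity $[\delta^{S^1}_{+,0}](x) = 1$ (resp.\ $\pi_0\circ[\delta^{S^1}_{+,0}](x) = 1$) each hold in a colimit of cochain complexes, so each is witnessed at some admissible $K_t$ dominating $H_t$. Taking a common upper bound $K_t$ and using the splitting-compatibility of the continuation $S^1$-morphisms, both relations can be realized simultaneously on the single complex $C(K_t)$; Proposition \ref{prop:order} then certifies that $C(K_t)$ carries a $k$-(semi)-dilation. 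The only subtle point here is to verify that the distinguished class $[e]=1$ and the splitting $C_0 \hookrightarrow C \twoheadrightarrow C_+$ are preserved under the continuation used in the colimit, which is precisely what the action-separation and symmetric-Hamiltonian setup of \S\ref{s31} delivers; once this is in hand, everything is formal.
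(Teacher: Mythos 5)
Your proposal is correct and follows essentially the same route as the paper: the equivalence of (2) and (3) is definitional (the $k$-(semi)-dilation condition only sees $F^kC^+$, hence only the $k$-truncated structure), and (1) $\Leftrightarrow$ (2) is exactly the paper's argument that $SH^*_{S^1,+}(W)$ is a colimit along splitting-compatible $S^1$-morphisms together with Proposition \ref{prop:order}. Your write-up merely spells out the colimit bookkeeping that the paper leaves implicit.
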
	
\begin{proof}
	The equivalence of \eqref{e1} and \eqref{e2} follows from that $SH^*_{S^1}(W)$ arises as a colimit of $H^*(C^+(H_t))$ and Proposition \ref{prop:order}. The equivalence of \eqref{e2} and \eqref{e3} is by definition.
\end{proof}

We define $\Delta^k=\varinjlim_{H_t} \Delta^k(C(H_t)),\Delta^k_+=\varinjlim_{H_t} \Delta^k(C_+(H_t))$ and $\Delta^k_{+,0}:=\varinjlim_{H_t} \Delta^k_{+,0}(C(H_t))$, then we have $\Delta^1$ is the BV operator $\Delta$ on $SH^*(W)$ and $\Delta^k:\ker \Delta^{k-1}\to \coker \Delta^{k-1},\Delta^k_+:\ker \Delta_+^{k-1} \to \coker \Delta_+^{k-1}$. $\Delta^0_{+,0}$ is the connecting map $SH^{*}_+(W)\to H^{*+1}(W)$ and $\Delta^k_{+,0}:\ker \Delta_+^{k}\to \coker \Delta_{+,0}^{k-1}$. Moreover we have the following.

\begin{proposition}\label{prop:transfer}
	The structural maps $\Delta^k, \Delta^{k}_+, \Delta_{+,0}^k$ are compatible with the Viterbo transfer map in the following sense. Assume $V\subset W$ is an exact subdomain, then the following diagram commutes.
	$$\xymatrix{\ker \Delta^k_W \ar[r]^{\Delta^{k+1}_{W}}\ar[d] &  \coker \Delta^k_W\ar[d] &\ker \Delta^k_{+,W} \ar[r]^{\Delta^{k+1}_{+,W}}\ar[d] &  \coker \Delta^k_{+,W}\ar[d] &	\ker \Delta^k_{+,W} \ar[r]^{\Delta^k_{+,0,W}}\ar[d] &  \coker \Delta^{k-1}_{+,0,W}\ar[d]\\
		\ker \Delta^k_V \ar[r]^{\Delta^{k+1}_{V}} &  \coker \Delta^k_V & 	\ker \Delta^k_{+,V} \ar[r]^{\Delta^{k+1}_{+,V}} &  \coker \Delta^k_{+,V} 	& \ker \Delta^k_{+,V} \ar[r]^{\Delta^k_{+,0,V}} &  \coker \Delta^{k-1}_{+,0,V}},	
	$$
	where the vertical maps are Viterbo transfer maps.
\end{proposition}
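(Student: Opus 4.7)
The plan is to realize the Viterbo transfer as a splitting-compatible $S^1$-morphism at the cochain level and then invoke the functoriality in Propositions \ref{prop:func1} and \ref{prop:func2}.

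First I would fix a cofinal family of stretched admissible Hamiltonians $H^{\tau}$ on $\widehat{W}$ that are $C^2$-small on $V$, of large positive slope in a collar of $\partial V$, again small on $W\setminus V$, and linear of slope $a\notin\mathcal{S}(\partial W)$ on the conical end of $W$. For $\tau$ large, the standard symplectic-action estimate (as in the classical Viterbo transfer) shows that the orbits lying outside $V$ generate a subcomplex $C^{\mathrm{out}}(H^{\tau})\subset C(H^{\tau})$ for $\delta^0$ whose quotient is canonically identified with the admissible Floer complex $C(H^{\tau}_V)$. Lifting this to the $S^1$-equivariant level, I would construct admissible pairs $(H_N^{\tau},J_N^{\tau})$ in the sense of Definition \ref{def:adm} that preserve this regional structure and are $S^1$-independent over the Morse parts of both $V$ and $W$. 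Since the parameterized action defect $\int\partial_s H_N^{\tau}$ can be made arbitrarily small (or zero in the autonomous/cascades description referenced in \S\ref{s52}), the action separation used for $\delta^0$ persists at every order $r\ge 0$. Hence $C^{\mathrm{out}}$ is preserved by the full $\delta^{S^1}$, the projection $\pi^{\tau,S^1}\colon C(H^{\tau})\to C(H^{\tau}_V)$ is an $S^1$-morphism, and the free $S^1$-action argument of \S\ref{s31} on moduli of constant orbits with $r>0$ makes $\pi^{\tau,S^1}$ compatible with the splittings $C=C_0\oplus C_+$ on both sides.

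Taking the colimit over $\tau$ and slopes (e.g.\ via the telescope construction) yields a splitting-compatible $S^1$-morphism $\pi^{S^1}$ realizing the Viterbo transfer, together with its components $\pi_0^{S^1}\colon C_0^W\to C_0^V$ and $\pi_+^{S^1}\colon C_+^W\to C_+^V$. Proposition \ref{prop:func1} applied to $\pi^{S^1}$ gives the first commutative diagram (for $\Delta^k$), and applied to $\pi_+^{S^1}$ it gives the second (for $\Delta^k_+$). For the third diagram, note that $\delta_{+,0}=(\delta^i_{+,0})$ is an $S^1$-morphism $C_+\to C_0[1]$ for both $V$ and $W$, whose target has trivial higher $S^1$-structure, so the map $\Delta^k_{+,0}$ is exactly the $\Phi^k$ constructed before Proposition \ref{prop:func2}. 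Splitting-compatibility of $\pi^{S^1}$ then gives $\pi_0^{S^1}\circ\delta_{+,0}^W=\delta_{+,0}^V\circ\pi_+^{S^1}$ up to $S^1$-homotopy (the homotopy being supplied by the higher components $\pi^i_{+,0}$), and Proposition \ref{prop:func2} applied to this data yields the third diagram.

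The main obstacle is establishing the parameterized action separation at every order $r\ge 0$, i.e.\ showing that the full $S^1$-differential, and not merely $\delta^0$, preserves $C^{\mathrm{out}}$. This reduces to a uniform bound on the defect term $\int\partial_sH_N^{\tau}$ together with a sufficiently large slope in the collar of $\partial V$; everything else is formal from the functoriality of the structural maps as encoded in Propositions \ref{prop:func1} and \ref{prop:func2}.
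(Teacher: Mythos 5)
Your proposal is correct and takes essentially the same route as the paper: the paper's proof simply realizes the $S^1$-equivariant Viterbo transfer as a splitting-compatible $S^1$-morphism (citing its construction as a continuation map in \cite{gutt2017positive}) and then invokes Propositions \ref{prop:func1} and \ref{prop:func2}, exactly as you do. The only difference is that you re-derive the transfer at the cochain level via step Hamiltonians and action separation instead of citing it, and your identification of $\Delta^k_{+,0}$ with the $\Phi^k$ of the $S^1$-morphism $\delta_{+,0}\colon C_+\to C_0[1]$, with the homotopy supplied by the components $\pi^i_{+,0}$ of the splitting-compatible transfer, is precisely the intended application of Proposition \ref{prop:func2}.
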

\begin{proof}
	It follows from the construction of the Viterbo transfer map for $S^1$-equivariant symplectic cohomology in \cite{gutt2017positive} as a continuation map, hence an $S^1$-morphism, as well as Proposition \ref{prop:func1} and \ref{prop:func2}.
\end{proof}

We define $\Delta^k_{\partial}$ to be the composition of $\Delta^k_{+,0}$ with $H^*(W)\to H^*(\partial W)$, then by definition, $\Delta^k_{\partial}$ is well-defined from $\ker \Delta^{k}_+$ to $\coker \Delta^{k-1}_{\partial}$. In order words, $\Delta_{\partial}^{k}$ is induced from the composed $S^1$-morphism $C_+(H_t)\stackrel{\delta_{+,0}}{\to} C(W)\to C(\partial W)$, where $C(W),C(\partial W)$ are Morse cochain complexes of $W,\partial W$ with the trivial $S^1$-structure (i.e.\ $\delta^r=0$ for $r\ge 1$). An instant corollary of the Viterbo transfer map in Proposition \ref{prop:transfer} is the following.
\begin{proposition}\label{prop:inv}
	The structure maps $\Delta^k, \Delta^{k}_+, \Delta^k_{+,0}, \Delta^k_{\partial}$ are invariants of Liouville domains up to exact symplectomorphisms.
\end{proposition}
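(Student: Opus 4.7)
The plan is to reduce the claim to a chain-level naturality statement by pulling back all Floer-theoretic data under an exact symplectomorphism, and then invoke the functorial construction of $\Delta^{k}$, $\Delta^{k}_{+}$, $\Delta^{k}_{+,0}$, $\Delta^{k}_{\partial}$ from the $S^1$-structure. Concretely, given an exact symplectomorphism $\phi:(W_1,\lambda_1)\to(W_2,\lambda_2)$, I would first extend it uniquely to an exact symplectomorphism $\hat{\phi}:(\hat W_1,\hat\lambda_1)\to(\hat W_2,\hat\lambda_2)$ of the completions, using that on the cylindrical ends $\hat{\phi}$ is determined by the contactomorphism $\phi|_{\partial W_1}:\partial W_1\to\partial W_2$ together with the requirement $\hat\phi^*\hat\lambda_2=\hat\lambda_1$. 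In particular, $\hat\phi$ identifies the length spectra $\cS$ and intertwines the Liouville flows.

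Next I would check that the admissible data from Definition \ref{def:adm} pulls back. Given an admissible $(H_N,J_N)$ on $\hat W_2$ with slope $a\notin\cS$, the pair $(\hat\phi^*H_N,\hat\phi^*J_N)$ is admissible on $\hat W_1$ with the same slope: the $S^1$-equivariance in $(t,z)$ and the $\CP^{j-i}$-translation symmetries only involve the $S^{2N+1}$ factor, which $\hat\phi$ does not touch, and cylindrical convexity together with the quadratic-at-infinity behaviour are preserved because $\hat\phi$ commutes with the Liouville flow on the ends. The pullback then induces a bijection of $X_{H_N}$-orbits respecting the separation between constant and non-constant orbits (hence the splitting $C=C_0\oplus C_+$) as well as a bijection of the parametrized moduli spaces $M_r(x,y)$ and $N_r(x,y)$, with the correct sign conventions. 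This yields chain isomorphisms $C(H_t;W_2)\cong C(\hat\phi^*H_t;W_1)$, $C_{\pm}(H_t;W_2)\cong C_{\pm}(\hat\phi^*H_t;W_1)$ intertwining the full $S^1$-structure and the splitting data, and an analogous chain isomorphism for the continuation $S^1$-morphisms used to build colimits.

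Since all of $\Delta^k$, $\Delta^k_+$, $\Delta^k_{+,0}$ are defined purely in terms of this $S^1$-structure and splitting (via Proposition \ref{prop:ss} and its variants in \S\ref{s2}), and since $\Delta^k_\partial$ is the composition with the pullback map $H^*(W)\to H^*(\partial W)$, which on the Morse side is itself induced by $\phi$, Propositions \ref{prop:func1} and \ref{prop:func2} applied to the pullback $S^1$-morphism give the desired commutativity. Passing to colimits over admissible slopes (and noting that $\hat\phi$ sends a cofinal family on $\hat W_2$ to a cofinal family on $\hat W_1$) then yields the claim. As an alternative more in line with the paper's setup, one can argue via Proposition \ref{prop:transfer} by factoring $\phi$ through a Liouville-shrunk copy of $W_1$ sitting inside both $W_1$ and $W_2$ as an exact subdomain, and using that the Viterbo transfer for an exact symplectomorphism is an isomorphism; the two commuting triangles from Proposition \ref{prop:transfer} then splice into the desired square.

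The proof is essentially bookkeeping rather than involving a serious obstacle; the only point requiring some care is making sure the extension $\hat\phi$ really does preserve the auxiliary $\CP^N$-symmetry and slice conventions in Definition \ref{def:adm}, but this is automatic because those conventions live on the $S^{2N+1}$-factor on which $\hat\phi$ acts trivially. The main conceptual content is already packaged in the functoriality provided by Propositions \ref{prop:func1}, \ref{prop:func2}, and \ref{prop:transfer}.
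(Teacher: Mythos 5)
Your proposal is correct, and its second route is exactly the paper's: the paper offers no separate argument for this proposition beyond the sentence preceding it, declaring it an instant corollary of the Viterbo transfer compatibility in Proposition \ref{prop:transfer} (together with the tacit tautological identification of Floer data under the symplectomorphism). Your primary route makes that tacit identification explicit at chain level: pulling back $(H_N,J_N)$ under the completed map, checking that admissibility, the $S^{2N+1}$-symmetries, the action-based splitting $C=C_0\oplus C_+$, and the moduli counts are preserved, and then invoking Propositions \ref{prop:func1} and \ref{prop:func2} plus cofinality in the colimit. That is more work than the paper does, but it is the honest content behind the one-liner. The only point where your write-up is slightly too optimistic is the claim that $\hat\phi$ can be chosen with $\hat\phi^*\hat\lambda_2=\hat\lambda_1$ on the nose and restricting to a strict contactomorphism of the boundaries: an exact symplectomorphism only gives $\phi^*\lambda_2-\lambda_1=\rd h$, so the completion extension intertwines the Liouville structures only outside a compact set (after composing with Liouville flows), and the pulled-back Hamiltonian is admissible with respect to the transported radial coordinate rather than literally of the standard form. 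This does no harm — the exact term integrates to zero over loops, so all symplectic actions and the $C_0/C_+$ separation are unaffected, and continuation maps absorb the change of cofinal family — but it is the one place where "bookkeeping" actually requires a remark rather than being automatic.
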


Then by Proposition \ref{prop:equi}, we can define $k$-(semi)-dilation equivalently by the following.

\begin{proposition}
	$W$ carries a $k$-dilation iff $1\in \Ima \Delta^k$ and $W$ carries a $k$-semi-dilation iff there exists an element $x\in H^*(\partial W)$ such that $\pi_0(x)=1$ and $x\in \Ima \Delta^k_{\partial}$. 
\end{proposition}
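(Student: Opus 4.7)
The plan is to deduce this from the algebraic characterization already obtained in Proposition~\ref{prop:equi}, applied to the $S^1$-complex $C(H_t)$ with the splitting described in \S\ref{s31}. I would take the distinguished element to be the unit $1\in H^0(W)=H^0(C_0(H_t),\delta^0_0)$ and the preferred projection to be $\pi_0\colon H^*(W)\to H^0(W)$. Proposition~\ref{prop:equi} then gives directly: $C(H_t)$ carries a $k$-dilation iff $1\in \Ima \Delta^k(C(H_t))$, and carries a $k$-semi-dilation iff there exists $[x]\in H^*(W)$ with $\pi_0([x])=1$ and $[x]\in \Ima \Delta^k_{+,0}(C(H_t))$.

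Next I would promote these statements from a single $C(H_t)$ to the colimit. By Proposition~\ref{prop:TFAE}, $W$ carries a $k$-(semi)-dilation iff $C(H_t)$ does for some admissible $H_t$. The continuation maps are $S^1$-morphisms compatible with the splittings, so by Proposition~\ref{prop:func1} they fit into commutative squares with the operators $\Delta^k$ and $\Delta^k_{+,0}$. Since the unit $1\in H^0(W)$ is strictly preserved throughout the directed system (it is the same class in every Morse complex $C_0(H_t)$), passing to the colimit yields that $W$ carries a $k$-dilation iff $1\in \Ima \Delta^k$, and carries a $k$-semi-dilation iff there exists $[x]\in H^*(W)$ with $\pi_0([x])=1$ lying in $\Ima \Delta^k_{+,0}$.

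The remaining task is to translate the $k$-semi-dilation condition from $\Delta^k_{+,0}$ to $\Delta^k_\partial = r\circ \Delta^k_{+,0}$, where $r\colon H^*(W)\to H^*(\partial W)$ is the restriction. Since $W$ and $\partial W$ are both connected, $r$ restricts to an isomorphism on $H^0$ sending $1$ to $1$, and in particular intertwines the two projections $\pi_0$. Pushing forward by $r$ a witness $[x]\in H^*(W)\cap \Ima \Delta^k_{+,0}$ with $\pi_0([x])=1$ produces a witness $[y]=r([x])\in H^*(\partial W)\cap \Ima \Delta^k_\partial$ with $\pi_0([y])=1$. Conversely, given $[y]=\Delta^k_\partial(\alpha)$ with $\pi_0([y])=1$, the lift $[x]:=\Delta^k_{+,0}(\alpha)$ satisfies $r([x])=[y]$ and hence $\pi_0([x])=1$ by the isomorphism $r|_{H^0}$.

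I do not anticipate a serious obstacle: the whole argument is essentially a translation of Proposition~\ref{prop:equi} into the symplectic setting. The one point that requires mild care is the passage to the colimit, which is routine once one notes the strict functoriality of $\Delta^k,\Delta^k_{+,0}$ under continuation (Proposition~\ref{prop:func1}) and the fact that the unit is common to all Morse subcomplexes $C_0(H_t)$.
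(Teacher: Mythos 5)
Your proposal is correct and follows essentially the same route as the paper, which states this proposition as an immediate consequence of Proposition \ref{prop:equi} (applied to the split $S^1$-complex $C(H_t)$ with $[e]=1$) together with the definitions of $\Delta^k,\Delta^k_{+,0},\Delta^k_{\partial}$ as colimits and the fact that $\Delta^k_{\partial}$ is $\Delta^k_{+,0}$ composed with the restriction $H^*(W)\to H^*(\partial W)$, which is an isomorphism on $H^0$ since $W$ and $\partial W$ are connected. The only point to phrase slightly more carefully is that $\Delta^k_{+,0}(\alpha)$ and $\Delta^k_{\partial}(\alpha)$ are only defined modulo $\Ima\Delta^{k-1}_{+,0}$ and $\Ima\Delta^{k-1}_{\partial}$, but since these ambiguities correspond under the restriction map the discrepancy can be absorbed, so your translation between the two semi-dilation criteria goes through.
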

Then $0$-semi-dilation is equivalent to $SH^*(W)=0$, as the condition is equivalent to the vanishing of the unit $\iota(x)\in SH^*(W)$ for $x\in H^*(W)$ with $\pi_0(x)=1$. Moreover, $1$-dilation, i.e.\ $\Delta(x)=1$, is the symplectic dilation defined in \cite{seidel2012symplectic}. By Proposition \ref{prop:hier}, $k$-dilation implies $k$-semi-dilation, but not necessarily the other way around. It is not clear to us whether there exist examples with $k$-semi-dilation but not $k$-dilations. By Proposition \ref{prop:hier} and \ref{prop:transfer}, we have the following.
\begin{proposition}\label{prop:inc} Let $W$ be a Liouville domain. If $W$ admits a $k$-(semi)-dilation, then $W$ admits a $j$-(semi)-dilation for every $j>k$. 	If in addition, $V$ is an exact subdomain of $W$, then $V$ admits a $k$-(semi)-dilation.
\end{proposition}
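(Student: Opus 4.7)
My plan is to deduce both statements purely from the algebraic and functorial machinery already established: Proposition \ref{prop:hier} (the hierarchy $k \Rightarrow k+1$ for any compatibly split $S^1$-complex), Proposition \ref{prop:TFAE} (that $k$-(semi)-dilations for $W$ can be read off from a single admissible Hamiltonian), and Proposition \ref{prop:transfer} (that the Viterbo transfer commutes with $\Delta^k, \Delta^k_+, \Delta^k_{+,0}$). Since everything needed is packaged in these results, the proof should be short.

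First, I would handle the hierarchy in $k$. By Proposition \ref{prop:TFAE}, $W$ admits a $k$-(semi)-dilation iff the $S^1$-complex $C(H_t)$ does, for some admissible $H_t$. Then Proposition \ref{prop:hier} applied to $(C(H_t),\delta)$ — using the splitting $C(H_t) = C_0(H_t) \oplus C_+(H_t)$ coming from the action filtration — yields a $(k+1)$-(semi)-dilation on $C(H_t)$. Translating back through Proposition \ref{prop:TFAE} gives a $(k+1)$-(semi)-dilation of $W$, and iterating produces a $j$-(semi)-dilation for every $j > k$.

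Next, I would handle the subdomain statement via Proposition \ref{prop:transfer}. The Viterbo transfer $H^*(W) \to H^*(V)$ is the restriction map and therefore sends the unit $1 \in H^0(W)$ to the unit $1 \in H^0(V)$; it also commutes with the projection $\pi_0$ onto degree zero. For the $k$-dilation case, if $1 \in \Ima \Delta^k_W \subset \coker \Delta^{k-1}_W$ (via the equivalent characterization using $\Delta^k$), the third diagram of Proposition \ref{prop:transfer} (with $k$ shifted appropriately) yields $1 \in \Ima \Delta^k_V$, so $V$ admits a $k$-dilation. For the $k$-semi-dilation case, pick $x \in H^*(W)$ with $\pi_0(x) = 1$ and $x \in \Ima \Delta^k_{+,0,W}$; the rightmost commutative square of Proposition \ref{prop:transfer} sends $x$ to its restriction $x|_V \in H^*(V)$, which lies in $\Ima \Delta^k_{+,0,V}$ and still satisfies $\pi_0(x|_V) = 1$.

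No step is really an obstacle here; the content has been placed in the preceding propositions. The only thing to be careful about is the bookkeeping: confirming that the unit in $H^0(W)$ is preserved by the Viterbo transfer (a standard property of restriction to a Liouville subdomain) and that the distinguished element $[e]$ appearing in the algebraic Definition \ref{def:dilation} is, under the identification $C_0 = $ Morse complex of $W$ of \S\ref{s3s3}, precisely this unit so that the algebraic and geometric notions match via Proposition \ref{prop:equi}. With those identifications in place, the proposition follows by stringing together Propositions \ref{prop:hier}, \ref{prop:TFAE}, and \ref{prop:transfer}.
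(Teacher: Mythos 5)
Your argument is correct and is essentially the paper's own proof: the paper deduces this proposition directly from Propositions \ref{prop:hier} and \ref{prop:transfer}, exactly as you do (with Proposition \ref{prop:TFAE}/\ref{prop:equi} supplying the translation between the geometric and algebraic notions). One minor label slip: for the $k$-dilation case the relevant commuting square in Proposition \ref{prop:transfer} is the first one, for $\Delta^k$ on $SH^*$ (matching your cited characterization $1\in\Ima\Delta^k$), not the third; the third square is what you correctly use for the semi-dilation case.
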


\begin{example}
	The cotangent bundle of $K(\pi,1)$ (e.g.\ $T^n$, hyperbolic manifolds) does not admit any $k$-semi-dilation, because the positive symplectic cohomology in the trivial homotopy class is trivial and operators $\Delta^k_+, \Delta^k_{\partial}$ preserve the homotopy class of the generator. Then if $V$ admits a $k$-semi-dilation, $V$ does not contain an exact $K(\pi,1)$ Lagrangian by Proposition \ref{prop:inc}.	
\end{example}
In view of Proposition \ref{prop:inc}, we define the following invariants for Liouville domain, which serve as measurements of the complexity of Liouville domains.
\begin{definition}
	Given a Liouville domain $W$, we define $\rD(W)$ to be the minimal number $k$ such that $W$ admits a $k$-dilation over $\Q$ and $\rSD(W)$ to be the minimal number $k$ such that $W$ admits a $k$-semi-dilation over $\Q$. $\rD(W),\rSD(W)$ are $\infty$ if the structure does not exist. We can also define $\rD(W;\bm{k})$ and $\rSD(W;\bm{k})$ using any other coefficient $\bm{k}$.
\end{definition}

\subsection{An alternative construction}\label{s32}
In the following, we will introduce a slightly different construction of the $S^1$-equivariant theory, that is used in \S \ref{s4} for the independence result for ADC contact manifolds. The construction has the following features (1) Hamiltonian $H_t$ is $0$ on $W$, which makes the neck-stretching argument cleaner; (2) The construction essentially uses a Hamiltonian that captures all Reeb orbits and absorb the continuation map $C(H_t)\to C(K_t)$ as an inclusion of subcomplexes, which makes the comparison of the theory between two hypothetical fillings simpler, since there is no need to compare this continuation map; (3) The drawback is that we can only construct a truncated $S^1$-structure on the cochain complex.  

We first modify the definition of admissible Hamiltonian, which will be called nice Hamiltonians. In the following, we fix a small positive number $\epsilon<\min \cS$.  Let $H_0:\widehat{W}\to\R$ satisfy the following.
\begin{enumerate}
	\item $H_0=0$ on $W$.
	\item $H_0=h(r)$ on $\partial W\times (1,\infty)$ such that $h',h''\ge 0$ and $\lim_{r\to \infty}h'(r)=\infty$.
	\item There exists open intervals $(a_i,b_i)$ with $a_0>1$ going to $\infty$, such that $h'(r)$ is constant and not in $\cS$ on $[a_i,b_i]$ and $h''(r)=0$ only on those intervals. Moreover, $h'(r)<\min \cS$ on $[1,a_0]$.
	\item\label{sep} $\epsilon < \min\{-r_1h'(r_1)+h(r_1)+r_2h'(r_2)-h(r_2)| r_1<a_i, r_2>b_i, h'(r_1), h'(r_2)\in \cS     \}.$
\end{enumerate}
We use $c_i$ to denote the middle point of $[a_i,b_i]$ and $W^i$ to denote the subspace $\{r\le c_i\}$. Then the non-constant orbits of $X_{H_0}$ correspond to Reeb orbits of $R_{\lambda}$ on level $r$ such that $h'(r)\in \cS$. The corresponding symplectic action \eqref{eqn:action} is given by $-rh'(r)+h(r)$. The purpose of \eqref{sep} is to make sure that the action gap of the orbits inside $W^i$ and outside $W^i$ is at least $\epsilon$, which is crutial in proving orbits inside $W^i$ form a subcomplex. To see that we can find such uniform $\epsilon$ for all $i$, note that $\frac{\rd}{\rd r}(-rh'(r)+h(r))=-rh''(r) \le 0$. Let $r_1$ be the maximum $r_1$ such that $r_1<a_i$ and $h'(r_1)\in \cS$ and $r_2$ the minimum $r_2$ such that $r_2>b_i$ and $h'(r_2)\in \cS$, hence we have
\begin{eqnarray*}
	\min\left\{-r_1h'(r_1)+h(r_1)+r_2h'(r_2)-h(r_2)| r_1<a_i, r_2>b_i, h'(r_1), h'(r_2)\in \cS \right\} & \ge &  \int_{r_1}^{r_2} rh''(r) \\
	& \ge  & r_1(h'(r_2)-h'(r_1)). 
\end{eqnarray*}
$h'(r_2)-h'(r_1)$ is a gap of two consecutive numbers in $\cS$, which might converge to $0$ as $h'(r_1)$ goes to $\infty$. However, we can arrange $h'(r)$ to grow slow enough, such that for every consecutive numbers $a,b\in \cS$, we have $r(b-a)$ is uniformly bounded below by a positive number where $r$ satisfies $h'(r)=a$. A Hamiltonian $H:S^1\times \widehat{W}\to \R$ is called nice iff it is a $C^2$-small time dependent non-degenerate perturbation to $H_0$ on $\partial W\times (1,\infty)$ away from $\cup_{i}\partial W \times (a_i,b_i)$, such that pairs of non-constant orbits correspond to $S^1$-family of orbits of $X_{H_0}$ with arbitrarily close symplectic action and the action gap between orbits in $W^i$ and outside $W^i$ is at least $\epsilon$ for all $i$.

\begin{definition}
	An almost complex structure $J$ is nice iff the following holds.
	\begin{enumerate}
		\item $J$ is compatible with $\rd \widehat{\lambda}$.
		\item $J$ is cylindrical convex on $\partial W \times (a_i,b_i)$ and near $\partial W$, i.e. $ \widehat{\lambda} \circ J = \rd r$. 
	\end{enumerate}
	We use $\cJ(W)$ to denote the set of nice almost complex structures.
\end{definition}

\begin{definition}
	A Morse function $f$ on the domain $W$ is admissible iff $\partial_r f>0$ on $\partial W$\footnote{Since $W$ is viewed as the space of constant orbits of $H$, we only need a Morse function on $W$ that computes that cohomology of $W$.}  and has a unique local minimum.
\end{definition}

The nice $H$ is degenerate, but is almost Morse-Bott non-degenerate. In particular, following \cite[\S 2.1]{filling}, we can construct the symplectic cochain complex $C(H,f)$ by non-degenerate orbits of $H$ and Morse critical points of $f$, where the differential consists of the gradient trajectories of $f$, Floer cylinders of $H$ with non-constant asymptotic orbits and fiber products of the stable manifolds of $\nabla f$ and moduli spaces of perturbed Cauchy-Riemann equations on $\C$ which is asymptotic to a non-constant Hamiltonian orbits near $\infty$. This formalism is better suited for the neck-stretching argument.

To ensure the integrated maximum principle applies, we introduce the following.
\begin{definition}\label{def:ham}
	A consistent Hamiltonian data $\bH_N$ up to level $N$ consists of the following.
	\begin{enumerate}
		\item\label{h1} For every $i\in \N$, we have a Hamiltonian $H^i_N:S^1\times S^{2N+1}\times \widehat{W}\to \R$ satisfying \eqref{a1} and \eqref{a3} of Definition \ref{def:adm}, which is a $C^2$-small perturbation to $H_0$ on $W^i$ near non-constant periodic orbits of $X_{H_0}$, such that $H^i_N = h'(c_i)r+(h(c_i)-h'(c_i)c_i)$ for $r\ge c_i$. 
		\item\label{h2} On $S^1\times U_j\times \widehat{W}$, we have $H^i_N(t,z,x)=H^i(t,x)$, where $H^i$ is the linear extension of a nice Hamiltonian $H$ truncated at $W^i$. In particular, $H^i$ has the property that the periodic orbits of $X_{H^i}$ are points in $W$ and pairs of orbits in $W^i$ corresponding to Reeb orbits of period up to $h'(c_i)$. Moreover, the symplectic action $\cA_{H^i}(x)<-\epsilon$ for every non-constant orbits $x$ of $X_{H^i}$. 
		\item For every gradient flow line $z(s)$ of $(\widetilde{f}_N,\widetilde{g}_N)$, we have $\partial_s H^i_N(t,z(s),x) < \frac{\epsilon}{2L_N}$, for every $t\in S^1,x\in W^i$, where $L_N$ is defined as
		$$L_N:=\max\left\{\int_{z(s)\notin \cup O_i} \rd s \st  z(s) \text{ is a gradient flow line (possibly broken) of } (f_N,g_N)\right\}.$$
		\item\label{h4} We have $H_N^{i+1}\ge H_N^i$ on $\widehat{W}$ and $H_N^i=H_N^{i+1}$ on $W^i\cup \partial W \times (c_i,b_i)$, we assume $\epsilon < \cA_{H^i}(\gamma)-\cA_{H^{i+1}}(\gamma')$ for all non-constant orbit $\gamma$ of $X_{H^i}$ and $\gamma'$ of $X_{H^{i+1}}$ that lies outside $W^i$.
		\item\label{h3} For every $i\in \N$, we have a $\R$-family of Hamiltonians $H_N^{i,i+1}$ defined by $\rho(s)H_{N}^{i+1}+(1-\rho(s))H_N^i$, where $\rho(s)$ is a non-increasing function such that $\rho(s)=1$ for $s\ll 0$ and $\rho(s)=0$ for $s\gg 0$. 
	\end{enumerate}	
\end{definition}

\begin{figure}[H]
	\begin{tikzpicture}[xscale=0.8,yscale=0.4]
	\draw [<->] (-1,11) -- (-1,0) -- (13,0);
	\draw (0,0) -- (1,0) to [out=0,in=200] (3,0.4);
	\draw[domain=3:5] plot (\x, {0.36397023426*(\x-3)+0.4});
	\draw[domain=3:11] plot (\x, {0.36397023426*(\x-3)+0.4});
	\node at (11.5,3.312) {$H^1$};
	\draw (5,1.12794046853) to [out=20, in=230] (7,2.3);
	\draw[domain=7:9] plot (\x, {1.19175359259*(\x-7)+2.3});
	\draw[domain=7:11] plot (\x, {1.19175359259*(\x-7)+2.3});
	\node at (11.5, 7.08) {$H^2$};
	\node at (11,10.5) {$H_0$}; 
	\draw (9, 4.68350718519) to [out=50, in=260] (11,10);
	\draw[dotted] (3,0) -- (3,0.4);
	\draw[dotted] (5,0) -- (5,1.12794046853);
	\draw[dotted] (7,0) -- (7,2.3);
	\draw[dotted] (9,0) -- (9,4.68350718519);
	\draw [fill] (3,0) circle [radius=0.05];
	\node at (3,-0.5) {$a_0$};
	\draw [fill] (4,0) circle [radius=0.05];
	\node at (4,-0.5) {$c_0$};
	\draw [fill] (5,0) circle [radius=0.05];
	\node at (5,-0.5) {$b_0$};
	\draw [fill] (7,0) circle [radius=0.05];
	\node at (7,-0.5) {$a_1$};
	\draw [fill] (8,0) circle [radius=0.05];
	\node at (8,-0.5) {$c_1$};
	\draw [fill] (9,0) circle [radius=0.05];
	\node at (9,-0.5) {$b_1$};
	\node at (0.5,-0.5) {$W$};
	\end{tikzpicture}
	\caption{Hamiltonians $H^i$ and $H_0$}\label{fig:ham}
\end{figure}
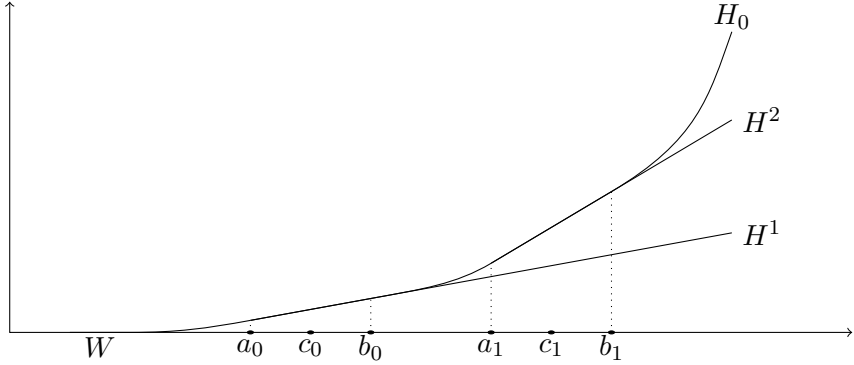
\begin{remark}
	A few remarks of the definition are in order.
	\begin{enumerate}
		\item $\partial_s H^i_N(t,z(s),x) < \frac{\epsilon}{2L_N}$  is used to make sure that the cochain complex of $H^i_N$ is a subcomplex of $H^{i+1}_N$ by action reasons, see Proposition \ref{prop:reg}. Such property can be arranged if $H^i$ is close to the autonomous $H_0$ on $W^i$.
		\item By \eqref{h4}, over critical point $\widetilde{z}_k$, we have $H^{i+1}|_{W^i}=H^i$.
		\item $H^{i,i+1}_{N}$ is used to build the $S^1$-morphism from the $N$-truncated $S^1$-complex of $H^i_N$ to $H^{i+1}_N$. The $s$-invariance on $W^i$ ensures that the induced $S^1$-morphism is the inclusion.
		\item We will not discuss any compatibility relation between $\bH_N,\bH_{N+1}$. We will only use one $\bH_N$ for some $N\gg 0$ depending on how many $\Delta^k_+,\Delta^k_{\partial}$ we want to identify in Theorem \ref{thm:ind}.
	\end{enumerate}
\end{remark}
For any $N$, one can inductively build a Hamiltonian data $\bH_N$ up to level $N$ by a $C^2$-small perturbation to the autonomous $H_0$ on each $W^i$. The motivation of using $H^i$ and continuation maps instead of using a global perturbation to $H_0$ is for the purpose of the integrated maximum principle. Since we are considering $s$-dependent Hamiltonians $H^i_N(t,z(s),x)$ for a gradient flow line $z(s)$, the integrated maximum principle requires more restrictive conditions. From now on, for every $N$, we fix a consistent Hamiltonian data $\bH_N$. Then a consistent almost complex structure data $\bJ_N$ is the following.
\begin{definition}\label{def:J}
	 A consistent almost complex structure data $\bJ_N$ up to level $N$ consists of the following data.
	 \begin{enumerate}
	 	\item For every $i\in \N$, a parameterized nice almost complex structure $J^i_N:S^1\times S^{2N+1}\to \cJ(W)$, such that $J^i_N(t+\theta,\theta\cdot z)=J^{i}_N(t,z)$, $S^1\times S^{2N+1}$ independent on $W$, and over slice $U_j\ni \widetilde{z}_j$, $J_N$ is a fixed $S^1$-dependent nice almost complex structure $J^i$. 
	 	\item For every $i\in \N$, a parameterized admissible almost complex structure $J^{i,i+1}_N:\R\times S^1\times S^{2N+1}\to \cJ(W)$, such that $J^{i,i+1}_N(s,t+\theta,\theta\cdot z)=J^{i,i+1}_N(s,t,z)$,  $J^{i,i+1}_N=J^i_N$ on $W^i$, $J_N^{i,i+1}=J_N^i$ for $s\gg 0$ and $J_N^{i,i+1}=J_N^{i+1}$ for $s\ll 0$.
	 	\item\label{J3} We impose the symmetry that $J^i_N$ is the same on $\CP^{j-i}=\{(0:\ldots:0:z_i:\ldots:z_j:0:\ldots:0)\}\simeq \{(0:\ldots:0:z_{i+k}:\ldots:z_{j+k}:0:\ldots:0)\}$	via translation, same property holds for $J^{i,i+1}_N$.
	 \end{enumerate}
	  The space of consistent almost complex structure data up to level $N$ is denoted by $\cJ_N(W)$.
\end{definition}

We fix an admissible Morse function $f$ on $W$. Let $\bH_N$ be a Hamiltonian data up to level $N$. Let $\cP^*(H^i)$ denote the set of non-constant orbits of $H^i$. Since by construction, we have $\cP^*(H^i)\subset \cP^*(H^{i+1})$. We define $\cP^*(\bH_N)=\cup \cP^*(H^i)$. Then we can assign grading to each element of $\cC(f)\cup \cP^*(\bH_N)$ by 
$$|p|=\ind p, p\in \cC(f), \quad |x| = n-\mu_{CZ}(x),x\in \cP^*(\bH_N).$$
We fix a metric $g$ on $W$, such that $(f,g)$ is a Morse-Smale pair. We also fix a $\bJ_N\in \cJ_N(W)$. Then we have the following moduli spaces.
\begin{enumerate}
	\item For $r\le N$, $i \in \N$ and $x,y \in \cP^*(H^i)$, we have
	\begin{equation}\label{eqn:M1}
	M^i_r(x,y):=\left\{\begin{array}{l} u:\R \times S^1 \to \widehat{W}, \\ z:\R \to S^{2N+1}\end{array}\left|\begin{array}{l} \partial_su+J^i_N(t,z(s),u)(\partial_tu-X_{H_N^{i}(t,z(s))}) = 0\\
	z'+\nabla_{\widetilde{g}_N}\widetilde{f}=0\\
	\displaystyle\lim_{s\to -\infty} (u,z) \in S^1\cdot (x,\widetilde{z}_{k+r}), \lim_{s\to \infty}(u,z)\in S^1\cdot (y,\widetilde{z}_{k}). 
	\end{array} \right. \right\}/\R\times S^1.
	\end{equation}
     By symmetry of $\bH_N,\bJ_N$ on $\CP^N$, the moduli space does not depend on $k$. The same independence property holds for the following moduli spaces.
	\item For $r\le N$, $i\in \N$ and $x\in \cP^*(H^i),p\in \cC(f)$, we have
	\begin{equation}\label{eqn:M2}
	M^i_r(p,x):=\left\{\begin{array}{l}	u:\R \times S^1 \to \widehat{W}, \\ z:\R \to S^{2N+1},\\ \gamma:(-\infty,0] \to W \end{array}\left|\begin{array}{l} \partial_su+J^i_N(t,z(s),u)(\partial_tu-X_{H_N^{i}(t,z(s))}) = 0\\ \gamma'+\nabla_g f= 0, z'+\nabla_{\widetilde{g}_N} \widetilde{f}_N =0,\\
	\displaystyle\lim_{s\to \infty} (u,z)\in S^1\cdot (x,\widetilde{z}_{k}), \lim_{s\to -\infty} u = \gamma(0),\\ \displaystyle \lim_{s\to -\infty} z \in S^1\cdot \widetilde{z}_{k+r}\lim_{s\to -\infty} \gamma = p 
	\end{array} \right. \right\}/\R\times S^1.
	\end{equation}
	Since $\partial_r f >0$ on $\partial W$, $\gamma(0)$ must be in the interior of $W$, therefore the Floer equation will become the Cauchy-Riemann equation near $s=-\infty$ as $H=0$ and $J$ is independent of $S^1\times S^{2N+1}$ there. In particular, we can view $u$ as a map from $\C$ to $\widehat{W}$ by removal of singularity.
	\item For $r\le N$, $i \in \N$ and $x \in \cP^*(H^{i+1}), y\in \cP^*(H^{i})$, we have
	\begin{equation}\label{eqn:N1}
	N_r^i(x,y):=\left\{\begin{array}{l} u:\R \times S^1 \to \widehat{W}, \\ z:\R \to S^{2N+1}
	\end{array}\left|\begin{array}{l} \partial_su+J^{i,i+1}_N(s,t,z(s),u)(\partial_tu-X_{H_N^{i,i+1}(s,t,z(s))}) = 0\\
	z'+\nabla_{\widetilde{g}_N}\widetilde{f}_N=0\\
	\displaystyle\lim_{s\to -\infty} (u,z) \in S^1\cdot (x,\widetilde{z}_{k+r}),\lim_{s\to \infty}(u,z)\in S^1\cdot (y,\widetilde{z}_{k}). 
	\end{array} \right. \right\}/ S^1.
	\end{equation}
	\item For $r\le N$, $i\in \N$, $x\in \cP^*(H^i)$ and $p \in \cC(f)$, we have 
	\begin{equation}\label{eqn:N2}
	N^i_r(p,x):=\left\{\begin{array}{l}u:\R \times S^1 \to \widehat{W},\\ z:\R \to S^{2N+1},\\
	\gamma:(-\infty,0] \to W \end{array}\left|\begin{array}{l} \partial_su+J^{i,i+1}_N(s,t,z(s),u)(\partial_tu-X_{H_N^{i,i+1}(s,t,z(s))}) = 0\\ \gamma'+\nabla_g f= 0, z'+\nabla_{\widetilde{g}_N} \widetilde{f}_N =0,\\
	\displaystyle\lim_{s\to \infty} (u,z)\in S^1\cdot (x,\widetilde{z}_{k}), \lim_{s\to -\infty} u = \gamma(0),\\ \displaystyle \lim_{s\to -\infty} z \in S^1\cdot \widetilde{z}_{k+r}\lim_{s\to -\infty} \gamma = p 
	\end{array} \right. \right\}/ S^1.
	\end{equation}
	\item For $p,q\in \cC(f)$, $M_0(p,q)$ is the moduli space of $-\nabla_g f$ flow lines from $p$ to $q$. $N_0(p,q)$ is the moduli space of parameterized $-\nabla_gf$ flow lines from $p$ to $q$. 
\end{enumerate}

\begin{figure}[H]
	\begin{center}
	\begin{tikzpicture}[scale=0.5]
	\draw (0,0) to [out=90, in=180] (2,1) to [out=0, in=90]  (4,0) to [out=270, in=0] (2,-1) to [out=180, in=270] (0,0); 
	\draw (0,0) to [out=270, in =180] (2,-4) to [out=0, in=270] (4,0);
	\fill (2,-4) circle[radius=2pt];
	\draw[->] (2,-4) to [out=270,in=160] (3.5,-5);
	\draw (3.5,-5) to [out=340,in=100] (5,-6);
	\node at (4,-4.7) {$\nabla_g f$};
	\fill (5,-6) circle[radius=2pt];
	\node at (5,-6.4) {$p$};
	\node at (2, 0) {$x$};
	\draw[dashed] (-2,-2) to [out=20, in=180] (2,-1.5) to [out=0, in=160] (6,-2);
	\node at (6,-2.5) {$\partial W$};
	\draw[dotted](2,-4) circle (2);
	\node at (2,-3) {$\overline{\partial}_J u=0$};
	\end{tikzpicture}
	\end{center}
	\caption{The $u$ part of $M^i_r(p,x),N^i_r(p,x)$ for $p\in \cC(f)$.}
\end{figure}
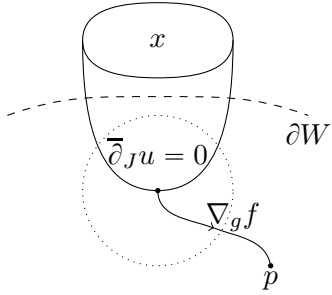

\begin{remark}\label{rmk:negative}
	One could also define $\cM^i_r(x,p),\cN^i_r(y,p)$ for $x\in \cP^*(H^i),y\in \cP^*(H^{i+1}),p\in \cC(f)$, but they are necessarily empty due to our choice of Hamiltonian (Definition \ref{def:ham}), see the proof of Proposition \ref{prop:reg}. This is the reason why have $S^1$-cochain complexes and morphisms compatible with splitting.
\end{remark}

Using the following integrated maximum principle \cite{abouzaid2010open}, we will be able to get compactness and exclude certain Floer cylinders.
\begin{lemma}[{\cite[Lemma 2.2]{cieliebak2018symplectic}}, {\cite[Lemma 2.5]{filling}}]\label{lemma:max}
	Let $(W,\omega)$ be a Liouville filling of $(Y,\alpha)$ with completion $(\widehat{W},\widehat{\omega})$. Let $H:\widehat{W}\to \R$ be a Hamiltonian such that $H=h(r)$ near $r=r_0$. Let $J$ be a $\widehat{\omega}$-compatible almost complex structure that is cylindrical convex on $Y\times(r_0,r_0+\delta)$ for some $\delta>0$. If both ends of a Floer cylinder $u$ are contained inside $Y\times \{r_0\}$, then $u$ is contained inside $Y\times \{r_0\}$. This also holds for $H_s$ depending on $s\in \R$ on $r\ge r_0$,  if $\partial_s H_s \le 0$ on $r\ge r_0$ and $H_s=h_s(r)$ on $(r_0,r_0+\delta)$ such that $\partial_s(r h'_s(r)-h_s(r))\le 0$ and $J_s$ is cylindrical convex on $Y\times (r_0,r_0+\delta)$.
\end{lemma}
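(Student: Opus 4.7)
The plan is to run the classical Abouzaid--Seidel integrated maximum principle on $u$. Suppose for contradiction that $u$ enters $Y\times(r_0,\infty)$. By Sard's theorem pick a regular value $r_1\in(r_0,r_0+\delta)$ of $r\circ u$ and set $\Sigma:=u^{-1}(Y\times[r_1,\infty))\subset\R\times S^1$; this is a compact surface with smooth boundary $\partial\Sigma$ lying entirely in the cylindrical-convexity region.

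Introduce the primitive $\sigma:=u^*\lambda-H(u)\,dt$ on $\R\times S^1$. A short calculation gives $d\sigma = u^*\omega - dH(\partial_s u)\,ds\wedge dt$, and feeding the Floer equation into $|\partial_s u|_J^2 = \omega(\partial_s u,\partial_t u - X_H)$ turns this into
$$d\sigma = |\partial_s u|_J^2\,ds\wedge dt \geq 0,$$
so Stokes' theorem yields $0\leq \int_\Sigma |\partial_s u|_J^2 = \int_{\partial\Sigma}\sigma$.

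The heart of the argument is showing the boundary term has the opposite sign. On $Y\times\{r_1\}$, cylindrical convexity $\lambda\circ J = dr$ combined with $X_H = h'(r_1)R_\alpha$ and the Floer equation produces $\lambda(u_*\partial_s) = -dr(u_*\partial_t)$ and $\lambda(u_*\partial_t) = dr(u_*\partial_s) + r_1 h'(r_1)$ on $\partial\Sigma$. Substituting gives the clean identity
$$\sigma|_{\partial\Sigma} = *d(r\circ u) + \bigl(r_1 h'(r_1)-h(r_1)\bigr)\,dt,$$
where $*$ is the Hodge star on $\R\times S^1$. Now $\int_{\partial\Sigma} dt = \int_\Sigma d(dt)=0$, while $\int_{\partial\Sigma} *d(r\circ u) = \int_{\partial\Sigma}\partial_\nu(r\circ u)\,d\tau\leq 0$ because $\partial_\nu$ is the outward normal of the superlevel set $\Sigma$. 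Thus $\int_{\partial\Sigma}\sigma\leq 0$, and equality throughout forces $\partial_\nu(r\circ u)\equiv 0$ on $\partial\Sigma$; together with $\partial_\tau(r\circ u)=0$ this contradicts regularity of $r_1$.

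For the $s$-dependent version, $d\sigma_s = \bigl(|\partial_s u|_J^2 - (\partial_s H_s)(u)\bigr)\,ds\wedge dt$, which is still nonnegative on $\Sigma$ by the first hypothesis, so $\int_{\partial\Sigma}\sigma_s\geq 0$. The boundary acquires the extra contribution $\int_{\partial\Sigma}\bigl(r_1 h'_s(r_1)-h_s(r_1)\bigr)\,dt$, which Stokes rewrites as $\int_\Sigma\partial_s\bigl(r_1 h'_s(r_1)-h_s(r_1)\bigr)\,ds\,dt\leq 0$ under the second hypothesis $\partial_s(rh'_s-h_s)\leq 0$. The argument then closes exactly as above. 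The main obstacle is the bookkeeping in the boundary identity of the third paragraph: once the sign conventions ($\iota_{X_H}\omega=-dH$) and the orientation of $\partial\Sigma$ are pinned down, the rest is a routine application of Stokes' theorem.
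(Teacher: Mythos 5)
Your argument is correct and is exactly the integrated maximum principle proof from the cited sources (the paper itself states this lemma without proof, quoting Cieliebak–Oancea and \cite{filling}): the primitive $u^*\lambda - H\,dt$, Stokes over the superlevel set $\Sigma=u^{-1}(\{r\ge r_1\})$ for a regular value $r_1$, and the boundary identity involving $*d(r\circ u)$ and $r_1h_s'(r_1)-h_s(r_1)$ are precisely the standard Abouzaid–Seidel computation, including the $s$-dependent modification. No gaps beyond the sign/orientation bookkeeping you already flag.
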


\begin{proposition}\label{prop:compact}
	$M^i_{r}(x,y)$, $M^i_{r}(p,x)$, $N^i_r(p,x)$, $N^i_r(x,y)$ are contained in $W^i$ for $x\in \cP^*(H^i)$.  In general, $\cN^i_r(x,y)$ is contained in $W^{i+1}$.
\end{proposition}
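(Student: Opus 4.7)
The plan is to apply the integrated maximum principle (Lemma~\ref{lemma:max}) to each family of cylinders. The key geometric inputs are: (i) by Definition~\ref{def:ham}, $H^i_N$ equals the $(s,t,z)$-independent linear function $h'(c_i)r+(h(c_i)-h'(c_i)c_i)$ on $\{r\ge c_i\}$; (ii) every $J^i_N$ and $J^{i,i+1}_N$ is cylindrical convex on each slab $\partial W\times(a_i,b_i)$; and (iii) non-constant orbits of $X_{H^i}$ occur only at radii $r$ with $h'(r)\in\cS$, so that elements of $\cP^*(H^i)$ all sit in $W^i=\{r\le c_i\}$ and elements of $\cP^*(H^{i+1})$ in $W^{i+1}$.

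For $M^i_r(x,y)$ with $x,y\in\cP^*(H^i)$, both asymptotes lie in $W^i$. Pick any $r_0\in(c_i,b_i)$; on $\{r\ge r_0\}$ the Hamiltonian is the linear function above, so $\partial_s H^i_N=0$ and Lemma~\ref{lemma:max} (its trivially-satisfied $s$-independent form) confines $u$ to $\{r\le r_0\}$. Letting $r_0\downarrow c_i$ gives the claim. The case $M^i_r(p,x)$ is identical once one observes that the gradient trajectory $\gamma$ cannot leave $W$ since $\partial_r f>0$ on $\partial W$, and that near $s=-\infty$ the Floer equation reduces to a plain Cauchy--Riemann equation (because $H=0$ on $W$), to which the same maximum principle applies after removal of singularity.

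For $N^i_r(p,x)$ and $N^i_r(x,y)$ with $x,y\in\cP^*(H^i)$, the relevant Hamiltonian is $H^{i,i+1}_N=\rho(s)H^{i+1}_N+(1-\rho(s))H^i_N$. By Definition~\ref{def:ham}\eqref{h4}, $H^i_N=H^{i+1}_N$ on $\partial W\times(c_i,b_i)$, so on that slab $H^{i,i+1}_N$ reduces to the $s$-independent linear function $H^i_N$, and exactly the preceding argument at $r_0\in(c_i,b_i)$ applies. For $\cN^i_r(x,y)$ with $x\in\cP^*(H^{i+1})$ now possibly at radius up to $c_{i+1}$, one works instead at $r_0\in(c_{i+1},b_{i+1})$, where the two Hamiltonians are the linear functions $h'(c_i)r+B$ and $h'(c_{i+1})r+A$ with intercepts $B=h(c_i)-h'(c_i)c_i$ and $A=h(c_{i+1})-h'(c_{i+1})c_{i+1}$. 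A direct calculation yields
\[
\partial_s H^{i,i+1}_N=\rho'(s)\bigl(H^{i+1}_N-H^i_N\bigr)\le 0, \qquad \partial_s\bigl(rh'_s(r)-h_s(r)\bigr)=-\rho'(s)(A-B)\le 0,
\]
using $\rho'\le 0$, $H^{i+1}_N\ge H^i_N$, and the monotonicity of $c\mapsto h(c)-h'(c)c$ (its derivative is $-c h''(c)\le 0$). Hence the $s$-dependent form of Lemma~\ref{lemma:max} applies and confines the cylinder to $W^{i+1}$.

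Finally, these containment statements pass to the compactifications, since each SFT-type breaking configuration is built from cylinders of the same families (together with $-\nabla_g f$ flow lines, which remain in $W$), and the region $W^i$ (resp.~$W^{i+1}$) is closed. The main technical obstacle is the last bullet: the continuation data for $\cN^i_r$ is genuinely $s$-dependent on $(c_{i+1},b_{i+1})$, so both hypotheses of Lemma~\ref{lemma:max} must be checked by the calculation above; the monotonicity of $c\mapsto h(c)-h'(c)c$ is what makes it work and is the one place the precise shape of the nice Hamiltonian is used.
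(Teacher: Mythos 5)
Your proof is correct and follows essentially the same route as the paper: apply the integrated maximum principle (Lemma~\ref{lemma:max}) at $r=c_i$ (or just above), where $H^i_N$ and $H^{i,i+1}_N$ are linear and $s$-independent, and at $r=c_{i+1}$ for $\cN^i_r(x,y)$ with $x\in\cP^*(H^{i+1})$, where your monotonicity computation for $c\mapsto h(c)-h'(c)c$ is exactly the paper's identity $\rho'(s)\int_{c_i}^{c_{i+1}}h''(r)r\,\rd r\le 0$. Your explicit check of $\partial_sH^{i,i+1}_N\le 0$ and the remark on compactifications are fine additions but do not change the argument.
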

\begin{proof}
	The first claim follows from  applying Lemma \ref{lemma:max} to $r=c_i$ as both $H_N^i,H_N^{i,i+1}$ are linear near $r=c_i$ and are independent of $\R\times S^1 \times S^{2N+1}$. To prove $\cN^i_r(x,y)$ is contained in $W^{i+1}$ for $x\in \cP^*(H^{i+1})$, note that on $\partial W \times (c_{i+1},c_{i+1}+\delta)$, we have $H_N^{i,i+1}=\rho(s)( h'(c_{i+1})r+(h(c_{i+1})-h'(c_{i+1})c_{i+1})) + (1-\rho(s))(h'(c_i)r+(h(c_i)-h'(c_i)c_i))$, hence
	$$\partial_s (r \frac{\rd}{\rd r} H_N^{i,i+1}- H_N^{i,i+1}) = \rho'(s) \left((h'(c_{i+1})c_{i+1}-h(c_{i+1}))-(h'(c_{i})c_{i}-h(c_{i}))\right)=\rho'(s)\int_{c_i}^{c_{i+1}} h''(r)r\rd r \le 0.$$
	Hence Lemma \ref{lemma:max} can be applied to $r=c_{i+1}$.
\end{proof}

We have the following standard regularization result. Whenever the indices do not appear in \eqref{eqn:M1}-\eqref{eqn:N2}, the moduli space is defined to be empty.
\begin{proposition}\label{prop:reg}
	There exists a second category subset $\cJ_{N,reg}^{\le 1}(W)\subset \cJ_N(W)$, such that for every $\bJ\in \cJ^{\le 1}_{N,reg}(W)$, the following holds.
	\begin{enumerate}
		\item\label{r1} For $x,y \in \cC(f)\cup \cP^*(H^i)$, 
		$$\cM^i_r(x,y):=M^i_r(x,y)\bigcup_{\substack{0\le k \le r,\\z\in \cC(f)\cup \cP^*(H^i)}} M^i_{r-k}(x,z)\times M^i_{k}(z,y)$$ is a compact manifold of boundary of dimension $|x|-|y|+(2r-1)$ whenever that is smaller than $2$, and $$\partial \cM^i_r(x,y)=\bigcup_{\substack{0\le k \le r,\\z\in \cC(f)\cup \cP^*(H^i)}} \cM^i_{r-k}(x,z)\times \cM^i_{k}(z,y).$$
		\item\label{r2} For $x\in \cC(f)\cup \cP^*(H^i), y\in \cC(f)\cup \cP^*(H^{i+1})$, $$\cN^i_r(x,y):=N^i_r(x,y)\bigcup_{\substack{0\le k \le r,\\z\in \cC(f)\cup \cP^*(H^i)}} N^i_{r-k}(x,z)\times M^i_{k}(z,y)\bigcup_{\substack{0\le k \le r,\\z\in \cC(f)\cup \cP^*(H^{i+1})}}M^{i+1}_{r-k}(x,z)\times N^i_k(z,y)$$ is a manifold with boundary of dimension $|x|-|y|+2r$ whenever that is smaller than $2$, and 
		$$\partial \cN^i_r(x,y)=\bigcup_{\substack{0\le k \le r,\\z\in \cC(f)\cup \cP^*(H^i)}}\cN^i_{r-k}(x,z)\times \cM^i_{k}(z,y)\bigcup_{\substack{0\le k \le r,\\z\in \cC(f)\cup \cP^*(H^{i+1})}}\cM^{i+1}_{r-k}(x,z)\times \cN^i_k(z,y).$$ Moreover, the only non-empty zero dimensional moduli spaces is when $x=y$ and $\cN^i_0(x,y)$ is the trivial cylinder over $x$.
	\end{enumerate}
\end{proposition}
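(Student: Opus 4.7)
The plan is to run the standard $S^1$-equivariant Floer--Morse transversality package, using Proposition \ref{prop:compact} and the action-separation built into Definition \ref{def:ham} to exclude unwanted degenerations. The Fredholm index of the linearised system at $(u,z)$ is the sum of a Morse contribution from the gradient flow on $S^{2N+1}$, namely $\ind(\widetilde z_{k+r})-\ind(\widetilde z_k)=2r$, and a Floer contribution from the parameterised cylinder on $\widehat W$; subtracting $2$ for the $\R\times S^1$ quotient in \eqref{eqn:M1} (resp.\ $1$ for the $S^1$ quotient in \eqref{eqn:N1}) recovers the advertised dimensions. For the mixed moduli \eqref{eqn:M2} and \eqref{eqn:N2} the half-infinite Morse trajectory adds $\ind p=|p|$, and because $H,J$ are $S^1\times S^{2N+1}$-independent on $W$, where $\gamma(0)$ must lie thanks to $\partial_r f>0$ on $\partial W$, standard removal of singularity at $s=-\infty$ turns the half-cylinder into a disk.

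For transversality I would form a universal moduli space by varying $\bJ\in\cJ_N(W)$ subject to the symmetries of Definition \ref{def:J}. The $\CP^{j-i}$ symmetry in \eqref{J3} only means one should perturb $\bJ$ on a representative set of slices and transport along the symmetry; combined with somewhere-injectivity of Floer cylinders (generic for non-degenerate asymptotics) and genericity of $(f,g)$ on $\CP^N$, the Sard--Smale theorem produces a comeager $\cJ_{N,\mathrm{reg}}^{\le 1}(W)$ regularising every relevant operator in virtual dimension $\le 1$. Gromov--Floer compactness then forces subconvergent sequences to degenerate into Morse breakings of $z(s)$ at critical points of $\widetilde f_N$ (paired with corresponding splittings of $u$) and Floer breakings of $u$ at Hamiltonian orbits or Morse critical points of $f$. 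Bubbling is excluded by exactness of $\widehat\lambda$, and escape into the conical end is excluded by Proposition \ref{prop:compact}; together these degenerations yield exactly the boundary strata of \eqref{r1} and \eqref{r2}, with standard gluing upgrading them to the actual topological boundary in dimensions $\le 1$.

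The main nontrivial point is the assertion that $\cN^i_0(x,y)$ is non-empty only when $x=y$, in which case it equals the trivial cylinder. With $r=0$ the flow line $z(s)$ is constant at some $\widetilde z_k$, so \eqref{eqn:N1} reduces to the ordinary parameterised continuation equation on a single slice. If $x\in\cP^*(H^{i+1})$ lies outside $W^i$, item \eqref{h4} of Definition \ref{def:ham} provides an action gap $\cA_{H^i}(y)-\cA_{H^{i+1}}(x)>\epsilon$ that exceeds the $\int\partial_s H$ defect, forcing negative Floer energy and hence emptiness. If $x\in\cP^*(H^i)$, Proposition \ref{prop:compact} confines the cylinder to $W^i$, where $H_N^{i,i+1}$ is $s$-independent, and the only zero-dimensional $S^1$-quotient solution is the trivial cylinder over $x$. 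The same action comparison explains Remark \ref{rmk:negative}: a Floer cylinder from a non-constant orbit of action $<-\epsilon$ down to a constant orbit of action near zero cannot exist. The principal obstacle I anticipate is orchestrating the choices so that transversality is compatible with the rigid $\CP^{j-i}$ symmetry on $\bH_N$ and $\bJ_N$; this is handled by fixing a representative slice per symmetry orbit and transporting the perturbations, which is enough to regularise the thin strata appearing in \eqref{r1} and \eqref{r2}.
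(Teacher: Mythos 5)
Your proposal follows essentially the paper's route: standard Floer--Morse transversality and compactness (the paper simply cites the analogous statements in \cite{filling}), combined with the action separation built into Definition \ref{def:ham}, the confinement of Proposition \ref{prop:compact}, and the $s$-independence of the continuation data on $W^i$ to identify rigid continuation solutions with trivial cylinders. However, two points in your write-up fall short of the statement and of what actually carries the proof. First, the inequality you invoke --- that the action gap ``exceeds the $\int\partial_s H$ defect'' --- is the heart of the argument and must be established, not assumed: for a gradient line $z(s)$ of $(\widetilde{f}_N,\widetilde{g}_N)$ one has $\partial_sH^i_N(t,z(s),\cdot)=0$ whenever $z(s)$ lies in the invariant neighbourhoods $V_j$ (since $\nabla\widetilde{f}_N$ is tangent to the slices $U_j$ and $H^i_N$ does not vary there), so condition (3) of Definition \ref{def:ham} gives $\int_{\R\times S^1}\partial_sH^i_N\,\rd s\,\rd t\le L_N\cdot\tfrac{\epsilon}{2L_N}<\epsilon$; combined with $\cA_{H^i}(x)<-\epsilon$ from \eqref{h2}, this is what shows the moduli spaces of Remark \ref{rmk:negative} are empty --- and their emptiness must come \emph{before} the boundary description in \eqref{r1}, \eqref{r2}, since such configurations could a priori occur as Gromov--Floer limits; you mention them only as an aside. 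For the continuation moduli one must in addition use $\rho'\le 0$ to discard the term $\rho'(s)(H^{i+1}_N-H^i_N)$, and the bound degrades to $2L_N\cdot\tfrac{\epsilon}{2L_N}=\epsilon$, which is precisely why \eqref{h4} demands a gap strictly larger than $\epsilon$.

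Second, the final claim concerns \emph{all} zero-dimensional $\cN^i_r(x,y)$, not only $r=0$: to conclude that the induced $S^1$-morphism is the inclusion one also needs the zero-dimensional $\cN^i_r(x,y)$ to be empty for $r\ge 1$, and your reduction ``with $r=0$ the flow line $z(s)$ is constant'' leaves this out. The fix uses exactly your two mechanisms: for $x\in\cC(f)\cup\cP^*(H^i)$, Proposition \ref{prop:compact} confines solutions to $W^i$, where $H^{i,i+1}_N$ and $J^{i,i+1}_N$ agree with the $s$-independent data defining $\delta^r$, so any solution carries a free $\R$-translation action unless $u$ is a trivial cylinder and $z$ is constant (forcing $r=0$ and $x=y$), and hence cannot be rigid; for $x\in\cP^*(H^{i+1})\setminus\cP^*(H^i)$ the action-gap estimate above applies verbatim for every $r$. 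With these two points supplied, your argument coincides with the paper's proof.
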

\begin{proof}
	If $M_r^i(x,p)$ and $N^i_r(x,p)$ in Remark \ref{rmk:negative} are empty for $p\in \cC(f)$, then compactness and transversality for $\cM^i_r(x,y),\cN^i_r(x,y)$ follow from the same argument as \cite[Proposition 2.6, 2.8]{filling},  since a prior, $M_r^i(x,p)$ and $N^i_r(x,p)$ could appear in the compactification of $M_r^i(x,y)$ and $N_r^i(x,y)$. Note that if $u:\R \times S^1 \to \widehat{W}$ solves Floer equation $\partial_su+J(\partial_tu-X_{H_s})=0$ for a $s$-dependent Hamiltonian, then we have the energy of $u$ is 
	$$E(u):=\int_{\R\times S^1} |\partial_su|^2\rd s \rd t = \cA_{H_{-\infty}}(u(-\infty))-\cA_{H_{\infty}}(u(\infty)) + \int_{\R\times S^1}\partial_s H_s \rd s \rd t.$$ 
	Let $z$ be a gradient trajectories of $S^{2N+1}$, then $\partial_s H^i_N(t,z(s))$ is $0$ when $z(s)\in \cup V_i$ since $\nabla \widetilde{f}_N$ is tangent to $U_i$ and $H_N$ does not depend on $U_i$. Therefore we have
	$$\int_{\R\times S^1}\partial_s H^i_N(t,z(s),p)\rd s \rd t \le L_N \cdot \frac{\epsilon}{2L_N}<\epsilon, \quad \forall p\in \widehat{W}. $$  
	Therefore $M_r^i(x,p)$ and $N_r^i(x,p)$ must be empty for $p\in \cC(f)$ and $x\in \cP^*(\bH_N)$  by \eqref{h2} of Definition \ref{def:ham}. 
	
	To prove the last claim, for $x,y\in \cP^*(H^i)$,  since $\cN^i_r(x,y)$ stays inside $W_i$ by Proposition \ref{prop:compact} and $H_N^{i,i+1}$ is $s$-independent on $W_i$, the only nonempty zero dimensional $\cN^i_r(x,y)$ is when $r=0$ and $x=y$, and the moduli space contains only the trivial cylinder over $x$. If $x\in \cP^*(H^{i+1})\backslash \cP^*(H^i)$, note that  we have
	\begin{eqnarray*}
		\partial_s H^{i,i+1}_N(s,t,z(s)) & =  & \rho'(s)(H^{i+1}_N(t,z(s))-H^{i}_N(t,z(s)))+\rho(s)\partial_s H^{i+1}_N(t,z(s))+(1-\rho(s))\partial_s H^{i}(t,z(s)) \\
		& \le &  \rho(s)\partial_s H^{i+1}_N(t,z(s))+(1-\rho(s))\partial_s H^{i}(t,z(s))
	\end{eqnarray*}
	Then by the same argument as before, we have
	$$\int_{\R\times S^1} \partial_s H^{i,i+1}_N(s,t,z(s), p) \rd s \rd t\le 2 L_N \cdot \frac{\epsilon}{2L_N} = \epsilon, \quad \forall p \in \widehat{W}.$$
    Then $N^i_r(x,y)$ is empty for $x\in \cP^*(H^{i+1})\backslash \cP^*(H^i)$ by \eqref{h4} of Definition \ref{def:ham}. 
\end{proof}
Let $C(H^i,\bJ, f)$, $C_+(H^i,\bJ),C_0(f)$ be the free $\bm{k}$-modules generated by $\cP^*(H^i)\cup \cC(f)$, $\cP^*(H^i)$ and $\cC(f)$ respectively. Moduli spaces $\cM^i_r$ carry consistent orientations following \cite{zhao2014periodic,filling}. By signed count the $0$-dimensional moduli space $\cM^i_r$, we have a degree $1-2r$ operator
$$\delta^r(y)=\sum_x \# \cM_r^i(x,y)x.$$
Then by Proposition \ref{prop:reg}, $(\delta^r)_{0\le r \le N}$ defines a $N$-truncated $S^1$-structure on $C(H^i,\bJ,f)$ compatible with the splitting $0\to C_0(f)\to C(H^i,\bJ,f)\to C_+(H^i,\bJ)\to 0$. Counting of moduli spaces $\cN^i_r$ defines $S^1$-morphism from $C(H^i,\bJ,f)\to C(H^{i+1},\bJ,f)$ compatible with the splitting, which is inclusion by \eqref{r2} of Proposition \ref{prop:reg}. Then we defined $C(\bH,\bJ,f)$ to be the direct limit for $i\to \infty$. Then $C(\bH,\bJ,f)$ is a $N$-truncated $S^1$-complex with a compatible splitting. 
\begin{remark}\label{rmk:auto}
Conceptually, we are defining the structure on the limit of $H^i$, which can be thought as a $C^2$-small perturbation to $H_0$ capturing all Reeb orbits. We need to involve the complexity of Definition \ref{def:ham} instead of working with a global perturbation $H_N:S^1\times S^{2N+1}\times \widehat{W}\to \R$ to $H_0$, because there is no $r_0$ such that $\partial_s H_N(t,z(s))\le 0$ on $r\ge r_0$. In particular, Lemma \ref{lemma:max} can not be applied.  The integrated maximum principle can not be applied to $\cM^{i+1}_r(x,y)$ and $r=c_i$ for $x,y\in \cP^*(H^i)$ for the same reason. However, by Proposition \ref{prop:reg},   $C(H^i,\bJ,f)$ is a subcomplex in $C(H^{i+1},\bJ,f)$. In particular, it means the signed counting of curves in $\cM^{i+1}_r(x,y)$ leaving $W^i$ is zero for $x,y\in \cP^*(H^i)$.

One way to avoid using Definition \ref{def:ham} is using autonomous Hamiltonian $H_N\equiv H$ with a cascades construction of the $S^1$-complex. In this case, Lemma \ref{lemma:max} can be applied and we can get a full $S^1$-structure with a single Hamiltonian.
\end{remark}

Applying constructions in \S \ref{s2}, for $2k\le N$,  we have $\Delta^{k,i},\Delta^{k,i}_{+},\Delta^{k,i}_{+,0}$ defined by the $N$-truncated $S^1$-structure on $C(H^i,\bJ,f)$. 
\begin{proposition}
	We have $\Delta^k=\varinjlim_i \Delta^{k,i}, \Delta^k_+=\varinjlim_i \Delta^{k,i}_{+}, \Delta^k_{+,0}=\varinjlim_i \Delta^{k,i}_{+,0}$. 
\end{proposition}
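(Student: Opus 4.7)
The plan is to exhibit, for each $i$, a Floer-theoretic comparison between the $N$-truncated $S^1$-complex $C(H^i,\bJ,f)$ of the alternative construction and the $S^1$-complex $C(K_t)$ of an admissible Hamiltonian $K_t$ from \S\ref{s31} whose slope is close to $h'(c_i)$. Concretely, one builds an interpolating $\R$-family of Hamiltonian and almost complex structure data on $S^1\times S^{2N+1}\times \widehat{W}$, respecting the $\CP^\infty$ symmetries of Definitions \ref{def:adm} and \ref{def:J}, and counts the associated parameterized Floer cylinders to define an $S^1$-morphism (in the $N$-truncated sense). Compactness follows from the integrated maximum principle (Lemma \ref{lemma:max}) applied at $r=c_i$, together with the same energy/action bookkeeping used in Proposition \ref{prop:reg}; regularity holds for generic choices by a standard argument. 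Since both sides have matching periodic orbits in the relevant action window (after a Morse-Bott to Morse perturbation), this $S^1$-morphism induces an isomorphism on the cohomology of the truncated cochain complexes, their $C_+$-quotients, and is compatible with the splitting into constant and non-constant orbit components thanks to the action separation imposed in Definition \ref{def:ham}.

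Next, apply Proposition \ref{prop:func1} to this comparison $S^1$-morphism: it intertwines the structural maps $\Delta^k$, $\Delta^k_+$, and $\Delta^k_{+,0}$ on the two sides, provided $2k\le N$. Passing to the direct limit in $i$ on the alternative side corresponds to letting the slope $h'(c_i)\to\infty$, which is cofinal in the colimit over all admissible Hamiltonians on the original side. Cofinality identifies the two colimits of cohomology groups, and the naturality from Proposition \ref{prop:func1} identifies the structural maps compatibly. This yields $\Delta^k=\varinjlim_i \Delta^{k,i}$, and analogously for $\Delta^k_+$ and $\Delta^k_{+,0}$.

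The main obstacle is constructing the comparison $S^1$-morphism so that it intertwines the two different $S^1$-structures at the chain level, not merely on cohomology, and so that all the required compatibilities (equivariance under the $S^1$-action, symmetries under the $\CP^j\hookrightarrow\CP^{j+k}$ translations of part \eqref{a3} of Definition \ref{def:adm} and part \eqref{J3} of Definition \ref{def:J}, and compatibility with the splitting into $C_0$ and $C_+$) hold simultaneously. Since both models use the same perfect Morse function $\widetilde{f}_N$ on $S^{2N+1}$ and the same symmetry conventions, one can choose an interpolating family of admissible data satisfying all of these. The control $\partial_s H^i_N<\epsilon/(2L_N)$ from Definition \ref{def:ham}, combined with the analogous estimate for the interpolating homotopy, ensures that the comparison morphism respects the action-theoretic splitting exactly as in the proof of Proposition \ref{prop:reg}. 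Once these technical points are arranged, the proposition is a formal consequence of the functoriality statements proved in \S\ref{s2}.
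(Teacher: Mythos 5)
Your proposal is correct and follows essentially the same route as the paper: compare $C(H^i,\bJ,f)$ with a non-degenerate admissible Hamiltonian complex from \S\ref{s31} via a continuation $S^1$-morphism inducing an isomorphism on regular cohomology, invoke Proposition \ref{prop:func1} (and \ref{prop:func2}) to intertwine $\Delta^k,\Delta^k_+,\Delta^k_{+,0}$, and pass to the colimit using cofinality of the slopes $h'(c_i)$ together with compatibility of the comparison maps with the inclusions $C(H^i)\subset C(H^{i+1})$. The only cosmetic difference is that the paper takes the comparison Hamiltonian $\widetilde{H}^i$ to be a $C^2$-small perturbation of $H^i$ itself on $W$ (citing the analogous continuation-map argument of \cite{filling}) rather than a separate admissible Hamiltonian of nearby slope.
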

\begin{proof}
 	Let $\widetilde{H}^i_N$ be a $C^2$-small perturbation to $H^i_N$ on $W$, such that $\widetilde{H}^i_N$ can be used to define the $N$-truncated $S^1$ structure on $C(\widetilde{H}^i)$ for non-degenerate $\widetilde{H}^i$ as in \S \ref{s31}. By the same argument as in \cite[Proposition 2.10]{filling}, there is a $N$-truanted $S^1$ morphism (a continuation map) from $C(\widetilde{H}^i) \to C(H^i)$, which induces isomorphism on the regular cohomology. Then by Proposition \ref{prop:func1}, this isomorphism induces isomorphism between $\Delta^k$. By the same argument as in Proposition \ref{prop:reg}, $C(\widetilde{H}^i)$ is a $N$-truncated $S^1$-subcomplex in $C(\widetilde{H}^{i+1})$ and the $S^1$-morphism $C(\widetilde{H}^i) \to C(H^i)$ can be made compatible with the inclusion. Then we have $\Delta^k:=\varinjlim \Delta^k(\widetilde{H}^i)=\varinjlim \Delta^{k,i}$. The proof for $\Delta^k_+,\Delta^k_{+,0}$ is the same as the construction above is compatible with splitting.
\end{proof}

\subsection{Naturality}
In this part, we discuss the naturality of the construction which is very important in the comparison argument when we do neck-stretching. 
\begin{definition}
Let $\bJ_1,\bJ_2$ be two consistent almost complex structures, then a homotopy $\bJ_s$ from $\bJ_1$ to $\bJ_2$ consists of $J^i_s:\R \times S^1 \times S^{2N+1}\to \cJ(W)$, such that $J^i_s(t+\theta,\theta\cdot z)=J^{i}_s(t,z)$, $J^i_s=J^i_{1,N}$ for $s\gg 0$ and $J^i_s=J^i_{2,N}$ for $s\ll 0$. The symmetric property in \eqref{J3} of Definition \ref{def:J} also holds for $\bJ_s$.
\end{definition} 

Then by choosing a generic homotopy and counting the moduli space similar to $\cN^i_r$ in \eqref{eqn:N1} and \eqref{eqn:N2}, we get an $S^1$-morphism $\phi_{J_s^i}$  compatible with the splitting from $C(H^i,\bJ_1,f)$ to $C(H^i,\bJ_2,f)$ for every $i$. Moreover, by a standard homotopy of homotopies argument, we have the following commutative diagram of $N$-truncated $S^1$-complex up to $S^1$-homotopy, which is also compatible with the splitting,

$$
\xymatrix{
	C(H^0,\bJ_1,f) \ar[r]\ar[d]^{\phi_{J^1_s}} & C(H^1,\bJ_1,f) \ar[r]\ar[d]^{\phi_{J^2_s}} & \ldots  \\
	C(H^0,\bJ_2,f) \ar[r] & C(H^1,\bJ_2,f) \ar[r] & \ldots	
	}
$$ 
Therefore by Proposition \ref{prop:func1} \ref{prop:func2}, they induce maps commuting with $\Delta^k,\Delta^k_+,\Delta^k_{+,0}$. Moreover, we have the following standard functorial property by a homotopy of homotopies of almost complex structures.
\begin{proposition}
	$\phi_{J^i_s}$ is independent of $\bJ_s$ up to $N$-truncated $S^1$-homotopy on each $C(H^i,\bJ,f)$ and is functorial with respect to concatenation of homotopies.
\end{proposition}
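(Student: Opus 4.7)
The plan is to prove both claims by a standard homotopy-of-homotopies argument adapted to our truncated $S^1$-setup, parallel to what was already used to produce the commutative diagram displayed just before the proposition.

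For the independence claim, suppose $\bJ_s^{(0)}$ and $\bJ_s^{(1)}$ are two homotopies of consistent almost complex structures from $\bJ_1$ to $\bJ_2$. I would first construct a two-parameter family $\{\bJ_{s,\tau}\}_{\tau\in[0,1]}$ interpolating between them, subject to the same equivariance and symmetry conditions as in Definition \ref{def:J}\eqref{J3}, and such that for $\tau$ near $0$ (resp.\ $1$) it agrees with $\bJ_s^{(0)}$ (resp.\ $\bJ_s^{(1)}$), and such that for each $\tau$ it is a homotopy from $\bJ_1$ to $\bJ_2$. For each $r\le N$, $i\in\N$, and $x,y\in\cC(f)\cup\cP^*(H^i)$, I would then consider the parametric moduli space
\[
\cK^i_r(x,y):=\bigl\{(\tau,u,z,\gamma)\st \tau\in[0,1],\ (u,z,\gamma)\in N^i_r(x,y)\text{ for }\bJ_{s,\tau}\bigr\}/S^1,
\]
defined analogously to \eqref{eqn:N1}--\eqref{eqn:N2} but with the extra parameter $\tau$. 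Compactness is furnished by Proposition \ref{prop:compact} applied to each slice, and for a generic choice of $\bJ_{s,\tau}$, $\cK^i_r(x,y)$ carries the structure of a manifold with corners of expected dimension $|x|-|y|+2r+1$, the boundary decomposing into codimension-one strata: (i) $\tau=0$ contributions giving $\phi_{J_s^{(0)},i}$, (ii) $\tau=1$ contributions giving $\phi_{J_s^{(1)},i}$, (iii) breaking at an interior orbit against $\cM^i_k$ or $\cM^{i+1}_k$. Signed counts of zero-dimensional $\cK^i_r$ define operators $h^r:C(H^i,\bJ,f)^{*}\to C(H^i,\bJ,f)^{*-2r-1}$, and the description of $\partial\cK^i_r$ yields exactly the $S^1$-homotopy relation
\[
\phi^k_{J_s^{(0)}}-\phi^k_{J_s^{(1)}}=\sum_{i+j=k}\bigl(h^i\circ\delta^j+\partial^j\circ h^i\bigr),\qquad k\le N.
\]
Compatibility with the splitting follows as in Proposition \ref{prop:reg} and Remark \ref{rmk:negative}: the same action/$\partial_s H$ estimate used there forces $h^i(C_+)\subset C_0$ contributions to vanish in the required way, because $H$ is unchanged and only $J$ varies.

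For functoriality with respect to concatenation, given $\bJ_{\bullet}:\bJ_1\rightsquigarrow\bJ_2$ and $\bJ'_{\bullet}:\bJ_2\rightsquigarrow\bJ_3$, I would compare the induced map of the concatenation $\bJ_{\bullet}\#_R\bJ'_{\bullet}$ (glued with a large neck parameter $R$) with the composition $\phi_{J'^i_s}\circ\phi_{J^i_s}$. A standard neck-stretching/gluing argument for the parameter $R\in[R_0,\infty)$ realizes the composition as a boundary of a $1$-parameter family of Floer solutions, and an additional homotopy-of-homotopies between $\bJ_{\bullet}\#_R\bJ'_{\bullet}$ for $R=R_0$ and any chosen homotopy representative then reduces to the independence already proven; combining the two one-parameter families via the same parametric moduli construction as above produces the desired $S^1$-homotopy.

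The main obstacle is purely bookkeeping rather than analytic: the compactness, transversality, and orientation arguments for $\cK^i_r$ are formal consequences of Proposition \ref{prop:reg} and the already-established Proposition \ref{prop:compact}, but one must check that the equivariance on $S^{2N+1}$ together with the translation symmetry on the critical strata $\CP^{j-i}$ can be preserved throughout the two-parameter family, so that $h^r$ defined with $k=0$ coincides with the one defined with general $k$ and the whole collection $(h^0,\ldots,h^N)$ truly assembles into an $N$-truncated $S^1$-homotopy. This is handled exactly as in the construction of $\bJ_s$ itself, by building $\bJ_{s,\tau}$ inductively on the strata of $\CP^N$ so that the imposed equivariance and symmetry conditions hold automatically; compatibility with the splitting $0\to C_0\to C\to C_+\to 0$ then follows word-for-word from the argument in Proposition \ref{prop:reg}.
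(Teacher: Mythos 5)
Your proposal is correct and is essentially the argument the paper has in mind: the paper states this proposition without a written proof, appealing to the ``standard homotopy of homotopies'' argument, and your two-parameter family $\bJ_{s,\tau}$ with the parametric moduli spaces $\cK^i_r$, the action/$\partial_sH$ estimate from Proposition \ref{prop:reg} for splitting compatibility, and the gluing-parameter argument for concatenation is exactly that standard argument written out. One minor bookkeeping slip: since $\phi_{J^i_s}$ is a map $C(H^i,\bJ_1,f)\to C(H^i,\bJ_2,f)$ at a fixed level $i$, the breaking strata of $\partial\cK^i_r$ involve only $\cM^i_k$ computed with $\bJ_1$ or $\bJ_2$ (and Morse breakings), not $\cM^{i+1}_k$, which does not affect the resulting $S^1$-homotopy relation.
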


\begin{definition}
	We introduce the following sets of consistent almost complex structures, which satisfy various regularity conditions.
	\begin{enumerate}
		\item $\cJ_{N,reg}(W)$ denotes the set of regular consistent almost complex structures for moduli spaces up to dimension $0$, and $\cJ_{N,reg,+}(W)$ denotes the set of regular consistent almost complex structures for moduli spaces with ends asymptotic to non-constant orbits up to dimension $0$.
		\item $\cJ^i_{N,reg}(W)$ denotes the set of regular almost complex structures for $\cM^j_r,\cN^{j}_r$ for $j\le i$ up to dimension $0$. Similarly for $\cJ^i_{N,reg,+}(W)$. 
	\end{enumerate}
\end{definition}
Then by the compactness of $\cM^i_r,\cN^i_r$, we have that $\cJ^i_{N,reg},\cJ^i_{N,reg,+}$ are an open dense subsets. 

\begin{lemma}\label{lemma:natural}
	Assume we have a smooth family $\bJ_s\in \cJ^i_{N,reg,+}$, then the continuation map from $C_+^*(H^i,\bJ_1,f)$ to $C_+^*(H^i,\bJ_2,f)$ is identity up to $S^1$-homotopy. $\Delta^{k,i}_+, \Delta^{k,i}_{+,0}$ are well-defined for $\bJ \in \cJ^i_{N,reg,+}(W)$.
\end{lemma}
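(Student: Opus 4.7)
The plan is to dispatch the two assertions in order: first verify that $\Delta^{k,i}_+$ and $\Delta^{k,i}_{+,0}$ are well defined for any $\bJ\in\cJ^i_{N,reg,+}(W)$, then obtain the continuation-map statement from a standard $2$-parameter moduli space argument, reducing it to the case of an $s$-independent family.

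For well-definedness, I would observe that every moduli space appearing in the chain-level definitions of $\delta^r_+$ and $\delta^r_{+,0}$ carries at least one non-constant asymptote (both ends for $\delta^r_+$, the upper end for $\delta^r_{+,0}$), so the hypothesis $\bJ\in\cJ^i_{N,reg,+}$ gives transversality up to dimension $0$ on every such stratum. Compactness comes from Proposition \ref{prop:compact} (trajectories stay inside $W^i$ via the integrated maximum principle) combined with the action gap $\epsilon$ built into $H_0$ and Definition \ref{def:ham}\eqref{h2}: since non-constant orbits have action below $-\epsilon$ while constant orbits have action close to $0$, no boundary configuration in a $\leq 1$-dimensional stratum can have an entire piece with two constant asymptotes and lie in the target moduli space of interest. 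Therefore $\delta^r_+$ and $\delta^r_{+,0}$ are chain-level operations, the $S^1$-structure identities hold from counting boundaries of $1$-dimensional strata, and Proposition \ref{prop:ss} produces $\Delta^{k,i}_+$ and $\Delta^{k,i}_{+,0}$.

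For the continuation-map claim, the key fact is that $\cJ^i_{N,reg,+}(W)$ is open, so any two smooth families $\bJ_s,\bJ'_s\subset\cJ^i_{N,reg,+}$ with matching endpoints can be joined by a generic $2$-parameter family $\bJ_{\sigma,s}$ that remains inside $\cJ^i_{N,reg,+}$. Counting the parameterized $\cN^i_r$-moduli spaces over $\sigma$, each of which still has at least one non-constant asymptote and hence is transverse, produces an $S^1$-homotopy between the two continuation $S^1$-morphisms restricted to $C_+$. Thus only the endpoints matter in $\cJ^i_{N,reg,+}$, and I would deform to an $s$-independent family: with $\bJ_s\equiv\bJ_1$, the $r=0$ piece carries a free $\R$-translation action whose only rigid fixed points are the trivial cylinders $u(s,t)=x(t)$, giving $\phi^0_+=\id$; and for $r>0$ the simultaneous $\R$-translation of $(u,z)$ shows that the rigid slice of $\cN^i_r$ sits one dimension below the rigid slice of $\cM^i_r/\R$, which is empty, giving $\phi^r_+=0$. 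Combined with the previous sentence this yields the identity up to $S^1$-homotopy.

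The hard part I expect is the transversality bookkeeping for the $2$-parameter deformation: one must perturb $\bJ_{\sigma,s}$ while keeping it inside $\cJ^i_{N,reg,+}$ for every $\sigma$, fixing the boundary families at $\sigma=0,1$, and then verify that the boundaries of the $1$-dimensional parameterized strata assemble into the relations that define an $S^1$-homotopy compatible with the splitting. Here the action gap $\epsilon$ again rules out broken pieces with both asymptotes constant (for which regularity would not be available), and the $S^{2N+1}$-symmetry imposed in Definition \ref{def:J}\eqref{J3} is what makes the resulting count assemble into a genuine $N$-truncated $S^1$-homotopy $(h^0,h^1,\ldots,h^N)$ rather than a collection of unrelated chain homotopies in each degree, in parallel with the non-equivariant argument carried out in \cite{filling}.
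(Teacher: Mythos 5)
Your proposal has two genuine gaps, one in each half. For the first claim, your reduction ``deform to an $s$-independent family'' cannot be carried out: the lemma is about a family joining two \emph{different} almost complex structures $\bJ_1\neq\bJ_2$ (this is exactly how it is used in Theorem \ref{thm:ind}, where the family is a neck-stretching family), and no homotopy rel endpoints turns such a path into a constant one, so your $\R$-translation analysis of $\cN^i_r$ for $\bJ_s\equiv\bJ_1$ addresses only the trivial case $\bJ_2=\bJ_1$. Moreover, your preliminary step --- joining two families with common endpoints by a two-parameter family that stays inside $\cJ^i_{N,reg,+}$ --- is not justified by openness and density alone, and it is in any case not where the hypothesis enters. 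The hypothesis that \emph{every} $\bJ_s$ lies in $\cJ^i_{N,reg,+}$ is used, as in \cite[Lemma 2.15]{filling} to which the paper defers, in a subdivision--compactness argument: cutting the path into sufficiently small pieces, any nontrivial index-$0$ continuation solution for a small piece would Gromov-converge to a broken configuration for some fixed $\bJ_{s^*}$ containing a nontrivial component of index $\ge 1$ (by the per-$s$ regularity of the zero-dimensional quotient moduli spaces, and using that the energy identity has no error term since $H^i$ is $s$-independent), contradicting total index $0$; hence each small piece counts only trivial cylinders and is literally the identity, and the composite is the identity up to $S^1$-homotopy.

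For the second claim, your direct chain-level definition of $\delta^r_+$ and $\delta^r_{+,0}$ for $\bJ\in\cJ^i_{N,reg,+}(W)$ does not go through. That set only provides transversality \emph{up to dimension $0$}, and only for moduli spaces whose asymptotes are non-constant orbits; so (i) the one-dimensional moduli spaces needed to verify the truncated $S^1$-relations and the morphism property are not known to be cut out transversely, and (ii) the moduli spaces \eqref{eqn:M2} entering $\delta^r_{+,0}$ have a constant asymptote at a critical point of $f$ inside $W$ and are not covered by $\cJ^i_{N,reg,+}$ at all --- this is precisely why \S \ref{s4} replaces $\Delta^k_{\partial}$ by the map $P^k$ and introduces the separate regularity set $\cJ^i_{N,reg,P}$. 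Your action-gap remark only excludes configurations with two constant asymptotes, which is not the issue. The paper's proof avoids defining anything new at chain level for such $\bJ$: the operations are a priori defined for $\bJ'$ in the dense set $\cJ^{\le 1}_{N,reg}(W)$ of Proposition \ref{prop:reg}; for $\bJ\in\cJ^i_{N,reg,+}(W)$ one \emph{declares} $\Delta^{k,i}_+,\Delta^{k,i}_{+,0}$ to be those of a nearby $\bJ'\in\cJ^{\le 1}_{N,reg}(W)$ lying in the same connected component of $\cJ^i_{N,reg,+}(W)$, and independence of this auxiliary choice is exactly what the first claim, combined with Proposition \ref{prop:func2}, provides. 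You would need to restructure your argument along these lines rather than attempt direct transversality for $\bJ$ itself.
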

\begin{proof}
	The first claim follows from the same argument of \cite[Lemma 2.15]{filling}. A prior, $\Delta^{k,i}_+, \Delta^{k,i}_{+,0}$ are defined for $\bJ \in \cJ^{\le 1}_{N,reg}(W)$, which is dense in $\cJ_N(W)$. For $\bJ$ in the open dense set $\cJ^i_{N,reg,+}(W)$, $\Delta^{k,i}_+, \Delta^{k,i}_{+,0}$ are defined by a nearby $\bJ'\in \cJ^{\le 1}_{N,reg}(W)$ such that $\bJ,\bJ'$ are in an connected component of $\cJ^i_{N,reg,+}(W)$. The well-definedness of  $\Delta^{k,i}_+, \Delta^{k,i}_{+,0}$ follows from the first claim and Proposition \ref{prop:func2}.
\end{proof}
Working with almost complex structures such that only moduli spaces up to dimension zero are cut out transversely in the above lemma and Proposition \ref{prop:natural} below is important for the neck-stretching in \S \ref{s4} and the reason was explained in \cite[Remark 2.13, 3.13]{filling}

\subsection{Uniruledness}
In \cite{filling}, we proved that vanishing of symplectic cohomology and existence of symplectic dilation implies that $W$ is $(1,\Lambda)$-uniruled for $\Lambda \in \R_+$ in the sense of \cite{mclean2014symplectic}. Roughly speaking, $(1,\Lambda)$-uniruled means that for any almost complex structure $J$ that is convex near $\partial W$ and any point $e$ in the interior $W^{\circ}$, there exists a proper holomorphic curve $u:S\to W^\circ$ such that $\int u^*\omega \le \Lambda$ and $H_1(S;\Q)=0$. Since the concept of $k$-semi-dilation is a generalization of vanishing and dilation, by the exactly same argument as \cite[Theorem J]{filling}, we have the following.
\begin{theorem}\label{thm:unirule}
	Let $W$ be an exact domain admitting a $k$-semi-dilation, then $W$ is $(1,\Lambda)$-uniruled in the sense \cite{mclean2014symplectic} for some $\Lambda > 0$.
\end{theorem}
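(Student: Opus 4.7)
The plan is to mimic the strategy of \cite[Theorem J]{filling}, where the vanishing and symplectic dilation cases were handled, and adapt it to the $k$-semi-dilation setting via the equivariant chain-level data produced by Proposition~\ref{prop:order}. Fix a point $e\in W^{\circ}$ and an almost complex structure $J$ on $\widehat{W}$ cylindrical at infinity; we must produce a proper holomorphic curve $u:S\to W^{\circ}$ of bounded area with $H_1(S;\Q)=0$ passing through $e$. I choose an admissible Morse function $f$ on $W$ whose unique minimum is at $e$, so that the class $1\in H^0(W)$ is represented geometrically by the point $e$ in the Morse cochain complex $C_0(f)$.

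First, I would unpack the $k$-semi-dilation algebraically. By Proposition~\ref{prop:order} applied in the setting of \S\ref{s32}, there is an admissible Hamiltonian $H$ and an element $X=\sum_{j=0}^{k}u^{-j}\alpha_{k-j}\in F^{k}C^{+}_{+}(H,\bJ,f)$ with $\delta^{S^1}_+ X=0$ and $\pi_0\circ \delta^{S^1}_{+,0}(X)=e$ modulo $\delta^0$-exact terms in $C_0(f)$. Writing out $\delta^{S^1}_{+,0}(X)=\sum_{j,i}u^{-j-i}\delta^{i}_{+,0}\alpha_{k-j}$ and extracting the $u^{0}$-coefficient, I obtain a finite collection of parametrized equivariant Floer moduli spaces (the $\cM^{i}_{r}(p,x)$-type moduli of \eqref{eqn:M2}) whose signed count produces the Morse generator $e$, together with auxiliary closedness relations coming from $\delta^{S^1}_+X=0$. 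The actions of all the Hamiltonian orbits appearing in the $\alpha_{k-j}$ are bounded above by some $-\Lambda_0<0$ coming from the length of the Reeb orbits hit, so the energies of all curves in these moduli spaces are uniformly bounded by a constant $\Lambda$ depending only on the Hamiltonian $H$ (hence ultimately only on the geometry near $\partial W$), exactly as in the energy estimate of \cite[\S3]{filling}.

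Next, I would perform a neck stretching along $\partial W$ in the family of almost complex structures. For $r\le N$ and the cut-down one-dimensional moduli spaces realizing $\pi_0\delta^{S^1}_{+,0}X=e$, the $S^1$-equivariant parameter in $S^{2N+1}$ contributes $2r$ to the index while the incidence condition at the Morse minimum $e$ kills $2n$ dimensions; the surviving zero-dimensional configurations include a genuine Floer curve asymptotic to a Reeb orbit and passing through $e$. Stretching the neck and applying SFT compactness (as in \cite[Prop.~3.4, Lemma~3.12]{filling}), the piece that lies inside $W$ degenerates into a (possibly nodal) punctured holomorphic curve $u:S\to W^{\circ}$ for the fixed $J$, passing through $e$, of area at most $\Lambda$, and asymptotic at its punctures to Reeb orbits of $\partial W$. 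Genus and homology are constrained because all source curves start as Floer cylinders (genus zero, one output puncture) attached to gradient trees and auxiliary moduli from the $S^1$-equivariant structure, so after the SFT degeneration the component surviving in $W$ is a planar (hence $H_1=0$) curve, or a tree of such which still has $H_1=0$.

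The main obstacle, and the place where some care is required beyond a literal quotation of \cite[Theorem J]{filling}, is ensuring transversality and compactness for the iterated moduli spaces arising from the $k$ successive layers of the $u$-adic filtration: the chain of equations $\sum_{i}\delta^{i}\alpha_{k-j-i}=0$ forces one to consider moduli spaces $\cM_r^i$ and $\cM^i_r(p,x)$ for $r$ ranging up to $k+1$, and one must stretch the neck simultaneously while tracking which component of the SFT building carries the point constraint $e$ and which carries the asymptotic Reeb orbits. The action gap \eqref{h4} of Definition~\ref{def:ham} together with Proposition~\ref{prop:compact} confines all relevant curves to a fixed $W^{i}$, yielding the uniform area bound $\Lambda$, and the standard orientability and gluing arguments from \cite{zhao2014periodic,filling} guarantee that the signed count of the resulting zero-dimensional moduli space equals the coefficient of $e$ in $\pi_0\delta^{S^1}_{+,0}X$, which is $1$. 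Hence at least one such curve through $e$ exists, proving that $W$ is $(1,\Lambda)$-uniruled.
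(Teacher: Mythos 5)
Your overall strategy (use the chain-level data behind the $k$-semi-dilation to produce, for the coefficient of the Morse minimum $e$ in $\pi_0\circ\delta^{S^1}_{+,0}$, a nonempty zero-dimensional parametrized Floer moduli space with the point constraint $u(0)=e$, then extract from such a solution a proper holomorphic curve of bounded energy through $e$, let $e$ vary, and invoke \cite[Theorem 2.3]{mclean2014symplectic}) matches the paper up to the extraction step. But there you diverge: the paper does \emph{not} stretch the neck. Because the Hamiltonians of \S\ref{s32} vanish on $W$ and the almost complex structure is chosen independent of $S^1\times S^{2N+1}$ (and equal to the given $J$) on $W$, the parametrized Floer solution $u$ is already honestly $J$-holomorphic on $u^{-1}(\widetilde W)$ for $\widetilde W=W\setminus \partial W\times[1-\delta,1]$; one then takes the component $S$ of $u^{-1}(\widetilde W)$ through $e$ and the maximum-principle argument of \cite[Theorem 5.4]{filling} shows $u|_S$ is proper, has energy bounded by $\max|\cA_{H^j}|$ over the orbits of $H^j$, and has $H_1(S;\Q)=0$. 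No SFT compactness, no building analysis, and no equivariant transversality beyond what \S\ref{s32} already provides is needed.

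The neck-stretching route you propose has a genuine gap precisely at the point where you assert the conclusion: after SFT degeneration, the bottom-level component in $\widehat W$ passing through $e$ is in general a sphere with several positive punctures, and a sphere with $n\ge 2$ punctures has $H_1\cong\Q^{n-1}\neq 0$, so ``planar, or a tree of such'' does not follow from genus-zero domains alone. To repair this you would still have to restrict the limit curve to $\widetilde W$ and run exactly the maximum-principle component-extraction argument that the paper uses directly on the Floer solution --- at which point the stretching buys you nothing and costs you the compactness/transversality bookkeeping for buildings that you flag but do not resolve. A second, smaller issue: McLean's definition quantifies over all $J$ convex near $\partial W$, so you must arrange that the almost complex structure used in the Floer-theoretic count agrees with the given $J$ on the region where the curve is extracted, with regularity achieved by perturbations supported away from it; you assume this implicitly, whereas it is exactly what the setup of \S\ref{s32} and \cite[Theorem 5.4]{filling} is designed to guarantee. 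Finally, note the paper handles ``any point $e$'' by observing the semi-dilation property is independent of the admissible Morse function $f$; your device of choosing $f$ with minimum at the prescribed point is an acceptable equivalent, but it needs that remark (or a continuation argument) to be available for every such $f$.
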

\begin{proof}
	Assume $W$ admits a $k$-semi-dilation but not a $k-1$-semi-dilation, i.e.\ $1+x \in \Ima \Delta^k_{+,0}$ on $C(H^j,\bJ,f)$ for some $j$ and $x$ is in $\oplus_{i>1}H^i(W)$. Let $e$ denote the unique minimum of $f$, which represents $1$ in $H^*(W)$. Therefore by the definition of $\Delta^k_{+,0}$, we can find $k+1$ periodic orbits $\{x_i\}_{0\le i \le k}$ of $X_{H^j}$, such that at least one the following equations has a solution for $u:\C \to \widehat{W}$ and $z:\R \to S^{2k+1}$ is a solution to $ z'+\nabla_{\widetilde{g}_N} \widetilde{f}_N=0$,
	$$\left\{\begin{array}{rl}
		\partial_su+J_N(t,z(s),u)(\partial_t-X_{H_N^j(t,z(s))}) = 0, & \displaystyle \lim_{s\to \infty}(u,z)\in S^1\cdot (x_0,\widetilde{z}_0), \lim_{s\to -\infty}z\in S^1\cdot \widetilde{z}_0, u(0)=e;  \\
		& \ldots  \\
	    \partial_su+J_N(t,z(s),u)(\partial_t-X_{H_N^j(t,z(s))}) = 0, & \displaystyle \lim_{s\to \infty}(u,z)\in S^1\cdot (x_k,\widetilde{z}_0), \lim_{s\to -\infty}z\in S^1\cdot \widetilde{z}_k, u(0)=e. 
	\end{array}\right.$$
	Then by the same maximum principle argument in \cite[Theorem 5.4]{filling}, $u^{-1}(\widetilde{W})$ contains a component $S$ (an open Riemann surface) such that $u:S\to \widetilde{W}$ is a proper holomorphic curve passing through $e$ with a uniform energy bound (the maximum of $|\cA_{H^j}|$ among all Hamiltonian orbits of $H^j$) and $H_1(S;\Q)=0$ for $\widetilde{W}:=W\backslash \partial W \times [1-\delta,1]$. Since $C(H^j,\bJ,f)$ carries a $k$-semi-dilation is a property independent of $f$ (this can be seen from the proof of \cite[Theorem 5.4]{filling}, or use a continuation map for a homotopy $f_s$ between admissible Morse functions),  in particular $e$ could be any point on $\widetilde{W}$. Therefore for every point of $\widetilde{W}$, we have found a proper holomorphic curve $u:S\to \widetilde{W}$ with a uniform energy bound passing through that point and $H_1(S;\Q)=0$. Hence $W$ is $(1,\Lambda)$-uniruled for some $\Lambda$ by \cite[Theorem 2.3]{mclean2014symplectic}.
\end{proof}
\begin{remark}
	It is unlikely that the existence of $k$-semi-dilation is equivalent to $(1,\Lambda)$-uniruled. Roughly speaking, the existence of $k$-semi-dilation implies a rigid rational curve passing through a fixed point and with another constraint at infinity. For the case of affine variety, often times, it can be explained by the non-vanishing of certain two point Gromov-Witten invariant, see \S \ref{s6}. In general, it is possible that the rational curve responsible for uniruledness is rigid only when we put more constraints. 
\end{remark}
Since the $(1,\Lambda)$-uniruledness in \cite{mclean2014symplectic} is equivalent to the algebraic $\A^1$-uniruledness, an instant corollary of Theorem \ref{thm:unirule} is the following.
\begin{corollary}\label{cor:unirule}
	Let $V$ be an affine variety. If one of the following conditions hold, then $V$ does not admits a $k$-semi-dilation for any $k$ and any coefficient $\bm{k}$.
	\begin{enumerate}
		\item The log Kodaira dimension of $V$ is not $-\infty$.
		\item $V$ admits a projective compactification $W$, such that $W$ is not uniruled.
	\end{enumerate}
\end{corollary}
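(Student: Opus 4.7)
The plan is to argue by contrapositive, leveraging Theorem \ref{thm:unirule} and known algebro-geometric characterizations of $\A^1$-uniruledness. Suppose $V$ admits a $k$-semi-dilation for some $k$ (and over some coefficient field $\bm{k}$). Since the proof of Theorem \ref{thm:unirule} is coefficient-independent in the sense that the existence of a $k$-semi-dilation still yields the relevant rigid moduli spaces, we obtain that $V$ is $(1,\Lambda)$-uniruled in the sense of \cite{mclean2014symplectic} for some $\Lambda > 0$.

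The key algebro-geometric input, which I would cite directly from \cite{mclean2014symplectic}, is that for a smooth affine variety $V$, being $(1,\Lambda)$-uniruled is equivalent to being algebraically $\A^1$-uniruled, meaning that through every point of $V$ passes a non-constant algebraic map $\A^1 \to V$. First I would recall this equivalence and apply it to obtain that $V$ is $\A^1$-uniruled.

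For part (1), the implication $\A^1$-uniruled $\Rightarrow$ log Kodaira dimension $\overline{\kappa}(V) = -\infty$ is a standard fact: the presence of a family of $\A^1$'s sweeping out $V$ forces the logarithmic plurigenera to vanish, since any log pluricanonical form would have to restrict to a regular pluricanonical form on each $\A^1$, which has $\overline{\kappa}(\A^1) = -\infty$. This contradicts $\overline{\kappa}(V) \geq 0$, giving (1). For part (2), if $V \subset W$ is an open affine subvariety of the projective variety $W$ and $V$ is $\A^1$-uniruled, then through a general point $p \in V \subset W$ passes a non-constant $\A^1 \to V$; taking its closure in $W$ produces a rational curve $\mathbb{P}^1 \to W$ through $p$, so $W$ is uniruled, contradicting the hypothesis.

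The main conceptual step is just invoking McLean's equivalence between symplectic $(1,\Lambda)$-uniruledness and algebraic $\A^1$-uniruledness; the remaining implications are standard in birational geometry. There is no analytical obstacle beyond what was already handled in Theorem \ref{thm:unirule}, so the corollary reduces to a dictionary translation.
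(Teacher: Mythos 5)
Your proposal is correct and follows exactly the route the paper intends: the paper treats this as an "instant corollary" of Theorem \ref{thm:unirule} together with McLean's equivalence between symplectic $(1,\Lambda)$-uniruledness and algebraic $\A^1$-uniruledness, and your contrapositive argument (semi-dilation $\Rightarrow$ $(1,\Lambda)$-uniruled $\Rightarrow$ $\A^1$-uniruled $\Rightarrow$ log Kodaira dimension $-\infty$ and uniruled compactification) simply spells out the same chain, including the correct observation that non-emptiness of the relevant moduli spaces is coefficient-independent.
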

Therefore, combining with the proof of \cite[Theorem 1.4]{li2019exact}, Corollary \ref{cor:unirule} proves the third part of \cite[Conjecture 5.1]{li2019exact}, i.e.\ log general variety never admits a cyclic dilation.

\subsection{Constructions preserving $k$-(semi)-dilations} 
We list three constructions that preserve $k$-(semi)-dilations. The first two of the following propositions are generalizations of the $0$ and $1$ dilations cases considered in \cite{oancea2006kunneth,seidel2012symplectic}.
\begin{proposition}\label{prop:prod}
	Let $V,W$ be two Liouville domains, then $\rD(V\times W) = \min \{\rD(V),\rD(W)\}$ and $\rSD(V\times W)\le \min \{\rSD(V),\rSD(W)\}$.
\end{proposition}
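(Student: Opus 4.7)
The plan is to adapt Oancea's K\"unneth formula for symplectic cohomology to the $S^1$-equivariant setting. Using a split Hamiltonian $H_V\oplus H_W$ on $\widehat{V\times W}$ (after smoothing the product corner), split almost complex structures, and a single parameter $z\in S^{2N+1}$ driving the equivariant structure on both factors simultaneously, the parametrised Floer moduli spaces on the product factor through those for $V$ and $W$. This should yield a quasi-isomorphism of $N$-truncated $S^1$-complexes whose underlying chain complex is $C(V)\otimes C(W)$, respecting the splittings $C_0\subset C$ and equipped with the induced product $S^1$-structure. Working over the field $\bm{k}$, no Tor obstructions arise.

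For the inequality $\rSD(V\times W)\le \min\{\rSD(V),\rSD(W)\}$, suppose without loss of generality $\rSD(V)=k$. By Proposition~\ref{prop:order}, pick a witness $x_V=\sum_{i=0}^k c_iu^{-i}\in F^kC^+_+(V)$ with $\pi_0\circ[\delta^{S^1}_{+,0}](x_V)=1_V$ and $u^{k+1}x_V=0$. Let $e_W\in C_0(W)$ be a Morse cocycle representing $1_W$, and set $x:=x_V\otimes e_W\in F^kC^+_+(V\times W)$. Since $C_0(W)$ carries the trivial $S^1$-structure (\S\ref{s31}) and $\delta^0_W(e_W)=0$, one has
\[
\delta^{S^1}_{V\otimes W}(x_V\otimes e_W)=(\delta^{S^1}_V x_V)\otimes e_W,
\]
so $[\delta^{S^1}_{+,0}](x)=[\delta^{S^1}_{+,0}](x_V)\otimes e_W$, whose $\pi_0$-projection is $1_V\otimes 1_W=1_{V\times W}$. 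Since $u^{k+1}x=0$ as well, $x$ is a $k$-semi-dilation witness. The identical argument using a $k$-dilation witness (no projection needed) gives $\rD(V\times W)\le\min\{\rD(V),\rD(W)\}$.

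For the reverse inequality $\rD(V\times W)\ge \min\{\rD(V),\rD(W)\}$, I use Proposition~\ref{prop:equi} together with the spectral-sequence interpretation of Remark~\ref{rmk:SS}: existence of a $k$-dilation is equivalent to the unit $1_{V\times W}=1_V\otimes 1_W$ dying on or before the $k$-th page of the $u$-adic spectral sequence. By the K\"unneth formula, the $u$-adic spectral sequence for $V\times W$ is the tensor product over $\bm{k}$ of the spectral sequences for $V$ and $W$. Thus $1_V\otimes 1_W$ dies on the $k$-th page of the product sequence if and only if $1_V$ dies on the $k$-th page for $V$ or $1_W$ dies on the $k$-th page for $W$, yielding $\rD(V\times W)\ge\min\{\rD(V),\rD(W)\}$.

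The main obstacle is pinning down the equivariant K\"unneth formula at the chain level: the parameter $z\in S^{2N+1}$ is shared between the two factors, so the standard splitting of the Floer moduli spaces on $V\times W$ into a product of those on $V$ and $W$ requires a careful choice of perturbation data compatible simultaneously with the split structure and the parametrised $S^1$-structure. The reason $\rSD$ only satisfies an inequality (and not equality) is that the projection $\pi_0\colon H^*(V\times W)\to H^0(V)\otimes H^0(W)$ introduces slack: a semi-dilation witness on the product may land on the unit only after this projection, which blocks extracting a pure-tensor witness on a single factor.
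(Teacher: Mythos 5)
Your proposal is correct and follows essentially the same route as the paper: an Oancea-style chain-level K\"unneth identification $C(H^i+K^i)\simeq C(H^i)\otimes C(K^i)$ with split data and the product $S^1$-structure $\delta^r\otimes\Id+\Id\otimes\delta^r$, the unit $e_W$ used to push a $k$-(semi)-dilation witness from a factor into the product (the paper packages your witness-tensoring as the $S^1$-morphism $x\mapsto x\otimes e_W$), and the tensor product of the $u$-adic spectral sequences over the field $\bm{k}$ to get the reverse inequality for $\rD$. No substantive differences to report.
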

\begin{proof}
	Following \cite{oancea2006kunneth}, the symplectic cohomology $SH^*(V\times W)$ can be computed as $\varinjlim H^*(C(H^i+K^i))$, where $H^i,K^i$ are Hamiltonians on $V,W$ respectively with slope goes to $\infty$. Moreover, $C(H^i+K^i)=C(H^i)\otimes C(K^i)$ if we use product almost complex structure. The same holds for $S^1$-structures by the same argument, in particular, $\delta^r_{H^i+K^i}$ on $C(H^i)\otimes C(K^i)$ is given by $\delta^r_{H^i}\otimes \Id + \Id \otimes \delta^r_{K^i}$\footnote{The application of $\delta^r_{H^i}\otimes \Id + \Id \otimes \delta^r_{K^i}$ to $a\otimes b$ follows the Koszul rule, namely $(\delta^r_{H^i}\otimes \Id + \Id \otimes \delta^r_{K^i})(a\otimes b)=\delta^r_{H^i}(a)\otimes b + (-1)^{|a|}a\otimes \delta^r_{K^i}(b)$}. Let $e_W$ denote the unique minimum of $K^i$ on $W$, then $C(H^i)\to C(H^i)\otimes C(K^i),x\mapsto x \otimes e_W$ is an $S^1$-morphism as $\delta^r_{K^i}(e_W)=0$ for $r\ge 0$. The morphism $C(H^i)\to C(H^i)\otimes C(K^i)$ is compatible with splitting and sends $1\in H^*(C(H^i))$ to $1\in H^*(C(H^i))\otimes H^*(C(K^i))$. Therefore $\rD(V\times W)\le \rD(V),\rSD(V\times W)\le \rSD(V)$. Switching $V,W$, we have $\rD(V\times W)\le \min\{\rD(V),\rD(W)\},\rSD(V\times W)\le\min\{\rSD(V),\rSD(W)\}$. 
	
	We consider the spectral sequence $\{E_i=(Z_{i-1}/B_{i-1},\overline{\Delta}^i)\}_{i\ge 1}$ in \eqref{ss1} of Remark \ref{rmk:SS}, the spectral sequence associated to $C(H_t+K_t)$ is exactly the tensor product of the spectral sequences associated to $C(H_t)$ and $C(K_t)$ in the case of field coefficient. Since $1\in \Delta^k$ iff $1\in \overline{\Delta}^k$, $C(H_t+K_t)$ carries a $k$-dilation iff $C(H_t)$ or $C(K_t)$ carries a $k$-dilation. Therefore $\rD(V\times W)=\min \{\rD(V),\rD(W)\}$.
\end{proof}

\begin{proposition}\label{prop:lefchetz}
	Let $\pi:V^{2n}\to \C$ be a Lefschetz fibration with fiber $F$, such that $c_1(V)=0$.
	\begin{enumerate}
		\item If $F$ admits a $k$-semi-dilation, then $V$ also has a $k$-semi-dilation.
		\item If $F$ admits a $k$-dilation, and $n$ is odd or $n-2k>0$, then $V$ also has a $k$-dilation.
	\end{enumerate}
\end{proposition}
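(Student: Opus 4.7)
The plan is to follow the Seidel--Solomon strategy for the $k=1$ case, generalizing it to arbitrary $k$ within the $N$-truncated $S^1$-equivariant framework of Section~\ref{s3}. The core task is to construct a chain-level $S^1$-morphism $\Psi$ from the fiber's $S^1$-complex to the total space's $S^1$-complex that is compatible with splittings, and then transport the $k$-(semi)-dilation element from $F$ to $V$ via $\Psi$ using the functoriality propositions.

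First I would set up an adapted Hamiltonian on $V$ of Seidel--Solomon type: $H_V = H_F + \rho \cdot h(|\pi|^2)$, where $H_F$ is a fiberwise quadratic Hamiltonian extended to $V$, $h$ is a small convex function on the base with unique critical point at the origin, and $\rho>0$ is small. Pair this with a fibered almost complex structure $J_V$ making $\pi$ pseudoholomorphic outside a neighborhood of the critical values. After appropriate perturbation, the $1$-periodic orbits of $H_V$ split into constant orbits (contributing $C_0(H_V) \cong C^{\mathrm{Morse}}(V)$) and ``fiber orbits'' concentrated over the origin of $\C$, in bijection with orbits of $H_F$ on the fiber. Extending to parametrized $S^1$-data $(\bH_N,\bJ_N)$ as in Section~\ref{s3}, the fibered $J_V$ together with the integrated maximum principle (Lemma~\ref{lemma:max}) applied in the base direction confines every parametrized Floer cylinder between fiber orbits to project to a constant map in $\C$. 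Hence the subcomplex of $C_+(H_V)$ generated by fiber orbits, together with its $\delta^r$-images into $C_0(H_V)$, is canonically identified with the $S^1$-complex $C(H_F)$ pushed forward by a PSS-type morphism $\iota: H^*(F) \to H^*(V)$ on the Morse side. This yields an $S^1$-morphism $\Psi: C(H_F) \to C(H_V)$ compatible with splittings.

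Once $\Psi$ is in hand, Propositions~\ref{prop:func1}, \ref{prop:func2}, and \ref{prop:equi} combined with Proposition~\ref{prop:transfer} imply that if $x_F$ is a $k$-(semi)-dilation element on $F$, the image $\Psi^0(x_F) \in \ker \Delta^k_{+,V}$ satisfies
\[
\Delta^k_{+,0,V}(\Psi^0(x_F)) \;=\; \iota(\Delta^k_{+,0,F}(x_F)) \;=\; \iota(1_F) \pmod{\mathrm{Im}\,\Delta^{k-1}_{+,0,V}}.
\]
The PSS-type map $\iota$ for a Lefschetz fibration sends $1_F$ to $1_V$ plus a correction $c$ supported in higher degrees, coming from Lefschetz thimbles / Seidel sections through the critical fibers. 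For part (1), the semi-dilation projection $\pi_0:H^*(V)\to H^0(V)$ kills $c$, so $V$ admits a $k$-semi-dilation with no restriction on $n$. For part (2), we need $c = 0$ in the full cohomology. Here $c_1(V) = 0$ provides a genuine $\Z$-grading, and $c$ sits in a single degree determined by the dimension $n$ of the Lagrangian vanishing spheres. A degree analysis shows that the hypothesis ``$n$ odd or $n-2k>0$'' is exactly the condition under which $c$ cannot lie in the same degree as $1_V$ inside $\Delta^k_{+,0,V}(\Psi^0(x_F))$: when $n$ is odd the parity obstruction rules it out, and when $n>2k$ the degree of $c$ strictly exceeds $0$ and hence $c$ is killed in the relevant cohomology group.

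The main obstacle will be the precise identification of the correction class $c$ and its degree. This requires a careful Floer-theoretic analysis of how Lefschetz vanishing cycles contribute to half-cylinder counts defining the connecting morphism $\delta^{S^1}_{+,0,V}$ on the image of $\Psi$. This calculation is the $k$-generalization of Seidel--Solomon's degree argument for $k=1$, where their condition $n\neq 2$ matches exactly with ``$n$ odd or $n-2>0$''. The $S^1$-equivariant extension requires lifting the PSS construction to all components $\Psi^r$ simultaneously, but once the chain-level $S^1$-morphism compatible with splittings is established, the remainder reduces to a routine application of the algebraic functoriality in Propositions~\ref{prop:func1}, \ref{prop:func2}, and \ref{prop:equi}.
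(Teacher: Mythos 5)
Your geometric setup is the same as the paper's (the adapted Hamiltonian $H_V=\pi^*H_B+H_F$ of Seidel--Solomon type, and the confinement of Floer solutions with fiber asymptotics to the fiber via \cite[Lemma 7.2]{seidel2012symplectic}), and your explanation of where the hypothesis ``$n$ odd or $n-2k>0$'' enters is in the right spirit. The gap is your central algebraic device: a chain-level $S^1$-morphism $\Psi:C(H_F)\to C(H_V)$ compatible with splittings, induced by a ``PSS-type map'' $\iota:H^*(F)\to H^*(V)$, does not exist in general. For a Lefschetz fibration $V$ is obtained from $F\times \D$ by attaching $n$-handles along the vanishing cycles, so the natural map on cohomology is the restriction $H^*(V)\to H^*(F)$, not a map in your direction; at chain level the fiber orbits together with their $\delta^r$-images are \emph{not} canonically identified with $C(H_F)$, because the connecting maps $\delta^r_{+,0}$ of $H_V$ have extra components landing on the critical points of $\pi$ (the thimble contributions) which have no counterpart in $C(H_F)$. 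Asserting the existence of a splitting-compatible $\Psi$ is implicitly asserting that this extension splits, which is exactly what your correction class $c$ obstructs; and once $\Psi$ is only ``a chain map up to a correction,'' Propositions \ref{prop:func1} and \ref{prop:func2} no longer apply (Proposition \ref{prop:transfer} is also not relevant here, since the fiber is not an exact subdomain of $V$).

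The correct mechanism, and the one the paper uses, goes in the opposite direction: the critical points of $\pi$ span a subcomplex $\bm{k}^{Crit(\pi)}[-n]\otimes\la 1,\ldots,u^{-k}\ra$ of $F^kC^+(H_V)$ whose quotient is $F^kC^+(H_F)$, together with the analogous Morse-theoretic sequence on the constant-orbit part. A diagram chase in the two tautological long exact sequences then shows that the class $1+\sum u^{-i}a_i$ witnessing the $k$-(semi)-dilation of $F$ lifts to $H_V$ up to a correction lying in $\bm{k}^{Crit(\pi)}[-n]\otimes\la 1,\ldots,u^{-k}\ra$, i.e.\ of the form $u^{-n/2}B$ with $B$ of degree $n$; such a term can interfere in degree $0$ only if $n$ is even and $n\le 2k$, and for the semi-dilation it is killed by $\pi_0$ with no hypothesis at all. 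In particular, the ``careful Floer-theoretic analysis of how Lefschetz vanishing cycles contribute,'' which you list as the main obstacle, is unnecessary: the correction is confined to $\bm{k}^{Crit(\pi)}$ purely formally, and degree/truncation reasons finish the argument. Finally, note that $H_V$ has small slope in the base direction, so one still needs the continuation $S^1$-morphism from $C(H_V)$ to the complexes of admissible Hamiltonians of large slope (cf.\ Proposition \ref{prop:TFAE}) to conclude the statement for $V$ itself; your write-up omits this step.
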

\begin{proof}
	We choose an admissible Hamiltonian $H_F:S^1\times S^{2N+1}\times \widehat{F}\to \R$ on the fiber $F$ as in Definition \ref{def:adm}, but first with the modification that $H_F=0$ on $F$. In particular, we can view $H_F$ as a function on $\widehat{V}$, since the conical ends of fibers form a trivial fibration over $\C$.  Pick a regular value $b$ of $\pi$, and define $H_B=\epsilon|z-b|^2$ for some sufficiently small $\epsilon>0$ on $\C$. Then $H_V=\pi^*H+H_F$ is well-defined Hamiltonian on $\widehat{V}$. Moreover, the periodic orbits of $H_V$ are points in the fiber $F_b$ over $b$, non-constant orbits in the conical ends of $\widehat{F_b}$, and critical points of $\pi$, which are Morse critical points of $\pi^*H_B$ of index $n$. By choosing an admissible Morse function on $F_b$\footnote{Or one can choose a perturbation supported near $F_b$ to make every orbits non-degenerate as in the proof of \cite[Lemma 7.2]{seidel2012symplectic}.}, the construction in \S \ref{s3} yields a $N$-truncated $S^1$-structure.  Then following \cite[Lemma 7.2]{seidel2012symplectic} and \cite[Lemma 5.3.8]{zhao2016periodic}, when asymptotics of a Floer solution are in $\widehat{F_b}$, we have the curve is contained in $\widehat{F_b}$. Then there is a short exact sequence of $N$-truncated $S^1$-cochain complexes and the following commutative exact sequences for $k\le N$ 
	$$\xymatrix{
		0 \ar[r] & \bm{k}^{Crit(\pi)}[-n]\otimes \la 1,\ldots,u^{-k}\ra \ar[r]^{\qquad \qquad  j} &  F^kC^+(H_V)\ar[r] & F^kC^+(H_F)\ar[r] & 0\\
		0 \ar[r] & \bm{k}^{Crit(\pi)}[-n]\otimes \la 1,\ldots,u^{-k}\ra \ar[r]\ar[u] & F^kC^+_0(H_V)\ar[r]\ar[u]^{i_V} & F^kC^+_0(H_F)\ar[r]\ar[u]^{i_F} & 0}$$
	where the bottom row is the part from constant orbits and is the short exact sequence from attaching $|Crit(\pi)|$ $n$-handles to $F\times \D$. 
	
	We assume $H_F$ already observes the $k$-semi-dilation. Since $c_1(F)=0$, there is a class $1+\sum_{i=1}^k u^{-i}a_i$ for $a_i\in H^{2i}(F)$, such that $\i_F(1+\sum_{i=1}^ku^{-i}a_i)=0$ in $H^*(F^kC^+(H_F))$. Then by the induced long exact sequence and that the left vertical arrow is an isomorphism, we have $1+\sum_{i=1}^k u^{-i}a_i$  mapped to zero in $\bm{k}^{Crit(\pi)}[-n]\otimes \la 1,\ldots,u^{-k}\ra$. In particular, $1+\sum_{i=1}^k u^{-i}a_i$ is closed in $F^kC_0^*(H_V)$ and $i_V(1+\sum_{i=1}^k u^{-i}a_i) \in \Ima j$, which implies there exists $B\in \bm{k}^{Crit(\pi)}[n]$, such that $i_V(1+\sum_{i=1}^k u^{-i}a_i+u^{-\frac{n}{2}}B)=0$, i.e.\ $H_V$ carries a $k$-semi-dilation. It is clear that we can build an $S^1$-morphism (continuation map) from the $S^1$-cochain complex of $H_V$ to the $S^1$-complex of the usual admissible Hamiltonians on $V$ with very large slope\footnote{The Hamiltonian $H_V$ does not detect many Reeb orbits on $\partial V$. In particular, since $\epsilon$ is sufficiently small, the Reeb orbits from the fixed points of the monodromy are not seen. In particular, $V$ could have smaller order of semi-dilation compared to $F$ when we consider all Reeb orbits of $V$, e.g. the trivial fibration $F\times \D$, where the $0$-dilation is supplied by $\D$ instead of $F$. }. In particular, $V$ also carries a $k$-semi-dilation.
	
	For the dilation case, it is sufficient to prove $u^{-\frac{n}{2}}B$ is zero in $\bm{k}^{Crit(\pi)}[-n]\otimes \la 1, \ldots, u^{-k}\ra$, which is automatic whenever $n$ is odd or $n>2k$. 
\end{proof}

\begin{proposition}\label{prop:flex}
	Assume Liouville domain $W$ is obtained by attaching a flexible Weinstein cobordism to $V$.
	\begin{enumerate}
		\item If $V$ admits a $k$-dilation, so does $W$ and $\rD(W)=\rD(V)$
		\item If $V$ admits a $k$-semi-dilation, and $H^{2i}(W)\to H^{2i}(V)$ are surjective for $i>0$, then $W$ admits a $k$-semi-dilation and $\rSD(W)=\rSD(V)$. In particular, this holds when $W$ is obtained by even-index subcritical/flexible handle attachments. If in addition we have $c_1(V)=0$, then the condition can be relaxed to that $H^{2i}(W)\to H^{2i}(V)$ are surjective for $0<i\le k$.
	\end{enumerate}
\end{proposition}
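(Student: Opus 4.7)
The inequalities $\rD(V) \le \rD(W)$ and $\rSD(V) \le \rSD(W)$ follow immediately from applying Proposition \ref{prop:inc} to the exact subdomain inclusion $V \subset W$, so the task reduces to establishing the reverse inequalities. The key input I will use is the classical theorem of Cieliebak and Bourgeois--Ekholm--Eliashberg that for flexible Weinstein cobordisms the Viterbo transfer $\phi : SH^*(W) \to SH^*(V)$ is an isomorphism, and that this extends to the $S^1$-equivariant setting as an $S^1$-quasi-isomorphism. By Proposition \ref{prop:transfer}, $\phi$ therefore commutes with the structural maps $\Delta^k$, $\Delta^k_+$, and $\Delta^k_{+,0}$, and it sends the unit $1_W \in H^0(W) \subset SH^0(W)$ to $1_V$.

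For part (1), assume $V$ carries a $k$-dilation, witnessed by $x_V \in \ker \Delta^{k-1}_V$ with $\Delta^k_V(x_V) \equiv 1_V$ modulo $\Ima \Delta^{k-1}_V$. I set $x_W := \phi^{-1}(x_V)$; since $\phi$ is an iso commuting with $\Delta^{k-1}$, $x_W \in \ker \Delta^{k-1}_W$, and applying $\phi$ to $\Delta^k_W(x_W)$ yields $\Delta^k_V(x_V) \equiv 1_V$ modulo $\Ima \Delta^{k-1}_V$. Using that $\phi$ maps $1_W$ to $1_V$ and $\Ima \Delta^{k-1}_W$ isomorphically onto $\Ima \Delta^{k-1}_V$, I conclude $\Delta^k_W(x_W) \equiv 1_W$ modulo $\Ima \Delta^{k-1}_W$, so $W$ has a $k$-dilation.

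For part (2), the subtlety is that $\Delta^k_{+,0}$ takes values in a quotient of $H^{*+1}(W)$ or $H^{*+1}(V)$, and the restriction $H^*(W) \to H^*(V)$ is not in general an isomorphism. I will assume without loss of generality that $k = \rSD(V)$; by Proposition \ref{prop:equi} together with minimality, $\pi_0$ must vanish on $\Ima \Delta^{k-1}_{+,0,V}$ (otherwise $V$ would admit a $(k-1)$-semi-dilation). Let $x_V \in \ker \Delta^k_{+,V}$ and $y_V \in H^*(V)$ be a representative of $\Delta^k_{+,0,V}(x_V)$ with $\pi_0(y_V) = 1$. Using the iso on $SH^*$, the tautological long exact sequence, and the surjectivity $H^{2i}(W) \to H^{2i}(V)$ in the relevant degrees via a 5-lemma argument, the transfer on $SH^*_+$ is surjective onto $\ker \Delta^k_{+,V}$ in the appropriate grading, so $x_V$ lifts to some $x_W \in \ker \Delta^k_{+,W}$. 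Pick $y_W \in H^*(W)$ representing $\Delta^k_{+,0,W}(x_W)$; by naturality, $y_W|_V - y_V \in \Ima \Delta^{k-1}_{+,0,V}$, and minimality gives $\pi_0(y_W|_V) = 1$. Since $H^0(W) \to H^0(V)$ is the iso $1_W \mapsto 1_V$, I obtain $\pi_0(y_W) = 1_W$, so $W$ carries a $k$-semi-dilation.

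The main technical obstacle will be the 5-lemma step controlling the lift of $x_V$: one must identify the exact degrees where the tautological long exact sequences for $W$ and $V$ might fail to align, and verify that the surjectivity of $H^{2i}(W) \to H^{2i}(V)$ for all $i > 0$ (respectively, only $0 < i \le k$ when $c_1(V) = 0$) matches these degrees. The refinement under $c_1(V) = 0$ reflects that in the $\Z$-graded case, representatives of the $k$-semi-dilation class involve only the correction terms $a_i \in H^{2i}(V)$ for $1 \le i \le k$ (since the $u$-adic filtration truncates at $u^{-k}$ in $F^kC^+_0$), so surjectivity is only needed in those degrees.
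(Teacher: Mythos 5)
Your part (1) is sound and is essentially the paper's argument in different clothing: the paper passes from the isomorphism $SH^*(W)\to SH^*(V)$ of \cite{bourgeois2012effect,cieliebak2002handle} to an isomorphism of the truncated equivariant groups $SH^*_{S^1,k}(W)\to SH^*_{S^1,k}(V)$ via the $u$-adic spectral sequence (the first page is ordinary symplectic cohomology), and then reads off the $k$-dilation from the commutative square with $H^*(\cdot)\otimes\la 1,\ldots,u^{-k}\ra$. Your phrasing in terms of $\Delta^k$, $\ker\Delta^{k-1}$, $\Ima\Delta^{k-1}$ needs the same comparison-of-spectral-sequences induction to justify that $\phi$ identifies $Z_k/B_0$, $B_k/B_0$ etc.\ for $W$ and $V$ (this does not follow from Proposition \ref{prop:transfer} alone, only together with the iso on $E_1$), but once that induction is written down the argument closes.

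Part (2) has a genuine gap at the step you yourself flag: lifting $x_V\in\ker\Delta^k_{+,V}$ to some $x_W\in\ker\Delta^k_{+,W}$. The four/five-lemma applied to the tautological long exact sequences, with the hypothesis that $H^{2i}(W)\to H^{2i}(V)$ is surjective for $i>0$, does give surjectivity of the transfer on $SH^*_+$ in the relevant parity, but surjectivity on $SH^*_+=Z_0/B_0$ of the positive complex does not propagate to surjectivity onto $\ker\Delta^k_{+,V}=Z_k/B_0$: a filtered map that is surjective (but not injective) on the first page of a spectral sequence need not be surjective on later pages, and the element $x_W$ must come equipped with the whole tower of primitives $\alpha_1,\ldots,\alpha_k$ in Definition \ref{def:Z}, which plain surjectivity of the transfer on positive symplectic cohomology does not provide. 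The paper's proof is arranged precisely to avoid this lift: it never moves $x_V$ to $W$. Instead it writes the $k$-semi-dilation on $V$ as the vanishing of $1+\sum a_iu^{-i}$ in $SH^*_{S^1,k}(V)$ with $a_i\in\oplus_{j>0}H^{2j}(V)$, uses the cohomological surjectivity hypothesis to lift only the correction terms $a_i$ to $\widetilde a_i\in H^{*}(W)$, and then uses the \emph{injectivity} of the isomorphism $SH^*_{S^1,k}(W)\to SH^*_{S^1,k}(V)$ to conclude that $1+\sum\widetilde a_iu^{-i}$ dies in $SH^*_{S^1,k}(W)$. To repair your argument you would either have to prove the refined surjectivity onto $\ker\Delta^k_{+,V}$ (which is not available from the stated hypotheses by soft homological algebra), or switch to the paper's dual strategy of lifting the $H^{2i}$-classes and exploiting injectivity of the equivariant transfer.
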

\begin{proof}
	We use $SH^*_{S^1,k}(W)$ to denote $\varinjlim_{H_t} H^*(F^kC^+(H_t))$. By \cite{bourgeois2012effect,cieliebak2002handle}, we have the Viterbo transfer map $SH^*(W)\to SH^*(V)$ is an isomorphism. Then $SH^*_{S^1,k}(W)\to SH^*_{S^1,k}(V)$ is also an isomorphism, since the first page of the spectral sequence from $u$-adic filtration is the regular symplectic cohomology. Therefore we have the following commutative diagram,
	$$
	\xymatrix{
		H^*(W)\otimes \la 1,\ldots,u^{-k}\ra \ar[r]\ar[d] & SH^*_{S^1,k}(W)\ar[d]^{\simeq}\\
		H^*(V)\otimes \la 1,\ldots, u^{-k}\ra \ar[r] & SH^*_{S^1,k}(V)
	}	
	$$
	$V$ admits a $k$-dilation iff $1$ is mapped to zero in $SH^*_{S^1,k}(V)$, hence $1\in H^*(W)$ is also mapped to $0$ in $SH^*_{S^1,k}(W)$. Therefore $\rD(W) \le \rD (V)$ and by Proposition \ref{prop:inc}, we have $\rD(W)=\rD(V)$. If $V$ admits a $k$-semi-dilation, then $1+\sum_{i=1}^k a_i u^{-i}$ is mapped to zero in $SH^*_{S^1,k}(V)$ for some $a_i\in \oplus_{j>0} H^{2j}(V)$\footnote{If $c_1(V)=0$, then $a_i\in H^{2i}(V)$ by the $\Z$-grading.}. By assumption $H^{2i}(W)\to H^{2i}(V)$ is surjective, there is some lift of $1+\sum_{i=1}^k a_i u^{-i}$ to $1+\sum_{i=1}^k \widetilde{a}_iu^{-i}$, such that it is mapped to $0$ in $SH^*_{S^1}(W,k)$. Hence $\rSD(W) = \rSD(V)$ as before.
\end{proof}

\section{Independence}\label{s4}
In this section, we will prove $\Delta^k_{\partial},\Delta^k_+$ are independent of fillings for ADC manifolds. The method has been carried out in \cite{filling} for $\Delta^0_{\partial}$ and $\Delta^1_{\partial}$. We will prove the invariance of $\Delta^k_{\partial}$ by rewriting it without using the Morse function $f$ on $W$. Then a neck-stretching argument with the ADC property implies that  $\Delta^k_{\partial},\Delta^k_+$ are independent of fillings. Since we will be using index properties, we assume $c_1(W)=0$ throughout this section.
\subsection{Shrinking the gradient flow}
In the following, we assume $\partial_r f > 0$ on $\partial W \times [1-\epsilon, 1]$. We pick a Morse-Smale pair $(h,g_{\partial})$ on $\partial W \times \{1-\epsilon\}$ such that \cite[Proposition 3.1]{filling} holds, i.e.\ the moduli space $\cR_{p,q}$ of concatenations of flow lines of $\nabla_g f$ and $\nabla_{g_\partial}h$ is cut out transversely. More precisely, $\cR(p,q)$ is the compactification of the following,
$$\left\{(\gamma_1,\gamma_2) \left| \begin{array}{l}
\frac{\rd}{\rd s}\gamma_1 + \nabla_{g_{\partial}} h = 0, \frac{\rd}{\rd s}\gamma_2 + \nabla_{g} f = 0, \\
\displaystyle\lim_{s\to -\infty} \gamma_1 = p,  \gamma_1(0)=\gamma_2(0), \lim_{s\to \infty} \gamma_2 = q \end{array}\right. \right\}, \quad p \in \cC(h),q\in \cC(f).$$
In particular, the restriction map $H^*(W) \to H^*(\partial W \times \{1-\epsilon\})$ can be defined by counting $\cR_{p,q}$. We define $\cP^i_r(x,p)$ and $\cH^i_{r}(x,p)$ to be the compactified moduli spaces of the following.
\begin{enumerate}
	\item For $x\in \cP^*(H^i),p\in \cC(h)$,
	\begin{equation}	
	P^i_r(p,x):=\left\{ \begin{array}{l} 
	u:\C \to \widehat{W},\\
	\gamma:\R_- \to \partial W\times \{1-\epsilon\},\\
	z:\R \to S^{2N+1} 
	\end{array} \left|\begin{array}{l}
	\partial_su+J_N^i(t,z(s),u)(\partial_tu-X_{H_N^i(t,z(s))})=0,\\
	z'+\nabla_{\widetilde{g}_N} \widetilde{f}_N=0, \gamma'+\nabla_{g_{\partial}} h = 0,\\
	\displaystyle\lim_{s\to \infty} (u,z)\in S^1\cdot (x,\widetilde{z}_k), \lim_{s\to-\infty} z \in S_1\cdot \widetilde{z}_{k+r},\\ \displaystyle u(0)=\gamma(0),\lim_{s\to -\infty} \gamma = p.
	\end{array}             
	\right.\right\}/\R\times S^1
	\end{equation}
	i.e.\ the fiber product over $W$ of the stable manifold of $\nabla_{g_{\partial}}h$ in $\partial W \times \{1-\epsilon \}$ associated to $p$ and the moduli space of $\{(u,z)|u:\C \to \widehat{W},z:\R \to S^{2N+1}\}$ solving the perturbed Cauchy-Riemann equation and the gradient flow equation with the listed asymptotics and $u(0)\in W^{\circ}$.
	\item For $x\in \cP^*(H^i),p\in \cC(h)$,
	\begin{equation}
	H^i_r(p,x):=\left\{ \begin{array}{l} 
	u:\C \to \widehat{W},\\
	\gamma_1:[0,l] \to W,l>0,\\\gamma_2:\R_-\to \partial W \times \{1-\epsilon\},\\
	z:\R \to S^{2N+1} 
	\end{array} \left|\begin{array}{l}
	\partial_su+J_N^i(t,z(s),u)(\partial_tu-X_{H_N^i(t,z(s))})=0, \\ \gamma_1'+\nabla_{g}f=0, \gamma_2'+\nabla_{g_\partial}h=0,z'+\nabla_{\widetilde{g}_N} \widetilde{f}_N=0,\\
	\displaystyle\lim_{s\to \infty} (u,z) \in S^1 \cdot (x,\widetilde{z}_k), \lim_{s\to -\infty} z \in S^1\cdot \widetilde{z}_{k+r},\\ u(0)=\gamma_1(l),\gamma_1(0)=\gamma_2(0),\displaystyle\lim_{s\to -\infty} \gamma_2 = p.
	\end{array}             
	\right.\right\}/\R\times S^1
	\end{equation}
\end{enumerate}
Then similar to \cite[Proposition 3.2]{filling}, besides the usual breaking, the boundary of $\cH^i_r$ also contains $\cP^i_r$ and $\cR\times \cM^i_r$ corresponding to $l=0$ and $l=\infty$. In particular, we have the following.
\begin{proposition}\label{prop:regP}
For generic choice of $\bJ$, we have $\cP^i_r(p,x)$ and $\cH^i_r(p,x)$ are manifolds with boundary of dimension $|p|-|x|+2r-1$ and $|p|-|x|+2r$ respectively, whenever they are smaller than $2$. And we have 
\begin{enumerate}
	\item $$\partial \cP^i_r(p,x)=\bigcup_{q\in \cC(h)} \cM_0(p,q)\times\cP^i_r(q,x)\bigcup_{\substack{0\le k \le r, \\ y\in \cP^*(H^i)}} \cP^i_k(p,y)\times\cM^i_{r-k}(y,x),$$
	\item $$\partial \cH^i_r(p,x)=\cP^i_{r}(p,x)\bigcup_{q\in \cC(h)} \cR(p,q)\times \cM^i_r(q,x)\bigcup_{q\in \cC(h)}\cM_0(p,q)\times\cH^i_r(q,x)\bigcup_{\substack{0\le k \le r,\\y\in \cP^*(H^i)}} \cH^i_k(p,y)\times \cM^i_{r-k}(y,x),$$
\end{enumerate}
where $\cM_0(p,q)$ is the compactified moduli space of $\nabla_{g_{\partial}}h$ flow lines from $q$ to $p$. 
\end{proposition}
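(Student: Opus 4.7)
The plan is to follow the template of the analogous result \cite[Proposition 3.2]{filling} that treats the non-equivariant ($r=0$) case, and to upgrade it to the parametrized $S^1$-equivariant setup developed in \S\ref{s32}. The four ingredients are: (i) Fredholm/dimension count, (ii) transversality for generic $\bJ \in \cJ_N(W)$, (iii) Gromov--Floer compactness using the integrated maximum principle, and (iv) identification of the $l\to 0$ and $l\to\infty$ boundary strata of $\cH^i_r(p,x)$.

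First, for the dimension, note that the un-quotiented moduli space underlying $\cM^i_r(p,x)$ (with $p\in\cC(f)$) already has virtual dimension $|p|-|x|+2r+1$, after accounting for the Fredholm index of the plane asymptotic to $x$ with a free interior marked point, the unstable manifold of $p$ in $W$ of dimension $|p|$, the parametrized $z$-dimension $2r+1$ from the flow line from $\widetilde z_{k+r}$ to $\widetilde z_k$, and the matching constraint $u(0)=\gamma(0)$. For $\cP^i_r(p,x)$, the configuration is identical except that $\gamma(0)$ is constrained to $\partial W\times\{1-\epsilon\}$ instead of $W$, but the unstable manifold of $p\in\cC(h)$ inside $\partial W\times\{1-\epsilon\}$ also has dimension $|p|$, so the codimension of the matching locus in $\widehat W$ is unchanged. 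After the $\R\times S^1$ quotient the expected dimension is therefore $|p|-|x|+2r-1$. For $\cH^i_r(p,x)$, the additional real parameter $l\in(0,\infty)$ raises the dimension by one, giving $|p|-|x|+2r$.

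Next, transversality is obtained by the standard Sard--Smale argument applied to the universal moduli problem, using the $S^{2N+1}$-parametrized family $\bJ$ to achieve regularity of the free plane $(u,z)$ together with submersivity of the evaluation map $\mathrm{ev}_0\colon (u,z)\mapsto u(0)$ onto $W^\circ$; transversality of the fiber product with the unstable manifold of $p$ (for $\cP^i_r$) and of its concatenation with a flow segment of $\nabla_g f$ (for $\cH^i_r$) then follows from the genericity of $(h,g_\partial)$ and $(f,g)$ guaranteed by \cite[Proposition 3.1]{filling}. Compactness of $\cP^i_r(p,x)$ and $\cH^i_r(p,x)$ (in dimensions $\le 1$) follows as in Proposition \ref{prop:compact}: the plane part is confined to $W^i$ by Lemma \ref{lemma:max} applied at $r=c_i$, the gradient flow segments remain in $W$ by $\partial_r f>0$ on $\partial W\times[1-\epsilon,1]$ and on $\partial W\times\{1-\epsilon\}$ for $h$, and the $z$-component is compact since $S^{2N+1}$ is. Since $\partial_sH^i_N$ is controlled as in the proof of Proposition \ref{prop:reg}, no flow from $\cP^*(H^i)$ to a Morse critical point of $f$ on $W\setminus\partial W$ appears, so breakings at interior Morse critical points of $f$ are excluded.

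Finally, the boundary description is read off from Gromov--Floer compactness. Floer breaking at an orbit $y\in\cP^*(H^i)$ on the cylindrical end of $u$ produces the strata $\cP^i_k(p,y)\times\cM^i_{r-k}(y,x)$ and $\cH^i_k(p,y)\times\cM^i_{r-k}(y,x)$; Morse breaking of $\gamma_2$ (the flow line of $h$) at a critical point $q\in\cC(h)$ produces $\cM_0(p,q)\times\cP^i_r(q,x)$ and $\cM_0(p,q)\times\cH^i_r(q,x)$. The new phenomena, and the main technical step, concern the two boundary limits in the length parameter $l\in(0,\infty)$ of $\cH^i_r(p,x)$. The limit $l\to 0$ collapses $\gamma_1$ to a point, enforcing $\gamma_2(0)=u(0)\in\partial W\times\{1-\epsilon\}$; this produces exactly the stratum $\cP^i_r(p,x)$. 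The limit $l\to\infty$ forces $\gamma_1$ to break at some critical point $q\in\cC(f)$ in the interior of $W$: the resulting concatenation of $\gamma_2$ (a half-flow of $h$ ending at $\gamma_2(0)=\gamma_1(0)$ on $\partial W\times\{1-\epsilon\}$) with a half-flow of $f$ from the unstable manifold of $q$ is, by construction of \cite[Proposition 3.1]{filling}, an element of $\cR(p,q)$, while the remaining half-flow of $f$ from $q$ together with $(u,z)$ is an element of $\cM^i_r(q,x)$; this yields the stratum $\cR(p,q)\times\cM^i_r(q,x)$. The main obstacle here is verifying transverse gluing for these two new strata, which amounts to a standard pre-gluing/Newton's method argument identical in spirit to the $r=0$ case in \cite[Proposition 3.2]{filling}, carried out uniformly in the $S^{2N+1}$-parameter.
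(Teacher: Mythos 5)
Your proposal follows essentially the same route as the paper, which gives no separate argument for this proposition beyond invoking \cite[Proposition 3.2]{filling} together with the transversality and integrated-maximum-principle compactness already set up in \S\ref{s32}; your dimension count, Sard--Smale genericity, and identification of the $l\to 0$ and $l\to\infty$ strata of $\cH^i_r(p,x)$ with $\cP^i_r(p,x)$ and $\cR(p,q)\times\cM^i_r(q,x)$ (correctly read with $q\in\cC(f)$) constitute exactly the intended proof. The only nitpick is a wording slip in the $l\to\infty$ limit: the half-flow of $f$ that joins $\gamma_2$ to form an element of $\cR(p,q)$ converges \emph{to} $q$, i.e.\ lies in the stable manifold of $q$ for the negative gradient flow, while it is the piece paired with $(u,z)$ that emanates from $q$ along its unstable manifold.
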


Moreover, $\cP_r,\cH_r$ are oriented by a combination of \cite[Appendix A]{filling} and \cite{zhao2014periodic}. As a consequence of Proposition \ref{prop:regP}, by counting $\cP_r$ we have an $S^1$ morphism $P^i$ from $C_+(H^i)$ to $C_0(h)[1]$. The counting $\cup_{q\in \cC(f)} \cR(p,q)\times \cM_r(q,x)$ for $x\in \cP^*(H^i)$ is the composition $R\circ \delta^r_{+,0}$, where $R$ is the restriction map from the Morse cochain complex of $f$ to the Morse cochain complex of $h$. Moreover,  $\{R\circ \delta^r_{+,0}\}_{r\ge 0}$ form another $S^1$ morphism from $C_+(H^i)$ to $C_0(h)[1]$, which is $S^1$-homotopic to $P^i$ by counting $\cH_r$.  As a consequence, the following is also commutative up to homotopy
$$
\xymatrix{
C(H^i,\bJ,f) \ar[r]^{\subset} \ar[d]^{P^i}& C(H^{i+1},\bJ,f)\ar[d]^{P^{i+1}}\\
C_0(h)[1] \ar[r]^{\Id} & C_0(h)[1]
}
$$
Therefore by Proposition \ref{prop:func2}, we have $P^{k,i}=P^k(C(H^i,\bJ,f)) = \Delta^{k,i}_{\partial}$ and $\varinjlim_{i} P^{k,i}=\Delta^k_{\partial}$.

\begin{figure}[H]
	\begin{tikzpicture}
	\draw (0,0) to [out=90,in=180] (0.25,0.5);
	\draw (0.25,0.5) to [out=0,in=90] (0.5,0);
	\draw (0.5,0) to [out=270,in=0] (0.25,-0.5);
	\draw (0.25,-0.5) to [out=180,in=270] (0,0);
	\draw (0.25,-0.5) -- (-1,-0.5);
	\draw (0.25,0.5) -- (-1,0.5);
	\draw (-1.5,0) to [out=90,in=180] (-1,0.5);
	\draw (-1,-0.5) to [out=180,in=270] (-1.5,0);
	\draw[->] (-1.5,0) to [out=180, in=250](-2,1);
	\draw (-2,1) to [out=70, in=220](-1.5,1.5);
	\fill (-1.5,1.5) circle[radius=1pt];
	\node at (-1.5, 1.7) {$q$};
	\node at (-2.5,1) {$\nabla_g f$};
	\draw[dotted] (-0.5,5) -- (-0.5,-1);
	\draw[->] (-1.5,1.5) -- (-1, 1.5);
	\draw (-1,1.5) -- (-0.5, 1.5); 
	\draw[->] (-0.5,1.5) -- (-0.5, 2); 
	\draw (-0.5,2) -- (-0.5, 2.5);
	\fill (-0.5,2.5) circle[radius=1pt];
	\node at (-0.3, 2.5) {$p$};
	\node at (0, 2) {$\nabla_{g_{\partial}} h$};
	\node at (-0.5, 4) {$\partial W$};
	\node at (-0.5, -1.5) {$l=\infty$, $\Delta^k_{\partial}$};
	
	\draw (4,0) to [out=90,in=180] (4.25,0.5);
	\draw (4.25,0.5) to [out=0,in=90] (4.5,0);
	\draw (4.5,0) to [out=270,in=0] (4.25,-0.5);
	\draw (4.25,-0.5) to [out=180,in=270] (4,0);
	\draw (4.25,-0.5) -- (3,-0.5);
	\draw (4.25,0.5) -- (3,0.5);
	\draw (2.5,0) to [out=90,in=180] (3,0.5);
	\draw (3,-0.2) to [out=180,in=270] (2.5,0);
	\draw (3,-0.2) to [out=0, in=180]  (3,-0.5);
	\draw[->] (2.5,0) to [out=180, in=250](2,1);
	\draw (2,1) to [out=70, in=220](3.5,1.5);
	\draw[dotted] (3.5,5) -- (3.5,-1);
	\node at (3.5, 4) {$\partial W$};
	\draw[->] (3.5,1.5) -- (3.5, 2); 
	\draw (3.5,2) -- (3.5, 2.5);
	\fill (3.5,2.5) circle[radius=1pt];
	\node at (3.7, 2.5) {$p$};
	\node at (3.5,-1.5) {$0<l<\infty$};

	\draw (8,0) to [out=90,in=180] (8.25,0.5);
	\draw (8.25,0.5) to [out=0,in=90] (8.5,0);
	\draw (8.5,0) to [out=270,in=0] (8.25,-0.5);
	\draw (8.25,-0.5) to [out=180,in=270] (8,0);
	\draw (8.25,-0.5) -- (7,-0.5);
	\draw (8.25,0.5) -- (7,0.5);
	\draw (6.5,0.25) to [out=90,in=180] (7,0.5);
	\draw (7,0) to [out=180,in=270] (6.5,0.25);
	\draw (7,0) to [out = 0, in = 0] (6.2, -0.25);
	\draw (6.2, -0.25) to [out=180, in=90] (6, -0.35);
	\draw (6,-0.35) to [out=270, in=180](7,-0.5);
	\draw[dotted] (6.5,5) -- (6.5,-1);
	\node at (6.5, 4) {$\partial W$};
	\draw[->] (6.5,0.25) -- (6.5, 1.5); 
	\draw (6.5,1.5) -- (6.5, 2.5);
	\fill (6.5,2.5) circle[radius=1pt];
	\node at (6.7, 2.5) {$p$};
	\node at (6.5,-1.5) {$l=0$, $P^k$};
	\draw [decorate,decoration={brace,amplitude=10pt,mirror,raise=4pt},yshift=0pt]
	(-1,-1.5) -- (7,-1.5) node [black,midway,yshift=-0.8cm] {The $u$ part of the homotopy between $P^k$ and $\Delta^k_{\partial}$};
	
	\draw (14,0) to [out=90,in=180] (14.25,0.5);
	\draw (14.25,0.5) to [out=0,in=90] (14.5,0);
	\draw (14.5,0) to [out=270,in=0] (14.25,-0.5);
	\draw (14.25,-0.5) to [out=180,in=270] (14,0);
	\draw (14.25,-0.5) -- (13,-0.5);
	\draw (14.25,0.5) -- (13,0.5);
	\draw (12.5,0) to [out=90,in=180] (13,0.5);
	\draw (13,-0.5) to [out=180,in=270] (12.5,0);
	\draw[dotted] (12.5,5) -- (12.5,-1);
	\node at (12.5, 4) {$\partial W$};
	\draw[->] (12.5,0) -- (12.5, 1.5); 
	\draw (12.5,1.5) -- (12.5, 2.5);
	\fill (12.5,2.5) circle[radius=1pt];
	\node at (12.7, 2.5) {$p$};
	\node at (12.5,-1.5) {After neck-streching along $Y$};
	\draw[dotted] (10.5,5) -- (10.5,-1);
	\node at (10.5, 4) {$Y$};
	\end{tikzpicture}
	\caption{The $u$ part of a pictorial proof of Theorem \ref{thm:B}.}
\end{figure}
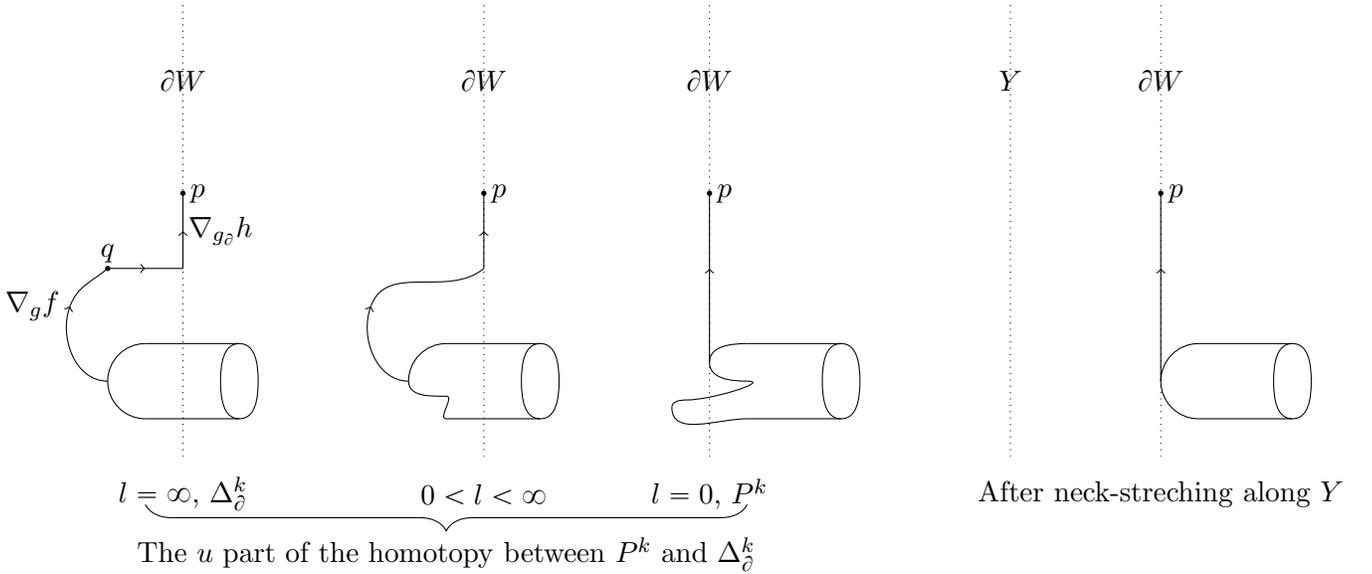

Next we will show that the equality holds with $\bJ$ of low regularity (i.e.\ only moduli spaces with low expected dimension are cut out transversely). We use $\cJ^{j}_{N,reg,P}(W)$ to denote the set of regular almost complex structures for $\cP_r^i$ with $i\le j$ and $r\le N$ of dimension up to $0$. Since the neck-stretching in Proposition \ref{prop:neck} only produces almost complex structures in  $ \cJ^{j}_{N,reg,+}(W)\cap\cJ^{j}_{N,reg,P}(W)$, we need to know that using such almost complex structures is sufficient to compute $\Delta^k_+,\Delta^k_{\partial}$.
\begin{proposition}\label{prop:natural}
	Let $\bJ^i\in \cJ^{i}_{N,reg,+}(W)\cap \cJ^i_{N,reg,P}(W)$, $\Delta^{k,i}_+,P^{k,i}$ denote the corresponding operators on $C_+(H^i,\bJ^i)$. Then $\Delta^k_+=\varinjlim \Delta^{k,i}_+$, $\Delta^k_{\partial}=\varinjlim P^{k,i}$, where the colimit is taken over the continuation maps for $C_+(H^1,\bJ^1)\to C_+(H^2,\bJ^2)\to \ldots$.
\end{proposition}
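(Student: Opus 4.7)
The strategy parallels Lemma \ref{lemma:natural}: a priori $\Delta^k_+ = \varinjlim_i \Delta^{k,i}_+$ and $\Delta^k_\partial = \varinjlim_i P^{k,i}$ are computed using data $\bJ^i \in \cJ^{\leq 1}_{N,reg}(W)$, so the task is to show that lowering the regularity requirement to $\bJ^i \in \cJ^i_{N,reg,+}(W) \cap \cJ^i_{N,reg,P}(W)$ leaves the colimits unchanged. The plan is to connect each given $\bJ^i$ to a nearby fully regular $\tilde{\bJ}^i$ inside the partially regular set, run the continuation $S^1$-morphism machinery one level at a time, and then assemble a commutative ladder that descends to $\Delta^{k,i}_+$ and $P^{k,i}$ via Propositions \ref{prop:func1} and \ref{prop:func2}.

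First, by the compactness statements in Proposition \ref{prop:compact} and Proposition \ref{prop:regP}, both $\cJ^i_{N,reg,+}(W)$ and $\cJ^i_{N,reg,P}(W)$ are open, so their intersection is open and contains $\cJ^{\leq 1}_{N,reg}(W)$ as a dense subset. For each $i$ I would pick $\tilde{\bJ}^i \in \cJ^{\leq 1}_{N,reg}(W)$ in the same path-component of $\bJ^i$ inside the intersection, and a smooth path $\bJ^i_s$ between them staying in the intersection. Counting parameterized versions of $\cN^i_r$ (for the $S^1$-morphism on $C_+$) and a parameterized analogue of $\cP^i_r$ together with the chain homotopy piece $\cH^i_r$ over this path produces an $S^1$-morphism $\phi^i_+ : C_+(H^i,\bJ^i) \to C_+(H^i,\tilde{\bJ}^i)$ compatible with splitting, together with a chain-level identity intertwining the $P$-maps to $C_0(h)[1]$. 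The same argument as in Lemma \ref{lemma:natural} then shows that $\phi^i_+$ is the identity up to $S^1$-homotopy on cohomology and that the $P$-maps agree.

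Next, I would propagate these equalities across levels. The horizontal arrows $C_+(H^i,\bJ^i) \to C_+(H^{i+1},\bJ^{i+1})$ are the inclusions from $\cP^*(H^i) \subset \cP^*(H^{i+1})$ built into Section \ref{s32}, and similarly for $\tilde{\bJ}$. A standard homotopy-of-homotopies argument, using a two-parameter family of consistent almost complex structure data interpolating the two compositions while staying inside the partially regular open set, produces $S^1$-homotopies making the square with vertical arrows $\phi^i_+, \phi^{i+1}_+$ commute up to $S^1$-homotopy, and compatibly with the $P$-maps. Invoking Propositions \ref{prop:func1} and \ref{prop:func2} translates this to commuting squares on $\Delta^{k,i}_+$ and $P^{k,i}$. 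Passing to the colimit yields $\varinjlim \Delta^{k,i}_+(\bJ^i) = \varinjlim \Delta^{k,i}_+(\tilde{\bJ}^i) = \Delta^k_+$ and likewise $\varinjlim P^{k,i}(\bJ^i) = \varinjlim P^{k,i}(\tilde{\bJ}^i) = \Delta^k_\partial$, the second identification being the content of the discussion preceding the proposition.

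The main obstacle is verifying the chain-level constructions in the second step when only transversality for dim $\leq 0$ moduli spaces with non-constant orbit asymptotics is available, which is exactly the subtlety emphasized in \cite[Remark 2.13, 3.13]{filling}. The key observation is that the matrix coefficients of $\phi^i_+$ and of the $P$-intertwiner count zero-dimensional moduli spaces, while the chain-level identities making them an $S^1$-morphism (respectively an $S^1$-homotopy) only require the one-dimensional strata whose asymptotics all lie in $\cP^*(H^i) \cup \cP^*(H^{i+1})$ or land on stable manifolds of $h$ on $\partial W \times \{1-\epsilon\}$. As in Remark \ref{rmk:negative}, no moduli space with constant-orbit asymptotics can appear in the relevant one-dimensional boundary strata, so regularity in $\cJ^i_{N,reg,+}(W) \cap \cJ^i_{N,reg,P}(W)$ genuinely suffices. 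Once this is in place, the rest of the argument is formal.
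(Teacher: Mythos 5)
Your skeleton (compare each $\bJ^i$ with a nearby $\widetilde{\bJ}^i\in \cJ^{\le 1}_{N,reg}(W)$ in the same component, intertwine the $P$-maps, propagate across levels by a homotopy of homotopies, and pass to the colimit via Propositions \ref{prop:func1} and \ref{prop:func2}) is the same as the paper's, which is phrased as well-definedness as in Lemma \ref{lemma:natural} plus functoriality of continuation maps. The genuine gap is in your last paragraph, which is exactly where the content of the proposition lies. The sets $\cJ^{i}_{N,reg,+}(W)$ and $\cJ^{i}_{N,reg,P}(W)$ only provide transversality for moduli spaces of expected dimension at most $0$. The chain-level identities you invoke -- that $(\delta^0,\ldots,\delta^N)$ is an $N$-truncated $S^1$-structure on $C_+(H^i,\bJ^i)$, that $P^i$ is an $S^1$-morphism $S^1$-homotopic to $R\circ \delta_{+,0}$, hence that $\Delta^{k,i}_+$ and $P^{k,i}$ make sense at all, and that your $\phi^i_+$ is a morphism of $S^1$-complexes -- are all proved by describing the boundary of the \emph{one-dimensional} moduli spaces $\cM^i_r$, $\cP^i_r$, $\cH^i_r$ for $\bJ^i$, and nothing in the definition of these sets makes those one-dimensional spaces transverse, regardless of whether their asymptotics are non-constant orbits or points on stable manifolds of $h$. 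Your appeal to Remark \ref{rmk:negative} is misplaced: that remark concerns emptiness of moduli spaces with constant-orbit outputs for action/energy reasons (the splitting), not regularity of one-dimensional strata, so it cannot supply the missing transversality.

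The correct mechanism, which is what Lemma \ref{lemma:natural} and the paper's proof actually use, is that the operators for $\bJ^i$ are \emph{defined via} a nearby $\bJ'$ regular up to dimension one lying in the same connected component of the partially regular set. Equivalently, the zero-dimensional counts for $\bJ^i$ and $\bJ'$ coincide: along a connecting path inside $\cJ^{i}_{N,reg,+}(W)\cap\cJ^{i}_{N,reg,P}(W)$ every moduli space of negative expected dimension is empty for each almost complex structure on the path, so the one-dimensional parametrized cobordism has no interior breaking and the signed counts cannot jump; all chain-level identities are then inherited from $\bJ'$, and independence of the choice of $\bJ'$ follows from the continuation-is-identity-up-to-$S^1$-homotopy statement together with Proposition \ref{prop:func2} (and its $\cP$-analogue as in \cite[Proposition 3.3]{filling}). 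With that repair your ladder and colimit argument, which is the paper's functoriality step modeled on \cite[Proposition 3.4]{filling}, goes through. A minor additional point: $\cJ^{\le 1}_{N,reg}(W)$ as defined in Proposition \ref{prop:reg} is not literally contained in $\cJ^{i}_{N,reg,P}(W)$; you need to intersect with the generic set of Proposition \ref{prop:regP} when choosing $\widetilde{\bJ}^i$.
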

\begin{proof}
	The well-definedness of  $\Delta^{k,i}_+,P^{k,i}$ follows from the same argument of Lemma \ref{lemma:natural} and \cite[Proposition 3.3]{filling}, i.e.\ they are defined by a nearby regular enough $\bJ$ and different choices give rise to homotopic maps. The remaining of the claim follows from comparing with a regular enough $\bJ$ such that the associated  direct limit computes  $\Delta^{k}_{\partial},\Delta^k_+$ and the functoriality of continuation maps similar to \cite[Proposition 3.4]{filling}.
\end{proof}

\subsection{Neck-stretching and independence}
Asymptotically dynamically convex (ADC) contact manifolds were introduced in \cite{lazarev2016contact}. Let $(Y,\xi)$ be a $2n-1$-dimensional contact manifold with a contact form $\alpha$, then we use $\cP^{<D}(\alpha)$ to denote the set of contractible Reeb orbits of period smaller than $D$. If $c_1(\xi) = 0$, then for any contractible non-degenerate Reeb orbit $x$, there is an associated Conley-Zehnder index $\mu_{CZ}(x) \in \Z$. The (SFT) degree of $x$ is defined to be $\deg(x):=\mu_{CZ}(x)+n-3$.
\begin{definition}\label{def:ADC}
	Let $(Y,\xi)$ be a contact manifold. $Y$ is called ADC iff there exists a sequence of contact forms $\alpha_1>\ldots > \alpha_i >\ldots$ and real numbers $D_1 < \ldots < D_i < \ldots \to \infty$, such that all Reeb orbits in $\cP^{<D_i}(\alpha_i)$ are non-degenerate and have degree greater than $0$.  
\end{definition}
We refer readers to \cite{lazarev2016contact,filling} for examples of ADC manifolds and constructions preserving ADC properties. The major classes of ADC manifolds admitting $k$-dilations are cotangent bundles $T^*M$ for $\dim M >3$ and links of terminal singularity \cite{mclean2016reeb}. In order to make sure the Conley-Zehnder index property at the boundary is preserved in the filling, we need to consider the following filling.
\begin{definition}
	A filling of $W$ of $Y$ is called topologically simple iff $c_1(W)=0$ and $\pi_1(Y)\to \pi_1(W)$ is injective.
\end{definition}

Let $(Y,\alpha)$ be an ADC contact manifold with two topologically simple exact fillings $W_1,W_2$ with fixed consistent Hamiltonian data $\bH_1=\bH_2=\bH$ outside $W_1,W_2$ as in \S \ref{s3}. Note that $\widehat{W}_1, \widehat{W}_2$ both contain the symplectization $(Y\times (0,\infty), \rd(r\alpha))$. Since $Y$ is ADC, there exist contact type surfaces $Y_i\subset Y\times (0,1-\epsilon)$, such that $Y_i$ lies outside of $Y_{i+1}$ and contractible Reeb orbits of contact form $r\alpha|_{Y_i}$ has the property that the degree is greater than $0$ if period is smaller than $D_i$.

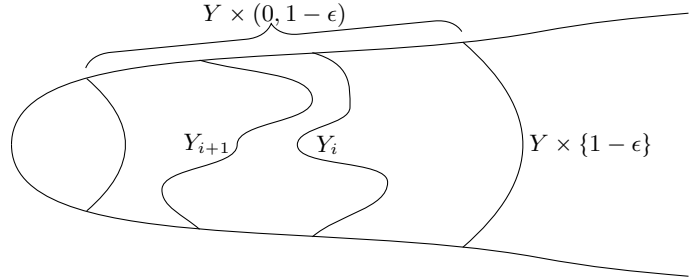
\begin{figure}[H]
	\begin{center}
		\begin{tikzpicture}[yscale=0.5]
		\draw (0,0) to [out=90, in =200] (7,3);
		\draw (7,3) to [out=20, in=190] (9,3.5);
		\draw (0,0) to [out=270, in =160] (7,-3);
		\draw (7,-3) to [out=340, in=170] (9,-3.5);
		\draw (6,2.73) to [out=300, in=60](6,-2.73);
		\draw (1,1.76) to [out=300, in=60](1,-1.76);
		\draw (4,2.45) to [out=330, in=90] (4.5,1) to [out=270, in=90] (3.8,0) to  [out=270, in=90](5,-1) to [out=270, in=60](4,-2.45);
		\draw (2.5,2.25) to [out=330, in=90](4,1.2) to [out=270, in=90](3,0) to [out=270, in=90](2,-1.2) to [out=270, in=120] (2.5,-2.25);
		\node at (4.2,0) {\footnotesize$Y_i$};
		\node at (2.6,0) {\footnotesize$Y_{i+1}$};
		\node at (7.7,0) {\footnotesize$Y\times \{1-\epsilon\}$};
		\draw [decorate,decoration={brace,amplitude=10pt,raise=4pt},yshift=0pt]
		(1,1.76) -- (6,2.73) node [black,midway,yshift=0.6cm] {\footnotesize$Y\times (0,1-\epsilon)$};
		\end{tikzpicture}
	\end{center}
	\caption{$Y_i\subset \widehat{W}_*$}
\end{figure}

Neck-stretching near $Y_i$ is given by the following. Assume $Y_i\times [1-\epsilon_i,1+\epsilon_i]_{r_i}$ does not intersect each other for some small $\epsilon_i$. Assume $J|_{Y_i\times [1-\epsilon_i,1+\epsilon_i]_{r_i}}=J_0$, where $J_0$ is independent of $S^1$ and $r_i$ and $J_0(r_i\partial_{r_i})=R_i,J_0\xi_i=\xi_i$. Then we pick a family of diffeomorphism $\phi_R:[(1-\epsilon_i)e^{1-\frac{1}{R}}, (1+\epsilon_i)e^{\frac{1}{R}-1}]\to [1-\epsilon_i,1+\epsilon_i]$ for $R\in (0,1]$ such that $\phi_1=\Id$ and $\phi_R$ near the boundary is linear with slope $1$. Then the stretched almost complex structure $NS_{i,R}(J)$ is defined to be $J$ outside $Y_i\times [1-\epsilon_i,1+\epsilon_i]$ and is $(\phi_R\times \Id)_*J_0$ on $Y_i\times [1-\epsilon_i,1+\epsilon_i]$. Then $NS_{i,1}(J)=J$ and $NS_{i,0}(J)$ gives almost complex structures on the completions of $W$ outside $Y_i$, inside $Y_i$ and the symplectization $Y_i\times \R_+$. Since we need to stretch along different contact surfaces, we assume the $NS_{i,R}(J)$ have the property that $NS_{i,R}(J)$ will modify the almost complex structure near $Y_{i+1}$ to a cylindrical almost complex structure for $R$ from $1$ to $\frac{1}{2}$, and for $R\le \frac{1}{2}$ we only keep stretching along $Y_i$. This explains the neck-stretching on a single almost complex structure, similarly we can apply neck-stretching on almost complex structure data $\bJ$ to get $NS_{i,R}(\bJ)$. We use $\cJ^{i}_{N,reg,SFT}(\bH,h,g_{\partial})$ to denote the set of regular $\bJ$, i.e.\ almost complex structures data satisfying Definition \ref{def:J} on the completion of $W$ outside $Y_i$ and asymptotic (in a prescribed way as in stretching process) to $J_0$ on the negative cylindrical end, such that the compactification of the following two moduli spaces up to virtual dimension $0$ is cut out transversely. We use $\widehat{V}_i$ to denote the completion of the cobordism from $Y_i$ to $\partial W$.

$$\left\{\begin{array}{l} u:\R \times S^1\backslash\{p_1,\ldots,p_k\} \to \widehat{V}_i, \\
z:\R \to S^{2N+1}
\end{array} \left| \begin{array}{l} 
\partial_s u + J(t,z(s),u)(\partial_tu-X_{H^i_{N}(t,z(s),u)})=0, \\
z'+\nabla_{\widetilde{g}_N} \widetilde{f}_N = 0,\\
\displaystyle \lim_{s \to \infty}(u,z)\in S^1\cdot(x,\widetilde{z}_i), \lim_{s\to -\infty}(u,z)\in S^1\cdot(y,\widetilde{z}_j),\\
\displaystyle\lim_{s\to -\infty} \epsilon_i^*u = (-\infty, \gamma_i), \gamma_i\in \cP(Y_i)
\end{array}\right. \right\}/\R\times S^1$$
$$\left\{\begin{array}{l} u:\R \times S^1\backslash\{p_1,\ldots,p_k\} \to \widehat{V}_i, \\
z:\R \to S^{2N+1},\\
\gamma:\R_- \to \partial W \times \{1-\epsilon\}
\end{array} \left| \begin{array}{l} 
\partial_s u + J(t,z(s),u)(\partial_tu-X_{H^i_{N}(t,z(s),u)})=0, \\
z'+\nabla_{\widetilde{g}_N} \widetilde{f}_N = 0,\\
\gamma'+\nabla_{g_{\partial}}h=0,\\
\displaystyle \lim_{s \to \infty}(u,z)\in S^1\cdot(x,\widetilde{z}_i), \lim_{s\to -\infty}u = \gamma(0) \\
\displaystyle \lim_{s\to -\infty} z \in S^1\cdot \widetilde{z}_j, \lim_{s\to \infty} \gamma = p\\
\displaystyle\lim_{s\to -\infty} \epsilon_i^*u = (-\infty, \gamma_i), \gamma_i\in \cP(Y_i)
\end{array}\right. \right\}/\R \times S^1$$
where $\epsilon_i:\R_-\times S^1 \to \R\times S^1\backslash\{p_i,\ldots,p_k\}$ is a cylindrical end around $p_i$, $x,y\in \cP^*(H^i)$ and $p\in \cC(h)$. 
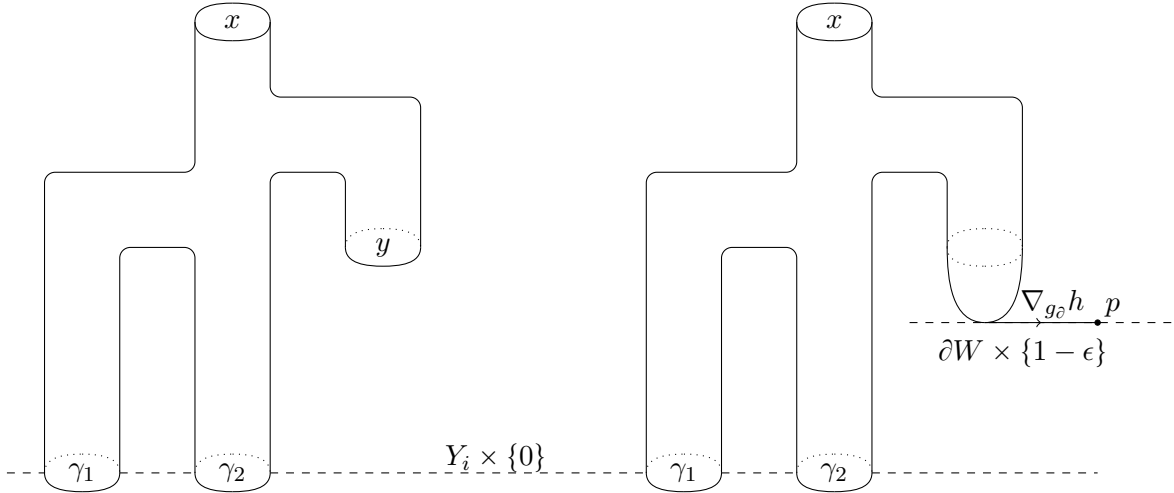
\begin{figure}[H]
	\begin{center}
		\begin{tikzpicture}
		\draw[rounded corners](0,0) -- (0,4) -- (2,4) -- (2,6);
		\draw[rounded corners](3,6)--(3,5)--(5,5)--(5,3);
		\draw[rounded corners](4,3)--(4,4)--(3,4)--(3,0);
		\draw[rounded corners](2,0)--(2,3)--(1,3)--(1,0);
		
		\draw (0,0) to [out=270,in=180] (0.5,-0.25);
		\draw (0.5,-0.25) to [out=0,in=270] (1,0);
		\draw[dotted] (0,0) to [out=90,in=180] (0.5,0.25);
		\draw[dotted] (0.5,0.25) to [out=0,in=90] (1,0);
		
		\draw (2,0) to [out=270,in=180] (2.5,-0.25);
		\draw (2.5,-0.25) to [out=0,in=270] (3,0);
		\draw[dotted] (2,0) to [out=90,in=180] (2.5,0.25);
		\draw[dotted] (2.5,0.25) to [out=0,in=90] (3,0);
		
		\draw (4,3) to [out=270,in=180] (4.5,2.75);
		\draw (4.5,2.75) to [out=0,in=270] (5,3);
		\draw[dotted] (4,3) to [out=90,in=180] (4.5,3.25);
		\draw[dotted] (4.5,3.25) to [out=0,in=90] (5,3);
		
		\draw (2,6) to [out=270,in=180] (2.5,5.75);
		\draw (2.5,5.75) to [out=0,in=270] (3,6);
		\draw (2,6) to [out=90,in=180] (2.5,6.25);
		\draw (2.5,6.25) to [out=0,in=90] (3,6);
		
		\node at (2.5,6) {$x$};
		\node at (4.5,3) {$y$};
		\node at (0.5,0) {$\gamma_1$};
		\node at (2.5,0) {$\gamma_2$};
		
		\draw[rounded corners](8,0) -- (8,4) -- (10,4) -- (10,6);
		\draw[rounded corners](11,6)--(11,5)--(13,5)--(13,3);
		\draw[rounded corners](12,3)--(12,4)--(11,4)--(11,0);
		\draw[rounded corners](10,0)--(10,3)--(9,3)--(9,0);

		\draw (8,0) to [out=270,in=180] (8.5,-0.25);
		\draw (8.5,-0.25) to [out=0,in=270] (9,0);
		\draw[dotted] (8,0) to [out=90,in=180] (8.5,0.25);
		\draw[dotted] (8.5,0.25) to [out=0,in=90] (9,0);
		
		\draw (10,0) to [out=270,in=180] (10.5,-0.25);
		\draw (10.5,-0.25) to [out=0,in=270] (11,0);
		\draw[dotted] (10,0) to [out=90,in=180] (10.5,0.25);
		\draw[dotted] (10.5,0.25) to [out=0,in=90] (11,0);
		
		\draw (10,6) to [out=270,in=180] (10.5,5.75);
		\draw (10.5,5.75) to [out=0,in=270] (11,6);
		\draw (10,6) to [out=90,in=180] (10.5,6.25);
		\draw (10.5,6.25) to [out=0,in=90] (11,6);
		
		\draw[dotted] (12,3) to [out=270,in=180] (12.5,2.75);
		\draw[dotted] (12.5,2.75) to [out=0,in=270] (13,3);
		\draw[dotted] (12,3) to [out=90,in=180] (12.5,3.25);
		\draw[dotted] (12.5,3.25) to [out=0,in=90] (13,3);
		
		\draw (12,3) to [out=270,in=180] (12.5,2);
		\draw (12.5,2) to [out=0,in=270] (13,3);
		
		\draw[->] (12.5,2) -- (13.25,2);
		\node at (13.4,2.25) {$\nabla_{g_{\partial}}h$};
		\draw (13.25,2) -- (14,2);
		\draw (14,2) circle[radius=1pt];
		\fill (14,2) circle[radius=1pt];
		
		\node at (10.5,6) {$x$};
		\node at (14.2,2.2) {$p$};
		\node at (8.5,0) {$\gamma_1$};
		\node at (10.5,0) {$\gamma_2$};
		
		\draw[dashed] (-0.5,0)--(0,0);
		\draw[dashed] (1,0)--(2,0);
		\draw[dashed] (3,0)--(8,0);
		\draw[dashed] (9,0)--(10,0);
		\draw[dashed] (11,0)--(14,0);
		\node at (6,0.2) {$Y_i\times \{0\}$};
		
		\draw[dashed] (11.5,2) -- (15,2);
		\node at (13,1.6) {$\partial W\times \{1-\epsilon\}$};
		\end{tikzpicture}
	\end{center}
	\caption{The $u$ part of the moduli spaces for the definition of  $\cJ^{i}_{N,reg,SFT}(\bH,h,g_{\partial})$.}\label{fig:SFT}
\end{figure}

Then by the same argument as in \cite[Proposition 3.12]{filling}, we have the following.
\begin{proposition}\label{prop:neck}
	Assume $Y$ is ADC with two topological simple exact fillings $W_1,W_2$, there exist consistent almost complex structures $\bJ^1_1,\bJ^2_1,\ldots,\bJ^1_2,\bJ^2_2,\ldots$ on $\widehat{W}_1$ and $\widehat{W}_2$ respectively and positive real numbers $\epsilon_1, \epsilon_2,\ldots$ such that the following holds.
	\begin{enumerate}
		\item\label{ns1} For $R<\epsilon_i$ and any $R'$, $NS_{i,R}(\bJ^i_*), NS_{i+1,R'}(NS_{i,R}(\bJ^i_*))\in \cJ^{i}_{N,reg,+}\cap \cJ^{i}_{N,reg,P}$, such that all zero dimensional $\cM^i_r(x,y)$ and $\cP^i_r(x,p)$ are the same for both $W_1,W_2$ and contained outside $Y_i$ for $x,y\in \cP^*(H^i)$ and $q\in C(h)$.   
		\item\label{ns2} $\bJ^{i+1}_*=NS_{i,\frac{\epsilon_i}{2}}(\bJ^i_*)$ on $W^i_*$. 
	\end{enumerate}
\end{proposition}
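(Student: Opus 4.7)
The plan is to prove the proposition by induction on $i$, following the scheme of \cite[Proposition 3.12]{filling} but adapted to the $S^1$-equivariant setup of Section \ref{s32}. For the base case $i=1$, I would choose generic $\bJ^1_1\in \cJ^{1}_{N,reg,+}(W_1)\cap \cJ^{1}_{N,reg,P}(W_1)$ and $\bJ^1_2\in \cJ^{1}_{N,reg,+}(W_2)\cap \cJ^{1}_{N,reg,P}(W_2)$ that agree on the common cylindrical region $Y\times(0,1-\epsilon)$ containing $Y_1$ (such a choice is possible since the regularity conditions are an open dense condition on $\cJ_N$ and restricting to a common part still leaves residual freedom). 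To produce $\epsilon_1$, I would study the family $NS_{1,R}(\bJ^1_*)$ as $R\to 0$.

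The key analytic input is SFT compactness for the parametrized Floer equation (with the added $S^{2N+1}$-gradient trajectory $z(s)$): a sequence of rigid curves in $\cM^1_r(x,y)$ or $\cP^1_r(x,p)$ as $R\to 0$ converges to a holomorphic building with a top level in $\widehat{V}_1$ and lower levels in the symplectization $Y_1\times \R$ and in the completion of the filling inside $Y_1$. The asymptotic ends at $Y_1$ are contractible Reeb orbits: contractibility in either $W_1$ or $W_2$ follows from the injectivity of $\pi_1(Y)\to \pi_1(W_*)$. An a priori bound on the periods of these asymptotics can be extracted from the slope of $H^1$ on the neck together with the fixed symplectic area of the top curve, so that the periods are bounded by $D_1$ once we stretch far enough. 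The ADC hypothesis then gives $\deg(\gamma_k)>0$ for every negative asymptotic $\gamma_k$ of the top level. The standard index additivity for SFT buildings (parametrized by $z(s)$ but with the parameter not altering the dimension shift at each end) implies that a top level with $\ell\ge 1$ negative asymptotics has expected dimension at most $-\sum_k\deg(\gamma_k)\le -\ell<0$; the lower levels, being holomorphic in a symplectization, must be rigid modulo the $\R$-translation and hence require nonnegative dimension themselves. Thus for generic $\bJ^1_*$ no such breaking occurs, and for sufficiently small $R$ every zero-dimensional curve remains in $\widehat V_1$. Choose $\epsilon_1$ to be any such $R$; the case of the iterated stretching $NS_{2,R'}(NS_{1,R}(\bJ^1_*))$ is automatic because the second stretching happens inside $Y_1$, a region disjoint from the curve. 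Since $\bJ^1_1=\bJ^1_2$ on $\widehat V_1$, the resulting moduli spaces coincide for $W_1$ and $W_2$.

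For the inductive step, given $\bJ^i_*$ and $\epsilon_i$, define $\bJ^{i+1}_*:=NS_{i,\epsilon_i/2}(\bJ^i_*)$ on $W^i_*$ as required by \eqref{ns2}, and extend generically inside $Y_i$ to an element of $\cJ_N(W_*)$. A further generic perturbation supported strictly inside $Y_{i+1}$ places $\bJ^{i+1}_*$ in $\cJ^{i+1}_{N,reg,+}\cap \cJ^{i+1}_{N,reg,P}$ without disturbing the curves already controlled at stage $i$ (which lie outside $Y_i$, hence outside $Y_{i+1}$). Then repeat the compactness-plus-degree argument at $Y_{i+1}$ with period bound $D_{i+1}$ to choose $\epsilon_{i+1}$. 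The principal obstacle is the index bookkeeping in the parametrized SFT limit: one must verify that the $S^{2N+1}$-gradient factor passes cleanly to the building, so that the degree formula $\deg(\gamma)=\mu_{CZ}(\gamma)+n-3$ still governs the dimension penalty at each negative end and the positivity $\deg(\gamma)>0$ really rules out breaking. This goes through because the parameter $z$ lives on the whole domain (the gradient flow on $S^{2N+1}$ does not break across the SFT-building), so it contributes the same $2r$-shift to the virtual dimension of the top piece as in the unbroken curve, leaving the usual SFT index inequality intact.
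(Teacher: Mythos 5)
Your overall strategy is exactly the one the paper intends: the paper's proof is literally a citation of \cite[Proposition 3.12]{filling}, i.e.\ induction on $i$, neck-stretching at $Y_i$, SFT compactness for the parametrized Floer equation, contractibility of the limiting asymptotics plus a period bound so that the ADC condition applies, and the degree/index argument showing that a building with negative ends at $Y_i$ would force a transversely cut out top level of negative expected dimension; hence for $R<\epsilon_i$ the rigid curves stay outside $Y_i$, where $\bJ^i_1=\bJ^i_2$, and the moduli spaces coincide. Two steps, however, do not work as written. First, in the inductive step you put the generic perturbation in the wrong region: $W^i_*=\{r\le c_i\}$ contains the entire filling, in particular everything inside $Y_i$ and inside $Y_{i+1}$, so condition \eqref{ns2} already forces $\bJ^{i+1}_*=NS_{i,\epsilon_i/2}(\bJ^i_*)$ there. ``Extending generically inside $Y_i$'' and a ``further perturbation supported strictly inside $Y_{i+1}$'' both contradict this constraint, and in any case cannot produce transversality for the stage-$(i+1)$ rigid curves, since those curves avoid exactly that region (this avoidance is the conclusion being proved). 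The genuine freedom lies in $\{r>c_i\}$: moduli spaces between orbits of $H^i$ are inherited from stage $i$ (they stay in $W^i$ by the integrated maximum principle, where $\bJ^{i+1}$ agrees with the already regular $NS_{i,\epsilon_i/2}(\bJ^i)$ and $H^{i+1}=H^i$), while moduli spaces involving a new orbit of $H^{i+1}$ are made regular by perturbing near those nonconstant nondegenerate asymptotics in the region $(c_i,c_{i+1}]$.

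Second, your claim that the doubly stretched case $NS_{i+1,R'}(NS_{i,R}(\bJ^i_*))$ is ``automatic because the second stretching happens inside $Y_i$, a region disjoint from the curve'' is circular: for the modified almost complex structure, the statement that its rigid curves avoid the region inside $Y_i$ is part of what must be proved and does not follow from the corresponding statement for $NS_{i,R}(\bJ^i_*)$, since changing $J$ inside $Y_i$ could a priori create new rigid curves entering that region. The correct argument is to choose $\epsilon_i$ by running your compactness-plus-ADC contradiction uniformly over the two-parameter family $(R,R')$; this works because the limiting top level of any putative crossing curve lies outside $Y_i$, where the almost complex structure is independent of $R'$, so the same negative-dimension conclusion applies for every $R'$. (A smaller slip: the asymptotic Reeb orbits are contractible in $W_*$ because they are capped by the part of the building below them; $\pi_1$-injectivity is then what transfers contractibility to $Y$, and together with $c_1(W_*)=0$ this is what makes the ADC degree condition and the Conley--Zehnder bookkeeping applicable.)
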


\begin{theorem}\label{thm:ind}
	Let $Y$ be an ADC contact manifolds, and $W_1,W_2$ be two topologically simple exact fillings of $Y$. For every $N\ge 0$, we have an isomorphism $\Gamma:SH^*_+(W_1)\to SH^*_+(W_2)$ (depends on $N$) such that  $\Gamma\circ \Delta^j_{+}=\Delta^j_{+}\circ \Gamma$ and $\Delta^j_{\partial}\circ \Gamma = \Delta^j_{\partial}$ for every $j\le N$.
\end{theorem}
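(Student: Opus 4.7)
The plan is to assemble the neck-stretching package of Proposition \ref{prop:neck} with the naturality statement of Proposition \ref{prop:natural} so as to produce, at every level $i$, a canonical identification of the positive $S^1$-subcomplexes of $W_1$ and $W_2$ that is compatible with the $N$-truncated $S^1$-structure and with the map $P^i$ computing $\Delta^j_\partial$. Passing to the colimit in $i$ will then yield the map $\Gamma$ together with the required intertwining relations.

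First, I would fix $N$ and select the stretched almost complex structure data $\bJ^i_1$ on $\widehat{W}_1$ and $\bJ^i_2$ on $\widehat{W}_2$ provided by Proposition \ref{prop:neck}. Because $H^i$ is supported in the collar outside $Y_i$, the set $\cP^*(H^i)$, and hence the underlying $\bm{k}$-module $C_+(H^i)$, is intrinsic to $(Y,\alpha)$ and does not depend on the filling; this gives a tautological identification of $C_+(H^i, \bJ^i_1)$ with $C_+(H^i, \bJ^i_2)$ as graded vector spaces. By item \eqref{ns1} of Proposition \ref{prop:neck}, every zero-dimensional $\cM^i_r(x,y)$ with $x,y\in \cP^*(H^i)$ and $r\le N$ lies outside $Y_i$ and is the same for the two fillings, so the $N$-truncated $S^1$-structures $(\delta^0,\ldots,\delta^N)$ agree under this identification. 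The same statement applies to the zero-dimensional $\cP^i_r(p,x)$ defining the $S^1$-morphism $P^i\colon C_+(H^i)\to C_0(h)[1]$, so $P^i$ is likewise intertwined. Proposition \ref{prop:natural} then guarantees that these low-regularity data are sufficient to read off $\Delta^{j,i}_+$ and $P^{j,i}=\Delta^{j,i}_\partial$.

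Next, I would exploit item \eqref{ns2}, which asserts $\bJ^{i+1}_* = NS_{i,\epsilon_i/2}(\bJ^i_*)$ on $W^i_*$. Combined with Proposition \ref{prop:reg}\eqref{r2}, this makes the inclusion $C_+(H^i, \bJ^i_*)\hookrightarrow C_+(H^{i+1}, \bJ^{i+1}_*)$ an inclusion of generators for both fillings, and in fact the same $S^1$-morphism under the identification at each level. Taking the colimit over $i$ therefore yields the isomorphism
\[
\Gamma \colon SH^*_+(W_1)\;\xrightarrow{\ \cong\ }\; SH^*_+(W_2),
\]
and Proposition \ref{prop:func1} applied to the identity $S^1$-morphism at each level gives $\Gamma\circ \Delta^j_+=\Delta^j_+\circ \Gamma$ for all $j\le N$. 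The relation $\Delta^j_\partial\circ \Gamma=\Delta^j_\partial$ will then follow from Proposition \ref{prop:func2} applied to the commutative square whose horizontal arrows are the canonical identifications of $C_+(H^i)$ and of $C_0(h)[1]$, the latter being literally the identity because $h$ is defined on the collar $\partial W\times\{1-\epsilon\}$ that is canonically identified with a neighborhood of $Y$ in both fillings.

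The main obstacle is not the formal manipulation above but the content already encapsulated in Proposition \ref{prop:neck}: the combination of SFT-compactness and the ADC-index estimate which forces the zero-dimensional moduli spaces to stay outside $Y_i$, where the topologically simple hypothesis is essential so that Conley--Zehnder indices of Reeb orbits computed in $Y$ agree with those computed in either filling. A secondary piece of bookkeeping is to check that the successive choices of $\bJ^i_*$ are compatible up to $S^1$-homotopy; this is handled by a parameterized homotopy-of-homotopies argument in the spirit of \cite[Proposition 3.4]{filling}, which legitimizes passage to the colimit and shows that $\Gamma$ depends only on $N$ and is independent of the auxiliary choices.
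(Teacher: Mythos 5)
Your proposal is correct and takes essentially the same route as the paper's proof: Proposition \ref{prop:natural} to reduce $\Delta^j_+,\Delta^j_\partial$ to the stretched complexes, \eqref{ns1} of Proposition \ref{prop:neck} to identify the truncated $S^1$-data and the maps $P^{j,i}$ for the two fillings, and \eqref{ns2} together with the action argument of Proposition \ref{prop:reg} --- plus Lemma \ref{lemma:natural} for the further-stretching continuation, which is exactly the content you compress into your ``homotopy-of-homotopies'' remark, and which the paper makes explicit by factoring each continuation map, up to $S^1$-homotopy, into a stretching continuation homotopic to the identity followed by an inclusion. The only caveat is your closing claim that $\Gamma$ is independent of the auxiliary choices: the paper explicitly notes that $\Gamma$ is ad hoc and not canonical, but this overstatement is not needed for the theorem.
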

\begin{proof}
	By Proposition \ref{prop:natural}, $\Delta^j_+, \Delta^{j}_{\partial}$ is represented by the limit of $\Delta^{j,i}_+,P^{j,i}$ on $C_+(H^i,NS_{i,\frac{\epsilon_i}{2}}\bJ^i_*)$ respectively and the limit is taken over the continuation maps $$C_+(H^1,NS_{1,\frac{\epsilon_1}{2}}\bJ^1_*) \to C_+(H^2,NS_{2,\frac{\epsilon_2}{2}}\bJ^2_*) \to \ldots$$ 
	By Proposition \ref{prop:neck}, $C_+(H^i,NS_{i,\frac{\epsilon_1}{2}}\bJ^i_*)$ can be identified for $W_1,W_2$ and the continuation map 
	\begin{equation}\label{eqn:con}
	C_+(H^i,NS_{i,\frac{\epsilon_i}{2}}\bJ^i_*) \to C_+(H^{i+1},NS_{i+1,\frac{\epsilon_{i+1}}{2}}\bJ^{i+1}_*)
	\end{equation}
	 can be decomposed into a composition of continuation maps up to $S^1$-homotopy
	 $$ C_+(H^i,NS_{i,\frac{\epsilon_i}{2}}\bJ^i_*) \to C_+(H^i,NS_{i+1,\frac{\epsilon_{i+1}}{2}}(NS_{i,\frac{\epsilon_i}{2}}\bJ^i_*)) \to C_+(H^{i+1},NS_{i+1,\frac{\epsilon_{i+1}}{2}}\bJ^{i+1}_*).$$ The first map is $S^1$-homotopic to identity by Lemma \ref{lemma:natural} and the second map is inclusion by \eqref{ns2} of Proposition \ref{prop:neck} and the argument in Proposition \ref{prop:reg}. As a consequence, \eqref{eqn:con} can be identified up to homotopy for $W_1,W_2$. Therefore $\Delta^{j}_+, \Delta^{j}_{\partial}$ are the same for both $W_1$ and $W_2$.
\end{proof}

\begin{remark}
	Theorem \ref{thm:ind} is a generalization of \cite[Theorem A, E]{filling}, which correspond to $k=0,1$. We can only match up finitely many structures at the same time, since we only constructed truncated $S^1$-structures. For the purpose of applications in this paper, finite identification is enough. Moreover, the identification $\Gamma$ is rather ad hoc and is not canonical. To identify all the structures at the same time, in addition to building the full $S^1$ structure (e.g. Remark \ref{rmk:auto}), Proposition \ref{prop:neck} may fail since we need to compare infinitely many moduli spaces in one neck-stretching process.
\end{remark}

\begin{remark}\label{rmk:generalization}
	Let $SH^*_{S^1,+,N}(W)$ denote the $\varinjlim_{H_t} H^*(F^NC^+_+(H_t))$. Then by the same argument of Theorem \ref{thm:ind}, we have $SH^*_{S^1,+,N}(W)\to H^{*+1}_{S^1,N}(\partial W)$ is independent of the topological simple exact filling $W$ as long as $\partial W$ is ADC for any $N$. As explained in Remark \ref{rmk:SS} and \ref{rmk:SSphi},  in principle Theorem \ref{thm:ind} can be used to reconstructed  $SH^*_{S^1,+,N}(W)\to H^{*+1}_{S^1,N}(\partial W)$ as a $\bm{k}$-linear map (in a non-conical way).
\end{remark}	

\subsection{Applications}
Apparently, the existence of $k$-semi-dilation is independent of topologically simple exact fillings if the contact boundary is ADC. For $k$-dilation we have the following.
\begin{corollary}[Corollary \ref{cor:C}]\label{cor:dilation}
	If $Y$ is an $ADC$ manifold such that there exists a topologically simple exact filling $W$ admitting a $k$-dilation. Then for any other topologically simple exact filling $W'$ such that $H^{2j}(W')\to H^{2j}(Y)$ is injective for every $j\le k$, then $W'$ also have $k$-dilation. In particular,  any Weinstein filling $W'$ of dimension at least $4k+2$ or $\frac{\dim W}{2}$ is odd has a $k$-dilation.
\end{corollary}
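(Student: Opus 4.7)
The plan is to deduce the corollary directly from Theorem \ref{thm:ind} by upgrading the filling-invariant $k$-semi-dilation data on $W'$ to an honest $k$-dilation, using the cohomological injectivity hypothesis. The key auxiliary input is that topological simplicity gives $c_1(W) = c_1(W') = 0$, hence a genuine $\Z$-grading on $SH^*_+$, which forces the possible ``higher corrections'' in $H^*(W')$ to live in specific even-degree groups.

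First, by Proposition \ref{prop:equi} and Proposition \ref{prop:order} the $k$-dilation on $W$ provides a class $y_W \in SH^*_+(W)$ with $y_W \in \ker \Delta^k_+$ and $\delta^{S^1}_{+,0}(y_W) = 1 \in H^0(W) \subset H^*(F^kC^+_0(W))$; restricting componentwise in each $u^{-j}$-direction gives $\Delta^k_\partial(y_W) = 1 \in H^0(Y)$. Applying Theorem \ref{thm:ind} with $N \ge k$ yields an isomorphism $\Gamma \colon SH^*_+(W) \to SH^*_+(W')$ that intertwines $\Delta^j_+$ and satisfies $\Delta^j_\partial \circ \Gamma = \Delta^j_\partial$ for $j \le N$. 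Setting $y' := \Gamma(y_W)$, we have $y' \in \ker \Delta^k_+$ on $W'$ and $\Delta^k_\partial(y') = 1$. Expand $\delta^{S^1}_{+,0}(y') = \sum_{j=0}^{k} u^{-j} x_j$ with $x_j \in H^*(W')$. Using $\deg u = 2$ and $\deg \delta^{S^1} = 1$ together with $\deg y_W = -1$, the $\Z$-grading pins $x_j \in H^{2j}(W')$. The boundary condition then translates to $r(x_0) = 1 \in H^0(Y)$ and $r(x_j) = 0$ for $1 \le j \le k$, where $r \colon H^*(W') \to H^*(Y)$ is the restriction. Connectedness of $W'$ gives $x_0 = 1$, and the injectivity hypothesis forces $x_j = 0$ for $j = 1, \ldots, k$. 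Hence $\delta^{S^1}_{+,0}(y') = 1$ exactly, which by Proposition \ref{prop:equi} is the $k$-dilation condition on $W'$.

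For the Weinstein specialization, write $2n = \dim W'$. The handle decomposition has indices $\le n$, so $H_i(W') = 0$ for $i > n$; Poincar\'e-Lefschetz duality gives $H^{2j}(W', \partial W') \cong H_{2n-2j}(W') = 0$ whenever $j < n/2$, and the long exact sequence of the pair shows $H^{2j}(W') \hookrightarrow H^{2j}(Y)$ in this range. Separately, $H^{2j}(W') = 0$ for $2j > n$, making the restriction trivially injective there. If $\dim W' \ge 4k+2$ then $n \ge 2k+1$ and every $j \le k$ satisfies $j < n/2$; if instead $n$ is odd then $n/2 \notin \Z$, so each $j \in \{1, \ldots, k\}$ falls into either $j < n/2$ (injective) or $2j > n$ (automatic vanishing). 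In both cases the cohomological hypothesis holds and the first part of the corollary applies. The most delicate ingredient is really the grading compatibility of $\Gamma$, which is what topological simplicity secures via $c_1 = 0$; without it the decomposition $x_j \in H^{2j}(W')$ would be unavailable and the proof would collapse.
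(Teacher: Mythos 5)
Your overall strategy (transfer the dilation class through the filling-independence statement, then use the injectivity of $H^{2j}(W')\to H^{2j}(Y)$ and the $\Z$-grading coming from $c_1=0$ to force the image in $H^*_{S^1,k}(W')$ to be exactly $1$) is the same as the paper's, and your grading bookkeeping, the final injectivity step, and the Weinstein specialization (Poincar\'e--Lefschetz duality for $j<n/2$, vanishing of $H^{2j}(W')$ for $2j>n$) are all fine; the paper does not even spell the last part out. However, there is a genuine gap in the middle step, namely the sentence ``the boundary condition then translates to $r(x_0)=1$ and $r(x_j)=0$ for $1\le j\le k$.'' Theorem \ref{thm:ind} only intertwines the structure maps $\Delta^j_+$ and $\Delta^j_\partial$, and $\Delta^k_\partial$ takes values in $\coker\Delta^{k-1}_\partial$ and records only the $u^0$-component of $\delta^{S^1}_{+,0}$ applied to a lift of $y'$. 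So from $\Delta^k_\partial(y')=\Delta^k_\partial(y_W)$ you obtain at best $r(x_0)\equiv 1$ modulo $\Ima\Delta^{k-1}_\partial(W')$, and you obtain no direct statement about the higher components $r(x_j)$, $j\ge 1$, of the lift you chose. One can relate $x_j$ to $\Delta^{k-j}_{+,0}(y')$ (multiply the lift by $u^j$), but again only modulo $\Ima\Delta^{k-j-1}_{+,0}$, and these ambiguities cannot simply be corrected away: a class $\beta$ with $\Delta^0_\partial(\beta)=r(x_{k-1})$, say, need not extend to a $\delta^{S^1}_+$-closed class in $F^{k-1}C^+_+(W')$, so subtracting it destroys closedness. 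In short, the data ``$y'\in\ker\Delta^k_+$ and $\Delta^j_\partial(y')$ for $j\le k$'' is strictly weaker than what you need, which is a class in $H^{-1}(F^kC^+_+(W'))$ whose image in $H^0_{S^1,k}(Y)$ is exactly $1$.

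The paper closes exactly this gap by invoking a stronger invariance statement than Theorem \ref{thm:ind} as literally stated: by Remark \ref{rmk:generalization}, the full truncated map $SH^{-1}_{S^1,+,k}(\cdot)\to H^{0}_{S^1,k}(Y)$ is itself independent of the topologically simple exact filling (alternatively one can reconstruct this map from the $\Delta$'s and $\Phi$'s as in Remarks \ref{rmk:SS}, \ref{rmk:SSphi}, but that requires knowing the structure maps on whole subquotients, not just on the single class $y'$). With that input, since $W$ carries a $k$-dilation there is a class on the $W'$-side mapping to exactly $1|_Y\in H^0_{S^1,k}(Y)$, i.e.\ all components $r(x_j)$ are pinned down simultaneously, and then your argument via the injectivity of $H^0_{S^1,k}(W')\to H^0_{S^1,k}(Y)$ finishes the proof. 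So your write-up becomes correct once you replace the appeal to Theorem \ref{thm:ind} at that step by Remark \ref{rmk:generalization} (or supply the reconstruction argument); as it stands, the claimed componentwise control of the boundary restriction is not justified.
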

\begin{proof}
Assume $W$ carries a $k$-dilation, i.e.\ $1\in \Ima(SH^{-1}_{S^1,+,k}(W)\to H^0_{S^1,k}(W))$. By the injective property of  $H^{2j}(W)\to H^{2j}(Y)$ for $j\le k$, we have $H^0_{S^1,k}(W)\to H^0_{S^1,k}(Y)$ is injective. Then the invariance property of  $SH^{-1}_{S^1,+,k}(W)\to H^0_{S^1,k}(Y)$ in Remark \ref{rmk:generalization} implies $W'$ carries a $k$-dilation.

\end{proof}

\begin{corollary}\label{cor:ob}
	Let $Y$ be a $2n-1$-dimensional ADC manifold with a topologically simple exact filling $W$ for $n\ge 3$, such that $\Ima \Delta^k_{\partial}$ contains a class of degree greater than $n$, then $Y$ has no Weinstein filling.
\end{corollary}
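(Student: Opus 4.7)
The plan is to derive a contradiction from a hypothetical Weinstein filling by combining Theorem \ref{thm:B} with the standard cohomological restriction imposed by a Weinstein handle decomposition. Suppose $W'$ is a Weinstein filling of $Y$. Since a Weinstein domain of dimension $2n$ admits a handle decomposition using only handles of index at most $n$, the domain $W'$ deformation retracts onto a CW complex of dimension at most $n$, so $H^{j}(W';\bm{k})=0$ for every $j>n$.

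Next I would use that, by construction, $\Delta^k_{\partial}$ is the composition of $\Delta^k_{+,0}$ with the restriction map $H^{*}(W')\to H^{*}(\partial W')=H^{*}(Y)$. Therefore, when computed on $W'$, the image $\Ima\,\Delta^k_{\partial}$ is contained in the image of the restriction map $H^{*}(W')\to H^{*}(Y)$, and in view of the previous paragraph it is concentrated in degrees $\le n$.

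Now I would invoke Theorem \ref{thm:B} (equivalently Theorem \ref{thm:ind}) to conclude that $\Delta^k_{\partial}$ is independent of the topologically simple exact filling. Consequently the image computed on $W'$ coincides with the image computed on $W$, which by hypothesis contains a class of degree $>n$. This contradicts the degree bound of the previous paragraph, proving that no Weinstein filling exists.

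The main point to check carefully is that a Weinstein filling $W'$ qualifies as a topologically simple exact filling so that Theorem \ref{thm:B} applies. The condition $c_1(W')=0$ is needed anyway for the ADC index condition to propagate into the filling, and it is standard to assume it as part of the Weinstein hypothesis in this context; the injectivity $\pi_{1}(Y)\to \pi_{1}(W')$ holds because Weinstein handles of index $\ge 2$ do not affect $\pi_{1}$ and, when $n\ge 3$, any index-$1$ handles can be cancelled without changing the contact boundary. Granting this, the steps above assemble into the desired contradiction.
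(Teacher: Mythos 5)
Your proof is correct and follows essentially the same route as the paper's: the paper's one-line argument is exactly that $\Delta^k_{\partial}$ factors through $H^*(W')$, which for a Weinstein filling vanishes in degrees above $n$, combined with the filling-independence from Theorem \ref{thm:B} and the observation that a Weinstein filling of dimension $\ge 6$ is topologically simple. The only cosmetic difference is your justification of $\pi_1$-injectivity; the cleaner standard argument is that $W'$ is obtained from $\partial W'\times[0,1]$ by attaching the dual handles, which have index $\ge n\ge 3$, so $\pi_1(\partial W')\to\pi_1(W')$ is an isomorphism.
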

\begin{proof}
	Since $\Delta^k_{\partial}$ factors through $H^*(W)$, the independence of $\Delta^k_{\partial}$ of topologically simple fillings (in particular, any Weinstein filling of dimension $\ge 6$ is topologically simple) implies that $Y$ has no Weinstein filling.
\end{proof}
\begin{remark}\label{rmk:ob}
By Remark \ref{rmk:SSphi}, the non-vanishing of the obstruction in Corollary \ref{cor:ob} is equivalent to that the image of $SH^*_{S^1,+,k}(W)\to H^{*+1}_{S^1,k}(\partial W)$ contains an element in the form of $x+u^{-1}y+\ldots$ with $x\in H^*(\partial W)$ of degree higher than $n$.	
\end{remark}
The obstruction is robust in the following sense.
\begin{proposition}\label{prop:robust}
	Let $Y_1$ and $Y_2$ be two ADC contact manifolds with topological simple exact fillings $V_1,V_2$. Assume the obstruction in Corollary \ref{cor:ob} exists for $Y_1$, then the obstruction exists for contact connected sum $Y_1\#Y_2$.
\end{proposition}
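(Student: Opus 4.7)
The plan is to exhibit a topologically simple exact filling of $Y_1\#Y_2$ whose $\Delta^k_{\partial}$ still hits a class of degree $>n$, and then invoke Corollary \ref{cor:ob}. The natural candidate is the boundary connected sum $V_1\natural V_2$, obtained by attaching a subcritical Weinstein $1$-handle to $V_1\sqcup V_2$. First I would check the preliminary ingredients: (i) $Y_1\#Y_2$ is ADC because contact connected sum preserves the ADC condition (Lazarev \cite{lazarev2016contact}); (ii) $V_1\natural V_2$ is a topologically simple exact filling of $Y_1\#Y_2$, since $c_1$ vanishes on a boundary connected sum whenever it vanishes on the pieces, and injectivity of $\pi_1(Y_1\#Y_2)\to\pi_1(V_1\natural V_2)$ follows from van Kampen together with the corresponding injectivity for each $V_i$.

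The main step is to show that the structural map $\Delta^k_{\partial}$ decomposes compatibly. By Proposition \ref{prop:flex} (applied to the subcritical $1$-handle) we already know $\rSD$ and the existence of $k$-dilations behave well under subcritical attachment; the finer statement needed here is that, in a Hamiltonian model adapted to the handle decomposition, there is an isomorphism $SH^*_{S^1,+,k}(V_1\natural V_2)\cong SH^*_{S^1,+,k}(V_1)\oplus SH^*_{S^1,+,k}(V_2)$ under which $\Delta^k_{\partial}$ decomposes as a direct sum with respect to the Mayer--Vietoris splitting $\widetilde H^*(Y_1\#Y_2)\cong \widetilde H^*(Y_1)\oplus\widetilde H^*(Y_2)$ in degrees $0<*<2n-1$. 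To get this, I would pick a nice Hamiltonian on $V_1\natural V_2$ that is a small perturbation of the one in \S\ref{s3s3} on each $V_i$ and whose orbits on the handle region are all constant (possible since the handle is subcritical and carries no short closed Reeb orbits); an integrated maximum principle as in Proposition \ref{prop:compact}, applied to a contact-type hypersurface separating the handle from each $V_i$, confines all Floer cylinders between non-constant orbits lying in $V_i$ to $\widehat V_i$. This implies that the $S^1$-cochain complexes split, and with an admissible Morse function on $V_1\natural V_2$ that restricts to admissible Morse functions on each $V_i$ away from the handle, the same confinement applies to the auxiliary moduli spaces $\cP^i_r$ used to define $\Delta^k_{\partial}$.

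Granted this splitting, let $\alpha\in H^{>n}(Y_1)\cap \Ima\Delta^k_{\partial}(V_1)$ be the obstruction class for $V_1$. Under $\widetilde H^*(Y_1)\hookrightarrow \widetilde H^*(Y_1\#Y_2)$ (which preserves the grading and lands in degree $>n$ since $n\ge 3$), its image $\widetilde\alpha$ lies in $\Ima\Delta^k_{\partial}(V_1\natural V_2)$. Hence $\Ima\Delta^k_{\partial}(V_1\natural V_2)$ contains a class of degree $>n$, and Corollary \ref{cor:ob} applied to the ADC contact manifold $Y_1\#Y_2$ with the topologically simple exact filling $V_1\natural V_2$ rules out any Weinstein filling.

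The expected main obstacle is justifying the $S^1$-equivariant direct-sum decomposition at the chain level with all the structural maps $(\delta^0,\delta^1,\ldots,\delta^N)$ and the auxiliary maps defining $\Delta^k_{\partial}$ preserved. The non-equivariant version follows from Cieliebak's invariance under subcritical attachment, but here one must verify that the neck region of the $1$-handle can be set up so that \emph{parameterized} Floer cylinders and the half-plane configurations of \S\ref{s3s3}--\S\ref{s32} respect the splitting. This boils down to choosing $\bH_N$ and $\bJ_N$ which are cylindrical on a separating hypersurface inside the handle, so that the integrated maximum principle of Lemma \ref{lemma:max} applies uniformly in the $S^{2N+1}$-parameter; once this is done, the bookkeeping is routine and the argument above goes through.
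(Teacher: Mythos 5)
Your overall strategy (pass to the boundary connected sum $V_1\natural V_2$ and show the obstruction survives) is the paper's, but the step you lean on is not established and, as formulated, would fail. You want a chain-level splitting $SH^*_{S^1,+,k}(V_1\natural V_2)\cong SH^*_{S^1,+,k}(V_1)\oplus SH^*_{S^1,+,k}(V_2)$, compatible with $\Delta^k_{\partial}$ and the Mayer--Vietoris splitting of $\widetilde H^*(Y_1\#Y_2)$, obtained by choosing a Hamiltonian whose orbits on the handle are constant and confining Floer cylinders "between non-constant orbits lying in $V_i$" via Lemma \ref{lemma:max}. But the admissible Hamiltonians that compute the invariants of $V_1\natural V_2$ are linear on the cylindrical end over $Y_1\#Y_2$, and the Reeb dynamics of $Y_1\#Y_2$ are not the disjoint union of those of $Y_1$ and $Y_2$: the nonconstant Hamiltonian orbits sit over the connected boundary (including orbits running through the surgered neck), so they are not partitioned into orbits belonging to $\widehat V_1$ or $\widehat V_2$, and no contact-type hypersurface inside the filling separates them; the integrated maximum principle gives no splitting here. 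If instead you use Hamiltonians adapted to copies of $Y_1$ and $Y_2$ sitting inside $V_1\natural V_2$, you are computing the invariants of $V_1\sqcup V_2$ and what you need is precisely the subcritical handle-attachment invariance theorem; upgrading that to the positive $S^1$-equivariant complex together with the auxiliary moduli spaces defining $\Delta^k_{\partial}$ (whose target is now $H^*(Y_1\#Y_2)$, a different space than $H^*(Y_1)$) is not "routine bookkeeping" and is nowhere proved in the paper.

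The paper avoids this entirely by a softer argument: by Remark \ref{rmk:ob}, the obstruction for $V_1$ is equivalent to the vanishing, under $H^*(V_1)\otimes\bm{k}[u,u^{-1}]/u\to SH^*_{S^1}(V_1)$, of a class $\sum a_iu^{-i}$ with $\deg a_0>n$ and $a_0|_{Y_1}\neq 0$. Since $\deg a_i=\deg a_0+2i>0$, each $a_i$ lifts to $H^*(V_1\natural V_2)$, and the commutative square with the isomorphism $SH^*_{S^1}(V_1\natural V_2)\simeq SH^*_{S^1}(V_1)\oplus SH^*_{S^1}(V_2)$ (which follows from invariance of full $SH^*$ under subcritical attachment plus the $u$-adic spectral sequence, exactly as in Proposition \ref{prop:flex}) shows the lifted class dies in $SH^*_{S^1}(V_1\natural V_2)$. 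The boundary restriction is then handled purely topologically: since $H^{2n-1}(V_1)\to H^{2n-1}(Y_1)$ vanishes, necessarily $n<\deg a_0<2n-1$, so $a_0|_{Y_1}$ survives as a nonzero class in $H^*(Y_1\#Y_2)$. Note also that your argument silently assumes the obstruction class is not of top degree $2n-1$ (where the splitting of $\widetilde H^*(Y_1\#Y_2)$ you invoke does not hold); this is true, but for the reason just given, which you would need to supply. To repair your proof you should either prove the equivariant splitting you assert (a substantial result) or reroute through $\Delta^k_{+,0}$ and the full equivariant groups as the paper does.
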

\begin{proof}
By Remark \ref{rmk:ob}, the obstruction exists for $Y_1$ is equivalent to that $H^*(V_1)\otimes \bm{k}[u,u^{-1}]/u\to SH^*_{S^1}(V_1)$ maps $\sum a_i u^{-i}$ to zero for $a_0\in H^*(V_1)$ such that $\deg(a_0)>n$ and $a_0|_{Y_1}\ne 0$. Let $V_1\natural V_2$ denote the boundary connected sum, i.e.\ attaching a Weinstein $1$-handle to connect $V_1$ and $V_2$. Then we have the following commutative diagram,
$$
\xymatrix{
H^*(V_1\natural V_2)\otimes \bm{k}[u,u^{-1}]/u \ar[r]\ar[d] & SH^*_{S^1}(V_1\natural V_2)\ar[d]^{\simeq}\\
\left(H^*(V_1)\otimes \bm{k}[u,u^{-1}]/u\right)\oplus \left(H^*(V_2)\otimes \bm{k}[u,u^{-1}]/u\right) \ar[r] &  SH^*_{S^1}(V_1)\oplus SH^*_{S^1}(V_2)
}$$
Since $\deg a_i=\deg a_0+2i>0$, we can view $a_i\in H^*(V_1\natural V_2)$. Then the diagram implies that $\sum a_i u^{-i}$ is mapped to zero in $SH^*_{S^1}(V_1\natural V_2)$. Since $Y_1$ is connected and $V_1$ is oriented, we have $H^{2n-1}(V_1)\to H^{2n-1}(Y_1)$ is zero, hence $a_0\notin H^{2n-1}(V_1)$. Therefore $a_0|_{Y_1}$ can be viewed as a nonzero class in $H^*(Y_1\# Y_2)=H^*(Y_1)\oplus H^*(Y_2)$ for $n<*<2n-1$. Then $V_1\natural V_2$ is a topological simple filling of $Y_1\#Y_2$ and the obstruction exists for $Y_1\#Y_2$.
\end{proof}

\begin{example}\label{ex:ob}
	Let $V$ be the $2n$ dimensional Liouville but not Weinstein domain in \cite{massot2013weak}, and $W$ be a $2m$ dimensional ADC domain with a $k$-dilation for any $k\ge 0$ in Proposition \ref{prop:ADC}. If $n-1>m$, then $\partial (V\times W)$ does not admit a Weinstein filling. The reason is that $V$ is hypertight, by the argument in Proposition \ref{prop:prod}, $\Delta^k_{+,0}$ for $V\times W$ hits the non-zero element in $H^{2n-1}(V)$. Since $V$ is in the form of $M\times [0,1]$, we have $H^*(V\times W) \to H^*(\partial(V\times W))$ is injective. In particular, $\Ima \Delta^k_{\partial}$ contains an element of degree $2n-1>n+m$ and $\partial(V\times W)$ is ADC by \cite[Theorem K]{filling}. Then by Proposition \ref{prop:robust}, let $Y$ be any other ADC contact manifold with a topological simple exact filling, then $\partial(V\times W)\# Y$ is also not Weinstein fillable.
\end{example}

\begin{remark}
	When $k=0,1$, the obstructions are the ones we defined in \cite{filling}. In $k=0$ case, we show that the obstruction is symplectic in nature by constructing almost Weinstein fillable examples with non-vanishing obstruction. In principle, all such obstructions are symplectic in nature. However, it is not easy to prove or disprove examples above and modifications of them (c.f.\ \cite[Theorem G]{filling}) are almost Weinstein fillable following the criterion in \cite{bowden2014topology}.
\end{remark}

Similar to \cite[Corollary H]{filling}, we have the following obstruction to cobordisms by Proposition \ref{prop:transfer}.
\begin{corollary}\label{cor:cob}
	Let $Y_1$, $Y_2$ be two contact manifolds with topologically simple exact fillings $V_1,V_2$, such that $\rSD(V_1)>\rSD(V_2)$. Assume $Y_2$ is ADC. Then there is no exact cobordism $W$ from $Y_1$ to $Y_2$ such that $V_1\cup W$ is topologically simple. In particular, if $V_1$ is Weinstein and $\dim V_1\ge 6$, then there is no Weinstein cobordism $W$ from $Y_1$ to $Y_2$. If $Y_1,Y_2$ are both simply connected, then there is no exact cobordism with vanishing first Chern class from $Y_1$ to $Y_2$.
\end{corollary}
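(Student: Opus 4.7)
The plan is to argue by contradiction, combining two ingredients already established in the paper: the persistence of $k$-semi-dilations under ADC boundary (Corollary \ref{cor:C}) and the Viterbo transfer compatibility of the structure maps (Proposition \ref{prop:transfer}, i.e.\ the monotonicity statement in Proposition \ref{prop:inc}). Suppose that an exact cobordism $W$ from $Y_1$ to $Y_2$ exists such that $V_1\cup W$ is topologically simple. Then $V_1\cup W$ is a topologically simple exact filling of the ADC contact manifold $Y_2$, so by Corollary \ref{cor:C} it carries a $k$-semi-dilation if and only if $V_2$ does; in particular $\rSD(V_1\cup W)=\rSD(V_2)$. On the other hand, $V_1$ sits inside $V_1\cup W$ as an exact subdomain, so Proposition \ref{prop:inc} yields $\rSD(V_1)\le \rSD(V_1\cup W)=\rSD(V_2)$, contradicting the assumption $\rSD(V_1)>\rSD(V_2)$.

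For the ``in particular'' clauses, I would reduce each to the main statement by checking that $V_1\cup W$ is automatically topologically simple under the stated hypotheses. When $V_1$ is Weinstein with $\dim V_1\ge 6$ and $W$ is a Weinstein cobordism, $V_1\cup W$ is itself Weinstein of dimension $\ge 6$; a handle decomposition with handles of index at most $n=\tfrac12\dim(V_1\cup W)\ge 3$ together with transversality (the cocores of the non-$1$-handles have codimension $\ge 3$ in the boundary, so loops in $\partial(V_1\cup W)$ can be pushed off them) forces $\pi_1(\partial(V_1\cup W))\to \pi_1(V_1\cup W)$ to be an isomorphism, and $c_1=0$ holds under the standard Weinstein convention relevant to ADC comparisons. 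When $Y_1$ and $Y_2$ are simply connected and $c_1(W)=0$, injectivity of $\pi_1(Y_2)=0\to \pi_1(V_1\cup W)$ is automatic, and additivity of the first Chern class under gluing gives $c_1(V_1\cup W)=0$ from $c_1(V_1)=0$ and $c_1(W)=0$.

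The only substantive inputs are Corollary \ref{cor:C} and the Viterbo transfer compatibility, both of which are already in hand, so the argument reduces to a one-line contradiction. The main (and essentially only) delicate point is the topological verification that gluing preserves the ``topologically simple'' hypothesis in each of the two special cases; this is where one uses the handle-theoretic features of Weinstein domains of dimension at least $6$. I do not anticipate any analytic obstacle, as all the Floer-theoretic content has been absorbed into Corollary \ref{cor:C} and Proposition \ref{prop:inc}.
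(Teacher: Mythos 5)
Your proposal is correct and follows essentially the same route as the paper: a one-line contradiction combining the filling-independence of $\rSD$ for the ADC boundary $Y_2$ (Corollary \ref{cor:C}/Theorem \ref{thm:ind}) with the monotonicity $\rSD(V_1)\le \rSD(V_1\cup W)$ from the Viterbo transfer (Proposition \ref{prop:inc}), plus the check that $V_1\cup W$ remains topologically simple in the two special cases. The only imprecision is your appeal to ``additivity of $c_1$ under gluing'': vanishing of the restrictions to $V_1$ and $W$ does not by itself force $c_1(V_1\cup W)=0$; one needs, as the paper does, the Mayer--Vietoris sequence together with $H^1(Y_1)=0$ (available here since $Y_1$ is simply connected) to see that $H^2(V_1\cup W)\to H^2(V_1)\oplus H^2(W)$ is injective.
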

\begin{proof}
	The first claim follows from $\rSD$ is independent of topological simple fillings for ADC contact manifolds. The Weinstein case follows from $V_1\cup W$ is Weinstein, hence is automatically topologically simple if dimension is at least $6$. The last case follows from the Mayer–Vietoris sequence that $H^2(W)\oplus H^2(V_1)\to H^2(V_1\cup W)$ is injective by $H^1(Y_1)=0$, hence $c_1(V_1\cup W)=0$ if $c_1(W),c_1(V_1)=0$. 
\end{proof}

\section{Examples and applications}\label{s6}
In this section, we discuss examples admitting $k$-dilations. In particular, we show that there are many examples with $k$-dilation, but not $k-1$-dilation. 
\subsection{Cotangent bundles}
A manifold $Q$ is called rationally-inessential, iff $H_n(Q;\Q)\to H_n(B\pi_1Q;\Q)$ vanishes for the classifying map $Q\to B\pi_1Q$ of the universal cover. In particular, any closed manifold with finite fundamental group is rationally-inessential. 
\begin{proposition}\label{prop:cot}
	If $Q$ is oriented, rationally-inessential, spin manifold, then $T^*Q$ admits a $k$-dilation for some $k$.
\end{proposition}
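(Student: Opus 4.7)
The plan is to translate the question into a topological statement about the free loop space $\mathcal{L}Q$ via Viterbo's isomorphism, and then to use the rational inessentiality hypothesis through the classifying map $\varphi\colon Q\to B\pi_1 Q$ to produce a $u$-torsion primitive of the unit.

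First, since $Q$ is oriented and spin, Viterbo's theorem (with its $S^1$-equivariant refinement due to Abbondandolo--Schwarz, Viterbo, and Salamon--Weber) gives an isomorphism of $\Q[u]$-modules
$$SH^*_{S^1}(T^*Q;\Q) \;\cong\; H^{S^1}_{n-*}(\mathcal{L}Q;\Q),$$
compatible with the tautological exact sequence of \S\ref{s31}, in such a way that the constant-loop inclusion $Q\hookrightarrow \mathcal{L}Q$ carries the unit $1\in SH^0(T^*Q)$ to the fundamental class $[Q]$. By Proposition \ref{prop:order} (equivalently Proposition \ref{prop:equi}), to exhibit a $k$-dilation it suffices to produce a class $x\in H^{S^1}_*(\mathcal{L}Q,Q;\Q)$ whose image under the connecting homomorphism is $[Q]$ and which is annihilated by $u^{k+1}$ for some finite $k$.

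Second, only the contractible loop component $\mathcal{L}_0 Q$ is relevant because constant loops lie there. The classifying map $\varphi\colon Q\to B\pi_1 Q$ induces a commutative square of pairs
$$\xymatrix{ (Q,\,Q) \ar[r]\ar[d] & (\mathcal{L}_0 Q,\, Q) \ar[d]^{\mathcal{L}\varphi} \\ (B\pi_1Q,\, B\pi_1Q) \ar[r] & (\mathcal{L}_0 B\pi_1 Q,\, B\pi_1 Q). }$$
Since $B\pi_1 Q$ is aspherical, the constant-loop inclusion $B\pi_1 Q\hookrightarrow \mathcal{L}_0 B\pi_1 Q$ is a rational homotopy equivalence (each contractible loop in a $K(\pi,1)$ is canonically null-homotopic), so it induces an isomorphism on $S^1$-equivariant rational homology. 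Rational inessentiality is precisely $\varphi_*[Q]=0\in H_n(B\pi_1 Q;\Q)$, and therefore $[Q]$ is killed in $H^{S^1}_n(\mathcal{L}_0 B\pi_1 Q;\Q)$. A diagram chase in the two long exact sequences produces a relative class $x\in H^{S^1}_{n+1}(\mathcal{L}_0 Q,Q;\Q)$ with connecting image $[Q]$.

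Third, to bound the $u$-torsion, observe that $H^{S^1}_*(\mathcal{L}_0 Q,Q;\Q)$ is finitely generated in each degree as a $\Q[u]$-module (because $\mathcal{L}_0 Q$ has rational homology of finite type in each degree; this uses the spin and orientation hypotheses only mildly, to trivialize local systems). The $u$-free part of $H^{S^1}_{n+1}(\mathcal{L}_0 Q,Q;\Q)$ injects, after passing to the Borel construction on $\mathcal{L}_0 B\pi_1 Q$, into an $H^*(B\pi_1 Q;\Q)$-indexed module on which the image of $[Q]$ already vanishes; hence the primitive $x$ must be $u$-torsion, giving the required $k$.

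The main obstacle is step two, namely making the diagram chase rigorous at the chain level so that the primitive produced is honestly $u^{k+1}$-torsion rather than merely existing abstractly; an alternative, perhaps cleaner, route is to replace step two by the Jones--Goodwillie identification of $H^{S^1}_*(\mathcal{L}Q;\Q)$ with the negative cyclic homology of $C_*(\Omega Q)$ (or of $C^*(Q)$ in the simply connected case) and to run the argument inside Connes' long exact sequence, where rational inessentiality directly kills the cyclic class of $[Q]$ after finitely many applications of the Connes differential.
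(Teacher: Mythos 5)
Your step one (the equivariant Viterbo isomorphism, unit $\mapsto$ image of $[Q]$ under the constant-loop inclusion, compatibility with the tautological sequences) is a reasonable reduction, and note that once you have \emph{any} class $x\in SH^*_{S^1,+}(T^*Q)$ with $[\delta^{S^1}_{+,0}](x)=1$, the condition $u^{k+1}x=0$ is automatic (every class in $SH^*_{S^1,+}$, and likewise every class of fixed degree in Borel equivariant loop-space homology, is killed by a power of $u$), so your step three is not where the difficulty lies. The genuine gap is in step two. The commutative square only tells you that the image of $[Q]$ in $H^{S^1}_n(\mathcal{L}_0Q;\Q)$ lies in the kernel of $(\mathcal{L}\varphi)_*\colon H^{S^1}_n(\mathcal{L}_0Q;\Q)\to H^{S^1}_n(\mathcal{L}_0B\pi_1Q;\Q)$; to produce a relative primitive you would need this map to be injective (or at least not to kill the image of $[Q]$ unless that image is already zero), and that is false in general. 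The simply connected case makes this vivid: there $B\pi_1Q$ is a point, rational inessentiality is vacuous, and your chase yields nothing, yet the statement is still nontrivial (it contains, e.g., the existence of a $k$-dilation on $T^*S^n$ and $T^*\CP^n$). So the entire content of the proposition is exactly the vanishing that the diagram chase does not deliver, and the kernel of $(\mathcal{L}\varphi)_*$ cannot be controlled by the elementary means you describe.

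For comparison, the paper does not give a from-scratch argument at all: it deduces the proposition from Li's result (based on Zhao's ``higher dilation''), namely that for rationally inessential oriented spin $Q$ the unit dies in the \emph{completed periodic} symplectic cohomology $\widehat{PSH}(T^*Q)$; since $H^*(W)\to SH^*_{S^1}(W)$ factors through $\widehat{PSH}(W)$, the unit dies in $SH^*_{S^1}(T^*Q)$, which by Remark \ref{rmk:cyclic} and Proposition \ref{prop:order} is equivalent to a $k$-dilation for some $k$. The essential input there is a localization/completion theorem on the loop-space side (Goodwillie--Jones type statements for periodic cyclic homology), which is precisely the ingredient your sketch gestures at in its final sentence but does not supply. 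If you want a direct proof along your lines, you should replace the $B\pi_1Q$ diagram chase by that completed periodic/Tate argument; the classifying-map step alone cannot carry the weight, since rational inessentiality enters only after one has localized away the non-constant loops.
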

By Remark \ref{rmk:cyclic}, Proposition \ref{prop:cot} was proved by Li \cite[Proposition 4.2]{li2019exact} based on Zhao's higher dilation \cite[Definition 4.2.1]{zhao2016periodic}. $W$ admits a higher dilation iff $1$ is zero in the completed periodic symplectic cohomology $\widehat{PSH}(W)$. Then Proposition \ref{prop:cot} follows from that the map $H^*(W)\to SH^*_{S^1}(W)$ factors through $\widehat{PSH}(W)$.   However, by \cite[Lemma 4.2.5]{zhao2016periodic}, higher dilation is equivalent to that $u^{-k}$ is zero in $SH^*_{S^1}(W)$ for all $k$, hence is a stronger property than $k$-dilation.

\begin{remark}
	One can define $S^1$-equivariant symplectic cohomology with local systems. Then if $Q$ is not spin, but satisfy all other conditions in Proposition \ref{prop:cot},  then $T^*Q$ carries a $k$-dilation in a local system \cite{abouzaid2013symplectic}.
\end{remark}
\begin{question}
	For rational inessential manifold $Q$, what is $\rD(T^*Q)$?
\end{question}

\subsection{Brieskorn manifolds}\label{s52}
 Another source of examples comes from links of singularities. In the following, we provide a list of basic examples with $k$-dilation but not $k-1$ dilations. Moreover, by \cite{mclean2016reeb}, the link is ADC if the singularity is terminal. In particular, we have many examples where results from \S \ref{s4} can be applied, see Proposition \ref{prop:ADC}.
\begin{theorem}[Theorem \ref{thm:A}]\label{thm:main}
	Let $W_{k,m}$ denote the Milnor fiber of the singularity $x_0^k+x_1^k+\ldots+x_m^k=0$ for $m\ge k$, then $\rD(W_{k,m})=\rSD(W_{k,m})=k-1$.
\end{theorem}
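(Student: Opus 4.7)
The plan is to model the positive $S^1$-equivariant symplectic cochain complex of $W_{k,m}$ via the Morse-Bott cascades formalism of Remark \ref{rmk:auto}. The boundary $\partial W_{k,m}$ is the link $\Sigma(k,\ldots,k)$ with $m+1$ equal exponents; its Reeb flow, from the standard contact form, is periodic, with every orbit in an $S^1$-family parameterized by the Fermat hypersurface $L = \{x_0^k + \ldots + x_m^k = 0\} \subset \CP^m$. The multiplicity-$N$ family $\gamma_N$ has Robbin--Salamon mean index $\bar\mu(\gamma_N) = 2N(m+1-k)/k > 0$ for $m \ge k$, which also recovers the ADC property. After choosing a Morse function $f_L$ on $L$, the cascades complex is generated by pairs $(N,p)$ with $N \ge 1$ and $p \in \cC(f_L)$, with degree governed by $\bar\mu(\gamma_N)$ and the Morse index of $p$, and the $u$-adic filtration interacts with $N$ systematically.

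For the upper bound $\rD(W_{k,m}) \le k-1$, I would exhibit a witness class $x = x_0 + u^{-1}x_1 + \ldots + u^{-(k-2)}x_{k-2}$ in the $(k-1)$-truncated positive equivariant symplectic cohomology, where each $x_j$ is built from the multiplicity-$(j+1)$ family $\gamma_{j+1}$ paired with the top-degree generator of $H^*(L)$. The geometric source is that a generic line in $\CP^m$ meets $L$ in $k$ distinct points, producing a broken holomorphic building of depth $k-1$ in the $u$-adic filtration. A direct cascade computation should yield $u^{k-1}x = 0$ (no $\gamma_N$ with $N \ge k$ is recruited) together with $[\delta^{S^1}_{+,0}](x) = 1 \in H^0(W_{k,m})$. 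An alternative route uses the Lefschetz fibration $W_{k,m} \to \C$ (projection onto one coordinate) with fiber $W_{k,m-1}$ and $k$ critical values, combined with Proposition \ref{prop:lefchetz} to propagate the $(k-1)$-dilation whenever the parity/dimension hypothesis is met, reducing the argument to a finite list of base cases.

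For the lower bound $\rSD(W_{k,m}) \ge k-1$, I would argue by grading: Proposition \ref{prop:order} shows a $(k-2)$-semi-dilation would require a class $x \in SH^{-1}_{S^1,+}(W_{k,m})$ with $u^{k-1} x = 0$ and $\pi_0 \circ [\delta^{S^1}_{+,0}](x) = 1$. The mean-index formula forces every $u^{-j}$-component of $x$ to be supported on $\gamma_N$ with $N \le k-1$, and a careful degree count in $H^*(L)$ together with the Morse-Bott-BV action on the cascades complex shows that such classes cannot reach the unit. This is consistent with Theorem \ref{thm:unirule}: a $(k-2)$-semi-dilation would demand an $(\A^1,\Lambda)$-uniruling by rational curves whose contact multiplicity at infinity is $<k$, contradicting the fact that a line in $\CP^m$ meets $L$ in exactly $k$ points.

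The main obstacle is verifying the structural maps $\Delta^{j}_{+,0}$ at the cochain level precisely enough to distinguish the $(k-1)$-st page from the $(k-2)$-nd page of the $u$-adic spectral sequence. Via the cascades formalism these reduce to counts of holomorphic spheres in $\C^{m+1}$ with prescribed tangency to $L$ and intersection constraints against Morse cycles on $L$; the equal-exponent symmetry and the explicit algebro-geometric description of lines through $L$ should make these counts tractable, but tracking the $u$-adic filtration to the required depth is the delicate point.
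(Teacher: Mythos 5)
Your overall framework (a Morse--Bott/cascades model driven by the Fermat hypersurface at infinity) is in the right family, but both of your key steps are asserted rather than proved, and the upper-bound witness you propose cannot work as stated. If $x=\sum_j u^{-j}x_j$ is to satisfy $[\delta^{S^1}_{+,0}](x)=1$, then $\deg x_j=2j-1$, increasing in $j$; but a generator drawn from the multiplicity-$(j+1)$ orbit family paired with the top-degree class of $H^*(L)$ has degree that \emph{decreases} as the multiplicity grows (each unit of multiplicity shifts degrees by $-2(m+1-k)\le -2$), so your proposed components are not even in the correct degrees. The paper's witness is structurally different: it is supported entirely on the \emph{first} period (the Hamiltonian is chosen to see only period-one orbits, so no higher multiplicities or multi-level cascades enter), and the $u$-powers are paired with Morse cocycles on the divisor representing $[H^{m-k+i}]$, $0\le i\le k-1$, of varying degree. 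Making this work requires the ingredients your outline leaves as ``should yield'': the Diogo--Lisi identification of $\delta^0$ with the $c_1$-term \eqref{eqn:d1} and the two-point Gromov--Witten count \eqref{eqn:d2}, the computations $\text{GW}_{0,2,A}([pt],[H^{m+1-k}])=k!$ and the vanishing \eqref{eqn:GW2}, and the determination of the $S^1$-structure $\delta^1(\check p)=\hat p$, $\delta^{\ge 2}=0$ (Proposition \ref{prop:S1}); only then does $\delta^{S^1}\bigl(\sum_{i=0}^{k-1}(-1)^i\tfrac{1}{k!}\,\check p_{m-k+i}u^{-i}\bigr)=e$ give the $(k-1)$-dilation. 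Your alternative Lefschetz route does not remove this burden either, since the base case (e.g.\ $W_{k,k}$) would still need exactly such a computation.

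The lower bound is also a genuine gap. A grading/mean-index count on $W_{k,m}$ itself does not rule out a $(k-2)$-semi-dilation for small $m$: for $m$ close to $k$ the positive symplectic cohomology is not sparse in the relevant degrees (already for $k=m=3$, excluding an honest dilation required Li's separate algebraic argument, not a degree count). The paper's route is different: for $m\gg k$ the Morse--Bott spectral sequence gives $SH^*_+(W_{k,m})=0$ for $0<*<2k-3$, which excludes $(k-2)$-semi-dilations there by degree reasons, and this nonexistence is then propagated down to every $m\ge k$ using the Lefschetz fibration with fiber $W_{k,m}$ inside $W_{k,m+1}$ together with Proposition \ref{prop:lefchetz} (a semi-dilation on the fiber would force one on the total space, eventually contradicting the large-$m$ vanishing). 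Finally, your appeal to Theorem \ref{thm:unirule} is only heuristic: it gives uniruledness but no quantitative relation between the order of semi-dilation and the tangency or multiplicity of the ruling curves at infinity, so it cannot substitute for this step.
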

\begin{remark}
	When $k=m=1$, the Milnor fiber is $\C$, which has a $0$-dilation. When $k=m=2$, the Milnor fiber is $T^*S^2$, which admits a $1$-dilation \cite{seidel2012symplectic}. When $k=m=3$, the Milnor fiber is considered in \cite[Theorem 2.2]{li2019exact} and proven to have a cyclic dilation with $h=1$ but not a dilation by an algebraic method. Since $SH^*_+(W_{3,3})$ is supported in grading $\le 3$. It must be a $2$-dilation. 
\end{remark}	
We will prove Theorem \ref{thm:main} by a direct computation of the $S^1$-cochain complex truncated by some action threshold. Let $X_{k,m}$ denote the projective variety $x_0^k+\ldots+x_m^k=x_{m+1}^k\subset \CP^{m+1}$, then $W_{k,m}\subset X_{k,m}$ is the complement of the divisor $\Sigma_{k,m}$ at infinity. Symplectic cohomology of complements of smooth divisors was studied by Diogo-Lisi \cite{diogo2019symplectic}, we will follow their Morse-Bott construction and give a brief review of their setup for our situation, which is a much simpler case than the general situation.

\subsubsection{Diogo-Lisi's model for symplectic cohomology}
The contact boundary $Y_{k,m}$ of $W_{k,m}$ is given the Boothby-Wang contact structure on the prequantization bundle over $\Sigma_{k,m}$. Let $\pi:Y_{k.m}\to \Sigma_{k,m}$ denote the projection. Hence the Reeb flow is the $S^1$ action with periods $2\pi \cdot \N$. We fix two Morse functions $f_{W}$ and $f_{\Sigma}$ on $W$ and $\Sigma$ respectively, and assume $\partial_r f_{W}>0$ near $Y_{k,m}$. If $m>3$, $W_{k,m},\Sigma_{k,m}$ are both simply connected of dimension at least $6$ and $H_*(W_{k,m}),H_*(\Sigma_{k,m})$ have no torsion, then we can assume both $f_{W}$ and $f_{\Sigma}$ are perfect Morse functions, i.e.\ every critical point represents a homology class. Since  $\Sigma_{3.3}$ is a smooth cubic surface, which is $\CP^2$ blown up at six points, $\Sigma_{2,3}=\CP^1\times \CP^1$ and $\Sigma_{2,2}=\CP^1$, we can assume the same thing for $m\le 3$. We fix a Morse function $f_Y$, such that critical points of $f_Y$ are in the $S^1$ fibers of critical points of $f_{\Sigma}$, and over each critical point of $f_{\Sigma}$ there are two critical points of $f_Y$. This can be done by perturbing $\pi^*f_{\Sigma}$. Let $Z_{\Sigma}$ be a gradient-like vector field on $\Sigma$ for $f_{\Sigma}$, such that the Morse-Smale condition is satisfied, i.e.\ the stable and unstable manifolds of $-Z_{\Sigma}$\footnote{We follow the notation in \cite{diogo2019symplectic}, i.e.\ $W^s/W^u$ is the stable/unstable manifold of $\bm{-Z}$.}
$$W^s_{\Sigma}(p):=\left\{q\in \Sigma_{k,m}| \displaystyle \lim_{t\to \infty}\phi^{-t}(q)=p \right\},\quad W^u_{\Sigma}(p):=\left\{q\in \Sigma_{k,m}| \displaystyle \lim_{t\to -\infty}\phi^{-t}(q)=p \right\}$$
intersect transversely, where $\phi^t$ is the time $t$ flow of $Z_{\Sigma}$. Let $\pi^*Z_{\Sigma}$ denote the lift of $Z_{\Sigma}$ to $Y_{k,m}$ in the contact distribution. We pick a gradient-like vector field $Z_Y$ of $f_Y$, such that $(f_Y,Z_Y)$ is a Morse-Smale pair and $Z_Y-\pi^*Z_{\Sigma}$ is vertical, i.e.\ flow lines of $Z_Y$ projects to flow lines of $Z_{\Sigma}$. We also pick a Morse-Smale pair $(f_W,Z_W)$ on $W_{k,m}$.

We fix a Hamiltonian $H_1$, which is $0$ on $W_{k,m}$ and is $h(r)$ on $Y_{k,m}\times (1,\infty)$ with $h'(r)>0$ and $h'(r)=2\pi+0.1$ for $r\gg 0$. Moreover, $h''(r)>0$ when $h'(r)=2\pi$. Therefore the non-constant periodic orbits of $X_{H_1}$ are $Y_{k,m}$ family of orbits contained in the level $r_0$, where $h'(r_0)=2\pi$, as it is parameterized by the initial point of the orbit on the hypersurface $r=r_0$. Then the cochain complex for the Hamiltonian-Floer cochain complex $C(H_1,f_Y, f_W)$ of $H_1$ is generated by $\cC(f_W)\cup \cC(f_{Y})$. Let $p\in \cC(f_{\Sigma})$, we use $\hat{p}\in \cC(f_Y)$ to denote the maximum in the fiber and $\check{p}\in \cC(f_Y)$ to denote the minimum in the fiber. Then the gradings are defined as follows.
\begin{equation}\label{eqn:ind}
|\check{p}|=2k-\ind(p)-3, \quad |\hat{p}|=2k-\ind(p)-4, \quad \forall p \in \cC(f_\Sigma),
\end{equation}
$$|p|=\ind(p), \quad \forall p \in \cC(f_W).$$
\begin{remark}\label{rmk:mono}
	Since we will be using cohomological convention, our grading is $n$ minus the grading in \cite[(3.4)]{diogo2019symplectic}, where their $\frac{\tau_X-K}{K}$ is $m+1-k\ge 1$ in our case. In particular, the monotonicity assumption in \cite{diogo2019symplectic} holds. Other notations in this section will be compatible in \cite{diogo2019symplectic}, in particular, the differential $\delta^0(p)=\sum \#\cM(p,q)q$ for moduli spaces $\cM(p,q)$ defined below. \textbf{We would like to warn that $\bm{\cM(p,q)}$ in this section was previously denoted by $\bm{\cM(q,p)}$ in \S \ref{s3}, \ref{s4} to be consistent with \cite{filling}.}
\end{remark}
Let $J$ be an $S^1$-independent admissible almost complex structure (which is sufficient for transversality as explained in Remark \ref{rmk:S^1}). The differentials for $C(H_1)$ is defined by counting the compactified moduli space $\cM(p,q)$ of the following two types of moduli spaces for $|q|-|p|=1$ for generic choice of $J,f$ and $f_Y$.
\begin{enumerate}
	\item For $p,q\in \cC(f_Y)$,
	\begin{equation}\label{eqn:cas3}M(p,q):=\left\{\gamma:\R \to Y_{k,m}|\gamma'-Z_{Y}=0,\displaystyle \lim_{s\to -\infty}\gamma=q,\lim_{s\to \infty}\gamma=p \right\}/\R.\end{equation}
	\item For $p\in \cC(f_Y),q\in \cC(f_W)$.
	\begin{equation}\label{eqn:cas4}
	M(p,q)=\left\{\begin{array}{l}
	u:\C \to \widehat{W}_{k,m},\\
	\gamma_1:\R_-\to W_{k,m},\\
	\gamma_2: \R_+ \to Y_{k,m} \times \{r_0\}.
	\end{array} \left| \begin{array}{l}
	\partial_su+J(\partial_tu-X_{H_1})=0,\\
	\gamma'_1+Z_W=0,\gamma'_2-Z_Y=0,\\
	\displaystyle\lim_{s\to -\infty } \gamma_1=q, \gamma_1(0)=u(0),\\
	\displaystyle \lim_{s\to \infty} \gamma_2=p, \lim_{s\to \infty}u(s,0)=\gamma_2(0).
	\end{array}
	\right. \right\}/\R.
	\end{equation}
\end{enumerate}
More precisely, in \eqref{eqn:cas4}, we need to consider those $u$ with finite energy, then $\lim_{s\to\infty} u$ converges to a Hamiltonian orbit. In particular, $\lim_{s\to \infty} u(s,0)=\gamma_2(0)$ is equivalent to that $\lim_{s\to \infty} u$ is the Hamiltonian orbit $\gamma$ such that $\gamma(0)=\gamma_2(0)$. We can also use the latter as the constraint, then it follows that $u$ has finite energy. The compactification $\cM_{p,q}$ involves adding broken Morse flow lines of $f_W,f_Y$ to $M_{p,q}$ at both ends.

In general,  \eqref{eqn:cas3} and \eqref{eqn:cas4} are the part of the differential of the presplit Floer complex in \cite{diogo2019symplectic}, which can be defined for any exact domain with a Morse-Bott contact form if we allow $J$ depending on $S^1$ following the setup and proofs in \cite{bourgeois2009symplectic}. Construction of such type first appeared in \cite{bourgeois2002morse} are called the cascades construction whose analytical foundation was discussed in detail in \cite{bourgeois2009symplectic}. The reason why we only need to consider \eqref{eqn:cas3} and \eqref{eqn:cas4} is that $C(H_1)$ only sees the first period, in particular, there is no multi-level cascades. 

\begin{remark}\label{rmk:S^1}
There are two ways to see why we choose $J$ to be independent of $S^1$: (1) Since we only consider simple orbits here, we have all the necessary some-injectivity, in particular \cite[Proposition 3.5 (i)]{bourgeois2009symplectic} can be applied; (2) Since the monotonicity assumption holds in our case by Remark \ref{rmk:mono}, as explained in \cite{diogo2019symplectic}, after neck-stretching, the moduli spaces \eqref{eqn:cas3}, \eqref{eqn:cas4} can be identified with some SFT moduli spaces that are cut out transversely with $S^1$-independent $J$. Hence by the openness of transversality, we can pick a sufficiently stretched $S^1$-independent $J$ such that \eqref{eqn:cas3}, \eqref{eqn:cas4} are cut out transversely when the expected dimension is at most $0$. 
\end{remark}

By neck-stretching along $Y_{k,m}$, under the monotonicity assumption, which in our case is equivalent to $m+1-k>0$, Diogo-Lisi \cite{diogo2019symplectic} showed that there exists an $S^1$-independent almost complex structure $J$, such that transversality for moduli spaces  \eqref{eqn:cas3}, \eqref{eqn:cas4} up to dimension zero holds and they can be identified with another SFT moduli space after fully stretching. In particular, by \cite[Theorem 9.1, Lemma 9.4]{diogo2019symplectic}, the only possible nontrivial differential on $C(H_1,f_Y,f_W)$ using such $J$ is given by
\begin{eqnarray}
\la \delta^0 \check{p} , \hat{q}\ra & = & \la c_1(Y_{k,m}), (W^s_{\Sigma}(q)\cap W^u_\Sigma(p))\ra=[H]\cap  [W^s_{\Sigma}(q)]\cap [W^u_\Sigma(p)] , \quad p,q \in \cC(f_{\Sigma})\label{eqn:d1}\\
\la \delta^0 \check{p}, x \ra & = &  \text{GW}_{0,2,A}([W^u_W(x)],[W^u_\Sigma(p)]), \quad p\in \cC(f_\Sigma), x \in \cC(f_W), \label{eqn:d2}
\end{eqnarray}
where $A\in H_2(X_{k,m};\Z)$ is the element that is mapped to the generator in $H^2(\CP^{m+1})$\footnote{It is possible that there are several $A$ that is mapped to the generator of $H_2(\CP^{m+1})$, e.g.\ $X_{2,2}$, then $GW_A$ should be interpreted as the sum of Gromov-Witten invariants for each such $A$.} and $c_1(Y_{k,m})\in H^2(\Sigma_{k,m},\Z)$ is the first Chern class of the $S^1$-bundle $Y_{k,m}$, $[H]$ is the codimension-$2$ homology class represented by the hyperplane class, i.e.\ the pushforward of the fundamental class of $H\cap \Sigma_{k,m}$ in $H_*(\Sigma_{k,m})$ for a generic hyperplane $H\subset \CP^{m+1}$, which is also the Poincar\'e dual of the first Chern class of the $S^1$-bundle $Y_{k,m}\to \Sigma_{k,m}$, and $\text{GW}_{0,2,A}([W^u_W(x)],[W^u_\Sigma(p)])$ is the 2-pointed Gromov-Witten invariants computed in $X_{k,m}$. 

In our case, there are only two Gromov-Witten invariants with expected dimension $0$, hence could be non-trivial, namely $\text{GW}_{0,2,A}([pt],[H^{m+1-k}])$, and $\text{GW}_{0,2,A}([S^m],[H^{\frac{3m}{2}+1-k}])$ when $m$ even and $m\le 2(k-1)$, where $[H^k]$ is the codimension-$2k$ homology class represented by intersection of $k$ generic hyperplanes with $X_{k,m}$ and $[S^m]$ is one of the sphere generators of $H_m(W_{k,m})$ as $W_{k,m}$ is homotopy equivalent the wedge product of $(k-1)^{m+1}$ spheres of dimension $m$. By \cite[(2.2)]{MR1484335}  and \cite[Proposition 1]{collino1999structure}, we have
\begin{eqnarray}
\text{GW}_{0,2,A}([pt],[H^{m+1-k}]) & = & k! \label{eqn:GW1}\\
\text{GW}_{0,2,A}([S^m],[H^{\frac{3m}{2}+1-k}]) & = & 0 \label{eqn:GW2}
\end{eqnarray}

\begin{remark}\label{rmk:primitive}
	To relate $[H^i]$ back to the unstable manifolds of critical points $p\in \cC(f_{\Sigma})$, first note that $[\Sigma_{k,m}]=[H]$ in $X_{k,m}$. Therefore $[H^{i+1}]$ in $X_{k,m}$ equals to the pushforward of $[H^i]$ in $\Sigma_{k,m}$, i.e.\ the intersection of $i$ generic hyperplanes in $\CP^m$ and $\Sigma_{k,m}\subset \CP^{m}$. Next we will determine the relation between $[W^u_\Sigma(p)]$ and $[H^i]$ in $\Sigma_{k,m}$.  Note that $[W^u_\Sigma(p)]$ is primitive\footnote{Not primitive classes in the hard Lefschetz theorem.} in $H_*(\Sigma_{k,m})$, i.e.\ $[W^u_\Sigma(p)]$ can not be written as a non-trivial multiple of another integral class. We claim that $[H^{n}]$ is a primitive class in homology if $n<\frac{m-1}{2}$ and is $k$ multiple of a primitive class if $n>\frac{m-1}{2}$. First of all, $H^*(\CP^m)\to H^*(\Sigma_{k,m})$ is an isomorphism for $*< m-1$ by the Lefschetz hyperplane theorem. Since the Poincar\'e dual of $[H^n]$ in $H^{2n}(\Sigma_{k,m})$ is the restriction of the positive generator of $H^{2n}(\CP^m)$ and Poincar\'e  duality preserves primitive classes, we have $[H^n]$ is primitive in $H_*(\Sigma)$ if $2n<m-1$. Now since $[H^n]\cap [H^{m-1-n}] = k[pt]$, we have $[H^{n}]$ is $k$-multiple of a primitive class if $n>\frac{m-1}{2}$. In particular, $[H^n]$ is represented by $[W^u_{\Sigma}(p)]$ for the unique $p$ with index $2m-2-2n$ for $2n<m-1$ and is represented by $k[W^u_{\Sigma}(p)]$ for the unique $p$ with index $2m-2-2n$ when $2n>m-1$. In the middle case $2n=m-1$, if $k$ is not divided by a square. then $[H^{n}]$ is primitive, however there are multiple critical points with index $m-1$.
\end{remark}

\subsubsection{Cascades model for $S^1$-cochain complexes}
In order to finish the computation, we need to build the $S^1$-structure compatible with $\delta^0$, i.e.\ using the cascades version of \eqref{eqn:M1} and \eqref{eqn:M2}. To motivate the moduli spaces that we need to consider for $\delta^i$ in our special case, which is oversimplified, we will review a more general setup as in \cite{bourgeois2009symplectic}. 

We assume the contact boundary of $W$ is Morse-Bott non-degenerate and  $H:\widehat{W}\to \R$ is an autonomous Hamiltonian such that all Hamiltonian orbits come in Morse-Bott non-degenerate families $\Sigma_1,\Sigma_2,\ldots$ and $\cA_H(\Sigma_1)>\cA_H(\Sigma_2)>\ldots$. Let $J$ be an $S^1$-dependent compatible almost complex structure and $f_i$ Morse function on $\Sigma_i$ with gradient like vector field $Z_i$ such that the Morse-Smale condition holds. Then the cascades Floer cochain complex is generated by critical points of $f_i$ with differential defined by counting rigid cascades as follows.
\begin{enumerate}
	\item A $0$-cascades from $p$ to $q$ is just $\gamma:\R\to \Sigma_i$ modulo the $\R$ translation such that $\gamma'-Z_i=0$, $\displaystyle \lim_{s\to -\infty}=q, \lim_{s\to +\infty}=p$.
	\item A $1$-cascades from $p\in \cC(f_i)$ to $q\in \cC(f_j)$ consists of 
	\begin{enumerate}
		\item $u:\R\times S^1\to \widehat{W}$  modulo the $\R$ translation such that $\partial_su+J_t(\partial_tu-X_H)=0$,
		\item $\gamma_1:\R_+\to \Sigma_i, \gamma_2: \R_- \to \Sigma_j$ such that $\gamma'_1-Z_i=0,\gamma'_2-Z_j=0$,
		\item $\displaystyle \lim_{s\to \infty} u=\gamma_1(0),\lim_{s\to -\infty } u=\gamma_2(0),\lim_{s\to \infty}  \gamma_1=p,\lim_{s\to -\infty } \gamma_2=q$.
	\end{enumerate}
    Then by action reasons, we must have $j<i$.
	\item A $2$-cascades from $p\in \cC(f_i)$ to $q\in \cC(f_j)$ consists of 
	\begin{enumerate}
		\item $u_1,u_2:\R\times S^1\to \widehat{W}$ modulo $\R$ translations such that $\partial_su+J_t(\partial_tu-X_H)=0$,
		\item an index $k$ and a positive number $l$,
		\item $\gamma_1:\R_+\to \Sigma_i,\gamma_2:[-l,0]\to \Sigma_k,\gamma_3:\R_-\to \Sigma_j$ such that $\gamma'_1-Z_i=0,\gamma'_2-Z_k=0,\gamma'_3-Z_j=0$,
		\item  $\displaystyle \lim_{s\to \infty} u_1=\gamma_1(0),\lim_{s\to -\infty } u_1=\gamma_2(0),\lim_{s\to \infty} u_2=\gamma_2(-l), \lim_{s\to -\infty} u_2=\gamma_3(0), \lim_{s\to \infty} \gamma_1=p,\lim_{s\to -\infty } \gamma_3=q$.
	\end{enumerate}
    Then by action reasons, we must have $j<k<i$, see the left of Figure \ref{fig:cascades}.
    \item In general, a $k$-cascades consists of $k$ Floer cylinders modulo translations and $k-1$ finite gradient flow lines connecting the asymptotics of Floer cylinders in between and two half infinite gradient flows at two ends asymptotic to the critical points.
\end{enumerate}

\begin{figure}[H]
	\begin{tikzpicture}[scale=1.50]
	\draw (0,0) to [out=90, in = 180] (0.5, 0.25) to [out=0, in=90] (1,0) to [out=270, in=0] (0.5,-0.25)
	to [out = 180, in=270] (0,0) to (0,-1);
	\draw[dotted] (0,-1) to  [out=90, in = 180] (0.5, -0.75) to [out=0, in=90] (1,-1);
	\draw (1,-1) to [out=270, in=0] (0.5,-1.25) to [out = 180, in=270] (0,-1);
	\draw (1,0) to (1,-1);
	\draw[-<] (1,-1) to (1.25,-1);
	\node at (1.25,-0.8) {$Z_k$};
	\draw (1.25,-1) to (1.5,-1);
	\draw (1.5,-1) to [out=90, in = 180] (2, -0.75) to [out=0, in=90] (2.5,-1) to [out=270, in=0] (2,-1.25)
	to [out = 180, in=270] (1.5,-1) to (1.5,-2);
	\draw[dotted] (1.5,-2) to  [out=90, in = 180] (2, -1.75) to [out=0, in=90] (2.5,-2);
	\draw (2.5,-2) to [out=270, in=0] (2,-2.25) to [out = 180, in=270] (1.5,-2);
	\draw (2.5,-1) to (2.5,-2);
	\draw[-<] (2.5,-2) to (2.75,-2);
	\node at (2.75,-1.8) {$Z_j$};
	\draw (2.75,-2) to (3,-2);
	\draw[-<] (-0.5,0) to (-0.25,0);
	\node at (-0.25,0.2) {$Z_i$};
	\draw (-0.25,0) to (0,0);
	\node at (0.5,-0.5) {$u_1$};
	\node at (2, -1.5) {$u_2$};
	\end{tikzpicture}
	\quad 
	\begin{tikzpicture}[scale=1.25]
	\draw (0,0) to [out=90, in = 180] (0.5, 0.25) to [out=0, in=90] (1,0) to [out=270, in=0] (0.5,-0.25)
	to [out = 180, in=270] (0,0) to (0,-1);
	\draw[dotted] (0,-1) to  [out=90, in = 180] (0.5, -0.75) to [out=0, in=90] (1,-1);
	\draw (1,-1) to [out=270, in=0] (0.5,-1.25) to [out = 180, in=270] (0,-1);
	\draw (1,0) to (1,-1);
	\draw[-<] (1,-1) to (1.25,-1);
	\node at (1.25, -0.8) {$Z_k$};
	\draw (1.25,-1) to (1.5,-1);
	\draw (1.5,-1) to [out=90, in = 180] (2, -0.75) to [out=0, in=90] (2.5,-1) to [out=270, in=0] (2,-1.25)
	to [out = 180, in=270] (1.5,-1) to (1.5,-2);
	\draw (2.5,-2) to [out=270, in=0] (2,-2.5) to [out = 180, in=270] (1.5,-2);
	\draw (2.5,-1) to (2.5,-2);
	\draw[-<] (-0.5,0) to (-0.25,0);
	\node at (-0.25,0.2) {$Z_i$};
	\draw (-0.25,0) to (0,0);
	\draw[->] (2,-2.5) to (2.5,-2.5);
	\draw (2.5,-2.5) to (3,-2.5);
	\node at (2.5, -2.8) {$Z_W$};  
	\node at (0.5,-0.5) {$u_1$};
	\node at (2, -1.5) {$u_2$};
	\end{tikzpicture}
	\caption{$2$ level cascades}\label{fig:cascades}
\end{figure}
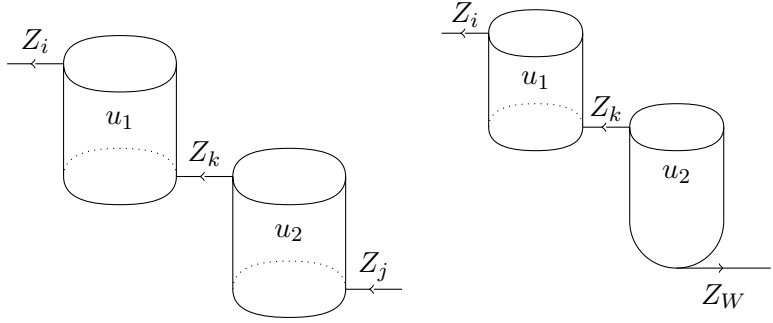

Let $M_{p,q}$ denote the moduli space of cascades (with any number of levels) from $p$ to $q$, then $M_{p,q}$ admits a compactification $\cM_{p,q}$ by adding broken cascades, where a cascades can break when one of the $u$-components breaks or one of the $\gamma$-components breaks or one of the finite lengths $l$ converges to $0$. The breaking of a $u$-component will be identified with a length converges to $0$. Using generic choices of $J,Z_i$, $\cM_{p,q}$ is cut out transversely and the zero dimensional moduli spaces $\cM_{p,q}= M_{p,q}$, whose count defines a differential.

When $H=0$ on $W$ as in \S \ref{s32}, the constant orbits in $W$ do not form a Morse-Bott family (the Morse-Bott non-degeneracy conditions fails along $\partial W$). However, since such orbits have maximal symplectic action, the failure along $\partial W$ can not been seen by Floer cylinders \cite[Proposition 2.6]{filling}. Then by picking an admissible Morse function $f_W$ on $W$ and treat $W$ as a Morse-Bott family of orbits, we can define a cascades model\footnote{But use $-Z_W$, as we are following the convention in \cite[Definition 4.7]{diogo2019symplectic}.} for the Floer cochain complex using such $H$, see right of Figure \ref{fig:cascades} for the cascades whose bottom cascades land in $W$. In particular, \eqref{eqn:cas3} is $0$-cascades and \eqref{eqn:cas4} is $1$-cascades and they are the only cascades we need to consider for $H_1$, since we only have two Morse-Bott families of orbits for $H_1$. The equivalence between the cascades model and the usual construction can be seen from the degeneration and gluing analysis in \cite{bourgeois2009symplectic} or a cascade continuation map, see e.g.\ \cite[Proposition 5.1]{diogo2019symplectic}.

Next we will adapt the cascades construction to the $S^1$-structure. We define $H_N:S^1\times S^{2N+1}\times \widehat{W}\to \R$ independent of $S^1\times S^{2N+1}$ and equal to $H$, which satisfies all the conditions in Definition \ref{def:ham} except that $H_N$ over critical points is not non-degenerate but only Morse-Bott non-degenerate. Let $J$ be a family of compatible almost complex structure as in Definition \ref{def:ham}. We also need to consider Morse-Smale pairs $(f^z_i,Z^z_i)$ equivariantly parametrized critical point $z$ of $\widetilde{f_N}$. For our purpose, we can simply consider $f^{\widetilde{z}_k}_i=f_i,Z^{\widetilde{z}_k}_i=Z_i$ and define  $f^{\theta \cdot \widetilde{z}_k}_i=\theta_*f_i, Z^{\theta \cdot \widetilde{z}_k}_i=\theta_*Z_i$.

Then we introduce the cascades for $\delta^r$ for $r\ge 1$ as follows.
\begin{enumerate}
	\item The $1$-cascades space for $\delta^r$ from $p\in \cC(f_i)$ to $q\in \cC(f_j)$ is the following
	\begin{equation}\label{cas:r1}
	  M_{r,1}(p,q):= \left\{
	  \begin{array}{l}
	  u:\R\times S^1\to \widehat{W},\\
	  z:\R \to S^{2N+1},\\
	  \gamma_1:\R_+\to \Sigma_i,\\
	  \gamma_2:\R_- \to \Sigma_j
	  \end{array}\left|
	  \begin{array}{l}
	  \partial_s u+J_N(t,z(s))(\partial_t u-X_H) = 0,\\
	  z'+\nabla_{\widetilde{g}_N}\widetilde{f}_N = 0,\\
	  \displaystyle z_-:=\lim_{s\to -\infty} z \in S^1\cdot \widetilde{z}_r,  z_+:=\lim_{s\to \infty} z \in S^1\cdot \widetilde{z}_0,\\ 
	  \theta_+\cdot \widetilde{z}_0=z_+,  \theta_-\cdot \widetilde{z}_r=z_- \\
	  \gamma'_1-(\theta_+)_*Z_i=0,\gamma'_2-(\theta_-)_*Z_j = 0,\\
	  \displaystyle \lim_{s\to \infty} u = \gamma_1(0), \lim_{s\to -\infty} u = \gamma_2(0),\\
	  \displaystyle \lim_{s\to \infty} \gamma_1 = \theta_+\cdot p, 
	 \displaystyle \lim_{s\to \infty} \gamma_2 = \theta_-\cdot q.
	  \end{array}
	  \right.   \right\}/\R\times S^1
	\end{equation}
	where $(s,t)\in \R\times S^1$ acts on $(u,z)$ by $(u(\cdot + s, \cdot + t), t\cdot z(\cdot + s))$ and $\theta$ acts on $\Sigma$ by the reperamretrization in $S^1$. Again by action reasons we must have $j\le i$. Note that now we can have $i=j$, then $u$ is necessarily a trivial cylinder by action reasons, but the $\R$ the action is still free due to the extra flow line $z$. 
	\item The $2$-cascades space $M_{r,2}(p,q)$ for $\delta^r$ from $p\in \cC(f_i)$ to $q\in \cC(f_j)$ is the following
	\begin{equation}\label{cas:r23}
	\left\{
	\begin{array}{l}
	u_1,u_2:\R\times S^1\to \widehat{W},\\
	z_1,z_2:\R \to S^{2N+1},\\
	\gamma_1:\R_+\to \Sigma_i,\\
	\gamma_2:[-l,0] \to \Sigma_k,\\
	\gamma_3:\R_-\to \Sigma_j
	\end{array}\left|
	\begin{array}{l}
	\partial_s u_1+J_N(t,z_1(s))(\partial_t u_1-X_H) = 0,\\
	\partial_s u_2+J_N(t,z_2(s))(\partial_t u_2-X_H)=0,\\
	z'_1+\nabla_{\widetilde{g}_N}\widetilde{f}_N = 0, z'_2+\nabla_{\widetilde{g}_N}\widetilde{f}_N=0\\
	\displaystyle z_0:=\lim_{s\to -\infty} z_1 = \lim_{s\to -\infty} z_2 \in S^1\cdot \widetilde{z}_v, 0\le v \le r,\\
	\displaystyle z_+:=\lim_{s\to \infty} z_1 \in S^1\cdot \widetilde{z}_0,z_-:=\lim_{s\to -\infty} z_2 \in S^1\cdot \widetilde{z}_0\\ 
	\theta_+\cdot \widetilde{z}_0=z_+, \theta_-\cdot \widetilde{z}_r=z_-, \theta_0\cdot \widetilde{z}_v=z_0,\\
	\gamma'_1-(\theta_+)_*Z_i=0, \gamma'_2-(\theta_0)_*Z_k = 0, \gamma'_3-(\theta_-)_*Z_j=0\\
	\displaystyle \lim_{s\to \infty} u_1 = \gamma_1(0), \lim_{s\to -\infty} u_1 = \gamma_2(0),\\
	\displaystyle \lim_{s\to \infty} u_2 = \gamma_1(-l), \lim_{s\to -\infty} u_2 = \gamma_3(0), \\
	\displaystyle \lim_{s\to \infty} \gamma_1 = \theta_+\cdot p, 
	\displaystyle \lim_{s\to \infty} \gamma_3 = \theta_-\cdot q.
	\end{array}
	\right.   \right\}/\R^2\times S^1
	\end{equation}
	where $(s_1,s_2,t)\in \R^2 \times S^1$ acts on $(u_1,z_1, u_2,z_2)$ by $(u_1(\cdot + s_1, \cdot + t), t\cdot z_1(\cdot + s_1), u_2(\cdot + s_2, \cdot + t), t\cdot z_2(\cdot + s_2))$. In particular, when $v=0$, $u_1$ solves the parameterized Floer equation and $u_2$ solves the Floer equation over $z_-$ and the situation is reversed when $v=r$. 
		
	\item The $k$-cascades moduli space for $\delta^r$ consists of (1) $k$ $(u,z)$-components and if a $u$-component is trivial cylinder then the corresponding $z$-component must be non-trivial so that the $\R$ action is non-trivial, (2) $k-1$ finite flow lines of $Z^*_*$ in the middle, (3) two half infinite flow lines of $Z^{z_+}_i=(\theta_+)_*Z_i,Z^{z_-}_j=(\theta_-)_*Z_j$, and they match by the obvious asymptotic conditions. 
\end{enumerate}

Let $M_r(p,q):=\cup_{i\ge 1} M_{r,i}(p,q)$ be the cascades space for $\delta^r, r\ge 1$. Then $M_r(p,q)$ admits a compactification $\cM_r(p,q)$ by adding broken cascades, where a cascades can break when one of the $(u,z)$-components breaks,  or one of the $\gamma$-components breaks (including one of the finite lengths $l$ goes to $\infty$) or one of the finite lengths $l$ converges to $0$. And we will identify the $l=0$ case with a breaking from a $(u,z)$-component as before. And for generic choice of $J_N,f_i,Z_i$, $\cM_r(p,q)$ is cut out transversely up to dimension zero and the count of zero dimensional moduli spaces $\cM_r(p,q)=M_r(p,q)$  defines an $S^1$-structure. One can similarly adapt the cascades construction to the case where $H=0$ on $W$ as before. The equivalence between the cascades construction and the usual construction again follows from the gluing argument in \cite{bourgeois2009symplectic} or a cascade continuation map/isomorphism in \cite{diogo2019symplectic}.

\begin{proposition}\label{prop:S1} In addition to the $\delta^0$ given in \eqref{eqn:d1},\eqref{eqn:d2}, the $S^1$-structure on $C(H_1)$ is given by $\delta^1(\check{p})=\hat{p}$ and $\delta^i=0$ for $i>1$.
\end{proposition}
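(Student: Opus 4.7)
The plan is to compute the $S^1$-structure on $C(H_1)$ directly from the cascades description in \S\ref{s52}, exploiting the fact that the Diogo--Lisi setup uses an $S^1$-independent almost complex structure $J$ and an $S^1\times S^{2N+1}$-independent Hamiltonian $H_1$ on all of $\widehat W_{k,m}$. This autonomy decouples the Floer equation for $u$ from the gradient equation for $z$ on $S^{2N+1}$: the two are coupled only via the asymptotic angles $\theta_\pm$ that enter the equations for the Morse flow lines $\gamma_1,\gamma_2$.

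First, contributions with both endpoints in $\cC(f_W)$ vanish for $r\ge 1$ by the $S^1$-freeness argument of \S\ref{s31}. It then suffices to analyze $\delta^r$ for cascades with at least one endpoint in the $Y_{k,m}$-family. For a 1-cascade with $u$ a trivial Reeb-orbit cylinder and both endpoints in the $Y_{k,m}$-family, the matching condition $\gamma_1(0)=\gamma_2(0)=u(0)\in Y_{k,m}$ reduces the cascade, after the $\R\times S^1$ quotient, to an intersection problem in $Y_{k,m}$. For $r=1$ the $z$-moduli modulo $\R\times S^1$ is one-dimensional and naturally identified with $S^1$ via the relative angle $\alpha=\theta_+\theta_-^{-1}$, so the cascade moduli is the space of $(\alpha,y)\in S^1\times Y_{k,m}$ with $y\in \alpha\cdot A_p\cap B_q$ for the appropriate (un)stable manifolds $A_p,B_q$ of $Z_Y$ attached to $p,q\in\{\check\cdot,\hat\cdot\}$. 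Projecting via $\pi\colon Y_{k,m}\to\Sigma_{k,m}$ lands the intersection inside $W^u_\Sigma(p)\cap W^s_\Sigma(q)$; by Morse--Smale this is empty unless $p=q$. The pair $\check p\to\hat p$ is the only one meeting both the degree condition $|q|=|p|-1$ and a non-empty fiber intersection: the sections $\{\check p\},\{\hat p\}$ of $\pi^{-1}(p)\cong S^1$ are distinct points, so a unique rotation $\alpha\in S^1$ aligns them, producing a single transverse intersection contributing, by the orientation scheme of \cite{zhao2014periodic}, the coefficient $+1$. The remaining pairings $\check p\to\check q$, $\hat p\to\hat q$, $\hat p\to\check q$ all vanish since the degree-compatible index shift forces $\ind q>\ind p$, making $W^u_\Sigma(p)\cap W^s_\Sigma(q)$ empty by Morse--Smale. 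Hence $\delta^1\check p=\hat p$ and $\delta^1\hat p=0$ when restricted to trivial-$u$ contributions within the $Y_{k,m}$-family.

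For $\delta^i$ with $i\ge 2$ and trivial $u$ in the $Y_{k,m}$-family, the same projection-to-$\Sigma$ argument applies: the degree condition together with rigidity forces $\ind q>\ind p$ across all four pairing types, so the cascade moduli is empty. Finally, non-trivial $u$ contributions to $\delta^r$ for $r\ge 1$ arise only from cylinders connecting $\cC(f_W)$ and the $Y_{k,m}$-family. By $z$-independence of the Floer equation, the $u$-moduli factor coincides with the autonomous Morse--Bott Floer moduli of the Diogo--Lisi formula for $\delta^0$, while the $z$-factor enters only through $\alpha$. A Fredholm/dimension count combined with neck-stretching as in \cite{diogo2019symplectic} identifies the cascade count with an equivariant/descendant Gromov--Witten-type invariant of $X_{k,m}$ that vanishes in the monotone range $k\le m$ either by dimensional reasons or by an $S^1$-equivariance argument on the compactified GW moduli space.

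The main obstacle is this last step: carefully ruling out non-trivial-$u$ contributions to $\delta^r_{+,0}$ for $r\ge 1$. While the Fredholm bookkeeping is routine, identifying the count via neck-stretching with a specific descendant invariant on $X_{k,m}$ and proving its vanishing will likely require either an explicit computation in the monotone range or a symmetry argument exploiting the free $S^1$-action on non-trivial Floer cylinders combined with the $S^1$-action on the $z$-component, so that after the diagonal quotient the remaining moduli carries an extra freedom that forces its signed count to be zero.
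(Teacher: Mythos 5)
Your treatment of the cascades with trivial $u$-component (both asymptotics in the $Y_{k,m}$-family) matches the paper's Step~1 in substance: project flow lines of $Z_Y$ to flow lines of $Z_\Sigma$, use Morse--Smale plus the index formulas \eqref{eqn:ind} to force $\pi(p)=\pi(q)$ and $r=1$, and identify the unique rigid configuration giving $\delta^1(\check p)=\hat p$; the paper phrases the count as $W^u_Y(\check p)\cap S^1\cdot W^s_Y(\hat p)=\{\check p\}$, which is your ``unique aligning rotation'' argument. The problem is the last step, which you yourself flag as the main obstacle: ruling out contributions to $\delta^r_{+,0}$ for $r\ge 1$, i.e.\ cascades from $p\in\cC(f_Y)$ to $x\in\cC(f_W)$ with a non-trivial Floer cylinder. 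Your proposal to identify these counts via neck-stretching with a descendant/equivariant Gromov--Witten invariant of $X_{k,m}$ and then prove its vanishing is not carried out, and no mechanism is actually given that forces the signed count to be zero; as stated this is a genuine gap, not a routine verification.

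The paper closes this step without any Gromov--Witten input, by a dimension count that exploits exactly the autonomy you already observed. Since $H_1$ and $J$ are independent of $S^{2N+1}$, forgetting the $z$-component of an element of $M_{r,1}(p,x)$ produces a solution of the $\delta^0$-moduli problem \eqref{eqn:cas4} for the rotated Morse data $(\theta_+)_*f_Y,(\theta_+)_*Z_Y$; because \eqref{eqn:cas4} is cut out transversely up to dimension $0$ for the same $S^1$-independent $J$, nonemptiness forces $|x|-|p|-1\ge 0$, so the expected dimension $|x|-|p|+2r-1$ of the parametrized problem is at least $2r>0$ and the moduli space relevant for $\delta^r$, $r\ge 1$, is empty. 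The multi-level cascades are excluded by the same projection argument applied to the top levels (which have constant $u$-components and hence nontrivial $z$-components), again yielding expected dimension at least $2r-1>0$. So the missing idea in your write-up is simply to reuse the transversality of the $r=0$ moduli spaces for the $S^1$-independent $J$ as an a priori lower bound on $|x|-|p|$, rather than trying to evaluate the $r\ge 1$ counts as invariants of $X_{k,m}$.
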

\begin{proof}
	We will apply the above cascades construction of $S^1$-cochain complex to $H_1,f_W,f_Y$ and $J_N=J$ independent of $S^1\times S^{2N+1}$. A prior, such $J$ may not guarantee transversality of the cascades moduli space. But we will show that in our special case, such $J$ is sufficient and will imply claim. 
	
	Since we $H_1$ has two families of orbits, namely constant orbits parameterized by $W_{k,m}$ and simple Hamiltonian orbits parametrized by the circle bundle $Y_{k,m}$, where the $S^1$-action of reparametrization is identified with the natural $S^1$ action on $Y_{k,m}$. 
	
	\textbf{Step 1: cascades space for $\delta^r$ from $p\in \cC(f_Y)$ to $q\in \cC(f_Y)$ with $r\ge 1$.} We first consider the $1$-cascades case, namely 
	\begin{equation}\label{eqn:cas1}
	M_{r,1}(p,q):=
	\left\{\begin{array}{l}
	u:\R\times S^1 \to \widehat{W}_{k,m},\\
	z:\R \to S^{2N+1},\\
	\gamma_1:\R_+\to  Y_{k,m}\times \{r_0\},\\
	\gamma_2:\R_-\to Y_{k,m}\times \{r_0\},\\
	\end{array}	\left| \begin{array}{l}
	\partial_su+J(\partial_tu-X_{H_1}) = 0, \\
	z'+\nabla_{\widetilde{g}_N}\widetilde{f}_N=0,\\
	  \displaystyle z_-:=\lim_{s\to -\infty} z \in S^1\cdot \widetilde{z}_r,  z_+:=\lim_{s\to \infty} z \in S^1\cdot \widetilde{z}_0,\\
	  \theta_+\cdot \widetilde{z}_0=z_+, \theta_-\cdot \widetilde{z}_r=z_-, \\
	\gamma'_1-(\theta_+)_*Z_Y=0, \gamma'_2-(\theta_-)_*Z_Y=0,\\
   \displaystyle \lim_{s\to \infty} u = \gamma_1(0), \lim_{s\to -\infty} u = \gamma_2(0),\\
    \displaystyle \lim_{s\to \infty} \gamma_1 = \theta_+\cdot p,
    \displaystyle \lim_{s\to \infty} \gamma_2 = \theta_-\cdot q.
	\end{array}
	\right.
	\right\}/\R^1\times S^1
	\end{equation}
	whose expected dimension is $|q|-|p|+2r-1$. We first show that $M_{r,1}(p,q)$ is cut out transversely when the expected dimension is $\le 0$. Note that by action reason, we must have $u$ is a trivial cylinder over a periodic orbits $\gamma = \lim_{s\to \pm \infty} u$. Therefore for a flow line $z$ in $S^{2N+1}$, the moduli space is simply the intersection of $\theta_+\cdot W^u_Y(p)$ and $\theta_-\cdot W^s_Y(q)$. Since projections of flow lines $Z_Y$ are flow lines of $Z_{\Sigma}$, the same holds for $\theta_*Z_Y$ for any $\theta$. In particular, $M_{r,1}(p,q)$ projects to flow lines in $\Sigma_{k,m}$ connecting $\pi(p),\pi(q)$. In particular, to have $M_{r,1}(p,q)\ne \emptyset$ we must have $\ind \pi(p) \ge \ind \pi(q)$ and when they are equal we must have $\pi(p)=\pi(q)$, where $\ind$ is the Morse index. Since by \eqref{eqn:ind}, $|q|-|p|=\ind \pi(p)-\ind \pi(q)$ iff both $p,q$ are check or hat generators, $|q|-|p|=\ind \pi(p)-\ind \pi(q)+1$ iff $p$ is a hat generator and $q$ is a check generator and  $|q|-|p|=\ind \pi(p)-\ind \pi(q)-1$ iff $p$ is a check generator and $q$ is a hat generator, then the only possibility with $|q|-|p|+2r-1\le 0$ and $M_{r,1}(p,q)$ possibly non-empty is when $r=1$, $p=\check{p}$ and $q= \hat{p}$. In this case, the flow line $z$ is between $S^1\cdot \widetilde{z}_0,S^1\cdot \widetilde{z}_1$, whose moduli space (before modulo $\R\times S^1$) is parameterized by $\R \times S^1_{\alpha}\times S^1_{\beta}$, where $S^1_{\beta}$ is the moduli space of unparametrized flow lines over in $\CP^1$ between the maximum and minimum and $S^1_{\alpha}$ comes from the reparametrization in $S^1$. With suitable parameterization of $S^1_{\beta}$,  we can normalize to $\theta_+=\alpha,\theta_-=\alpha+\beta$. Hence after modulo the $\R\times S^1$ action, $M_{r,1}(\check{p},\hat{p})\simeq W^u_Y(\check{p})\cap S^1\cdot W^s_Y(\hat{p})=\{\check{p}\}$ and is cut out transversely. 
	
	Next we claim there are no cascades with multiple ($\ge 2$) levels from $p\in \cC(f_Y)$ to $q\in \cC(f_Y)$. Assume otherwise, since the $u$-component is necessarily a constant cylinder, the $u$ part must be parameterized by a non-trivial gradient flow line $z$. Therefore we must have $r\ge 2$ in order to have multiple cascade levels. From the same argument as before,  every element in the moduli space is projected to a flow line of $Z_{\Sigma}$ between $\pi(p),\pi(q)$. In order to $|q|-|p|+2r-1\le 0$ and $\ind \pi(p)\ge \ind \pi(q)$, we must have $r\le 1$, contradiction. This concludes that $M_{r}(p,q)$ of dimension up to zero is cut out transversely with only $M_1(\check{p},\hat{p})$ is non-empty with one element. Then $\cM_r(p,q)\backslash M_r(p,q)$ is empty when the expected dimension is at most zero. Hence $\cM^r_{p,q}$ is cut out transversely up to dimension $0$.

    \textbf{Step 2: cascades space for $\delta^r$ from $p\in \cC(f_Y)$ to $x\in \cC(f_W)$ with $r\ge 1$.} We start with the $1$-cascades case as before, namely 

	\begin{equation}\label{eqn:cas2}	
	M_r(p,x):=
	\left\{\begin{array}{l}
	u:\R^1\times S \to \widehat{W}_{k,m},\\
	z:\R \to S^{2N+1},\\
	\gamma_1:\R_+\to Y_{k,m}\times \{r_0\},\\
	\gamma_2:\R_-\to  W_{k,m},\\
	\end{array}	\left| \begin{array}{l}
	\partial_su+J(\partial_tu-X_{H_1}) = 0, \\
	z'+\nabla_{\widetilde{g}_N}\widetilde{f}_N=0,\\
	\displaystyle z_-:=\lim_{s\to -\infty} z \in S^1\cdot \widetilde{z}_r,  z_+:=\lim_{s\to \infty} z \in S^1\cdot \widetilde{z}_0,\\
	\theta_+\cdot \widetilde{z}_0=z_+,\\
	\gamma'_1-(\theta_+)_*Z_Y=0, \gamma'_2+Z_W=0,\\
	\displaystyle \lim_{s\to \infty} u =  \gamma_1(0), \lim_{s\to -\infty}u=\gamma_2(0)\\
	\displaystyle\lim_{s\to -\infty}z\in S^1 \cdot \widetilde{z}_0, \lim_{s\to \infty}z\in S^1\cdot \widetilde{z}_r,\\
    \displaystyle \lim_{s\to \infty} u = \gamma_1(0), \lim_{s\to -\infty} u = \gamma_2(0),\\
    \displaystyle \lim_{s\to \infty} \gamma_1 = \theta_+\cdot p,
    \displaystyle \lim_{s\to -\infty} \gamma_2 =  x.
    \end{array}
    \right.
    \right\}/\R^1\times S^1
	\end{equation}
	whose expected dimension is $|x|-|p|+2r-1$. Note that we do not have the rotation of $\theta_-$ anymore, as the $S^1$-reperamretrization on $W_{k,m}$ is trivial. By forgetting the $z$ component, we get a solution to the $\delta^0$-moduli space \eqref{eqn:cas4} but for $(\theta_+)_*f_Y,(\theta_+)_*Z_Y$.  Since \eqref{eqn:cas4} is cut out transversely up to dimension $0$ for the $S^1$-independent $J$, \eqref{eqn:cas4} for $(\theta_+)_*f_Y,(\theta_+)_*Z_Y$ is also cut out transversely up to dimension $0$, which we will denote by $M_0(p,x)$ by a little abuse of notation. Then in order to have $M_0(p,x)\ne \emptyset$, we must have $|x|-|p|-1\ge 0$, hence $\dim M_{r,1}(p,x)\ge 2r>0$. Therefore $M_{r,1}(p,x)=\emptyset$, in particular is cut out transversely, when the expected dimension is $\le 0$. 
	
	Then we consider the case with multiple cascades levels. Assume  the space is not empty and contains a curve of levels $(u^+_1,z^+_1),\ldots (u^+_a,z^+_a)$ in $Y_{k,m}$ and $(u^-_1,z^-_1),\ldots, (u^-_b,z^-_b)$ in $W_{k,m}$ and $(u,z)$ in between. Then $z^{\pm}_*$ must be nontrivial gradient flow lines but $z$ could rest over a critical point. Since $u^{\pm}_{*}$ is necessarily constant orbits, we can forget all the $z$ components, then $\pi(\displaystyle\lim_{s\to \infty} u)\in  W^u_\Sigma(\pi(p))$. Therefore either  $M_0(\widehat{\pi(p)},x)$ or $M_0(\widecheck{\pi(p)},x)$ is not empty, as flow lines of $Z_Y$ project to flow lines of $Z_{\Sigma}$. In order to have $M_0(\widehat{\pi(p)},x)$ or $M_0(\widecheck{\pi(p)},x)$ not empty, we must have $|x|-|\widehat{\pi(p)}|-1\ge 0$. Then $|x|-|p|+2r-1\ge 2r-1>0$. Therefore $M_{r,v}(p,x)=\emptyset$ when the expected dimension is $\le 0$ for any $v\ge 2$. Then the compactification $\cM_r(p,x)$ is also empty when the expected dimension is $\le 0$ for $r\ge 1$. The situation for $\delta^r$ on $\cC(f_W)$ is similar, and $\delta^r=0$ on $\cC(f_W)$ for $r\ge 1$. 
\end{proof}

\begin{remark}
	The proof of Proposition \ref{prop:S1} relies on that we can pick $S^1$-independent $J$ to get transversality for moduli spaces in the definition of $\delta^0$ up to dimension $0$. Then we can pick the trivial $J_N$ and all higher $M_r(p,q)$ with expected dimension $0$ projects to $M_0(p,q)$ with negative expected dimension except for one case. Since we only need to consider the Hamiltonian capturing the first period instead of all periods where $S^1$-equivariant transversality is possible, the computation of $\delta^i, i\ge 1$ in Proposition \ref{prop:S1} holds even without the monotonicity assumption $k\le m$. But the relation to Gromov-Witten invariants might break if we drop the monotonicity assumption.
\end{remark}

\begin{proof}[Proof of Theorem \ref{thm:main}]
Let $p_i$ denote the linear combination of critical points such that $[W^u_{\Sigma}(p_i)]$ representing  $[H^i]$ on $\Sigma_{k,m}$. In other words, $[W^u_{\Sigma}(p_i)]$ represents $[H^{i+1}]$ in $X_{k,m}$. $p_i$ is not necessarily a single critical point depending on whether $[H^i]$ is primitive. By Proposition \ref{prop:S1}, let $e$ denote the unique minimum of $f_W$, we have
\begin{eqnarray*}
	\delta^{S^1}(\sum_{i=0}^{k-1} (-1)^i\frac{1}{k!}\check{p}_{m-k+i}u^{-i}) &  \stackrel{Prop.\ref{prop:S1}}{=} &	\sum_{i=0}^{k-1} (-1)^i\frac{1}{k!}\delta^0(\check{p}_{m-k+i})u^{-i}+ \sum_{i=1}^{k-1} (-1)^i\frac{1}{k!}\delta^1(\check{p}_{m-k+i})u^{-i+1}\\
	& \stackrel{\eqref{eqn:d1},\eqref{eqn:GW1},\eqref{eqn:GW2}}{=} & e + \sum_{i=0}^{k-2}(-i)^i \frac{1}{k!}\hat{p}_{m-k+i+1}u^{-i} + \sum_{i=1}^{k-1} (-1)^i\frac{1}{k!}\delta^1(\check{p}_{m-k+i})u^{-i+1} \\
	& \stackrel{Prop.\ref{prop:S1}}{=} e&
\end{eqnarray*}	
and $e$  is not exact in $F^lC(H_1)^+$ for $l<k-1$. Therefore we have a $k-1$-dilation on $C(H_1)$. Therefore we have $W_{k,m}$ admits a $k-1$-dilation by Proposition \ref{prop:TFAE}.  Since for $m\gg k$, the Morse-Bott spectral sequences \cite{kwon2016brieskorn} implies that $SH^*_+(W)=0$ for $0< * < 2k-3$. Hence $W_{k,m}$ can not admits a $k-2$-semi-dilation for $m\gg k$. Note that $W_{k,m}$ admits a Lefschetz fibration with fiber $W_{k,m-1}$ by projecting along one of the variables. Then by Proposition \ref{prop:lefchetz}, $W_{k,m}$ can not admits a $k-2$-semi-dilation for every $m$. Therefore $\rD(W_{k,m})=\rSD(W_{k,m})=k-1$.
\end{proof}

\begin{remark}
From the proof of Theorem \ref{thm:main} and Remark \ref{rmk:primitive}, we know that the existence of dilation depends on the coefficient we use. $W_{k,m}$ admits a $k$-dilation if $k!$ is invertible in the coefficient when $k>\frac{m+1}{2}$ and $(k-1)!$ is invertible if $k<\frac{m+1}{2}$. In particular, we obtain another proof of $T^*S^2$ admits a dilation when the characteristic of the field is not $2$ and $T^*S^n$ admits a dilation in any ring coefficient when $n\ge 3$.
\end{remark}
We suspect that examples in Theorem \ref{thm:main} are optimal in dimension, in particular, we make the following conjecture.
\begin{conjecture}
	Let $W$ be a Liouville domain, then $\rD(W),\rSD(W)$ are either $\infty$ or smaller than $\frac{\dim W}{2}$.
\end{conjecture}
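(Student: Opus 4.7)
The first step is to rephrase the conjecture as a grading statement. By Proposition \ref{prop:order}, a $k$-(semi)-dilation is witnessed by a closed element $x = \sum_{i=0}^{k} c_i u^{-i} \in C_+^+$ with $\pi_0[\delta^{S^1}_{+,0}](x) = 1 \in H^0(W)$; since $\deg u = 2$ and $x$ must have total degree $-1$, we have $|c_i| = 2i-1$, so the top coefficient $c_k$ lies in $SH^{2k-1}_+(W)$. The conjecture thus reduces to ruling out the existence of such $x$ together with the torsion condition $u^{k+1}x=0$ whenever $k \geq n := \tfrac{1}{2}\dim W$.

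The principal route I would take is Calabi–Yau duality. By Remark \ref{rmk:li} (combining Ganatra's cyclic-homology identification with Li's theorem), the existence of some $k$-dilation is equivalent to exactness of the canonical smooth CY structure on the wrapped Fukaya category $\mathcal{W}(W)$; for Weinstein $W$ of dimension $2n$ this category is smooth $n$-Calabi–Yau. Under this translation the dilating class corresponds to a negative cyclic cycle whose image under the norm map is dual to the unit. The plan is to combine the Connes long exact sequence with the Hodge-to-de-Rham filtration on $HC^-_*$, using that for a smooth $n$-CY category Hochschild homology is supported in degrees bounded by $n$ (via Poincar\'e duality with Hochschild cohomology whose top class sits in degree $n$); this should bound the length of any $u$-torsion primitive of $1$ by $n-1$, giving $k < n$ in the Weinstein case.

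A complementary strategy for affine varieties $W \subset X$ with SNC compactification is to upgrade Theorem \ref{thm:unirule} to the non-vanishing of a $2$-pointed Gromov–Witten invariant $\mathrm{GW}_{0,2,A}([\mathrm{pt}], [D])$ on $X$, as in the Brieskorn computation of \S\ref{s52}. Here the complex codimension of $[D]$ is controlled by $k$ (of order $n-k+1$), and a virtual dimension count for the constrained moduli of rational curves forces $k \leq n$; the strict inequality follows because at $k = n$ the constraint $[D]$ degenerates to a point class and the required rational curve would have zero area in $W$, contradicting uniruledness by a non-constant curve with bounded energy.

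The hard part will be handling genuinely non-Weinstein, non-algebraic Liouville domains, for which neither the smooth CY structure nor a projective compactification is directly available. One potential bridge is the Gutt–Hutchings framework recalled in the excerpt: the length of the $u$-torsion string generated by the primitive of $1$ is precisely the extent to which the capacities $c^{GH}_i$ can be extended in the negative direction, so a symplectic-capacity upper bound of order $n$ would yield the conjecture. A secondary difficulty is the strict inequality $k<n$ versus $k\leq n$: even the Weinstein CY argument naively only gives the weaker bound, and excluding the borderline case $k=n$ will likely require an additional parity or Euler-class obstruction specific to the unit in degree $0$.
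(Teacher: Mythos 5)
The statement you are addressing is stated in the paper as a \emph{conjecture}: the author offers no proof, only the motivating evidence that the examples $W_{k,m}$ of Theorem \ref{thm:main} realize $\rD=\rSD=k-1$ in dimension $2m\ge 2k$, i.e.\ order at most $\tfrac{\dim W}{2}-1$. So there is no argument in the paper to compare yours against, and the real question is whether your proposal closes the problem. It does not: it is a research plan with gaps that you partly acknowledge yourself, and some of its intermediate claims are not correct as stated.

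Concretely: (1) your opening reduction to a ``grading statement'' ($c_k\in SH^{2k-1}_+(W)$) presupposes a $\Z$-grading, but the conjecture is for arbitrary Liouville domains, where the paper only has a $\Z_2$-grading when $c_1(W)\ne 0$; the reduction already fails there. (2) The key step of the Calabi--Yau route --- that for a smooth $n$-CY category Hochschild homology is ``supported in degrees bounded by $n$'' --- is false in the relevant generality: $\cW(W)$ is smooth but not proper, and $SH^*(W)\simeq HH_*(\cW(W))$ is typically unbounded (e.g.\ $SH^*(T^*M)\simeq H_{n-*}(\Lambda M)$), so no such support bound can feed into the Connes/Hodge-to-de-Rham argument, and even granting some bound you do not carry out the translation into a bound on the length of the $u$-torsion primitive of $1$. (3) The Gromov--Witten strategy applies only to affine varieties with suitable compactifications, and the virtual-dimension count there is heuristic; Theorem \ref{thm:unirule} gives uniruledness from a $k$-semi-dilation, not a converse that would let a curve-counting bound constrain $k$. (4) You flag, correctly, that general (non-Weinstein, non-algebraic) Liouville domains and the strict inequality $k<n$ versus $k\le n$ are untreated. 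So the conjecture remains open; what you have is a plausible outline for the Weinstein/affine cases whose pivotal duality input needs to be replaced by a correct statement (for instance, some boundedness coming from the CY structure together with finite-dimensionality of the relevant filtered pieces) before any part of it constitutes a proof.
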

In the following, we will consider general Brieskorn varieties. Let $a_0,\ldots,a_n\in \N_{\ge 2}$, the Brieskorn variety $V_{a_0,\ldots,a_n}:=\{\sum x_i^{a_i}=1 \}$. $V_{a_0,\ldots,a_n}$ can be understood as a Weinstein filling of the Brieskorn manifold $Y_{a_0,\ldots,a_n}:=\{\sum x_i^{a_i}=0 \}\cap S^{2n+1}$. Since $V_{a_0,\ldots,a_n}$ has non-negative log-Kodaira dimension when $\sum \frac{1}{a_i}\le 1$, therefore we have the following by Corollary \ref{cor:unirule}.
\begin{proposition}\label{prop:general}
	If $\sum \frac{1}{a_i}\le 1$, then $V_{a_0,\ldots,a_n}$ does not admits a $k$-dilation for any $k$.
\end{proposition}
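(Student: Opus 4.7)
The plan is to reduce the proposition directly to Corollary \ref{cor:unirule}(1). By Proposition \ref{prop:hier}, a $k$-dilation is in particular a $k$-semi-dilation, so it suffices to show that $V=V_{a_0,\ldots,a_n}$, which is a smooth affine variety, admits no $k$-semi-dilation for any $k$. By Corollary \ref{cor:unirule}(1) this will follow once we verify that $V$ has non-negative log-Kodaira dimension whenever $\sum 1/a_i \le 1$.

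To establish the log-Kodaira statement, I would compactify $V=\{\sum x_i^{a_i}=1\}$ inside the weighted projective space $\mathbb{P}(w_0,\ldots,w_n,1)$ with $w_i=d/a_i$ and $d=\operatorname{lcm}(a_0,\ldots,a_n)$, as the weighted-degree-$d$ hypersurface $\overline V=\{\sum x_i^{a_i}=y^d\}$ with boundary $D_\infty=\overline V\cap\{y=0\}$. The weighted-projective adjunction formula yields
\[
K_{\overline V}\cong \mathcal O_{\overline V}\Bigl(d-1-\tsum_i w_i\Bigr),\qquad D_\infty\sim \mathcal O_{\overline V}(1),
\]
so
\[
K_{\overline V}+D_\infty\sim \mathcal O_{\overline V}\Bigl(d-\tsum_i w_i\Bigr)=\mathcal O_{\overline V}\Bigl(d\bigl(1-\tsum_i 1/a_i\bigr)\Bigr),
\]
which is $\mathbb Q$-effective precisely when $\sum 1/a_i\le 1$.

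Finally, pass to a log resolution $\pi:(X,D)\to (\overline V,D_\infty)$. The singularities of $(\overline V,D_\infty)$ arise from the cyclic-quotient singularities inherited from $\mathbb{P}(w_0,\ldots,w_n,1)$ (possibly meeting the hypersurface singularities along $D_\infty$) and are klt; accordingly, one can choose the resolution so that the discrepancies $a_i$ in $K_X+D=\pi^*(K_{\overline V}+D_\infty)+\sum a_i E_i$ are non-negative on exceptional divisors meeting $D$. Effectivity of $K_{\overline V}+D_\infty$ as a $\mathbb Q$-divisor then lifts to effectivity of $K_X+D$, giving $\kappa(X,K_X+D)\ge 0$ and concluding the proof via Corollary \ref{cor:unirule}(1).

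The main obstacle is the discrepancy/log-resolution bookkeeping at the boundary, especially in the borderline case $\sum 1/a_i=1$ where $K_{\overline V}+D_\infty$ is only numerically trivial: one needs to confirm that the resolution of the cyclic-quotient singularities along $D_\infty$ does not introduce negative-coefficient exceptional divisors in the log canonical class that would spoil the effectivity conclusion. Once this klt/discrepancy check is done, the rest of the argument is simply citing the two results from the preceding sections.
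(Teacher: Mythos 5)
Your proposal is correct and takes essentially the same route as the paper: the paper's proof consists precisely of the observation that $V_{a_0,\ldots,a_n}$ has non-negative log-Kodaira dimension when $\sum 1/a_i\le 1$, after which Corollary \ref{cor:unirule} (combined with the fact that a $k$-dilation is in particular a $k$-semi-dilation, Proposition \ref{prop:hier}) rules out $k$-dilations. The only difference is that the paper cites the log-Kodaira statement as known, whereas you supply the weighted-projective compactification, the adjunction computation giving $K_{\overline V}+D_\infty\sim\mathcal O_{\overline V}(d(1-\sum 1/a_i))$, and the log-canonicity/discrepancy check needed to pass to a log resolution; that verification is standard and sound, so nothing essential is missing.
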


For general Brieskorn manifold $Y_{a_0,\ldots,a_n}$, we have the following description of the Reeb dynamics of a preferred contact form \cite[\S 5.3.1]{kwon2016brieskorn}. $T$ is a called a principle period iff there exists $I_T\subset I=\{0,\ldots,n\}$, such that $T=lcm_{i\in I_T} a_i$, $a_i|T$ iff $i\in I_T$ and $|I_T|\ge 2$. Then the periods of the Reeb flow are multiples $NT$ of principle period $T$. Given a period $A$, then $A=NT$ for a principle period $T$ such that there is no principle period $T'$ divides $NT$ and is divided by $T$. Then there is family\footnote{The family is diffeomorphic to the Brieskorn manifold determined by the sequence $I_T$.} of parametrized Reeb orbits with period $NT$ of dimension $2|I_T|-3$, which after quotient the $S^1$-reparametrization is $2|I_T|-4$ dimensional. And the generalized Conley-Zehnder index of such family is given by 
\begin{equation*}
2\sum_{i\in I_T}\frac{NT}{a_i}+2\sum_{i\notin I_T}\lfloor\frac{NT}{a_i}\rfloor+|I|-|I_T|-2NT.
\end{equation*}
If we perturb the contact form into a non-degenerate one using a Morse function on the critical manifold following \cite[Lemma 2.3]{bourgeois2002morse}, the periodic orbit with minimal Conley-Zehnder index from such family is given by the generalized Conley-Zehnder index minus half of the dimension of this family, i.e.\
\begin{equation}\label{eqn:CZ}
2\sum_{i\in I_T}\frac{NT}{a_i}+2\sum_{i\notin I_T}\lfloor\frac{NT}{a_i}\rfloor+|I|-2|I_T|-2NT+2.
\end{equation}
For each $A$, there might be several such principle $T$. Each will give rise to a Morse-Bott non-degenerate family of Reeb orbits and they describe all of the Reeb orbits.

There are many natural geometric relations between Brieskorn manifolds which can be useful to determine the order of dilation. 
\begin{enumerate}
	\item The projection along the $x_{n}$ coordinate gives a Lefschetz fibration on $V_{a_0,\ldots,a_n}$ with fiber $V_{a_0,\ldots,a_{n-1}}$.
	\item If $a_i\le b_i$ for $0\le i \le n$, then $V_{a_0,\ldots,a_n}\subset V_{b_0,\ldots,b_n}$ as an exact subdomain. This follows from the adjacency of singularities \cite[Definition 9.8, Lemma 9.9]{MR3217627}.
\end{enumerate}
Then using the basic examples in Theorem \ref{thm:main}, Proposition \ref{prop:lefchetz} and the Viterbo transfer maps,  we can determine whether $V_{a_0,\ldots,a_n}$ admits a $k$-(semi)-dilation for some $k$ for most of $a_0,\ldots,a_n$. With the following proposition, we can actually determine the number $\rD(W)$ and $\rSD(W)$ in some preferable cases. 
\begin{proposition}\label{prop:D}
	Assume the affine variety $V_{a_0,\ldots,a_n}$ admits a $k$-(semi)-dilation for some $k$. If there is a minimum Conley-Zehnder index $\mu_{\min}$ among the minimum Conley-Zehnder indices of all families, and it is attained at the the minimal period, then $\rD(W)$, repetitively, $\rSD(W)$ is $\frac{n-\mu_{\min}+1}{2}$
\end{proposition}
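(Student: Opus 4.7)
The proof will mirror the strategy of Theorem~\ref{thm:main} in the general Brieskorn setting, combining a cascades description of the $S^1$-cochain complex at the minimum period with an explicit telescoping construction of the dilation chain, and using structural rigidity for the matching lower bound.

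First, I would choose an autonomous Hamiltonian $H$ on $\widehat{V}_{a_0,\ldots,a_n}$ with slope just above the minimum principal period $T_0$, so that the non-constant orbits of $X_H$ form a single Morse-Bott family $\Sigma_0$. After perturbing by a Morse-Smale pair $(f_{\Sigma_0},Z_{\Sigma_0})$, the generators appear in pairs $\check{p},\hat{p}$ with $|\check{p}|-|\hat{p}|=1$, and by the hypothesis on $\mu_{\min}$ the maximum degree $n-\mu_{\min}$ of $C_+$ is realized uniquely by $\check{p}_{\min}$ at the minimum critical point $p_{\min}$ of $f_{\Sigma_0}$. A cascades computation along the lines of Proposition~\ref{prop:S1}, using an $S^1$-independent almost complex structure after sufficient neck-stretching so the Diogo-Lisi transversality applies, should yield $\delta^r=0$ for $r\ge 2$, $\delta^1\check{p}=\hat{p}$, and $\delta^0\check{p}$ composed of hat-generators at Morse index two less than $p$ (analogous to \eqref{eqn:d1}) together with two-pointed Gromov-Witten contributions into the Morse cochains of $V$ (analogous to \eqref{eqn:d2}). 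The standing hypothesis that $W$ admits some $k$-semi-dilation, together with Theorem~\ref{thm:unirule}, forces $\delta^0\check{p}_\ast\ni e$ for at least one generator $\check{p}_\ast$.

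For the upper bound, I would construct $x=\sum_{i=0}^d c_i u^{-i}$ inductively, setting $c_0=\check{p}_\ast$ and $c_{i+1}=\pm\check{p}^{(i+1)}$ so that $\delta^1\check{p}^{(i+1)}$ cancels the hat-term of $\delta^0 c_i$. Because $\delta^0$ strictly lowers the Morse index by $2$, the telescoping halts only when we reach $\check{p}_{\min}$, whose $\delta^0$ has no hat-output, forcing the top term to sit at degree $n-\mu_{\min}$ and hence $d=(n-\mu_{\min}+1)/2$. This gives $\rSD(W)\le (n-\mu_{\min}+1)/2$, and, after absorbing the residual higher-degree Morse output in $V$ into an exact correction, also $\rD(W)\le (n-\mu_{\min}+1)/2$.

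The hard part is the matching lower bound. A pure degree count on the leading term $c_{d'}\in SH^{2d'-1}_+(W)$ of a minimum-depth representative reproduces only the upper bound direction. Instead, the argument should show that any witnessing chain, modulo $\delta^{S^1}$-boundaries, can be pushed into the subcomplex generated by constant orbits and the minimum-period family: higher-period orbits have strictly larger Conley-Zehnder index by the growth formula \eqref{eqn:CZ}, hence live at strictly smaller degree than $n-\mu_{\min}$, and an action filtration analogous to Proposition~\ref{prop:reg} separates them from the portion of the chain that maps to $1$. On the minimum-period subcomplex, the structural computation above is rigid: the only way for the telescoping to terminate is to reach $\check{p}_{\min}$, which forces depth exactly $(n-\mu_{\min}+1)/2$. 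The subtle step is verifying that no cascading cancellation across distinct Morse-Bott families can short-circuit the chain, which amounts to checking algebraic independence of the different period-family contributions in the relevant $u$-adic spectral sequence.
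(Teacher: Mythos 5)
Your overall framing (compute at the minimal period, then bound the order by degree considerations) is in the right spirit, but both halves of your argument have gaps, and the crucial lower bound is missing the idea the paper actually uses. For the upper bound, your explicit telescoping needs the Diogo--Lisi/Gromov--Witten description of $\delta^0$ and the cascades computation of $\delta^1$, which the paper only establishes for the Fermat case $W_{k,m}$ (smooth compactifying divisor in $\CP^{m+1}$, monotonicity $m+1-k\ge 1$); for general exponents $a_0,\ldots,a_n$ the compactification lives in a weighted projective space and this machinery is not available -- indeed this is exactly why Proposition~\ref{prop:D} takes the existence of some $k$-(semi)-dilation as a \emph{hypothesis}. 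Your step ``the hypothesis plus Theorem~\ref{thm:unirule} forces $\delta^0\check p_*\ni e$'' is also unjustified: uniruledness is a consequence of a semi-dilation, not a tool to localize it, and nothing in the hypothesis forces the unit to be killed already at slope just above the minimal period -- the witness may involve longer orbits. The paper gets the upper bound much more cheaply: by Proposition~\ref{prop:equi} a minimal-order witness has leading term of degree $2\,\rSD-1$ in $SH^*_+(W)$, and the hypothesis on $\mu_{\min}$ bounds the positive degree support of $SH^*_+(W)$ by $n-\mu_{\min}$, so $\rSD(W)\le\frac{n-\mu_{\min}+1}{2}$ purely by degree reasons, given that some (semi)-dilation exists.

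The genuine gap is the lower bound. Your plan is to push any witnessing chain into the subcomplex spanned by constants and the minimal-period family, on the grounds that longer orbits ``have strictly larger Conley--Zehnder index, hence strictly smaller degree than $n-\mu_{\min}$.'' The hypothesis only says the minimum of the minimal Conley--Zehnder indices is attained at the minimal period -- not strictly -- and, more to the point, lower-degree generators from longer-period families are precisely what could produce a \emph{shallower} witness (leading term in degree $2j-1<n-\mu_{\min}$), so excluding them by degree is impossible on $V$ itself; your appeal to ``algebraic independence of the period-family contributions in the $u$-adic spectral sequence'' is exactly the unproved point. The paper circumvents this by a stabilization trick: replace $V_{a_0,\ldots,a_n}$ by $V_{a_0,\ldots,a_n,T,\ldots,T}$ where $T$ is the minimal period. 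Each added $T$ raises the minimal Conley--Zehnder index at period $T$ by exactly $1$ (so the top degree $n-\mu_{\min}$ of $SH^*_+$ is unchanged) while raising it at every other period by at least $2$, so the index gap blows up; by the Morse--Bott spectral sequence, after adding enough $T$'s the positive symplectic cohomology is supported in the single positive degree $n-\mu_{\min}$ and in negative degrees. The stabilized variety still carries a (semi)-dilation by Proposition~\ref{prop:lefchetz} (it is a Lefschetz fibration with fiber the previous stage), and the degree $1-2k$ of $\Delta^k_{+,0}$ then forces its order to be exactly $\frac{n-\mu_{\min}+1}{2}$; finally Proposition~\ref{prop:lefchetz} gives $\rSD(V_{a_0,\ldots,a_n})\ge\rSD(V_{a_0,\ldots,a_n,T,\ldots,T})$, which is the lower bound you are missing. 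Without this (or some rigorous substitute for your localization step), your proof does not close.
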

\begin{proof}
	Assume the minimal period is $T$, we may consider $V_{a_0,\ldots, a_n, T}$, then the new minimal Conley-Zehnder index with the minimal period $T$ is given by $\mu_{\min}+1$ by \eqref{eqn:CZ}, since now we add a $T$ into $I_T$. We claim this is also the global minimum. For other period $T'>T$, if $T$ does not divide $T'$, the minimal Conley-Zehnder index of $T'$ will increase by at least $2$ in $2\sum_{i\notin I_T}\lfloor\frac{NT}{a_i}\rfloor$ of \eqref{eqn:CZ}. If $T$ divides $T'$, then the minimal Conley-Zehnder index of period $T'$ will increase at least $2\frac{T'}{T}-1\ge 3$ since we need to include the new $T$ in $I_{T'}$. Hence the claim follows. In particular, $V_{a_0,\ldots,a_m,T,\ldots,T}$ attains unique minimal Conley-Zehnder index at the minimal period, and the gap of minimal Conley-Zehnder indices between the minimal period and other periods strictly increase with respect to the number of $T$ added.
	
	Note that the space of parameterized orbits with period $T$ is diffeomorphic to the Brieskorn manifold $Y_{I_T}$ \cite[Remark 5.3]{kwon2016brieskorn}, whose cohomology is supported in degree $0$, $|I_T|-2$, $|I_T|-1$ and $2|I_T|-3$. Therefore by the Morse-Bott spectral sequence \cite[Theorem 5.4]{kwon2016brieskorn}, if we add in enough $T$,   $V_{a_0,\ldots,a_m,T,\ldots,T}$ will have the property that $SH_+^*(V_{a_0,\ldots,a_n,T,\ldots,T})$ is supported in degree $n-\mu_{min}$ and negative degrees, since the degree gap blows up between the minimal period and other periods. By Proposition \ref{prop:lefchetz} and Theorem $\ref{thm:main},V_{a_0,\ldots,a_n,T,\ldots,T}$ supports a $m$-dilation form some $m$.  Note that $\Delta^{k}_{+,0}$ has degree $1-2k$,  we must have $\rD(V_{a_0,\ldots,a_n,T,\ldots,T})=\rSD(V_{a_0,\ldots,a_n,T,\ldots,T})=k:=\frac{n-\mu_{\min}+1}{2}$. Moreover, we have $\rSD(V_{a_0,\ldots,a_n})\ge k$ by Proposition \ref{prop:lefchetz} . By the minimal index assumption on $V_{a_0,\ldots, a_n}$, we know that $SH_+^*(V_{a_0,\ldots,a_n})$ is supported in degree below $n-\mu_{\min}$, hence $ \rD(V_{a_0,\ldots,a_n}), \rSD(V_{a_0,\ldots,a_n}) \le k$ by degree reasons if $V_{a_0,\ldots,a_n}$ admits a $m$-(semi)-dilation for some $m$. This finishes the proof.
\end{proof}
Using Proposition \ref{prop:D}, we are able to prove many examples with symplectic dilation, which are not constructed from $T^*S^2$ as in \cite{etgu2017koszul,seidel2012symplectic}, hence previously unknown.
\begin{corollary}\label{cor:1dilation}
	Let $n\ge 3$,  $a_i=i+2$ for $i\le n-2$ and $a_{n-1}=a_n=n$, then $V_{a_0,a_1,\ldots,a_n}$ admits a $1$-dilation.
\end{corollary}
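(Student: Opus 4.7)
The plan is to apply Proposition \ref{prop:D}, whose hypotheses reduce the statement to (a) establishing that $V:=V_{a_0,\ldots,a_n}$ supports some $k$-(semi)-dilation and (b) computing the minimum Conley--Zehnder index $\mu_{\min}$ over the Morse--Bott Reeb families of $Y_{a_0,\ldots,a_n}$, checking that $\mu_{\min}=n-1$ and that it is attained at the minimal period. Given both ingredients, Proposition \ref{prop:D} yields $\rD(V)=\rSD(V)=(n-(n-1)+1)/2=1$ at once.

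For (a), since $a_i\le n$ for every $i$, the adjacency of singularities cited just before the statement provides an exact subdomain embedding $V\hookrightarrow V_{n,n,\ldots,n}=W_{n,n}$; Theorem \ref{thm:main} endows $W_{n,n}$ with an $(n-1)$-dilation, and the Viterbo transfer (Proposition \ref{prop:inc}) pulls this back to an $(n-1)$-dilation on $V$. Hence $V$ admits at least one $k$-(semi)-dilation, which is all that Proposition \ref{prop:D} requires on the dilation side.

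For (b), the minimal principal period is easy to spot: because $a_{n-1}=a_n=n$ are always present and $a_2=4$ appears once $n\ge 4$, inspecting pairs and triples gives $T_{\min}=3$ with $I_{T_{\min}}=\{1,2,3\}$ when $n=3$; $T_{\min}=4$ with $I_{T_{\min}}=\{0,2,3,4\}$ when $n=4$; and $T_{\min}=4$ with $I_{T_{\min}}=\{0,2\}$ when $n\ge 5$. Substituting $N=1$ and $T=T_{\min}$ into \eqref{eqn:CZ} is then a short arithmetic exercise producing the value $n-1$ uniformly in $n$.

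The main obstacle is to rule out any other family with strictly smaller index. Rewriting \eqref{eqn:CZ} as
\[
\mu(NT)=2NT\Bigl(\sum_{i=0}^{n}\frac{1}{a_i}-1\Bigr)-2\sum_{i\notin I_T}\Bigl\{\frac{NT}{a_i}\Bigr\}+|I|-2|I_T|+2,
\]
the harmonic computation $\sum_i 1/a_i-1=H_n-2+\frac{2}{n}>0$ (for $n\ge 3$, where $H_n$ is the $n$-th harmonic number) shows that the linear term dominates and forces $\mu(NT)\gg n-1$ once $NT$ exceeds an explicit threshold on the order of $n/\log n$. Below this threshold only a handful of principal periods survive---essentially $T\in\{T_{\min},n,6\}$ together with $\mathrm{lcm}(a_i,a_j)$ for small pairs and their few small multiples---and for each such $(N,T)$ the inequality $\mu(NT)\ge n-1$ is verified by direct substitution into \eqref{eqn:CZ}. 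With this finite case check complete, Proposition \ref{prop:D} concludes that $\rD(V)=\rSD(V)=1$.
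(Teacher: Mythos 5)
Your part (a) and your identification of the minimal period and its index coincide with the paper's argument: embed $V_{a_0,\ldots,a_n}$ into $V_{n,\ldots,n}$ to get some $k$-dilation via Theorem \ref{thm:main} and the transfer map, compute $\mu_{\min}=n-1$ at $T_{\min}$ ($T=3$ for $n=3$, $T=4$ for $n\ge 4$), and feed this into Proposition \ref{prop:D}. The gap is in the step where you rule out all other families. Your threshold estimate is fine as far as it goes: bounding the fractional parts and $|I_T|$ crudely gives $\mu(NT)\ge 2NT\bigl(H_n-2+\tfrac{2}{n}\bigr)-(n-1)$, so periods $NT\gtrsim n/\log n$ are harmless. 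But the remaining verification is not the ``handful'' of cases you describe, and it is nowhere carried out. Since every integer in $\{2,3,\ldots,n\}$ occurs among the $a_i$, \emph{every} composite integer $T\le n$ is a principal period (all of its divisors $\ge 2$ are realized), so the set of periods below your threshold consists of all composite integers up to roughly $n/\log n$ together with their multiples up to that size; this list grows without bound as $n\to\infty$ and depends on $n$, so no single finite ``direct substitution'' covers all $n\ge 3$, and your proposal gives no estimate for these intermediate periods. This is precisely the content the paper supplies: it introduces $f(T)$ (a lower bound for the minimal Conley--Zehnder index in period $T$, valid because $I_{NT'}\supset I_{T'}$), proves the telescoping identity $f(T+1)-f(T)=2|I_T|-2$, and uses the explicit increment tables for $4\le T\le 11$ plus periodicity mod $12$ (coming from $2,3,4\in\{a_i\}$) to conclude $f(T)\ge f(4)=n-1$ for all $T>4$, uniformly in $n$.

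The gap is repairable along lines close to what you set up: for a period $A<n$ all entries $a_i>A$ contribute $\lfloor A/a_i\rfloor=0$, so the minimal index in period $A$ equals $2\sum_{d=2}^{A}\lfloor A/d\rfloor+n+3-2A-2|I_T|$, and the required inequality $\mu(A)\ge n-1$ becomes the $n$-independent statement $\sum_{d=2}^{A}\lfloor A/d\rfloor\ge A+|I_T|-2$, which must then be proved for \emph{all} $A$ (not just a short list), e.g.\ by an induction in $A$ that is essentially the paper's telescoping argument. As written, however, the decisive comparison between the minimal period and all other periods is asserted rather than proved, so the proof is incomplete.
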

\begin{proof}
	Since $V_{a_0,\ldots,a_n}\subset V_{n,\ldots,n}$, by Theorem \ref{thm:main}, $V_{a_0,\ldots,a_n}$ admits a $(n-1)$-dilation. In view of Proposition \ref{prop:D}, we need to verify the minimal Conley-Zehnder index is $n-1$ and is represented by an orbit with minimal period. When $n=3$, the minimal period is $T=3$  whose minimal Conley-Zehnder index in this period is $2\cdot 3+2+1-6-1=2$. For any other period $3N$ with $N>1$, the minimal Conley-Zehnder index is (which depends on the parity of $N$)
	$$2 \cdot 3 \cdot \frac{NT}{3}+2 \lfloor \frac{3N}{2} \rfloor + 4-6-(1+(-1)^N)-2NT+2\ge 2\lfloor\frac{3N}{2}\rfloor-2\ge 4.$$
	Hence the claim holds for $n=3$. For $n\ge 4$, the minimal period is $T=4$, the minimal Conley-Zehnder index in this period is $2\cdot(2+1)+2+n-1-8=n-1$ for $n>4$ and is $2\cdot(2+3)+2+1-8-2=3$ when $n=4$. We need to show that all other periods can not have smaller minimal Conley-Zehnder indices. Let $f(T)$ be the minimal Conley-Zehnder index formula by \eqref{eqn:CZ} even if $T$ is not period, i.e.\
	$$f(T):=2\sum_{a_i|T} \frac{T}{a_i}+2\sum_{a_i\nmid T}\lfloor \frac{T}{a_i}\rfloor + n+1 - 2|I_T|-2T+2 = 2\sum_{i=0}^n\lfloor \frac{T}{a_i}\rfloor + n+3 - 2|I_T|-2T ,$$
	where $I_T$ is subset of $\{a_0,\ldots,a_n\}$ that divides $T$. Note that if $T=NT'$ for a principle period $T'$, we have the minimal Conley-Zehnder index for period $NT'$ is no less than $f(T)$, since $I_T\supset I_{T'}$. It is direct to check that $f(4)=n-1$. Then we can compute $f(T+1)-f(T)$
	\begin{eqnarray*}
	f(T+1) - f(T) & =  & 2\sum_{i=0}^n \lfloor (\frac{T+1}{a_i}\rfloor - \lfloor \frac{T}{a_i}\rfloor) - 2|I_{T+1}|+2|I_T|-2 \\
	& = &   2 |I_{T+1}|-2|I_{T+1}|+2|I_T|-2	= 2 |I_T|-2.
	\end{eqnarray*}	
    We can compute directly
	\begin{center}
		\begin{tabular}{ |c|c|c|c|c|c|c|c|c| } 
				\hline
				$T $ & 4 & 5 & 6 & 7 & 8 & 9 & 10 & 11  \\ 
				\hline
				$f(T+1)-f(T)\ge$ & 2 & -2 & 2 & -2 & 2 & 0 & 0 & -2  \\ 
				\hline
		\end{tabular}
	\end{center}
	Since we have $2,3,4$ in $\{a_i\}$, after $11$ we can compute the lower bound of $f(T+1)-f(T)$ for different $T \mod 12$,
	\begin{center}
		\begin{tabular}{ |c|c|c|c|c|c|c|c|c|c|c|c|c| } 
			\hline
			$T \mod 12$ & 0 & 1 & 2 & 3 & 4 & 5 & 6 & 7 & 8 & 9 & 10 & 11 \\ 
			\hline
			$f(T+1)-f(T)\ge$ & 4 & -2 & 0 & 0 & 2 & -2 & 2 & -2 & 2 & 0 & 0 & -2  \\ 
			\hline
		\end{tabular}
	\end{center}
   As a consequence, for any $T>4$, we have $f(T)-f(4)\ge 0$, this proves that the minimal Conley-Zehnder index is obtained at the minimal period $T=4$. 
   \end{proof}
\begin{remark}
	The example in Corollary \ref{cor:1dilation} is not optimal, for example, $V_{2,3,5,5,5,5}$ also admits a $1$-dilation by Proposition \ref{prop:D}. By Conjecture \ref{conj} below, $V_{2,3,5}$ as the $E_8$ plumbing of $T^*S^2$ is expected to admit dilation. That $V_{2,3,5,5,5,5}$ admits dilation will also follow from the existence of dilation on $V_{2,3,5}$.
\end{remark}
In view of Proposition \ref{prop:D}, it is useful to determine when the minimal Conley-Zehnder index is represented by the minimal period. We believe this holds when $\sum \frac{1}{a_i}>1$ as it can be verified for small $n$. To support this claim, note that the remaining case can not support $k$-dilation by Proposition \ref{prop:general}. On the other hand, $\sum \frac{1}{a_i}-1$ is related with the average growth of the Conley-Zehnder indices and when it is negative, the Conley-Zehnder index is not bounded from below. Moreover, $\sum \frac{1}{a_i}> 1$ is equivalent to the singularity $x_0^{a_0}+\ldots+x_n^{a_n}=0$ is canonical. By \cite[Theorem 1.1]{mclean2016reeb}, the singularity being canonical is equivalent to that for some contact form on $\partial V_{a_0,\ldots,a_n}$, the minimal SFT degree $\mu_{CZ}+n-3$ is non-negative. Moreover by \cite{ss}, the minimal Conley-Zehnder index orbit represents non-trivial class in the positive symplectic cohomology, which is very likely to be responsible for a $k$-dilation. We hope to prove this claim and fully understand the existence of $k$-dilations of Brieskorn varieties via studying the compactification in the weighted projective spaces in future work. In view of the conjectural picture in \cite[Conjecture 6.1]{li2019exact}, we can conjecture the following special case with a precise expectation on the order of dilation.
\begin{conjecture}\label{conj}
	$V_{a_0,\ldots,a_n}$ admits a $k$-dilation iff $\sum\frac{1}{a_i}>1$, and $k$ is determined by the minimal Conley-Zehnder index of the minimal principle period.
\end{conjecture}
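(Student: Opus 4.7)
The plan is to establish both implications in Conjecture \ref{conj} by combining the already-proved uniruledness obstruction, a weighted-projective compactification of $V_{a_0,\ldots,a_n}$ analogous to the setup in \S \ref{s52}, and the arithmetic input from the Conley-Zehnder index formula \eqref{eqn:CZ}.

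The forward direction is already in hand: if $V_{a_0,\ldots,a_n}$ admits a $k$-dilation, then Theorem \ref{thm:unirule} together with Corollary \ref{cor:unirule} forces the log-Kodaira dimension of $V_{a_0,\ldots,a_n}$ to equal $-\infty$, which for a Brieskorn-Pham singularity is exactly the condition $\sum 1/a_i > 1$; this is Proposition \ref{prop:general}. For the reverse direction I would compactify $V_{a_0,\ldots,a_n}$ as the complement of a divisor $\Sigma$ in the weighted projective hypersurface
\[
X \;=\; \{x_0^{a_0} + \cdots + x_n^{a_n} = z^{d}\} \;\subset\; \CP(d/a_0,\ldots,d/a_n,1),\qquad d = \mathrm{lcm}(a_0,\ldots,a_n),
\]
where $\Sigma = \{z=0\}\cap X$ is the projective Brieskorn variety at infinity carrying a prequantization $S^1$-bundle whose total space is $Y_{a_0,\ldots,a_n}$. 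A direct degree computation gives $\deg(K_X + \Sigma) = d(1-\sum 1/a_i)$, so the log-Fano condition $K_X + \Sigma < 0$ translates exactly into $\sum 1/a_i > 1$.

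I would then extend the Diogo-Lisi Morse-Bott analysis recalled in \S \ref{s52} to this orbifold setting: after neck-stretching along $Y_{a_0,\ldots,a_n}$, the presplit $S^1$-equivariant cochain complex for a Hamiltonian of slope just above the minimal principal period is controlled, as in Proposition \ref{prop:S1}, by a sum of 2-point orbifold Gromov-Witten invariants on $X$ paired against $\check p$-type classes indexed by $H^*(\Sigma)$. The log-Fano condition supplies, via an orbifold analogue of \eqref{eqn:GW1}, a nonvanishing 2-point invariant counting rational curves of lowest positive degree through a generic point with one tangency to $\Sigma$. Iteratively pairing this invariant with the hyperplane-class descendants of $\Sigma$ would then produce, as in the proof of Theorem \ref{thm:main}, an element $\sum_{i=0}^{k-1} (-1)^i c_i \check p_i u^{-i}$ whose $\delta^{S^1}$-image is the minimum $e$, yielding some $m$-dilation. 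To pin down the exact order, I would then invoke Proposition \ref{prop:D}, which reduces the problem to showing that the global minimum of
\[
f(T) \;=\; 2\sum_{i=0}^n \lfloor T/a_i\rfloor + n + 3 - 2|I_T| - 2T
\]
is attained at the smallest principal period. From $f(T+1)-f(T) = 2|I_T| - 2$ one computes that the average of $f(T+1)-f(T)$ over one period of $d = \mathrm{lcm}(a_i)$ equals $2\sum 1/a_i - 2 > 0$; combined with the discrete step analysis used for $n=3$ in the proof of Corollary \ref{cor:1dilation}, this should give $f(T) \ge f(T_{\min})$ for all $T$.

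The principal obstacle is the orbifold extension of Diogo-Lisi's moduli-theoretic setup: one must establish a Morse-Bott presplit cochain complex, neck-stretching, and transversality in the setting where $X$ has cyclic quotient singularities dictated by the weights $w_i = d/a_i$, and identify $\delta^0$ with orbifold 2-point Gromov-Witten invariants whose moduli involves twisted marked points with prescribed isotropy. Once this orbifold framework is available, the two remaining inputs, namely nonvanishing of a single orbifold GW invariant on the log-Fano pair $(X,\Sigma)$ and the arithmetic minimum statement for $f(T)$ under the condition $\sum 1/a_i > 1$, should both go through by fairly direct generalizations of the arguments used in Theorem \ref{thm:main} and Corollary \ref{cor:1dilation} respectively.
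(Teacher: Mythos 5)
First, be aware of what you are proving: the statement is a \emph{conjecture} in the paper, not a theorem. The author offers no proof and explicitly defers exactly the strategy you describe (compactification in weighted projective space and an orbifold extension of Diogo--Lisi) to future work, so there is no proof in the paper to compare against. Your forward direction is fine and is already Proposition \ref{prop:general}: a $k$-dilation implies uniruledness, hence log-Kodaira dimension $-\infty$, which rules out $\sum \frac{1}{a_i}\le 1$. But the reverse direction in your write-up is a research program, not an argument. The two pillars you lean on are precisely the unproven ones. (i) The orbifold Diogo--Lisi framework: for general exponents the divisor $\Sigma$ and the ambient $X$ have quotient singularities, several Morse--Bott families with the same period can coexist (indexed by different $I_T$), and the clean cancellation pattern of Proposition \ref{prop:S1} and the explicit input \eqref{eqn:GW1}--\eqref{eqn:GW2} have no established analogue; ``a nonvanishing $2$-point orbifold GW invariant of lowest degree'' is asserted from log-Fano-ness, but log-Fano pairs are not automatically $\A^1$-uniruled \emph{with the required rigid two-point count hitting the unit}, and no computation replacing \eqref{eqn:GW1} is given. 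Without this, the element $\sum (-1)^i c_i\check p_i u^{-i}$ whose $\delta^{S^1}$-image is $e$ is not produced.

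(ii) The arithmetic step is also genuinely gapped, and your proposed argument would fail as stated. Positive average drift, i.e.\ $f(T+d)-f(T)=2d\bigl(\sum \frac{1}{a_i}-1\bigr)>0$ for $d=\mathrm{lcm}(a_i)$, only shows $f(T)\to\infty$; it does not show that the global minimum of the minimal Conley--Zehnder index \eqref{eqn:CZ} over all orbit families is attained at the minimal principal period. The dips within the first few lcm-periods are exactly where the minimum could move, which is why the proof of Corollary \ref{cor:1dilation} required an explicit initial-segment computation plus a residue-by-residue analysis mod $12$, and why the paper itself only claims this minimality statement has been ``verified for small $n$.'' Moreover, Proposition \ref{prop:D} needs \emph{both} the minimality statement \emph{and} the prior existence of some $m$-(semi)-dilation, so your two unproven inputs are interlocked: the GW nonvanishing is needed before Proposition \ref{prop:D} can be invoked at all. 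Until the orbifold moduli theory, the nonvanishing computation, and a genuine proof of the arithmetic minimum are supplied, the proposal does not establish the conjecture.
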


The following proposition provides examples with ADC boundary and admits $k$-(semi)-dilation, hence they provide supplies for results in \S \ref{s4}. 
\begin{proposition}\label{prop:ADC}
	The boundary of $\sum_{i=0}^k x_i^k+\sum_{j=1}^rx_{i+j}^{a_j}=1$ is ADC for $a_j\ge k$ and $r>0$.
\end{proposition}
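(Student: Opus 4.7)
The plan is to verify the ADC condition directly from the explicit Conley--Zehnder formula \eqref{eqn:CZ}, exploiting the arithmetic fact that having $k+1$ exponents equal to $k$ forces $\sum_{i=0}^{n}\tfrac{1}{a_i}\ge\tfrac{k+1}{k}>1$, where $n=k+r$. I would invoke the Morse--Bott description of the Reeb dynamics recalled in Section \ref{s52}: on $Y_{a_0,\ldots,a_n}$ with its standard contact form, Reeb orbits form Morse--Bott families indexed by pairs $(NT,I_T)$ with $T=\mathrm{lcm}\{a_i:i\in I_T\}$ a principal period and $|I_T|\ge 2$; after perturbation by an admissible Morse function on each family (which is itself a lower--dimensional Brieskorn manifold), the resulting non-degenerate orbits have Conley--Zehnder indices bounded below by $\mu_{\min}(NT,I_T)$ from \eqref{eqn:CZ}. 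Since $Y_{a_0,\ldots,a_n}$ is $(n-2)$-connected, hence simply connected for $n\ge 3$ (which holds under $k\ge 2,\ r\ge 1$), every Reeb orbit is contractible, so verifying ADC reduces to the uniform bound $\deg=\mu_{\min}(A,I_T)+n-3>0$ in $(A,I_T)$.

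At the minimum principal period $A=k$, the condition $a_i\ge k$ forces $a_i\mid k\Leftrightarrow a_i=k$, so $I_T=\{i:a_i=k\}\supset\{0,1,\ldots,k\}$ and $|I_T|\ge k+1$. Plugging $A=k$ into \eqref{eqn:CZ}, the floor terms vanish for $i\notin I_T$ because $a_i>k$, and
\[
\mu_{\min}(k,I_T)=2|I_T|+(n+1)-2|I_T|-2k+2=n+3-2k=r-k+3,
\]
so $\deg=(r-k+3)+n-3=2r$, strictly positive precisely by the hypothesis $r>0$.

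For $A>k$ I would derive a uniform linear lower bound. Writing $\lfloor A/a_i\rfloor=A/a_i-\{A/a_i\}$ with $\{A/a_i\}\in[0,1)$ vanishing iff $i\in I_T$, substitution into \eqref{eqn:CZ} yields
\[
\mu_{\min}(A,I_T)=2A\!\left(\sum_{i=0}^{n}\tfrac{1}{a_i}-1\right)-2\!\!\sum_{i\notin I_T}\!\!\{A/a_i\}+(n+3)-2|I_T|.
\]
Bounding the middle term strictly by $2(n+1-|I_T|)$ and applying $\sum \tfrac{1}{a_i}-1\ge \tfrac{1}{k}$, this gives
\[
\mu_{\min}(A,I_T)>\tfrac{2A}{k}+1-n.
\]
For integer $A\ge k+1$ we obtain $\deg>\tfrac{2}{k}>0$, which integrality of Conley--Zehnder indices upgrades to $\deg\ge 1$. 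Since indices of all non-minimal orbits within each family only shift upward (by $[0,2|I_T|-4]$), every perturbed orbit satisfies $\deg>0$, and ADC follows by taking $\alpha_i$ to be the standard contact form and any sequence $D_i\to\infty$.

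The only delicate point is that the linear estimate in the last step degenerates to an equality at $A=k$ (when $|I_T|=n+1$), which is exactly why one must treat the minimum period separately; the hypothesis $r>0$ then supplies the strict positivity $\deg=2r>0$ there. Nothing else beyond routine bookkeeping on the explicit Kwon--van Koert formula is required.
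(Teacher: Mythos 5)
Your proof is correct and follows essentially the same route as the paper: both arguments verify the ADC condition directly from the Kwon--van Koert index formula \eqref{eqn:CZ}, computing that the minimal SFT degree at the minimal period $k$ is exactly $2r>0$ and then bounding all larger periods. The only difference is bookkeeping for periods $A>k$ --- the paper telescopes the increments $f(T+1)-f(T)=2|I_T|-2$ (so $f(T)\ge f(k)$ since $|I_T|\ge k+1$ whenever $k\mid T$), whereas you use the equivalent direct estimate via $\sum 1/a_i-1\ge 1/k$ and fractional parts; both are fine, though strictly speaking the nondegenerate forms $\alpha_i$ in Definition \ref{def:ADC} should be small perturbations of the standard (Morse--Bott) Brieskorn contact form rather than that form itself, a point the paper also leaves implicit.
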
 
\begin{proof}
	The minimal Conley-Zehnder index of the minimal period $k$ is $r-k+3$. Hence the SFT degree is at least $2r>0$. Following the same argument in Corollary \ref{cor:dilation}, we consider the function $f(T)$. Then  $f(T+1)-f(T)\ge 2k$ if $T\mod k=0$ and $f(T+1)-f(T)\ge -2$ else, hence $f(T)-f(k)\ge 0$ for any $T>k$. In particular, the minimal SFT degree is $2r>0$.
\end{proof}

The study of exotic contact structure on the almost contact manifold $(S^{2n-1},J_{std})$ for $n\ge 3$ has a long history. By the result of Eliashberg-Floer-McDuff \cite{mcduff1991symplectic}, any contact manifold representing $(S^{2n-1},J_{std})$ with a Liouville filling not diffeomorphic to the ball is an exotic one. And Floer theoretic invariants were used to show that there are infinitely many different exotic contact structures \cite{lazarev2016contact,ustilovsky1999infinitely}.  Moreover, there are inner hierarchies in the examples found by Lazarev \cite{lazarev2016contact} as explained to us by the referee as follows. Let $V_{flex}$ be one of the flexible Weinstein domains whose contact boundary is $(S^{2n-1},J_{std})$ as almost contact manifold used in \cite[Corollary 1.4]{lazarev2016contact}, then we can only have Weinstein cobordisms from $\#^i \partial V_{flex}$ to $\#^{i+1} \partial V_{flex}$ but not the other way around. Assume otherwise, if there is an Weinstein cobordism from $W$ from $\#^{i+1} \partial V_{flex}$ to $\#^i \partial V_{flex}$, then $W\cup \natural^{i+1}V_{flex}$ is an Weinstein filling of $\#^i \partial V_{flex}$. Then by \cite[Corollary 1.3]{MR4182808}, $H^*(W\cup \natural^{i+1}V_{flex})$ is isomorphic to $H^*(\natural^{i}V_{flex})$. However we must have $\rank H^n(W\cup \natural^{i+1}V_{flex})\ge \rank H^n(\natural^{i+1}V_{flex})> \rank H^n(\natural^{i}V_{flex})$, contradiction. Moreover, the same argument can be applied to show that there is no exact cobordism with vanishing first Chern class from $\#^{i+1} \partial V_{flex}$ and $\#^i \partial V_{flex}$ using that $\#^{i+1} \partial V_{flex} = S^{2n-1}$ and the Mayer-Vietoris sequence. The argument for non-existence of Weinstein cobordism can be applied to any almost Weinstein fillable contact manifold $(Y,J)$ of dimension at least $5$. However, the argument for the non-existence of exact cobordisms with vanishing first Chern class (possibly with additional constraints on $\pi_1$) relies on that $H^*(Y)$ is sparse. Using the order of semi-dilation, we can get another hierarchy of exoticity which does not depend on the topology of $Y$ with the drawback that it is only finite in a fixed dimension. 
\begin{corollary}
	For every $k\in \N$, there exists $n$ such that for any simply connected almost Weinstein fillable contact manifold $(Y^{2n-1},J)$, there exist contact manifolds $Y_1,\ldots,Y_k$ such that the following holds.
	\begin{enumerate}
		\item $Y_i = (Y,J)$ as almost contact manifolds.
		\item There is a Weinstein cobordism from $Y_i$ to $Y_j$ if $i<j$.
		\item There is no exact cobordism with vanishing first Chern class  from $Y_i$ to $Y_j$ for $i>j$.
	\end{enumerate}
\end{corollary}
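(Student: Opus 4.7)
The plan is to use the order of semi-dilation as a filling-invariant hierarchy marker and to produce, for each $i=1,\ldots,k$, a Weinstein filling $\widetilde V_i$ of a contact manifold $Y_i=(Y,\xi_i)$ lying in the almost contact class of $J$, organized as a nested chain $\widetilde V_1\subset\widetilde V_2\subset\cdots\subset\widetilde V_k$, with each $Y_i$ ADC and $\rSD(\widetilde V_i)=i-1$. Granting these, property~(2) follows because each complement $\widetilde V_j\setminus \widetilde V_i$ for $i<j$ is a Weinstein cobordism from $Y_i$ to $Y_j$; property~(3) follows from Corollary~\ref{cor:cob}, with the simple-connectivity of $Y$ and $c_1(W)=0$ forcing the glued domain $\widetilde V_i\cup W$ to be topologically simple via the Mayer--Vietoris sequence (the bound $H^1(Y_i)=0$ makes $H^2(\widetilde V_i\cup W)\hookrightarrow H^2(\widetilde V_i)\oplus H^2(W)$, so $c_1$ vanishes on the union).

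First, fix $n>k$ and let $W^{(i)}:=W_{i,n}$ be the Milnor fiber from Theorem~\ref{thm:main}, so that $\rD(W^{(i)})=\rSD(W^{(i)})=i-1$ and $\partial W^{(i)}$ is ADC by Proposition~\ref{prop:ADC}. Since $(Y,J)$ is almost Weinstein fillable, the Cieliebak--Eliashberg h-principle provides a flexible Weinstein filling $V_0$ of $(Y,J)$; one has $SH^*(V_0)=0$ (so $\rSD(V_0)=0$) and $\partial V_0$ is ADC by Lazarev. Set $\widetilde V_0:=V_0$ and inductively define
\[
\widetilde V_i \;:=\; \bigl(\widetilde V_{i-1}\,\natural\, W^{(i)}\bigr)\,\cup\, C_i,
\]
where $C_i$ is a flexible Weinstein cobordism attached to $\partial\widetilde V_{i-1}\#\partial W^{(i)}$ whose positive end is a contact structure $\xi_i$ on $Y$ in the almost contact class of $J$. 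The required smooth cobordism exists because $\partial W^{(i)}$ bounds $W^{(i)}$, giving an explicit null-cobordism of the new connect-sum factor, and a compatible almost complex structure with $c_1=0$ can be arranged; the Cieliebak--Eliashberg h-principle then realizes $C_i$ as a genuine flexible Weinstein cobordism.

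For the invariant computation I would exploit that boundary connected sum is a subcritical Weinstein $1$-handle attachment, hence preserves $SH^*$ and $SH^*_{S^1}$ with the unit of $H^0$ carried to the pair of units; this gives $\rD(V\natural W)=\max(\rD(V),\rD(W))$. Together with Proposition~\ref{prop:flex}(1) applied to the flexible piece $C_i$, an induction yields $\rD(\widetilde V_i)=\max(i-2,i-1)=i-1$. The bound $\rSD\le\rD$ from Proposition~\ref{prop:hier} and the subdomain monotonicity $\rSD(W^{(i)})\le\rSD(\widetilde V_i)$ from Proposition~\ref{prop:inc} applied to the exact subdomain $W^{(i)}\subset\widetilde V_i$ then sandwich $\rSD(\widetilde V_i)$ to $i-1$. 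The ADC property of $Y_i$ persists at each step since contact connected sum and flexible Weinstein cobordism attachment both preserve ADC (see \cite{lazarev2016contact,filling}). Property~(1) is built into the construction, and with $\rSD(\widetilde V_i)$ strictly increasing in $i$ and each $Y_i$ ADC, property~(3) becomes a direct application of Corollary~\ref{cor:cob}.

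The main obstacle will be the construction of the flexible cobordism $C_i$ with the prescribed almost contact boundary and vanishing first Chern class. This rests on the Cieliebak--Eliashberg h-principle for flexible Weinstein structures together with a careful tracking of the almost contact class and $c_1$ through the sequence of handle attachments; in particular, one must verify that the null-cobordism of $\partial W^{(i)}$ coming from $W^{(i)}$ itself admits an extension of the almost complex data consistent with $(Y,J)$ on the positive end. Once $C_i$ is produced, the remainder of the argument reduces to the functoriality statements of Sections~\ref{s3}--\ref{s4} and to the filling-independence theorem (Theorem~\ref{thm:ind}) that underlies Corollary~\ref{cor:cob}.
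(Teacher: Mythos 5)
Your overall architecture (a chain of Weinstein fillings with strictly increasing $\rSD$, all of whose boundaries lie in the almost contact class $(Y,J)$, with upward Weinstein cobordisms from subdomain inclusions and downward non-existence from Corollary \ref{cor:cob}) matches the paper's strategy, and your use of Proposition \ref{prop:inc}, Proposition \ref{prop:flex} and the ADC-preservation results is in the right spirit. The genuine gap is the correction cobordism $C_i$, which is exactly where the whole difficulty of item (1) lives. Your justification ``the required smooth cobordism exists because $\partial W^{(i)}$ bounds $W^{(i)}$'' conflates a Weinstein cobordism with its smooth reversal: $W^{(i)}\setminus B^{2n}$ is a Weinstein cobordism whose \emph{negative} end is $S^{2n-1}$ and whose \emph{positive} end is $\partial W^{(i)}$, i.e.\ it realizes $\partial W^{(i)}$ as obtained from the sphere by handle attachment. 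What you need is the opposite direction: a cobordism with negative end $\partial\widetilde V_{i-1}\#\partial W^{(i)}$ and positive end $Y$. Smoothly the reversal is fine (the reversed middle handles still have index $n$), but the reversal carries no natural almost Weinstein structure; you must solve an obstruction-theoretic extension problem to equip it with an almost complex structure having $c_1=0$, restricting on the negative end to the given almost contact structure and on the positive end to $(Y,J)$, before the Cieliebak--Eliashberg h-principle can be invoked. You flag this as ``the main obstacle'' but never resolve it, and since $\partial W_{i,n}$ is far from a homotopy sphere (its middle homology is large), this is not a routine verification; as written the proof does not go through. A secondary unproved step is the formula $\rD(V\natural W)=\max(\rD(V),\rD(W))$, which needs the splitting of $SH^*_{S^1}$ under boundary connected sum (as in Proposition \ref{prop:robust}) rather than Proposition \ref{prop:flex} alone, though this is comparatively minor.

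For comparison, the paper sidesteps the formal-data problem entirely: it takes $n=2m+1\gg 0$ and large primes $p,q$ so that the Brieskorn boundaries $\partial V_{i+1,\ldots,i+1,p,q}$ are homotopy spheres with $\rSD(V_{i+1,\ldots,i+1,p,q})=i$ (Propositions \ref{prop:lefchetz} and \ref{prop:D}), and then connect-sums $N=|bP_{2n}|\cdot|\pi_{4m+1}(SO(4m+1)/U(2m))|$ copies so that $S_i:=\#^N\partial V_{i+1,\ldots,i+1,p,q}$ is $(S^{2n-1},J_{std})$ on the nose; setting $Y_i=S_i\#\widetilde Y$ then gives item (1) with no correction cobordism, the Weinstein cobordisms in (2) come from the adjacency inclusions $\natural^N V_{i+1,\ldots}\subset\natural^N V_{i+2,\ldots}$, and (3) follows from Corollary \ref{cor:cob}. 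This also explains the quantifier ``there exists $n$'' in the statement, which your choice ``fix $n>k$'' does not engage with: the homotopy-sphere and standardness properties require the specific arithmetic/dimensional choices, whereas your route would instead have to supply the missing h-principle input for $C_i$.
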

\begin{proof}
	We first construct the example on $(S^{2n-1},J_{std})$. Let $n=2m+1\gg 0$ and two different prime numbers $p,q \gg 0$, then by \cite[Proposition 3.6]{kwon2016brieskorn}, we have that $\partial V_{i+1,\ldots,i+1,p,q}$ is a homotopy sphere of dimension $2n-1$ for every $1 \le i\le k$. Moreover, by Proposition \ref{prop:lefchetz} and \ref{prop:D}, we have $\rSD(V_{i+1,\ldots,i+1,p,q})=i$. Let $N:=|bP_{2n}|\cdot |\pi_{4m+1}(SO(4m+1)/U(2m))|$, where $bP_{2n}$ is the group of boundary parallelizable homotopy sphere of dimension $2n-1$. Then the contact connected sum $S_i:=\#^N\partial V_{i+1,\ldots,i+1,p,q}$ is $(S^{2n-1},J_{std})$, see \cite[Theorem 3.12]{kwon2016brieskorn} and \cite{ustilovsky1999infinitely}. By Proposition \ref{prop:flex}, we have $\rSD(\natural^N V_{i+1,\ldots,i+1,p,q})=i$. Moreover, by construction, we have $\natural^NV_{i+1,\ldots,i+1,p,q}\subset \natural^N V_{i+2,\ldots,i+2,p,q}$. Hence there is a Weinstein cobordism from $S_i$ to $S_{i+1}$, but there is no exact cobordism with vanishing first Chern class from $S_{i+1}$ to $S_i$ by Corollary \ref{cor:cob}. Now let $\widetilde{Y}$ be the contact boundary of the flexible Weinstein filling and is $(Y,J)$ as almost contact manifolds. We define $Y_i=S_i\# \widetilde{Y}$, then there is a Weinstein cobordism from $Y_i$ to $Y_{i+1}$ and $Y_i$ has a Weinstein filling $W_i$ such that $\rSD(W_i)=i$ by Proposition \ref{prop:flex}. Then claim for $Y_i$ follows from Corollary \ref{cor:cob}. 
\end{proof}

\bibliographystyle{plain} 
\bibliography{ref}

\end{document}